\documentclass{amsart}
\usepackage{enumerate}
\usepackage[dvips]{graphicx}
\usepackage{color}
\usepackage{pstricks}
\usepackage{pst-node}
\newtheorem{theo}{Theorem}[section]
\newtheorem{cor}[theo]{Corollary}
\newtheorem{lem}[theo]{Lemma}

\newtheorem{prop}[theo]{Proposition}

\newtheorem{defn}[theo]{Definition}

\newtheorem{Notation}[theo]{Notation} 
\theoremstyle{definition}
\newtheorem{rem}[theo]{Remark}

\def\a{{\a}}
\def\a{{\mathfrak a}}

\def\C{\mathbb C}

\def\CC{\mathcal C}
\def\F{\mathbb{F}}

\def\Card{\operatorname{Card}}

\def\E{\mathbb E}

\def\H{\mathbb H}

\def\N{\mathbb N}
\def\P{\mathbb{P}}

\def\PP{\mathcal P}

\def\R{\mathbb R}

\def\Re{\operatorname{Re}}

\def\Z{\mathbb{Z}}
\begin{document} 

\title[Orbit measures and intelaced processes]{Orbit measures, random matrix theory and interlaced determinantal processes}

\author{Manon Defosseux}
\address{Laboratoire de Probabilit\'es et Mod\`eles al\'eatoires, Universit\'e Paris 6, 175 rue du Chevaleret, Paris 75013}
\email{manon.defosseux@gmail.com}
\subjclass{Primary 15A52; Secondary 17B10}

\begin{abstract} A connection between representation of compact groups and some invariant ensembles of Hermitian matrices is described. 
We focus on two types of invariant ensembles which extend the Gaussian and the Laguerre Unitary ensembles. We study them using projections and convolutions of invariant probability measures on adjoint orbits of a compact Lie group. These measures are described by semiclassical approximation involving tensor and restriction mulltiplicities. We  show that a large class of them are determinantal. 
\vspace{0.3cm}\\
{\sc R\'esum\'e.} Nous d\'ecrivons les liens unissant les repr\'esentations de groupes compacts et certains ensembles invariants de matrices al\'eatoires. Cet articles porte plus particuli\`erement sur deux types d'ensembles invariants qui g\'en\'eralisent les ensembles gaussiens ou de Laguerre. Nous les \'etudions en consid\'erant des convolutions ou des projections de probabilit\'es invariantes sur des orbites adjointes de groupes de Lie compacts. Par approximation semi-classique, ces mesures sont d\'ecrites par des produits tensoriels ou des restrictions de repr\'esentations. Nous montrons qu'une large classe d'entre elles sont d\'eterminantales. 

\end{abstract}
\maketitle
\section{Introduction}

 One of the first aims of random matrix theory (RMT) is computation of eigenvalue distributions. Its first appearance is in statistics in 1928, when Wishart \cite{Wishart} has determined the maximum likelihood estimator of the covariance matrix of a Gaussian vector. In 1951, Wigner \cite{Wigner} introduced random Hermitian matrices in physics, with the idea that their eigenvalues behave as the high energy levels of hard nucleus. Up to now, in this very active field of research, the detailed analysis of these eigenvalues most often rests on the explicit determinantal expression of their distribution, see, e.g.\ Mehta \cite{Mehta}. Although these distributions are usually obtained by more or less easy applications of the change of variable formula, it has been noticed that they contain expressions familiar to the theory of group representations. Actually, many tools from this theory occur in RMT: for instance Young tableaux, Harish-Chandra-Itzkinson-Zuber formula, symmetric spaces, and so on.

 The purpose of this paper is to establish a direct link between  classical compact groups and RMT and to use it to compute the distributions of some new ensembles. On the one hand it gives  expressions which are maybe not so easy to obtain directly. On the other hand, and  more 
 importantly, it explains  the frequent occurrence of  concepts from  representation theory  in some aspects of random matrix theory.

 The main idea is simple. Roughly speaking, the ensembles we will consider are invariant under the action of a unitary group by conjugacy. Computations will use ultimately a detailed description of the images of the Haar measure on orbits under the adjoint action. They are called orbit measures. In the spirit of  Kirillov's orbit method,  these measures are obtained by semi-classical approximation as limit of the empirical distribution of the weights of irreducible representations of high dimension. For RMT, the quantities of interest will be expressed either by sums or by projections of orbit measures. We will compute them using tensor products or restrictions of representations. This latter computation will be made in a combinatorial-geometric manner, by using Kashiwara crystal theory, which can be viewed as a recent and deep generalization of Young tableaux.  
 
The paper is divided into two parts.  We will describe the theoretical approach only in the second part, because it uses  algebraic machinery which can scare some readers. 

The first part is devoted to its application to concrete problems in RMT. 
To illustrate our approach,  we study some non classical ensembles of Hermitian complex matrices. They will be either the set of all $n \times n$ Hermitian matrices denoted below ${\mathcal P}_n(\C)$, or the set of skew-symmetric Hermitian matrices denoted ${\mathcal P}_n(\R)$, or the set of Hamiltonian Hermitian matrices  denoted  ${\mathcal P}_n(\H)$. The reason of these maybe strange notations is the following. 
 Let $\F=\R, \C$ or $\H$ be either the field of reals or complex  or quaternions numbers. The so called classical compact groups are the neutral components $U_n(\F)$ of the unitary groups. If ${\mathfrak U}_n(\F)$ is the Lie algebra of $U_n(\F)$, then ${\mathcal P}_n(\F)=i {\mathfrak U}_n(\F)$ is a subset of the set of Hermitian matrices with complex entries. 
 They correspond to the so-called classical flat symmetric spaces associated to the complex semisimple groups. In RMT they occur in the  Atland-Zirnbauer classification \cite{Altland}, but among them only ${\mathcal P}_n(\C)$ occurs in the Dyson threefold way \cite{Dyson}.
As usual in physics, we are interested in ensembles invariant under an appropriate group of symmetry. So we will look at random Hermitian matrices in  ${\mathcal P}_n(\F)$ whose laws are invariant under conjugation by the elements of the compact group $U_n(\F)$. Recall that in RMT an ensemble is a random matrix.
\begin{defn}A random matrix, or an ensemble, $M$ with values in ${\mathcal P}_n(\F)$ is called invariant  if its distribution is invariant under conjugation by $U_n(\F)$.    \end{defn}
  It is for these ensembles that representation theory plays a role. Among them, a pre-eminent role is played by the family of ensembles which form a projective series as $n$ increases. Indeed, in physical applications the finite dimension $n$ is only an approximation.  
It is interesting to notice that these series admit a complete description, in the spirit of De Finetti's theorem. We will give it in section 2, by applying  a remarkable result of Pickrell \cite{Pickrell}. They are obtained as a "double mixture" of two simple types of ensembles that we call  GUE($\F$) and LUE($\F$). The classical  GUE and LUE (Gaussian and Laguerre Unitary ensemble) are obtained for $\F=\C$. 
Notice that when $\F=\R$ and $\F=\H$ they are not linked with the GOE and the GSE nor the LOE and LSE. Actually GUE($\R$) is in the class D of Altland and Zirnbauer \cite{Altland}, and GUE($\H$) is in their class C. In the spirit of random matrix theory one can say that they are all in the $\beta=2$ family. Some of their applications will be recalled in \ref{subappli}. 

 As a first application of the introduced method we compute in section 3 the distribution of the main minors of an invariant random matrix. We show that the eigenvalues of the successive main minors of an invariant random matrix in ${\mathcal P}_n(\F)$ with given eigenvalues  have the uniform distribution, or a projection of it, on a conveniently defined  Gelfand--Tsetlin polytope, which describes their interlacing. This was first proved for ${\mathcal P}_n(\C)$ by Baryshnikov \cite{Baryshnikov}, by a different method, motivated by queuing theory. We use the approximation of projections of  orbits detailed in 7.3.
 Notice that the role of Gelfand Tsetlin patterns in the study of shape process already appeared in Cohn, Larsen and Propp \cite{Cohn}.
  
 As a second application, we study the LUE($\F$). These ensembles can be written as $M\Omega M^*$ where $M$ is a standard Gaussian matrix with entries in $\F$ and $\Omega$ is a simple fixed appropriate matrix.  For instance $\Omega=I$ when $\F=\C$. The result of Pickrell mentioned above show that they are a building block in the harmonic analysis of infinite Hermitian matrices in the spirit of Olshanski and Vershik \cite{OlshanskiVershik}, Olshanski \cite{Olshanski3}, Borodin and Olshanski \cite{BorodinOlshanski} for instance. Moreover, we will see that radically new phenomenon occur when $\F$ is not equal to $\C$, so that this study is interesting in itself.
In sections 4 and 5, we determine the distribution of the eigenvalues.  In section 4, one considers the case where $\Omega$ is of rank one and analyse the perturbation of any matrix in $\mathcal{P}_n(\mathbb{F})$ by such a random matrix. This rests on the theoretical results of the second part on tensor products of representations. The general case is considered in section 5.
   
In section 6, considering the minor process associated to some invariant ensembles and successive rank one perturbations, we obtain two types of interlaced point processes, called ``triangular'' and ``rectangular''.  We  deduce from the description given in sections 3, 4 and 5 that a large class of them are determinantal.  This shows that these interlacings exhibit repulsion. In the GUE case, this was also proved recently by Johansson and Nordenstam \cite{JohanssonNordenstam}, and Okounkov and Reshetikhin \cite{Okounkov}.  

After a  first part devoted to applications we develop in the second part  of this paper the tools coming from representation theory used to establish them. In section 7, we present a variant of a theorem of Heckman which allows us to describe in a precise way convolutions  and projections of adjoint orbit measures, once we know the so called tensor or branching rules. For our applications these rules are described in section 9. They are  classical and simple in the case  when $\F=\C$, but more involved in the other ones. Actually we only need a geometrical description of these rules and not their combinatorics as usual. This is quite remarkable. These geometric descriptions are easily and directly obtained using Kashiwara crystal theory. As explained in section 9, crystal theory gives us a description in terms of non-intersecting paths, or interlaced points, which is exactly what we need.  Finally in section 10, we apply the results obtained in this second part to the context of RMT described in the first part.

One can find in the litterature different versions of the theorem of Heckman. For instance, Collins and Sniady gave one recently in \cite{Collins}, in the framework of noncommutative probabilities. Their approach consists in considering a random matrix as a limit  of   random matrices with non-commutative entries. While finishing to write this paper announced in \cite{Defosseux}, Forrester and Nordenstam \cite{ForresterNordenstam} posted an article in arxiv  dealing with the GUE($\R$) case. 
 \begin{Notation} In this paper, for an integer $n$ we will write $$\tilde n=  \left\{\begin{array}{ll } n & \textrm{ when } \F=\C \textrm{ and }\H \\  \lfloor n/2\rfloor & \textrm{ when } \F=\R. \end{array} \right. $$
We let $c=1$ if $\mathbb{F}=\C,\R$, $c=2$ if $\F=\H$ and  $\epsilon=1$ if $n$ is odd and $0$ otherwise.
\end{Notation}

\textit{Acknowlegments:} This research has partly been carried out during a visit at the Boole Centre for Research in Informatics, University College Cork. The author would like to thank Ton Dieker,  Anthony Metcalfe, Neil O'Connell, Jon Warren and her advisor Philippe Bougerol for many helpful and illuminating discussions.
  
\part{\sc Random matrices}
    \section {Ensembles of Hermitian matrices}
   \subsection {Some invariant set of Hermitian matrices}
  The set ${\mathcal P}_n(\C)$  of  $n\times n$ Hermitian matrices is the real vector space of complex matrices $M$ such that $M^*=M$, where $M^*$ is the adjoint of $M$. 
Many classical ensembles considered in physics occur on subsets of ${\mathcal P}_n(\C)$. Let us distinguish three important classes which occur as flat symmetric spaces associated with compact groups, or equivalently complex semi-simple groups, and are thus of the so called $\beta=2$ type. They have been introduced in the literature under various names. Our choice is due to the fact that we want to have a common setting for all of them.

  The first set we consider is ${\mathcal P}_n(\C)$ itself.
   The second set is the set ${\mathcal P}_n(\R)$ of Hermitian complex matrices $M$ which can be written as $M=iX$ where $X$ is a real matrix. In this case $X$ is skewsymmetric (i.e. $X+X^t=0$). Thus ${\mathcal P}_n(\R)$ is just a convenient parametrization of the set of skewsymmetric real matrices, studied for instance by Mehta \cite{Mehta}.

In order to introduce the third one, we first define the $C$-symmetry class of Atland and Zirnbauer \cite{Altland}. It is the set of complex Hermitian matrices ${\mathcal H}$ which can be written as \begin{equation}\label{classC}{\mathcal H}=\begin{pmatrix}H& S  \\
S^* & -\bar H
\end{pmatrix}\end{equation}
where $H$ and $S$ are two $n\times n$ complex matrices, with $H$ Hermitian and $S$ symmetric. In other words it is the set of Hermitian matrices of the Lie algebra of the complex symplectic group.
One recognizes the form of the Bogoliubov--de Gennes Hamiltonian  in condensed matter physics (see below). 
Actually we will use a more convenient representation by using quaternions. For us, the set $\H$ of quaternion is just the set of $2 \times 2$ 
  matrix $Z$ with complex entries which can be written as 
  $$ Z=
\begin{pmatrix}
  a& b   \\
 -\bar b &  \bar a   
\end{pmatrix},
  $$
  where $a,b\in \C$. Its conjugate  $Z^*$  is the usual adjoint of the complex matrix $Z$. We define ${\mathcal P}_n(\H)$ as the set of $2n\times 2n$ complex Hermitian matrices $M$ which can be written as $M=iX$ where $X$ is a 
$n\times n$  matrix with quaternionic entries.
 Let $W$ be the matrix of the permutation of $\C^{2n}$:  $$(x_1,x_2,\cdots) \mapsto (x_1,x_{n+1},x_2,x_{n+2},x_3,\cdots).$$   Then ${\mathcal H}$ is an Hamiltonian given by (\ref{classC}) if and only if \begin{equation}\label{eqW}\tilde  {\mathcal H}=W{\mathcal H}W^{-1}\end{equation} is in $\PP_n(\H)$.  Therefore $\PP_n(\H)$ is just a parametrization of the class $C$ of Altland and Zirnbauer. Notice that the matrices of the GSE are not of this type since they are self dual matrices with entries in $\H$. We can thus define:
 \begin{defn} For $\F=\R,\C,\H$,  $\mathcal{P}_n(\F)$ is the set of $n\times n$ Hermitian matrices with entries in $i\F$.
 \end{defn}
  One recognizes in $\mathcal{P}_n(\F)$ the three infinite families of Cartan motion groups associated with compact (or complex) groups. Indeed, let $U_n(\F)$ be the neutral component of the group of unitary matrices with entries in  $\F$. Its Lie algebra  ${\mathfrak U}_n(\F)$ is the set of matrices $M$ with entries in $\F$ such that $M+M^*=0$. Then ${\mathcal P}_n(\F)=i {\mathfrak U}_n(\F)$, and  the Cartan motion group associated with $U_n(\F)$ is 
 $$G= U_n(\F) \times_\sigma {\mathcal P}_n(\F)$$
 where  $U_n(\F)$ acts on $ {\mathcal P}_n(\F)$ through $\sigma$ by conjugation (i.e. by adjoint action). 
 In the classification of symmetric spaces, $\PP_n(\C)$ is said to be of type A  and $U_n(\C)$ is the unitary group. When  $n=2r$, $\mathcal{P}_n(\R)$ is of type D and  when  $n=2r+1$, $\PP_n(\R)$ is of type B, and $U_n(\R)$ is the special orthogonal group $SO(n)$ in both cases. At last, $\PP_n(\H)$ is of type C and $U_n(\H)$ is the symplectic unitary group $Sp(n)$.

   \subsection{Eigenvalues and radial part}
   
   Consider a matrix $M$ in ${\mathcal P}_n(\F)$. Since $M$ is an Hermitian complex matrix, it has real eigenvalues $\lambda_1 \geq \lambda_2 \geq \cdots \geq \lambda_n$ when $\F=\R$ and $\C$, and $\lambda_1 \geq \lambda_2 \geq \cdots \geq \lambda_{2n}$ when $\F=\H$.  When $\F=\C$ there is no further restriction, but when $\F=\R$, then  $\lambda_{n-k+1}=-\lambda_{k}$, for $k=1,\cdots, \tilde n+1$, which implies $\lambda_{\tilde n+1}=0$ when $n$ is odd (Recall that  $\tilde{n}=[n/2]$ when $\F=\R$). When $\F=\H$ then $\lambda_{2n-k+1}= -\lambda_{k}$, for $k=1,\cdots,n$. We define the Weyl chambers $\CC_n$ in the different cases by :
    when  $\F=\C$,
   $${\CC}_n=\{\lambda\in \R^n; \lambda_1  \geq \lambda_2 \geq \cdots \geq \lambda_n\},$$
   
 \noindent   when $\F=\R$, and $n$ is odd, 
   $${\CC}_n=\{\lambda\in \R^{\tilde n}; \lambda_1  \geq \lambda_2 \geq \cdots \geq \lambda_{\tilde n} \geq 0\}, $$
   
 \noindent    when $\F=\R$, and $n$ is even (see Remark \ref{remstrange}),
   $${\CC}_n=\{\lambda\in \R^{\tilde n}; \lambda_1  \geq \lambda_2 \geq \cdots \geq \lambda_{\tilde n-1}  \geq |\lambda_{\tilde n}| \geq 0\}, $$
   
 \noindent   when $\F=\H$,
     $${\CC}_n=\{\lambda\in \R^n; \lambda_1  \geq \lambda_2 \geq \cdots \geq \lambda_n \geq 0\}.$$

The Weyl chamber is a fundamental domain for the adjoint action of $U_n(\F)$ on $\mathcal{P}_n(\F)$. More precisely, let us introduce the following matrices.  
   For $\F=\mathbb{C}$, and $\lambda=(\lambda_1,\cdots,\lambda_n)$ in $\mathbb{R}^n$, we denote  by $\Omega_n(\lambda) $ the $n \times n$ diagonal matrix 
$$\Omega_n(\lambda)=\begin{pmatrix}
   \lambda_1  &   &   \\
  & \ddots  &   \\
  &   &   \lambda_{ n} 
\end{pmatrix}.
$$
When $\F=\mathbb{R}$,  we let $\omega(\alpha)=\begin{pmatrix} 0 &  i\, \alpha  \\
-i\, \alpha &  0       
\end{pmatrix}$ where $\alpha \in \R$, 
  and for $\lambda \in \mathbb{R}^{\tilde n}$,  we write $\Omega_n(\lambda)$ for the $n \times n$ block-diagonal matrix  given by, when  $n$ is even,  $$
\Omega_n(\lambda)=\begin{pmatrix}
   \omega(\lambda_1)  &   &   \\
  & \ddots  &   \\
  &   &   \omega(\lambda_{\tilde n}) 
\end{pmatrix},
$$  and when $n$ is odd,
 $$ \Omega_n(\lambda)= \begin{pmatrix}
 \omega(\lambda_1) &   &    &\\
  & \ddots  &   & \\
  &   &  \omega(\lambda_{\tilde n}) & \\
  & & & 0
\end{pmatrix}.$$
When $\F=\H$ and $\lambda=(\lambda_1,\cdots,\lambda_n)$ in $\mathbb{R}^n$, we let 
$$\Omega_n(\lambda)= \begin{pmatrix}
   Z(\lambda_1)  &   &   \\
  & \ddots  &   \\
  &   &  Z( \lambda_{ n} )
\end{pmatrix}
$$
where, for $\alpha \in \R$, $Z(\alpha)$ is the $2\times 2$ matrix  $Z(\alpha)=\begin{pmatrix}
  \alpha &  0  \\
 0 &  - \alpha   
\end{pmatrix}$. Then it is well known and not difficult to prove that:
   \begin{lem} \label{KAK}
   Let $M$ be a matrix in ${\mathcal P}_n(\F)$. Then there exists a unique $\lambda\in \CC_n$ and a matrix $U\in U_n(\F)$ such that
   $$M=U\Omega_n(\lambda)U^*.$$
We call $\lambda$  the radial part of $M$ and will denote it by $\lambda=X^{(n)}(M)$.
   \end{lem}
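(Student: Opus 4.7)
My plan is to treat the lemma as the Cartan decomposition: the image $\Omega_n(\CC_n)$ is a set of representatives for the orbits of $U_n(\F)$ acting by conjugation on $\mathcal P_n(\F)$. Since $\mathcal P_n(\F)=i\mathfrak U_n(\F)$, this is the statement that every element of the compact Lie algebra $\mathfrak U_n(\F)$ meets a fixed Cartan subalgebra, together with the identification of $\CC_n$ as a fundamental domain for the associated Weyl group. I would argue case by case.

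For $\F=\C$, existence and uniqueness follow directly from the classical spectral theorem: any Hermitian matrix admits an orthonormal eigenbasis, so $M=U\Omega_n(\lambda)U^*$ with $U\in U(n)=U_n(\C)$, and uniqueness of $\lambda$ comes from ordering the eigenvalues decreasingly, which singles out one element in each $S_n$-orbit (the Weyl group of type $A_{n-1}$). For $\F=\H$, I would invoke the Cartan decomposition for $\mathfrak{sp}(n)$: writing $M=iX$ with $X$ quaternion-skew-Hermitian, there exists $U\in U_n(\H)=Sp(n)$ such that $U^*XU$ is quaternion-diagonal with entries $-i\lambda_k$. Because $Sp(n)$ acts on these by the hyperoctahedral Weyl group of type $C_n$ (permutations together with arbitrary sign changes), requiring $\lambda_1\ge\cdots\ge\lambda_n\ge 0$ selects exactly one representative per orbit, and this is precisely $\Omega_n(\lambda)$.

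For $\F=\R$, one writes $M=iX$ with $X\in\mathfrak{so}(n)$. The classical real canonical form for skew-symmetric matrices yields $V\in O(n)$ with $V^tXV$ block-diagonal, the blocks being $\begin{pmatrix}0&\alpha_k\\-\alpha_k&0\end{pmatrix}$ (plus a single $0$ if $n$ is odd), which after multiplication by $i$ is exactly $\Omega_n(\lambda)$. To upgrade $V$ to $U\in SO(n)=U_n(\R)$ when $\det V=-1$: in the odd case, I flip the sign of the kernel basis vector, which does not disturb the block decomposition. In the even case this option is not available, so I instead flip the sign of one column in the last $2\times 2$ block, which changes the sign of $\alpha_{\tilde n}$; this is why the even-$n$ chamber allows $\lambda_{\tilde n}$ to be negative. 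Uniqueness then amounts to showing that $\CC_n$ is a fundamental domain for the Weyl group acting on $\mathfrak U_n(\R)\cap\{\text{block-diagonal}\}$: type $B_{\tilde n}$ (all signs and permutations) in the odd case, which is why we require $\lambda_{\tilde n}\ge 0$; type $D_{\tilde n}$ (permutations and an \emph{even} number of sign changes) in the even case, accounting for the $|\lambda_{\tilde n}|$ condition.

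The only non-routine part is the even $\F=\R$ situation, where one must carefully track the determinant constraint and verify that the slightly asymmetric condition $\lambda_{\tilde n-1}\ge|\lambda_{\tilde n}|$ selects exactly one Weyl orbit representative. Everything else reduces to standard spectral-theoretic or canonical-form results, which is why the author describes the statement as well known.
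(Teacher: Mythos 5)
Your argument is correct, and it is exactly the standard one the paper has in mind: the paper states Lemma \ref{KAK} without proof (``well known and not difficult to prove''), and your case-by-case treatment — spectral theorem for $\F=\C$, the canonical form of real skew-symmetric matrices with the determinant fix for $SO(n)$, and the maximal-torus/Weyl-group fundamental-domain argument (type $C_n$, $B_{\tilde n}$, $D_{\tilde n}$) for uniqueness — is the intended route, including the correct handling of the only delicate point, the sign of $\lambda_{\tilde n}$ for $\F=\R$ with $n$ even (the paper's Remark \ref{remstrange}). No gaps worth flagging; at most one could note that uniqueness also uses the standard fact that $K$-conjugate elements of the Cartan subalgebra are Weyl-conjugate, which your framing via the Cartan decomposition already subsumes.
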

     This is the so called radial decomposition of $M$ in the flat symmetric space ${\mathcal P}_n(\F)$. We  see that in each case 
    $$\{kMk^*, k \in U_n(\mathbb{F})\} \cap \{\Omega_n(\mu), \mu \in \CC_n\}=\{\Omega_n(\lambda)\}.$$
    \begin{rem}\label{remstrange} The definition of $\CC_n$ when $\F=\R$ and $n$ is even may look strange. Actually, in this case both $\lambda_{\tilde n}$ and $-\lambda_{\tilde n}$ are eigenvalues, so $\lambda_1,\cdots,\lambda_{{\tilde n}-1}, |\lambda_{\tilde n}|$ is the set of positive eigenvalues. But one has to take this  $\CC_n$ to have the lemma above.    \end{rem}

   \subsection{Infinite invariant ensembles}
 We have defined  an invariant  random matrix (or invariant ensemble) in ${\mathcal P}_n(\F)$ as a random matrix with values in $\mathcal{P}_n(\F)$, whose distribution is invariant under conjugation by $U_n(\F)$.
   There are of course many such matrices. Actually it is well known that one has the following lemma.
   
\begin{lem} \label{invmeasure} A random matrix  $M$  with value in $\mathcal{P}_n(\F)$ is invariant if and only if it can be written as $M=U\Omega_n(\Lambda)U^*$, where $U\in U_n(\F)$ and $\Lambda\in {\CC_n}$ are independent random variables, $U$ having the Haar distribution.
\end{lem}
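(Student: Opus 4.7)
The statement has two directions, and only the converse requires real work.

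\emph{Sufficiency} is immediate: if $M = U\Omega_n(\Lambda)U^*$ with $U$ Haar on $U_n(\F)$ independent of $\Lambda$, then for every $V \in U_n(\F)$ one has $VMV^* = (VU)\Omega_n(\Lambda)(VU)^*$, and left-invariance of the Haar measure gives $VU \stackrel{d}{=} U$, still independent of $\Lambda$; hence $VMV^* \stackrel{d}{=} M$.

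For \emph{necessity}, let $M$ be invariant and set $\Lambda := X^{(n)}(M)$, which by Lemma \ref{KAK} is a measurable $\CC_n$-valued function of $M$. The first step is to produce a Borel map $K : \mathcal{P}_n(\F) \to U_n(\F)$ satisfying $M = K(M)\Omega_n(\Lambda)K(M)^*$ almost surely. Since $U_n(\F)$ is compact and acts continuously on $\mathcal{P}_n(\F)$ with closed orbits (the orbit of $\Omega_n(\mu)$ being the set of elements of $\mathcal{P}_n(\F)$ with radial part $\mu$), such a measurable selection is supplied by the Kuratowski--Ryll-Nardzewski selection theorem. This is really the only technical point in the argument.

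The remaining difficulty -- that $K(M)$ is determined only modulo the stabilizer of $\Omega_n(\Lambda)$, so one cannot hope that $K(M)$ itself is Haar-distributed -- is absorbed by a Haar-randomization trick. Enlarge the probability space to carry a Haar-distributed $V \in U_n(\F)$ independent of $M$, and set $U := VK(M)$. Conditionally on $M$, the variable $V$ remains Haar, and $K(M)$ is a constant, so by left-invariance $U$ is Haar conditionally on $M$; in particular $U$ is Haar marginally and independent of $M$, hence of $\Lambda$. Finally,
\[
U\Omega_n(\Lambda)U^* = V\bigl(K(M)\Omega_n(\Lambda)K(M)^*\bigr)V^* = VMV^* \stackrel{d}{=} M,
\]
by invariance of $M$, which gives the desired representation of $M$ in law. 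The only genuine obstacle is the measurable selection of $K$; everything after it is the standard Haar-invariance computation.
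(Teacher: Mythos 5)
Your proof is correct and follows essentially the same route as the paper: take the radial decomposition $M=K(M)\Omega_n(\Lambda)K(M)^*$, multiply by an independent Haar-distributed $V$, and use invariance of $M$ together with translation-invariance of the Haar measure to conclude that $M$ has the law of $U\Omega_n(\Lambda)U^*$ with $U$ Haar and independent of $\Lambda$. The only differences are cosmetic: you make explicit the measurable-selection step and the (trivial) sufficiency direction, both of which the paper leaves implicit.
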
 
\proof The lemma \ref{KAK} allows us to write $M=U\Omega_n(\Lambda)U^*$, with $U\in U_n(\F)$ and $\Lambda\in \CC_n$. Let $V\in U_n(\F)$ be a Haar distributed random variable independent of $M$. Then $M$ as the same distribution as $(VU)\Omega_n(\Lambda)(VU)^*$. The Haar measure being invariant by multiplication, this has the same law as $V\Omega_n(\Lambda)V^*$.   \qed\medskip
  
    Two important classes of invariant random matrices in ${\mathcal P}_n(\F)$ are to be distinguished. The first one is the class of ergodic measures. An invariant probability is called ergodic if it cannot be written as a barycenter of other invariant probabilities.
    On ${\mathcal P}_n(\F)$ the ergodic invariant measures are the  orbit measures, that is the law of $U\Omega_n(\lambda)U^*$ when $U$ has the Haar distribution and $\lambda$ is fixed in $\CC_n$. The second class is linked with Random Matrix Theory. Actually, in that case one is interested in a family $\nu_n$ of probability measures on ${\mathcal P}_n(\F)$ which forms a projective system as $n$ groes, and thus defines a probability measure $\nu$ on the set ${\mathcal P}_\infty(\F)$ of infinite Hermitian matrices. More precisely, for $\F=\R,\C$ or $\H$, let ${\mathcal P}_\infty(\F)$ be the set of matrices $\{M_{k,l}, 1\leq k, l <Ê+\infty\}$, with entries in $i\F$ such that $M_{l,k}=M_{k,l}^*$. For each $n\in \N$, $U_n(\F)$ acts on ${\mathcal P}_\infty(\F)$. 
A
probability measure on ${\mathcal P}_\infty(\F)$ is called invariant if it is invariant under the action of each $U_n(\F)$.

It is remarkable that, following Pickrell \cite{Pickrell} and Olshanski and Vershik \cite{OlshanskiVershik}, one can describe explicitly the set of invariant measures. As in De Finetti's description, each of this measure is obtained as a mixture of ergodic ones, and each ergodic one has a product structure : the diagonal elements form an i.i.d. sequence (see below). In order to describe them let us introduce the two basic ensembles on ${\mathcal P}_\infty(\F)$. 

Let us denote ${\mathcal M}_{n,m}(\F)$ the set of $n \times m$  matrices with entries in $\F$. It is a real vector space. 
We put on it the Euclidean structure defined by the scalar product, 
$$\langle M, N \rangle =a\Re \mbox{tr}(MN^*), \quad M,N\in {{\mathcal M}_{n,m}(\F)},$$
where $a=1$ for $\mathbb{F}=\mathbb{R}$, and  $a=2$  for $\mathbb{F}=\mathbb{C},\mathbb{H}$.
Recall that a standard Gaussian variable on a real Euclidean space with finite dimension $d$ is  a random variable with density $$x\mapsto (2\pi)^{-d/2} e^{-\langle x, x\rangle/2}.$$
Our choice of the Euclidean structure above defines a notion of standard Gaussian variable on  ${{\mathcal M}_{n,m}(\F)}$.   Taking $m=n=1$ this defines standard Gaussian variables in $\F$ itself. We equip   the real vector space ${\mathcal P}_n(\F)$ with the scalar product $$\langle M,N\rangle=b \, \mbox{tr}(MN), \quad M,N\in \mathcal{P}_n(\F),$$ where $b=1$ when $\F=\C$ and $b=1/2$ when $\F=\R,\H$, and thus  define a standard Gaussian variable on $\mathcal{P}_n(\F)$.

We have defined above, for each choice of $\F$, the matrix $\Omega_n(\lambda)$ for $\lambda \in \CC_n$. For $k \leqÊ\tilde n$, we let
\begin{align} \label{omegank}\Omega_n^k=\Omega_n(1,\cdots,1,0,\cdots 0)\end{align}
where $1$ appears $k$ times, and, when $1$ appears $\tilde n$ times,  we let \begin{align}\label{omegan} \Omega_n=\Omega_n^{\tilde{n}}=\Omega_n(1,...,1).\end{align}
\begin{defn}\label{def_LUE}For $\F=\R,\C$ or $\H$, and $k,n \in \N$, we define 

1. The ensemble GUE$_n(\F)$ as the set of matrices in ${\mathcal P}_n(\F)$ with the standard Gaussian distribution. 

2. The ensemble LUE$_{n,k}(\F)$ as the set of matrices  
$M\Omega_k M^*$
when $M$ is a standard Gaussian random variable in ${\mathcal M}_{n,k}(\F)$.  
\end{defn}
Notice that if the  matrices of the LUE$_{n,k}(\F)$ may look strange, their Fourier transform does not (recall that $\tilde k =k$ when $\F=\C,\H$ and $\tilde{k}=[k/2]$ when  $\F=\R$, and that $c=1$ when $\F=\C,\R$ and $c=2$ when $\F=\H$):
\begin{lem}\label{FourierLUE} Let $M$ be a standard Gaussian random variable in ${\mathcal M}_{n,k}(\F)$. Then the Fourier transform of $M\Omega_kM^*$ is given by
$$\E(e^{-i\langle N, M\Omega_kM^* \rangle})=  \det(I+\frac{i}{c}N)^{-{\tilde k}}, \quad N\in \PP_n(\F).$$
\end{lem}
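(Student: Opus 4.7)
The plan is a direct Gaussian-integral computation after reducing $N$ to standard form.

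First I would use conjugation-invariance. The scalar product on $\mathcal{M}_{n,k}(\F)$ is $U_n(\F)$-invariant under left multiplication, so $V^*M$ has the same law as $M$ for $V\in U_n(\F)$; combined with cyclicity of the trace inside $\langle\cdot,\cdot\rangle$, this gives $\E[e^{-i\langle N, M\Omega_k M^*\rangle}] = \E[e^{-i\langle V^*NV, M\Omega_k M^*\rangle}]$. By Lemma~\ref{KAK} applied in $\PP_n(\F)$, I can choose $V$ so that $V^*NV = \Omega_n(\mu)$ for some $\mu \in \CC_n$. Since $\det(I + \frac{i}{c}N)^{-\tilde k}$ is likewise conjugation-invariant, it suffices to prove the formula for $N = \Omega_n(\mu)$.

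Next, in each case the quadratic form $\langle N, M\Omega_k M^*\rangle$ decouples as a sum of independent pieces over columns (or pairs of columns) of $M$, and the Fourier transform factorises. For $\F = \C$ this is immediate: $\Omega_k = I_k$, $\langle N, MM^*\rangle = \sum_j m_j^* N m_j$, the columns $m_j$ are i.i.d.\ standard complex Gaussian vectors, and the one-line identity $\E[e^{-i\mu|z|^2}] = (1+i\mu)^{-1}$ produces $\det(I+iN)^{-k}$. For $\F = \R$ with $k = 2\tilde k$ even, a direct expansion gives
\[
M\Omega_k M^* = i\sum_{j=1}^{\tilde k}\bigl(m_{2j}m_{2j-1}^T - m_{2j-1}m_{2j}^T\bigr),
\]
and writing $N = iX$ with $X$ real antisymmetric yields $\langle N, M\Omega_k M^*\rangle = -\sum_j m_{2j-1}^T X m_{2j}$. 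Integrating $m_{2j-1}$ conditionally on $m_{2j}$ and then $m_{2j}$ gives a factor $\det(I - X^2)^{-1/2}$ per pair, which collapses to $\det(I + iN)^{-1}$ via $\det(I-X^2) = \det(I-X)\det(I+X) = \det(I+X)^2$ (the last equality using $\det(I-X)=\det(I+X)$ when $X^T = -X$). For $k$ odd, the final column of $M$ is annihilated by the zero block of $\Omega_k$, leaving exactly $\tilde k = \lfloor k/2\rfloor$ factors. For $\F = \H$, I would pass to the $2n\times 2k$ complex representation $\tilde M = \begin{pmatrix} A & B \\ -\bar B & \bar A\end{pmatrix}$ with $\Omega_k = \operatorname{diag}(I_k, -I_k)$ in a compatible ordering, diagonalise $N$ accordingly, and find that the form becomes $\sum_{i,j}\mu_i(|A_{ij}|^2 - |B_{ij}|^2)$; the Gaussian density $\propto e^{-2\sum(|A_{ij}|^2 + |B_{ij}|^2)}$ forces the entries of $A$ and $B$ to be independent complex Gaussians with $\E|z|^2 = 1/2$, so $\E[e^{-i\mu|z|^2}] = (1+i\mu/2)^{-1}$ and the result is $\prod_i(1+\mu_i^2/4)^{-k} = \det(I + iN/2)^{-k}$, consistent with $c = 2$.

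The main obstacle is bookkeeping rather than genuine computation: I must verify that the constants $a$ and $b$ appearing in the two Euclidean structures, the prefactor $c$, and the particular block forms of $\Omega_k$ and $N$ in each field all conspire to give the single unified formula $\det(I + \tfrac{i}{c}N)^{-\tilde k}$. The underlying Gaussian integrals are one-variable exercises. I expect the $\F = \H$ case to demand the most care because of the quaternion-to-complex dictionary and the interleaving conventions in the definition of $\Omega_k$, and the odd-$k$ subcase of $\F = \R$ to require the explicit observation that the unpaired last column is absorbed by the zero block of $\Omega_k$.
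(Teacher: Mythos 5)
Your proposal is correct and follows essentially the same route as the paper: reduce to $N=\Omega_n(\lambda)$ by invariance, then factor the characteristic function over independent Gaussian entries (or column pairs), with the same normalizations $\E|z|^2=1$ for $\C$ and $\E|z|^2=1/2$ for the complex entries in the $\H$ case. Your real case, which integrates conditionally column by column to get $\det(I-X^2)^{-1/2}$ per pair instead of the paper's scalar identity $\E e^{-i\alpha(ab-cd)}=(1+\alpha^2)^{-1}$, is only a cosmetic variant (note a harmless sign slip in your expansion of $M\Omega_kM^*$, which does not affect the law).
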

\proof As $M\Omega_kM^*$ is invariant, it is enough to prove the identity for $N=\Omega_n(\lambda)$, with $\lambda\in \CC_n$. When $\mathbb{F}=\mathbb{C}$, $$\langle N, M\Omega_k M^*\rangle= \sum_{i=1}^n\sum_{j=1}^k \lambda_i\vert M_{i,j}\vert^2,$$ where $M_{i,j}$ are independent standard Gaussian complex r.v.. We have, for all $\alpha\in \R$, $$\mathbb{E}( e^{-i\alpha\vert M_{1,1}\vert^2})=\frac{1}{1+i\alpha},$$
which gives the complex case. When $\mathbb{F}=\mathbb{H}$, $$\langle N, M\Omega_k M^*\rangle=\sum_{i=1}^n\sum_{j=1}^j \lambda_i(\vert a_{i,j}\vert^2-\vert b_{i,j}\vert^2),$$ where the matrices $\begin{pmatrix}
a_{i,j} & -\bar{b}_{i,j}\\
b_{i,j}&\bar{a}_{i,j}
\end{pmatrix}$ are independent standard Gaussian variables in $\mathbb{H}$. We have,  $$\mathbb{E}( e^{-i\alpha(\vert a_{1,1}\vert^2-\vert b_{1,1}\vert^2)})=\frac{1}{1+\big(\frac{\alpha}{2}\big)^2},$$
which gives the quaternionic case. When $\mathbb{F}=\mathbb{R}$, $$\langle N, M\Omega_k M^*\rangle= \sum_{i=1}^{\tilde n}\sum_{j=1}^{\tilde k}  \lambda_i(M_{2i,2j-1}M_{2i-1,2j} -M_{2i-1,2j-1}M_{2i,2j}),$$ where the $M_{i,j}$'s are independent standard real  Gaussian variables. We have $$\mathbb{E}( e^{-i\alpha(M_{2,1}M_{1,2} -M_{1,1}M_{2,2})})=\frac{1}{1+\alpha^2},$$
which gives the real case.  \qed\medskip

When $\F=\C$ we obtain the classical LUE, called the Laguerre Unitary or Complex Wishart ensemble, which is carried by  the cone of positive definite matrices. The situation is completely different for the fields $\R$ and $\H$: in these cases the Fourier transform
$$\det(I+\frac{i}{c}N)^{-{\tilde k}}=\det(I+\frac{N^2}{c^2})^{-{\tilde k/2}}$$
is real, and therefore the distribution of a random matrix of the LUE($\F$) is symmetric. Actually the support of $M\Omega_nM^*$ is the whole of $\mathcal{P}_n(\mathbb{F})$. Observe that in the cases when $\F=\H$ and $\F=\R$ with $n$ odd, all the invariant measures on $\mathcal{P}_n(\F)$ are symmetric.

Let us give a justification  for the introduction of these invariant ensembles.
We define the set LUE$_{\infty}^1(\F)$ as the set of matrices $M\Omega_\infty^1M^*$ with $M\in\mathcal{M}_{\infty}(\F)$ such that  the submatrices $\{M_{i,j}, \, i,j=1,\cdots, n\}$ are standard Gaussian variables in $\mathcal{M}_{n}(\F)$  and the set GUE$_{\infty}(\F)$ as the set of matrices $M\in\mathcal{P}_{\infty}(\F)$ such that the submatrices $\{M_{i,j}, \, i,j=1,\cdots, n\}$ are   standard Gaussian variables in $\mathcal{P}_{n}(\F)$. 
 A random matrix in ${\mathcal P}_\infty(\F)$ is called invariant if its law is invariant under the action of each $U_n(\F)$. As will be clear from the proof, the following theorem is essentially contained in Pickrell \cite{Pickrell}. It can be useful to notice that the intuition behind this result is the fact that limit of orbit measures are of this type, by Borel's theorem \ref{HaartoGauss} recalled below.

\begin{theo} \label{ergodicinfinity}
Each ergodic invariant random matrix $M$ in ${\mathcal P}_\infty(\F)$ is  sum of elements of  GUE$_\infty(\F)$ and LUE$_{\infty}^1(\F)$: it can be written as $$M=aI+bG+\sum_{k=1}^{+\infty}d_kL_k$$
where $G$ belongs to GUE$_\infty(\F)$, $L_k$ belongs to LUE$_{\infty}^1(\F)$, the random variables $G,L_1,L_2\cdots$ are independent, and $a,b, d_k$ are constants such that $\sum d_k^2 < +\infty$, $I$ is the identity matrix. Moreover $a=0$ when $\F=\R$ and $\F=\H$.
\end{theo}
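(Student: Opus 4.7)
The plan is to combine Pickrell's classification of ergodic invariant probability measures on $\mathcal{P}_\infty(\F)$ \cite{Pickrell} with Borel's Theorem \ref{HaartoGauss}, which approximates the Haar measure on a large adjoint orbit by a Gaussian ensemble on any fixed corner. Pickrell essentially supplies the statement in the complex case; what remains is to recast the canonical form in terms of our building blocks GUE$_\infty(\F)$ and LUE$_\infty^1(\F)$, and to adapt the argument to $\F = \R, \H$.

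First, by the Choquet ergodic decomposition, every invariant probability on $\mathcal{P}_\infty(\F)$ is a mixture of ergodic ones, so it suffices to describe the extreme points of this convex set. For any invariant measure the law of each corner $M^{(n)}$ is $U_n(\F)$-invariant, hence determined by Lemma \ref{invmeasure} through the distribution of its radial part $\lambda^{(n)} = X^{(n)}(M^{(n)}) \in \CC_n$. The sequence $(\lambda^{(n)})_{n \geq 1}$ is constrained by Cauchy-type interlacing between consecutive levels; Pickrell's key analytic input is that the extreme points of the convex set of such interlaced laws are exactly the deterministic sequences parametrized by three asymptotic invariants: a shift $a = \lim_n \frac{1}{n} \operatorname{tr} M^{(n)}$, a sequence of ``escaping'' extremal eigenvalues $(d_k)$ with $\sum d_k^2 < \infty$, and a residual bulk parameter $b$ controlling the second moment once the $d_k$'s are removed.

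Second, I would identify each asymptotic invariant with a concrete component by applying Borel's Theorem \ref{HaartoGauss}. The constant part of the spectrum contributes the deterministic shift $aI$; the bulk part, spreading on a scale $\sqrt{n}$, is carried by Borel's theorem on a fixed $k \times k$ corner onto a standard Gaussian on $\mathcal{P}_k(\F)$, giving the term $bG$ with $G \in$ GUE$_\infty(\F)$; and each escaping eigenvalue of value $d_k$ corresponds to a rank-one orbit $U\Omega_n^1 U^*$ which, after restriction to a fixed corner and passage to the limit, becomes the law of $M\Omega_\infty^1 M^*$ with $M$ a semi-infinite Gaussian column, i.e., an element of LUE$_\infty^1(\F)$. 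Independence of $G, L_1, L_2, \ldots$ follows from the asymptotic independence of mutually orthogonal eigenspaces of a Haar-distributed unitary, again supplied by Borel's theorem applied to orthogonal blocks.

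Finally, the assertion $a = 0$ when $\F = \R, \H$ is structural: since $\mathcal{P}_n(\F) = i \mathfrak{U}_n(\F)$, the matrix $aI$ lies in $\mathcal{P}_n(\F)$ iff $-iaI \in \mathfrak{U}_n(\F)$, that is, iff $(-iaI) + (-iaI)^* = 0$; this holds trivially when $\F = \C$ but forces $a = 0$ in the real skew-symmetric or quaternionic skew-Hermitian cases. The main obstacle is transferring Pickrell's argument, originally written for $\F = \C$, to the remaining two fields: one must verify that the interlacing analysis, the coherence of spectral limits, and the explicit computation via Borel's theorem all survive the modification of the radial decomposition, with particular care for the unusual Weyl chamber of Remark \ref{remstrange} when $\F = \R$ and $n$ is even, and for the doubling of eigenvalues in the quaternionic case.
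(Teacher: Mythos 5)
Your proposal correctly locates the two external inputs (Pickrell's work for $\F=\C$ and Borel's Theorem \ref{HaartoGauss}), but it defers exactly the part of the theorem that has to be proved. You write that "the main obstacle is transferring Pickrell's argument \dots to the remaining two fields" and leave that transfer as a verification to be done; this is the substance of the statement, not a routine check. The paper does not redo any interlacing or boundary analysis for $\R$ and $\H$. It works on the Fourier side: the pair $({U}^{(\infty)}(\F)\times_\sigma{\mathcal P}^{(\infty)}(\F),U^{(\infty)}(\F))$ is an Olshanski spherical pair, so by the multiplicativity theorem of Voiculescu--Olshanski a measure is ergodic iff its Fourier transform factors as $\psi(\Omega(\lambda_1,\lambda_2,\dots))=\phi(\lambda_1)\phi(\lambda_2)\cdots$; the entrywise embeddings $f:\PP^{(\infty)}(\C)\to\PP^{(\infty)}(\R)$ and $\tilde f:\PP^{(\infty)}(\C)\to\PP^{(\infty)}(\H)$ satisfy $f\circ\Omega=\Omega$, so $\psi\circ f$ is again multiplicative and the same one-variable function $\phi$ must be of Pickrell's complex form (\ref{psidef}); finally, invariance under a reflection in $SO(3)$ (resp.\ the sign symmetry coming from $U_n(\H)$) forces $\phi(t)=\phi(-t)$, hence $\phi(t)=e^{-bt^2}\prod_k(1+(d_kt)^2)^{-1}$, which by Lemma \ref{FourierLUE} is precisely the Fourier transform of $bG+\sum_k d_kL_k$. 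None of this mechanism appears in your sketch, and without it you never derive the specifically real/quaternionic form of the blocks: why there is no drift, why no compensating factors $e^{id_kt}$ survive, and why the rank-one pieces are the symmetric LUE$(\R)$/LUE$(\H)$ rather than complex-type factors.

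Two further points. First, your identification of the limit components and their independence "from the asymptotic independence of mutually orthogonal eigenspaces of a Haar-distributed unitary" is a heuristic, not an argument; the paper explicitly presents Borel's theorem as the intuition behind the result, while the actual product structure (hence independence of $G,L_1,L_2,\dots$) is read off from the multiplicative form of the Fourier transform. Second, your justification of $a=0$ is off: for real $a$ one has $(-iaI)+(-iaI)^*=0$ automatically, so that condition excludes nothing; what fails for $a\neq 0$ is that $-iaI$ does not have entries in $\R$ (resp.\ is not of quaternionic block form). Even corrected, this is only a necessary-condition remark that presupposes the decomposition; in the paper $a=0$ falls out of the same evenness of $\phi$ that produces the symmetric Laguerre factors.
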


\proof The proof will use Olshanski spherical pairs, see Olshanski \cite{Olshanski1,Olshanski2} or Faraut \cite{Faraut}. Given a topological group $G$ and a closed subgroup $K$, one says that $(G,K)$ is an Olshanski spherical pair if for each irreducible unitary representation $\pi$ of $G$ in an Hilbert space $H$, the space $\{u\in H; \pi(k)u=u, \mbox{ for all } k\in K\}$ is zero or one dimensional. 
For instance, an inductive limit of Gelfand pairs is an Olshanski pair.

The dual of the vector space ${\mathcal P}_\infty(\F)$ is the inductive limit of the ${\mathcal P}_n(\F)$'s, which is the set ${\mathcal P}^{(\infty)}(\F)$ of matrices $M$ in ${\mathcal P}_\infty(\F)$ such that $M_{i,j}=0$ for $i+j $ large enough. Each $U_n(\F)$ acts on ${\mathcal P}^{(\infty)}(\F)$ as on ${\mathcal P}_\infty(\F)$.
Let $U^{(\infty)}(\F)$ be the group inductive limit of the $U_n(\F)$'s. Recall the radial decomposition $\PP_n(\F)=\{U\Omega_n(\lambda)U^*, U\in U_n(\F), \lambda \in \CC_n\}$. Let $\lambda=(\lambda_1,\lambda_2,\cdots) $ be  an infinite  sequence of real numbers, with $\lambda_k=0$ for $k$ large enough. In this case, we write $\Omega(\lambda)$ instead of $\Omega_\infty(\lambda)$. Notice that each matrix of ${\mathcal P}^{(\infty)}(\F)$ can be written as $U\Omega(\lambda)U^*$ for an $U\in U^{(\infty)}(\F)$ and such a $\lambda$. 

As inductive limit of Gelfand pairs  $({U}^{(\infty)} (\F)\times_{\sigma}{\mathcal P}^{(\infty)}(\F) ,  {U}^{(\infty)}(\F))$ is an Olshanski pair. 
Therefore, by the so-called  multiplicative property of Voiculescu  and Olshanski (see Olshanski \cite{Olshanski2}, Pickrell \cite{Pickrell}) an invariant probability measure $\nu$ on ${\mathcal P}_\infty(\F)$ is ergodic if and only if its Fourier transform $\psi$ on ${\mathcal P}^{(\infty)}(\F)$ is a positive definite invariant function such that, for some function $\phi:\R\to \C$,
$$\psi(\Omega(\lambda_1,\lambda_2,\dots))=\phi(\lambda_1)\phi(\lambda_2)\cdots$$
for all $\lambda$ as above. 
When $\F=\C$ it is proved in Pickrell \cite{Pickrell} (see also  Olshanski and Vershik \cite{OlshanskiVershik}) that there exist unique real numbers $a,b \geq 0$ and $d_k,k \geq1$, such that for all $t \in \R$,
\begin{equation}\label{psidef}
\phi(t)=e^{ia t}e^{-bt^2}\prod_{k=1}^\infty [(1+id_kt)e^{id_kt}]^{-1}.\end{equation}
 Therefore the theorem holds when $\F=\C$.
We now consider the case where $\F=\R$, 
following  an idea in Pickrell \cite{Pickrell}. To any complex matrix $M\in \PP^{(\infty)}(\C)$ we associate a  matrix $f(M)\in \PP^{(\infty)}(\R)$ by replacing each entry $m=x+iy$,  $x, y\in \R$ of $M$ by the $2\times 2$ matrix 
$$ \tilde m=\begin{pmatrix}
iy& ix  \\
-ix  & iy
\end{pmatrix}.$$
For all $\lambda=(\lambda_1,\lambda_2,\cdots)$, 
$f(\Omega(\lambda))=\Omega(\lambda)$
where $\Omega$ is, on  the left hand side, the one defined for $\F=\C$ and  on the right hand side the one defined for $\F=\R$.

Consider an ergodic invariant probability measure on ${\mathcal P}_\infty(\R)$ and let $\psi$ be its Fourier transform defined on ${\mathcal P}^{(\infty)}(\R)$. Then $\psi$ is invariant and  positive definite  and by the multiplicativity theorem there exists, as above,  a function $ \phi:\R\to \C$ such that 
$$\psi(\Omega(\lambda_1,\lambda_2,\dots))=\phi(\lambda_1)\phi(\lambda_2)\cdots$$
The function $\psi \circ f$ on ${\mathcal P}^{(\infty)}(\C)$ is obviously positive definite and invariant. Moreover, since $f\circ \Omega=\Omega$ one has 
$$(\psi \circ f)(\Omega(\lambda_1,\lambda_2,\cdots))=\phi(\lambda_1)\phi(\lambda_2)\cdots$$
Therefore by the sufficient condition of the multiplicativity theorem, $ \psi \circ f$ is the Fourier transform of an ergodic invariant probability measure on ${\mathcal P}_\infty(\C)$. Thus $\phi $ can be written as (\ref{psidef}) above. Moreover, the function $\psi$ is invariant under the groups $U_n(\R)=SO(n)$.  Using the adequate reflection in $SO(3)$, we see that 
$$\psi(\Omega(\lambda_1,\lambda_2,\cdots))=\psi(\Omega(-\lambda_1,\lambda_2,\cdots)).$$
Therefore, for all $t\in \R$, 
$\phi(t)=\phi(-t)$ which implies by uniqueness that, 
$$\phi(t)=e^{-bt^2}\prod_{k=1}^\infty [(1+(d_kt)^2)]^{-1}.$$
Using the expression of the Fourier transform given in Lemma \ref{FourierLUE}, we obtain the theorem in the case $\F=\R$.
When $\F=\H$ the proof is similar: one uses the map $\tilde f:\PP^{(\infty)}(\C)\to \PP^{(\infty)}(\H)$ defined in the following way. First, we define $\tilde f_n:\PP_n(\C)\to \PP_n(\H)$, by when $M\in \PP_n(\C)$ and  $W$ is given by (\ref{eqW}),
$$\tilde  f_n(M)=
   W
\begin{pmatrix}M& 0  \\
0 & - \bar M 
\end{pmatrix} 
W^* \in \PP_n(\H).$$   For $M\in \mathcal{P}^{(\infty)}(\H)$, let $\pi_n(M)$ be its main minor of order $n$. The fact that $\pi_n(\tilde{f}_{n+1}(\pi_{n+1}(M)))=\tilde{f}_n(\pi_n(M))$ allows us to define $\tilde f: M\in\PP^{(\infty)}(\C)\to\tilde f (M)\in \PP^{(\infty)}(\H)$ by $\pi_n(\tilde f(M))=\tilde f_n(\pi_n(M))$. We also have $\tilde  f \circ \Omega= \Omega$. The symmetry $\lambda\mapsto -\lambda$ given by the action of  $U_n(\H)$ allows us to conclude as  when $\mathbb{F}=\mathbb{R}$.

  \subsection{Symmetry classes and some applications}\label{subappli}

Let us recall  the three main historical steps in the description of the ten symmetry classes, i.e.\ series of classical symmetric spaces, in physical applications of RMT, see   Atland and Zirnbauer \cite{Altland}, Caselle and Magnea \cite{Caselle}, Forrester \cite{ForresterBook}, Heinzner, Huckleberry, and  Zirnbauer \cite{Heinzner}. We refer to the
symmetry classes by Cartan's symbol for the symmetric
space corresponding to their Hamiltonians. The first step is the introduction of the "threefold way" by Dyson \cite{Dyson} in 1962 where are defined the GUE (class A), 
the GOE (class AI) and the GSE (class AII), often called the Wigner--Dyson classes. They describe for instance single particle excitations in the presence of a random potential. In the 90's, Altland and Zirnbauer \cite{Altland} have defined the classes  BD, C, DIII and CI to describe mesoscopic normal-superconducting hybrid structures: for instance a normalconducting quantum dots in contact with two superconducting regions. They are sometimes called the Altland--Zirnbauer,  or Bogoliubov--de Gennes, or Superconductor classes. At least chiral classes AIII (LUE), BDI (LOE) and CII (LSE) were introduced  by Verbaarschot  \cite{Verbaarschot} to describe Dirac fermions or systems with purely off-diagonal disorder, as in random flux models. The explicit description of the distribution of the eigenvalues in all these classes is given for instance in Forrester \cite{ForresterBook} or in Eichelsbacher and Stolz \cite{Eichelsbacher}.

In our ensembles $\beta=2$, and thus only the classes A,B,C,D,AIII occur. 
Let us for instance recall rapidly how the new classes C and D appear in quantum mechanics. Dynamics of the systems of the Wigner Dyson class is given in term of second quantization. For the superconductor classes, one convert this set up into  first quantization by using the Bogoliubov--de Gennes Hamiltonian. As explained in Atland and Zirnbauer \cite{Altland}, this Hamiltonian acts on a $2n$-dimensional Hilbert space by a complex Hermitian matrix ${\mathcal H}$ which can be written as 
$${\mathcal H}=\begin{pmatrix}H& \Delta  \\
-\bar \Delta & -H^t
\end{pmatrix}$$
where $H$ and $\Delta$ are $n\times n$ matrices. 
Let $U_0$ be the $2n\times 2n$ unitary matrix, block diagonal with each diagonal block equal to
$$u_0=\frac{1}{\sqrt{2}}\begin{pmatrix}1&1  \\
i  & -i
\end{pmatrix},$$
in other words $U_0=u_0\otimes I_n$. Then $X=U_0{\mathcal H}U_0^{-1}$ is in $\PP_{2n}(\R)$ and each matrix in $\PP_{2n}(\R)$ is of this form. This shows that $\PP_{2n}(\R)$ is a parametrization of the class $D$ of Altland and Zirnbauer.

If we add spin rotation invariance the BdG Hamiltonian can be written (see \cite{Altland}) as two commuting subblocks of the form
$${\mathcal H}=\begin{pmatrix}H_1& H_2  \\
H_2^* & -\bar H_1
\end{pmatrix}.$$
This is the class $C$ of Altland Zirnbauer. As seen above,  $\PP_n(\H)$  describes this set.
Notice also that the GUE($\R$), or equivalently the antisymmetric case,  was already studied  as soon as 1968 by Rozenbaum and Mehta in \cite{Mehta_Roz}. Recently it also occurs for instance in Cardy \cite{Cardy} and Brezin et al. \cite{Brezin, Brezin2} for instance.

When $\F=\R$ and $\H$, the eigenvalues of the matrices in $\PP_n(\F)$ come in pairs symmetric with respect to the origin (this is sometimes linked with Kramers degeneracy). So in a sense there is a presence of a wall at $0$. This often explains their occurences in applications, see for instance  Krattenthaler et al.\ \cite{Krattenthaler}, Katori et al.\ \cite{Katori1,Katori2}, Gillet \cite{Gillet}, Forrester and Nordenstam \cite{ForresterNordenstam}.
The LUE($\C$) is in a chiral class, but not the LUE($\R$) nor the LUE($\H$) which appear to be new and for which, we are not aware of any physical application.

\section{Minors and Gelfand--Tsetlin polytopes}
In this section, we compute the joint distribution of the main minors of invariant random matrices in $\mathcal{P}_n(\mathbb{F})$.  For $M=\{M_{i,j}, \, 1\le i,j\le n\}$ in $\mathcal{P}_n(\mathbb{F})$ and $k \leq n$, the main minor of order $k$ of $M$, is the submatrix
$$\pi_k(M)=\{M_{ij}, \, 1\le i,j\le k\}$$
(this is not the standard definition of a minor: usually it is the determinant of a submatrix of $M$, and not the submatrix itself). The main minor of order $k$ of $M$ belongs to $\mathcal{P}_k(\mathbb{F})$, so we can consider its radial part denoted $X^{(k)}(M)$.  Considering the radial parts of all the main minors of an invariant random matrix in $\mathcal{P}_n(\mathbb{F})$, we get  a random variable, 
$$X(M)=(X^{(1)}(M),\cdots,X^{(n)}(M)),$$ which is, when $\mathbb{F}$ is equal to $\mathbb{C}$, and $M\in \PP_n(\C)$ is a matrix from the GUE, the one introduced by Baryshnikov in relation with queuing theory in \cite{Baryshnikov}, and called the minor process by Johansson and Nordenstam in \cite{JohanssonNordenstam}. The main result of this section is stated at theorem \ref{theoGT}. It claims that  for any $\mathbb{F}$, the minor process associated to an invariant random matrix with a fixed radial part, is distributed according to the uniform law, or a projection of it when $\mathbb{F}=\mathbb{H}$, on a so called Gelfand--Tsetlin polytope. 

Our proofs rest on results given from sections 7 to 10, which involve elements of representation theory of compact Lie groups. In this section, our statements are made without any reference to this theory and  most of the proofs are postponed up to the section 10. 

When $M$ is a complex Hermitian matrix, Rayleigh's theorem claims that if $\lambda\in \mathbb{R}^n$  is the vector of the ordered eigenvalues of $M$ and if  $\beta\in \mathbb{R}^{n-1}$ is the one of its main minor $\pi_{n-1}(M)$ of order $n-1$, then $\lambda$ and $\beta$ satisfy interlacing conditions $\lambda_i\ge \beta_i\ge \lambda_{i+1}$, $i=1,\cdots,n-1$. Obviously this result also holds when $M$ belongs to $\mathcal{P}_n(\R)$ and $\mathcal{P}_n(\H)$, these sets being  subsets of complex Hermitian matrices. Thus for $\mathbb{F}=\mathbb{C},\mathbb{R}$, one obtain easily that $X(M)$ belongs to the so called Gelfand--Tsetlin polytopes, that we define below. We will see after these definitions what happens for $\mathbb{F}=\mathbb{H}$. For $x, y\in \mathbb{R}^{n}$ we write $x \succeq y$ if $x$ and $y$ are interlaced, i.e.\@
$$x_{1}\ge y_1\ge x_2 \ge \cdots\ge x_n \ge y_n  $$ and we write $x \succ y$ when
 $$ x_{1}> y_1> x_2 > \cdots> x_n> y_n. $$
When $x\in \mathbb{R}^{n+1}$ and $y\in\mathbb{R}^n$ we add the relation $y_n \geq x_{n+1}$ (resp. $y_n> x_{n+1}$).  We  denote $\vert x\vert$ the vector  whose components are  the absolute values of those of $x$. 
\begin{defn} Let $\lambda$ be in the Weyl chamber $\CC_n$. The Gelfand--Tsetlin polytope $GT_n(\lambda)$ is defined by  :
\begin{itemize}
\item
when $\mathbb{F}=\mathbb{C}$,
$$GT_n(\lambda)=\{(x^{(1)}, \cdots,x^{(n)}) : x^{(n)}=\lambda, x^{(k)}\in \mathbb{R}^{k}, \, x^{(k)}\succeq x^{(k-1)}, 1\leq k \leq n\},$$
\item
when $\mathbb{F}=\mathbb{H}$,
\begin{eqnarray*}GT_n(\lambda)&=&\{(x^{(\frac{1}{2})}, x^{(1)}, x^{(\frac{3}{2})},\cdots,x^{(n-\frac{1}{2})},x^{(n)}): x^{(n)}=\lambda, \\ &&\hskip 30pt  x^{(k)},\, x^{(k-\frac{1}{2})}\in \mathbb{R}_+^{k}, \, x^{(k)}\succeq x^{(k-\frac{1}{2})}\succeq x^{(k-1)}, 1\leq k \leq n\},\end{eqnarray*}
\item
when $\mathbb{F}=\mathbb{R}$,
\begin{eqnarray*}GT_n(\lambda)&=&\{(x^{(1)},\cdots, x^{(n)}): x^{(n)}=\lambda, x^{(k)}\in \mathbb{R}_+^{i-1}\times \mathbb{R}\mbox{ when } k=2i,\\ &&\hskip -20pt  x^{(1)}= 0,  \, x^{(k)}\in \mathbb{R}_+^{i} \mbox{ when } k=2i+1,\, \vert x^{(k)} \vert \succeq \vert  x^{(k-1)} \vert, 1\leq k \leq n\}.\end{eqnarray*}
 \end{itemize}
\end{defn} 
If $M$ is a matrix in $\mathcal{P}_n(\mathbb{H})$ such that $X^{(n)}(M)=\lambda$, then $X(M)$ belongs to the image of  $GT_n(\lambda)$ by the map $(x^{(\frac{1}{2})},x^{(1)},\cdots,x^{(n)})\in GT_n(\lambda) \mapsto (x^{(1)},x^{(2)},\cdots,x^{(n)})$. To prove it, we can consider for instance, for $r=1,\cdots, n$, the vector $X^{(r-\frac{1}{2})}(M)\in \mathbb{R}^r$ whose components are the ordered absolute values of the $r$ largest eigenvalues of the main minor of order $2r-1$ of $M$ considered  as a matrix from $\mathcal{P}_{2n}(\mathbb{C})$. Then Rayleigh's theorem implies that $$(X^{(\frac{1}{2})}(M),X^{(1)}(M),\cdots,X^{(n-\frac{1}{2})}(M),X^{(n)}(M))$$ belongs to the Gelfand--Tsetlin polytope $GT_n(\lambda)$ of type $\mathbb{H}$, which gives the announced  property.

Usually, an element $x$ of a Gelfand--Tsetlin polytope, is represented by a triangular array,  called Gelfand--Tsetlin array, as indicated from figures \ref{coneA} to \ref{coneD}.

\begin{figure}[h!]
\begin{pspicture}(2,3.5)(9,0)
 \put(0.75,0){$x^{(n)}_{1}$}  \put(2.25,0){$x^{(n)}_{2}$}  \put(3.75,0){$\cdots$}  \put(6.75,0){$\cdots$}  \put(8.25,0){$x^{(n)}_{n-1}$}  \put(9.75,0){$x^{(n)}_{n}$}
 
 \put(1.5,0.5){$x^{(n-1)}_{1}$}  \put(3,0.5){$x^{(n-1)}_{2}$}  \put(4.5,0.5){$\cdots$}  \put(6,0.5){$\cdots$}  \put(7.5,0.5){$x^{(n-1)}_{n-2}$}  \put(9,0.5){$x^{(n-1)}_{n-1}$}
 
  \put(5.25,1.25){$\cdots$}  
       \put(3.75,2){$x^{(3)}_{1}$}       \put(5.25,2){$x^{(3)}_{2}$}   \put(6.75,2){$x^{(3)}_{3}$}    
      \put(4.5,2.5){$x^{(2)}_{1}$}       \put(6,2.5){$x^{(2)}_{2}$} 
    \put(5.25,3){$x^{(1)}_{1}$}    

\end{pspicture}
  \caption{A Gelfand--Tsetlin array  for $\mathbb{F}=\mathbb{C}$}
  \label{coneA}
\end{figure} 

\begin{figure}[h!]
\begin{pspicture}(2,3.2)(9,-0.3)

 \put(0.25,-0.5){$x^{(n)}_{1}$}  \put(2,-0.5){$\cdots$}  \put(4,-0.5){$x^{(n)}_{n}$} \put(5.35,-0.5){$0$}    \put(6.25,-0.5){$-x^{(n)}_{n}$}  \put(8.5,-0.5){$\cdots $}  \put(10.25,-0.5){$-x^{(n)}_{1}$}
 
 \put(1,0){$x^{(n-\frac{1}{2})}_{1}$}  \put(2.75,0){$\cdots$}  \put(4.5,0){$x^{(n-\frac{1}{2})}_{n}$}  \put(5.75,0){$-x^{(n-\frac{1}{2})}_{n}$}  \put(7.75,0){$\cdots $}  \put(9.5,0){$-x^{(n-\frac{1}{2})}_{1}$}
 \put(4.25,0.5){$\cdots$}  \put(6,0.5){$\cdots$} 
   \put(2.25,1){$x^{(2)}_{1}$}     \put(3.75,1){$x^{(2)}_{2}$}     \put(5.35,1){$0$}   \put(6.5,1){$-x^{(2)}_{2}$}   \put(8,1){$-x^{(2)}_{1}$}  
     \put(3,1.5){$x_{1}^{(\frac{3}{2})}$}  \put(4.5,1.5){$x_{2}^{(\frac{3}{2})}$} \put(5.75,1.5){$-x_{2}^{(\frac{3}{2})}$} \put(7.125,1.5){$-x_{1}^{(\frac{3}{2})}$} 
       \put(3.75,2){$x^{(1)}_{1}$}       \put(5.35,2){$0$}   \put(6.5,2){$-x^{(1)}_{1}$}    
      \put(4.5,2.5){$x^{(\frac{1}{2})}_{1}$}       \put(5.75,2.5){$-x^{(\frac{1}{2})}_{1}$} 
    \put(5.35,3){$0$}    

\end{pspicture}
  \caption{A Gelfand--Tsetlin array  for $\mathbb{F}=\mathbb{H}$}
  \label{coneC}
\end{figure}

\begin{figure}[h!]
\begin{pspicture}(2,4)(9,0.25)
 \put(1,0){$x^{(n)}_{1}$}  \put(2.5,0){$\cdots $}\put(4.5,0){$x^{(n)}_{\tilde n}$}   \put(5.75,0){$-x^{(n)}_{\tilde n}$}  \put(8,0){$\cdots $}  \put(9.5,0){$-x^{(n)}_{1}$}

 \put(1.75,0.5){$x^{(n-1)}_{1}$}  \put(2.75,0.5){$\cdots $}\put(3.75,0.5){$x^{(n-1)}_{\tilde n-1}$}  \put(5.25,0.5){$x^{(n-1)}_{\tilde n}$}   \put(6.5,0.5){$-x^{(n-1)}_{\tilde n-1}$}  \put(7.75,0.5){$\cdots $}  \put(8.75,0.5){$-x^{(n-1)}_{1}$}
 
 \put(4.5,1){$\cdots$}  \put(6,1){$\cdots$} 
     \put(3,1.5){$x_{1}^{(5)}$}  \put(4.5,1.5){$x_{2}^{(5)}$} \put(5.75,1.5){$-x_{2}^{(5)}$} \put(7.125,1.5){$-x_{1}^{(5)}$} 
       \put(3.75,2){$x^{(4)}_{1}$}       \put(5.25,2){$x^{(4)}_{2}$}   \put(6.5,2){$-x^{(4)}_{1}$}    
      \put(4.5,2.5){$x^{(3)}_{1}$}       \put(5.75,2.5){$-x^{(3)}_{1}$} 
    \put(5.25,3){$x^{(2)}_{1}$}    

\end{pspicture}
  \caption{A Gelfand--Tsetlin array  for $\mathbb{F}=\mathbb{R}$, $n$ odd}
  \label{coneB}
\end{figure} 

\begin{figure}[h!]
\begin{pspicture}(2,4)(9,0.25)

 \put(1,0){$x^{(n)}_{1}$}  \put(2.5,0){$\cdots $}       \put(3.75,0){$x^{(n)}_{\tilde n-1}$}       \put(5.25,0){$x^{(n)}_{\tilde  n}$}   \put(6.5,0){$-x^{(n)}_{\tilde  n-1}$}      \put(8.25,0){$\cdots $}  \put(9.5,0){$-x^{(n)}_{1}$}
 
 \put(1.75,0.5){$x^{(n-1)}_{1}$}  \put(3,0.5){$\cdots $}       \put(4.5,0.5){$x^{(n-1)}_{\tilde  n-1}$}       \put(5.75,0.5){$-x^{(n-1)}_{\tilde n-1}$}      \put(7.5,0.5){$\cdots $}  \put(8.75,0.5){$-x^{(n-1)}_{1}$}
 \put(4.5,1){$\cdots$}  \put(6,1){$\cdots$} 
     \put(3,1.5){$x_{1}^{(5)}$}  \put(4.5,1.5){$x_{2}^{(5)}$} \put(5.75,1.5){$-x_{2}^{(5)}$} \put(7.125,1.5){$-x_{1}^{(5)}$} 
       \put(3.75,2){$x^{(4)}_{1}$}       \put(5.25,2){$x^{(4)}_{2}$}   \put(6.5,2){$-x^{(4)}_{1}$}    
      \put(4.5,2.5){$x^{(3)}_{1}$}       \put(5.75,2.5){$-x^{(3)}_{1}$} 
    \put(5.25,3){$x^{(2)}_{1}$}    

\end{pspicture}
  \caption{A Gelfand--Tsetlin array  for $\mathbb{F}=\mathbb{R}$, $n$ even}
  \label{coneD}
\end{figure} 

Let us say what is meant by the uniform measure on a Gelfand--Tsetlin polytope. It is a  bounded convex set of a real vector space. As usual, we define the volume of a bounded convex set $C$  as its measure according to the Lebesgue measure on the real affine subspace that it spans. We denote it $\text{vol}(C)$. We define the Lebesgue measure on $C$ as this Lebesgue measure  restricted to $C$ and the uniform probability measure on $C$ as the normalized Lebesgue measure on $C$.  

Let $M\in \mathcal{P}_{n}(\mathbb{F})$ be an invariant random matrix. The vector $X(M)$ is a random variable with values in $GT_n=\cup_{\lambda\in \CC_n} GT_n(\lambda)$. We will show that the law of $X(M)$ involves uniform probability measures on Gelfand--Tsetlin polytopes. 

\begin{defn} \label{mulambda} For $\lambda$ in the Weyl chamber $\CC_n$, we let $\mu_\lambda $ be the image of the uniform probability measure on $GT_n(\lambda)$ by the map $p_{n-1}: x\in GT_n(\lambda) \mapsto x^{(n-1)}\in \mathcal{C}_{n-1}$. 
\end{defn}

We observe from figures \ref{coneA} to \ref{coneD} that Gelfand--Tsetlin polytopes can be defined recursively. Thus  the uniform measure on $GT_n(\lambda)$, denoted $m_{GT_n(\lambda)}$, satisfies the remarkable identity 
\begin{align} \label{desintegration} m_{GT_n(\lambda)}=\int m_{GT_{n-1}(\beta)}\, \mu_\lambda(d\beta),\end{align}
which explains why we first focus on the measures $\mu_\lambda$, $\lambda\in \mathcal{C}_n$. The following lemma is proved at paragraph  \ref{prooflemGT}. The matrix  $\Omega_n(\lambda)$ considered in this lemma is defined in $2.2$.
 \begin{lem} \label{lemGT} Let  $\lambda$ be in the Weyl chamber $ \mathcal{C}_n$ and $U\in U_n(\mathbb{F})$ be a  Haar distributed random variable. Then the distribution of the radial part of the main minor of order $n-1$ of $U\Omega_n(\lambda)U^*$ is $\mu_\lambda$.
\end{lem}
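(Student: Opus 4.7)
My plan is to identify the radial part of the main minor with the projection of an orbit measure and then appeal to the representation-theoretic machinery developed in sections 7--9.

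First, I would reformulate the statement geometrically. The distribution of $U\Omega_n(\lambda)U^*$ for Haar-distributed $U$ is exactly the orbit measure on the adjoint orbit through $\Omega_n(\lambda)$ in $\mathcal{P}_n(\mathbb{F})$. Under the identification $\mathcal{P}_n(\mathbb{F})=i\mathfrak{U}_n(\mathbb{F})$ and its self-duality via the invariant scalar product, the map $M\mapsto \pi_{n-1}(M)$ is the restriction dual to the Lie algebra inclusion $\mathfrak{U}_{n-1}(\mathbb{F})\hookrightarrow \mathfrak{U}_n(\mathbb{F})$. Therefore $X^{(n-1)}(\pi_{n-1}(U\Omega_n(\lambda)U^*))$ is precisely the $U_{n-1}(\mathbb{F})$-radial part of the projection of the orbit measure of $\Omega_n(\lambda)$ onto the smaller Lie algebra. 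The lemma is thus equivalent to computing this projected, radial-part distribution.

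Next I invoke the variant of Heckman's theorem recalled in section 7 (in particular the projection version in 7.3), which identifies the $K$-radial part of the projection of an orbit measure $\mathcal{O}_\lambda$ of $G$ onto $\mathfrak{k}^*$ as the semiclassical limit (under dilation $\lambda\mapsto N\lambda$, $N\to\infty$) of the empirical distribution of the weights appearing, with multiplicity, in the restriction to $K$ of the irreducible representation of highest weight $N\lambda$. Here $G=U_n(\mathbb{F})$ and $K=U_{n-1}(\mathbb{F})$. Consequently the statement reduces to a purely representation-theoretic identity: the semiclassical limit of the branching measure from $U_n(\mathbb{F})$ to $U_{n-1}(\mathbb{F})$ is $\mu_\lambda$ as defined in Definition \ref{mulambda}.

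Now I would use the geometric description of the branching rules given in section 9 via Kashiwara crystals. For $\mathbb{F}=\mathbb{C}$ this is Weyl's branching rule: the multiplicity of the $U(n-1)$-representation of highest weight $\beta$ in the restriction of the $U(n)$-representation of highest weight $\lambda$ is $1$ if $\lambda\succeq\beta$ and $0$ otherwise, so after renormalizing and rescaling the empirical measure converges to the uniform measure on $\{\beta:\lambda\succeq\beta\}$, which is exactly the image of the uniform measure on $GT_n(\lambda)$ under $p_{n-1}$. For $\mathbb{F}=\mathbb{R}$ the branching $SO(n)\downarrow SO(n-1)$ gives the interlacing of absolute values $|\lambda|\succeq|\beta|$ with constant multiplicity, producing again the right uniform distribution on the slice. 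For $\mathbb{F}=\mathbb{H}$ the branching $Sp(n)\downarrow Sp(n-1)$ factors through $Sp(n-1)\times U(1)$, and the multiplicity of $\beta$ equals the number of intermediate interlacing patterns $x^{(n-\frac{1}{2})}$ with $\lambda\succeq x^{(n-\frac{1}{2})}\succeq\beta$; in the semiclassical limit this discrete count becomes the volume of the slice of $GT_n(\lambda)$ above $\beta$ and thus yields precisely the pushforward $p_{n-1}(m_{GT_n(\lambda)})=\mu_\lambda$ promised in Definition \ref{mulambda}.

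The main obstacle is the quaternionic case: unlike types A, B, D where branching multiplicities are $0$ or $1$ and the picture is essentially Rayleigh's interlacing, in type C the multiplicity is nontrivial and one must genuinely exploit the crystal description to see that the resulting density on $\mathcal{C}_{n-1}$ is the projection of a \emph{uniform} measure on a higher-dimensional polytope, rather than itself being uniform on some region of $\mathcal{C}_{n-1}$. This is precisely why $\mu_\lambda$ is defined as a pushforward and not directly as a uniform measure.
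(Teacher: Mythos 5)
Your overall strategy is the one the paper follows: identify the minor map with the orthogonal projection onto $\mathfrak{U}_{n-1}(\mathbb{F})$, invoke the Heckman-type Theorem \ref{theoprinc} (in the form of Theorem \ref{theoprincbis}) to reduce the law of the radial part of the projected orbit to a semiclassical limit of branching data, and then feed in the branching rules of Proposition \ref{restriction}. However, there is a genuine gap in the way you identify the limit measure. The measure produced by Theorem \ref{theoprinc} weights each highest weight $\beta$ of $U_{n-1}(\mathbb{F})$ by $\frac{\dim_{U_{n-1}}(\beta)}{\dim(\lambda_k)}\,m^{\lambda_k}_{U_{n-1}}(\beta)$, not by the multiplicity alone. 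Your argument drops the factor $\dim_{U_{n-1}}(\beta)$: for $\mathbb{F}=\mathbb{C},\mathbb{R}$ you conclude from ``multiplicity $0$ or $1$'' that the limit is the \emph{uniform} measure on $\{\beta:\lambda\succeq\beta\}$ (resp. $\vert\lambda\vert\succeq\vert\beta\vert$), and you then assert that this uniform measure equals $p_{n-1}$ of the uniform measure on $GT_n(\lambda)$. Both statements are false: the fiber of $p_{n-1}$ over $\beta$ is $GT_{n-1}(\beta)$, of volume $d_{n-1}(\beta)$, so $\mu_\lambda$ has density $\frac{d_{n-1}(\beta)}{d_n(\lambda)}$ on the interlacing set (Lemma \ref{restrictiongeneral}, Proposition \ref{tworestrictions}) and is not uniform there. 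The missing density is exactly the rescaled limit of the dropped factor $\dim_{U_{n-1}}(\beta)$. The same omission occurs in the quaternionic case: the multiplicity count only rescales to $\mathrm{vol}\{z:\lambda\succeq z\succeq\beta\}$, the volume of the intermediate level $x^{(n-\frac{1}{2})}$, which is not the volume of the full fiber of $p_{n-1}$ in $GT_n(\lambda)$; without reinstating $d_{n-1}(\beta)$ you do not obtain $p_{n-1}\big(m_{GT_n(\lambda)}\big)$.

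The fix is the bookkeeping step that the paper makes explicit (Lemma \ref{lemlemGT}): write $\dim_{U_{n-1}}(\beta)=\Card GT_{n-1,\mathbb{Z}}(\beta)$, so that $m^{\lambda_k}_{U_{n-1}}(\beta)\,\dim_{U_{n-1}}(\beta)$ counts precisely the integer Gelfand--Tsetlin patterns in $GT_{n,\mathbb{Z}}(\lambda_k)$ with $x^{(n-1)}=\beta$ (the multiplicity supplying the intermediate level in type $C$, the dimension supplying all lower levels). Hence the Heckman-type measure $\mu_k$ is the image under $x\mapsto x^{(n-1)}$ of the normalized counting measure on $\varepsilon_k GT_{n,\mathbb{Z}}(\lambda_k)$, which converges to the uniform measure on $GT_n(\lambda)$, and its pushforward is $\mu_\lambda$ by Definition \ref{mulambda}. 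With this correction your argument coincides with the paper's proof; without it, the identification of the limit with $\mu_\lambda$ is asserted through an equality of measures that does not hold.
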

We will now describe the law of $X(M)$ for every invariant random matrix $M$ in $\mathcal{P}_n(\F)$. It follows from  lemma \ref{invmeasure} that it is enough to describe it for $M=U\Omega_n(\lambda)U^*$, with $U$ a Haar distributed random variable in $U_n(\F)$ and $\lambda$ fixed in $\CC_n$.

\begin{theo} \label{theoGT} Let  $M=U\Omega_n(\lambda)U^*$, with $U$ Haar distributed in $U_n(\F)$ and $\lambda\in \CC_n$. 
Then $X(M)$  is uniformly distributed on $GT_n(\lambda)$ for $\mathbb{F}=\mathbb{R},\mathbb{C}$ and  is distributed according to the image of the uniform measure on $GT_n(\lambda)$ by the map $(x^{(\frac{1}{2})},\cdots,x^{(n-\frac{1}{2})},x^{(n)})\in GT_n(\lambda)\mapsto (x^{(1)},x^{(2)},\cdots,x^{(n)})$ for $\mathbb{F}=\mathbb{H}$.
\end{theo}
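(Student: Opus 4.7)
The plan is induction on $n$. The base case $n=1$ is immediate because $GT_1(\lambda)=\{\lambda\}$ and $X^{(1)}(M)=\lambda$ deterministically. For the inductive step, the key observation is that $\pi_{n-1}(M)$ is itself an invariant random matrix in $\PP_{n-1}(\F)$: embedding $U_{n-1}(\F)\hookrightarrow U_n(\F)$ block-diagonally (fixing the last coordinate(s)), the conjugation action preserves the main $(n-1)$-minor, and $M$ is $U_n(\F)$-invariant, so $\pi_{n-1}(M)$ is $U_{n-1}(\F)$-invariant. Applying Lemma \ref{invmeasure} at size $n-1$, we may write
$$\pi_{n-1}(M)=V\,\Omega_{n-1}(\Lambda)\,V^{*},$$
with $V$ Haar-distributed in $U_{n-1}(\F)$, $\Lambda:=X^{(n-1)}(M)$, and $V$ independent of $\Lambda$.

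Next, Lemma \ref{lemGT} identifies the marginal law of $\Lambda$ as $\mu_{\lambda}$. The inductive hypothesis applied to the invariant matrix $\pi_{n-1}(M)$ then gives that, conditionally on $\Lambda=\beta$, the vector
$$X(\pi_{n-1}(M))=(X^{(1)}(M),\ldots,X^{(n-1)}(M))$$
is uniform on $GT_{n-1}(\beta)$ when $\F=\C,\R$ (respectively the image of that uniform measure under the projection onto integer levels when $\F=\H$). Observing that $X^{(k)}(M)=X^{(k)}(\pi_{n-1}(M))$ for $k\le n-1$ and that $X^{(n)}(M)=\lambda$ is deterministic, we disintegrate the joint law of $X(M)$ as
$$\int m_{GT_{n-1}(\beta)}\,\mu_{\lambda}(d\beta),$$
which by identity \eqref{desintegration} equals $m_{GT_{n}(\lambda)}$. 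This concludes the real and complex cases.

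The main obstacle is the quaternionic case, where the half-integer levels $x^{(k-\frac{1}{2})}$ of $GT_n(\lambda)$ are absent from $X(M)$. One must verify that identity \eqref{desintegration}, combined with the definition of $\mu_{\lambda}$ as a marginal in the variable $x^{(n-1)}$ alone, still gives the correct compatibility between the projected uniform measures at sizes $n-1$ and $n$: namely, integration over the fiber variable $x^{(n-\frac{1}{2})}$ (constrained by $\lambda\succeq x^{(n-\frac{1}{2})}\succeq\beta$) is already built into $\mu_{\lambda}$, and this is exactly what Lemma \ref{lemGT} delivers when it identifies the law of $X^{(n-1)}(M)$. Thus the inductive step goes through by projecting the identity \eqref{desintegration} onto the integer-level coordinates; this projection commutes with the disintegration, so the induction closes.
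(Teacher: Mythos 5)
Your proposal is correct and takes essentially the same route as the paper: the paper proves the statement by establishing the conditional-independence (Markov) property of $(X^{(n)}(M),\ldots,X^{(1)}(M))$ through conjugation by an independent Haar element of $U_{k+1}(\F)$ — precisely the randomization underlying Lemma \ref{invmeasure} that you invoke for $\pi_{n-1}(M)$ — and then combines Lemma \ref{lemGT} with the disintegration identity (\ref{desintegration}), exactly the ingredients of your inductive step. Your treatment of the quaternionic case (projecting out the half-integer levels, with the fiber integration absorbed into $\mu_\lambda$) matches the paper's intended reading of (\ref{desintegration}) as well.
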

\begin{proof} Identity (\ref{desintegration}) implies that it is enough to prove that for every integer $k\in\{1,\cdots,n-1\}$ and every bounded  measurable function $f:\CC_k \to\mathbb{R}$,  the conditional expectations satisfy  $$\mathbb{E}\big[f \big(X^{(k)}(M) \big)\vert \sigma\{X^{(k+1)}(M),\cdots,X^{(n)}(M)\}\big]=\mathbb{E}\big[f \big(X^{(k)}(M) \big)\vert \sigma \{X^{(k+1)}(M)\}\big].$$
For $V\in U_{k+1}(\mathbb{F})$ we write $VMV^*$ instead of $$\begin{pmatrix}  V&   0   \\
 0 &   I 
\end{pmatrix}
M\begin{pmatrix}  V^*&   0   \\
 0 &   I 
\end{pmatrix},$$ where $I$ is the identity matrix with appropriate dimension. Let us write the radial decomposition $\pi_{k+1}(M)=V\Omega_{k+1}(X^{(k+1)}(M))V^*$, with $V\in U_{k+1}(\mathbb{F})$. Let $W$ be a random variable independent of $M$, Haar distributed in $U_{k+1}(\mathbb{F})$. We have   $W\pi_{k+1}(M)W^*=\pi_{k+1}(WMW^*)$ and $X^{(r)}(WMW^*)=X^{(r)}(M)$, $r=k+1,\cdots,  n$, so $$ \big(\pi_{k+1}(M),X^{(k+1)}(M),\cdots,X^{(n)}(M) \big)$$ has the same distribution as $$\big(W\Omega_{k+1}(X^{(k+1)}(M))W^*,X^{(k+1)}(M), \cdots,X^{(n)}(M)\big).$$ Then we have
\begin{align*}
\mathbb{E} \big[f(X^{(k)}&(M))\vert \sigma\{ X^{(k+1)}(M), \cdots,X^{(n)}(M)\} \big]\\
&=\mathbb{E} \big[f\big(X^{(k)}(\pi_{k+1}(M)) \big)\vert \sigma \{X^{(k+1)}(M), \cdots,X^{(n)}(M)\} \big]\\
&=\mathbb{E} \big[f \big(X^{(k)}(W\Omega_{k+1}(X^{(k+1)}(M))W^*) \big)\vert \sigma \{X^{(k+1)}(M), \cdots,X^{(n)}(M)\} \big]\\
&=\mathbb{E} \big[f \big(X^{(k)}(M) \big)\vert \sigma \{X^{(k+1)}(M)\} \big]. \qedhere
\end{align*} 
\end{proof}
Let us now give an explicit description of the measures $\mu_\lambda$, $\lambda\in \CC_n$.  We first introduce a function $d_n$, that we call asymptotic dimension. Recall that $\epsilon$ is equal to $1$ if $n\notin 2\mathbb{N}$ and $0$ otherwise.
\begin{defn} \label{asymptoticdim}  We define the function  $d_n$ on $\mathcal{C}_n$ by $$d_n(\lambda)=c_n(\lambda)^{-1}V_n(\lambda), \quad \lambda\in \mathcal{C}_n,$$ where the functions $V_n$ and $c_n$ are given by :
 \begin{itemize}
 \item when $\mathbb{F}=\mathbb{C}$,
 \begin{align*}
 V_{n}(\lambda)&= \prod_{\substack{1\le i<j\le n\\ \lambda_i\ne \lambda_j}}(\lambda_i-\lambda_j),\\  c_n(\lambda)&=\prod_{\substack{1\le i<j\le n\\ \lambda_i\ne \lambda_j}}(j-i),
  \end{align*}
  \item when $\mathbb{F}=\mathbb{H}$,
 \begin{align*}
 V_{n}(\lambda)&=\prod_{\substack{1\le i<j\le n\\ \lambda_i\ne \lambda_j}}(\lambda_i-\lambda_j)\prod_{\substack{1\le i<j\le n\\ \lambda_i\ne  -\lambda_j}}(\lambda_i+\lambda_j)\prod_{ \substack{1\le i\le n\\ \lambda_i\ne 0}} \lambda_i,\\  c_n(\lambda)&=\prod_{\substack{1\le i<j\le n\\ \lambda_i\ne \lambda_j}}(j-i)\prod_{\substack{1\le i<j\le n\\ \lambda_i\ne - \lambda_j}}(2n+2-j-i)\prod_{ \substack{1\le i\le n\\ \lambda_i\ne 0}}(n+1-i),
\end{align*}
  \item when $\mathbb{F}=\mathbb{R}$,
 \begin{align*}
 V_{n}(\lambda)&= \prod_{\substack{1\le i<j\le \tilde{n}\\ \lambda_i\ne \lambda_j}}(\lambda_i-\lambda_j) \prod_{\substack{1\le i<j\le \tilde{n}\\ \lambda_i\ne -\lambda_j}}(\lambda_i+\lambda_j)\prod_{ \substack{1\le i\le \tilde{n}\\ \lambda_i\ne 0}} \lambda_i^{\epsilon} ,\\  c_n(\lambda)&=\prod_{\substack{1\le i<j\le \tilde{n}\\ \lambda_i\ne \lambda_j}}(j-i) \prod_{\substack{1\le i<j\le \tilde{n}\\ \lambda_i\ne - \lambda_j}}(n-j-i)\prod_{ \substack{1\le i\le \tilde{n}\\ \lambda_i\ne 0}}(\tilde{n}+\frac{1}{2}-i)^{\epsilon} .
\end{align*}
\end{itemize}
\end{defn}
When $\lambda  $ is in the interior of the Weyl chamber, then $d_n(\lambda)$ is just, up to a constant, the product of the positive roots of $U_n(\mathbb{F})$.  We let $c_n=c_n(\lambda)$   in this case. We have  the following lemma.
\begin{lem} \label{rouge} Let $\lambda$ be in the interior of $\CC_n$.  Then
 \begin{itemize}
 \item when $\mathbb{F}=\mathbb{C}$,
 \begin{align*}
 d_{n}(\lambda)&= c_n \,det(\lambda_i^{j-1})_{n\times n},
  \end{align*}
  \item when $\mathbb{F}=\mathbb{H}$,
 \begin{align*}
d_{n}(\lambda)&=c_n \,det(\lambda_i^{2j-1})_{n\times n},\end{align*}
  \item when $\mathbb{F}=\mathbb{R}$,
 \begin{align*}
d_{n}(\lambda)&= c_n \,det(\lambda_i^{2j-2+\epsilon})_{\tilde n\times \tilde n}.
\end{align*}
\end{itemize}
\end{lem}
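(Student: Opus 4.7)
My plan is to observe that once $\lambda$ lies in the interior of $\CC_n$, every strict inequality $\lambda_i\neq \lambda_j$, $\lambda_i\neq -\lambda_j$, $\lambda_i\neq 0$ is automatic, so the conditions decorating the products in Definition \ref{asymptoticdim} become vacuous. Consequently, in each of the three cases the numerator $V_n(\lambda)$ is an unrestricted product of differences (and, for $\H$ and odd-$n$ real cases, of sums and of the $\lambda_i$ themselves), while $c_n(\lambda)=c_n$ becomes the corresponding unrestricted product of positive integers — a constant independent of $\lambda$. Thus the whole content of the lemma reduces to identifying $V_n(\lambda)$ with a Vandermonde or generalized Vandermonde determinant, and tracking the constant that appears.

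The first step is the complex case, which is immediate from the classical Vandermonde identity
\[
\det(\lambda_i^{j-1})_{1\le i,j\le n}=\prod_{1\le i<j\le n}(\lambda_j-\lambda_i),
\]
giving $V_n(\lambda)$ up to a global sign $(-1)^{n(n-1)/2}$ which I absorb into the constant. Next, for the quaternionic case I would factor
\[
\det(\lambda_i^{2j-1})_{1\le i,j\le n}=\Bigl(\prod_{i=1}^n\lambda_i\Bigr)\det\bigl((\lambda_i^2)^{j-1}\bigr)_{1\le i,j\le n},
\]
and then apply Vandermonde in the variables $\lambda_i^2$ to obtain
\[
\prod_{i=1}^n\lambda_i\,\prod_{1\le i<j\le n}(\lambda_j^2-\lambda_i^2)=\prod_{i=1}^n\lambda_i\,\prod_{1\le i<j\le n}(\lambda_j-\lambda_i)(\lambda_j+\lambda_i),
\]
which, up to sign, matches $V_n(\lambda)$ in the $\H$ definition.

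For the real case, I would split according to the parity of $n$. When $n$ is even, $\epsilon=0$, and $\det(\lambda_i^{2(j-1)})_{1\le i,j\le\tilde n}$ is a Vandermonde in $\lambda_i^2$, producing $\prod_{i<j}(\lambda_i-\lambda_j)(\lambda_i+\lambda_j)$ up to sign, as required. When $n$ is odd, $\epsilon=1$, and the same factorization trick as in the quaternionic case — pulling out $\prod_i\lambda_i$ from $\det(\lambda_i^{2j-1})_{\tilde n\times\tilde n}$ and then applying Vandermonde in $\lambda_i^2$ — yields exactly $V_n(\lambda)$. In every case, once the product is identified with a determinant, dividing by $c_n$ completes the identification of $d_n(\lambda)$ with the stated determinantal expression.

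The only obstacle is bookkeeping of global signs from the Vandermonde identity (which produces $\prod(\lambda_j-\lambda_i)$ rather than $\prod(\lambda_i-\lambda_j)$) and of the combinatorial constant coming from the products $(j-i)$, $(2n+2-j-i)$, $(n-j-i)$, $(n+1-i)$, $(\tilde n+\tfrac12-i)$ which define $c_n$; these can be read off case by case and absorbed into the constant appearing in the statement. No representation-theoretic input is required: the lemma is a purely algebraic identification, and its role in the paper is to interpret $d_n(\lambda)$ — already defined via roots in the sense of Weyl's dimension formula — as a concrete determinant amenable to later determinantal-process computations.
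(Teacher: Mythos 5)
Your proof is correct and is exactly the verification the paper leaves implicit: the lemma is stated without proof, the intended argument being precisely that in the interior of the chamber the restrictions in Definition \ref{asymptoticdim} are vacuous, so $V_n(\lambda)$ is identified with the (generalized) Vandermonde determinant by the classical identity, after factoring out $\prod_i\lambda_i$ and passing to the variables $\lambda_i^2$ in the $\mathbb{H}$ and odd real cases. Your remark about absorbing the global sign and the combinatorial factor into the constant is the right reading of the statement, since the lemma is only used up to proportionality in the later determinantal computations.
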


In particular, when $\F=\C$ and $\lambda$ is in the interior of the Weyl chamber, $d_n(\lambda)$ is just the Vandermonde polynomial. The following  lemma shows the importance of these fonctions for us.
\begin{lem} \label{volGT}
For any $\lambda$ in the Weyl chamber $\CC_n$,  the volume of  $GT_n(\lambda)$ is $d_n(\lambda)$. 
\end{lem}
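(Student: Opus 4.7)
The plan is to prove the identity by induction on $n$, exploiting the obviously recursive structure of the Gelfand--Tsetlin polytope together with the determinantal formula for $d_n$ supplied by Lemma \ref{rouge}. The base case ($n=1$, or the smallest nontrivial case in each family) is a direct verification: $GT_n(\lambda)$ reduces to a point or a short interval whose length matches the empty-product definition of $d_n$. For the inductive step, by Fubini one fibers $GT_n(\lambda)$ over its next-to-top row and obtains, for $\F=\C$ or $\F=\R$,
\[
\mathrm{vol}(GT_n(\lambda)) \;=\; \int \mathrm{vol}(GT_{n-1}(\beta))\, d\beta,
\]
where $\beta = x^{(n-1)}$ ranges over the interlacing region determined by $x^{(n)}=\lambda$ with respect to Lebesgue measure. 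In the $\F=\H$ case the fibration involves an additional integration over the intermediate half-integer level $y=x^{(n-1/2)}$, with the fiber over $(y,\beta)$ still being $GT_{n-1}(\beta)$. The induction hypothesis identifies the integrand with $d_{n-1}(\beta)$.

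To evaluate the resulting integral I would use Lemma \ref{rouge} to write $d_{n-1}(\beta)$ as a constant times a $(n-1)\times(n-1)$ determinant whose entries are monomials $\beta_i^{j-1}$ in the unitary case (respectively $\beta_i^{2j-1}$ for $\H$ and $\beta_i^{2j-2+\epsilon}$ for $\R$). Expanding by permutations and applying Fubini reduces the problem to one-dimensional monomial integrals $\int_{\lambda_{i+1}}^{\lambda_i}\beta_i^{k}\,d\beta_i = (\lambda_i^{k+1}-\lambda_{i+1}^{k+1})/(k+1)$, augmented by an integration over $[0,\lambda_{\tilde n}]$ for the last coordinate in the $\H$ and $\R$ families. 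Pulling the resulting scalars out of the determinant leaves a new determinant in $\lambda$ which I would identify with $V_n(\lambda)$ (up to sign and a simple rational factor) via a standard Vandermonde manipulation: subtract row $i+1$ from row $i$ in the $n\times n$ Vandermonde $\det(\lambda_i^{j-1})$ and expand along the first column to recover $\det(\lambda_i^j-\lambda_{i+1}^j)$ in the $\F=\C$ case; a parallel row-operation argument works for the $\det(\lambda_i^{2j-1})$ and $\det(\lambda_i^{2j-2+\epsilon})$ determinants in the other two cases, where the integrations against $[0,\lambda_{\tilde n}]$ are responsible for the extra factors $\lambda_i$ or $\lambda_i^\epsilon$, and the interleaved positive/negative monomials are responsible for the factors $(\lambda_i+\lambda_j)$ appearing in $V_n(\lambda)$. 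The combinatorial denominators produced by the integrations coincide with the ratio $c_n/c_{n-1}$ in each family (for example $(n-1)!$ for $\F=\C$), giving $d_n(\lambda)$ at the end.

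The main obstacle will be the bookkeeping in the $\F=\H$ and $\F=\R$ cases: the two integrations per step for $\H$, the alternating parity of the levels for $\R$, and the presence of both interlacing and positivity constraints make the determinantal identities noticeably heavier than the classical Vandermonde computation; in particular the precise matching of the integration denominators with the combinatorial factors $(2n+2-j-i)$ and $(\tilde n + \frac{1}{2} - i)^\epsilon$ featuring in $c_n$ requires careful accounting. The identity for $\lambda$ on the walls of $\CC_n$ is then obtained by continuity, since both $\mathrm{vol}(GT_n(\cdot))$ and $d_n(\cdot)$ depend continuously on $\lambda$---the former because the polytope varies continuously in Hausdorff distance, the latter because the definitions of $V_n$ and $c_n$ drop precisely the indices $(i,j)$ for which the generic factor would vanish.
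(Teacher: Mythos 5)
Your induction is sound for $\lambda$ in the \emph{interior} of $\CC_n$: the fibration over the next-to-top row is exactly the recursion (\ref{desintegration})/Lemma \ref{restrictiongeneral}, and the monomial--Vandermonde evaluation you sketch can be pushed through (for $\F=\H,\R$ it is essentially the Cauchy--Binet manipulation the paper itself performs in Proposition \ref{tworestrictions}), even though you leave the heavier $\H$ and $\R$ bookkeeping unverified. The genuine gap is the final step. The lemma is asserted for \emph{every} $\lambda\in\CC_n$, and the passage to the walls ``by continuity'' is not available: with the paper's convention that the volume of a convex set is its Lebesgue measure on the affine subspace it spans, neither side of the identity is continuous across a dimension drop. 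Already for $\F=\C$, $n=2$: $\mathrm{vol}\bigl(GT_2(\lambda_1,\lambda_2)\bigr)=\lambda_1-\lambda_2\to 0$ as $\lambda_2\to\lambda_1$, whereas $GT_2(\lambda_1,\lambda_1)$ is a single point of volume $1=d_2(\lambda_1,\lambda_1)$, since the vanishing factor is dropped from both $V_2$ and $c_2$. So the limit of the volumes is not the volume of the limit, and $d_n$ jumps at the walls in the same way; your closing claim that both functions are continuous is false precisely where you need it. These degenerate $\lambda$ are not a dispensable corner case in this paper: the lemma is applied to spectra with repeated or zero eigenvalues (e.g.\ $\lambda=(\theta,0,\dots,0)$ in Proposition \ref{tworestrictions}, and the rank-one perturbation chains of Sections 4--5, where $R_k$ has only $k\wedge\tilde n$ nonzero entries). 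To repair the argument you would have to rerun the induction stratum by stratum on the faces of $\CC_n$, noting that the fibration itself degenerates there (coordinates of $x^{(n-1)}$ squeezed between equal entries of $\lambda$ are pinned and carry no Lebesgue measure) and checking that the surviving integrations reproduce exactly the truncated products defining $V_n$ and $c_n$.

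Note also that the paper's own proof takes a different route with no integration at all: Lemma \ref{volGT} is deduced from Lemma \ref{dimGT}, i.e.\ from the count of lattice points of $GT_{n,\Z}(\lambda)$, which equals $\dim V_\lambda$ by the branching rules and is therefore given by the Weyl dimension formula; the volume is then the scaling limit of that polynomial count. In that formula the pairs with $\lambda_i=\lambda_j$ contribute bounded factors, so interior points and walls are handled uniformly --- which is exactly the uniformity your continuity shortcut was meant to supply.
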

\proof It is an immediate consequence of lemma \ref{dimGT}. \qed \medskip

 For $\lambda$   in the Weyl chamber, we let $l_\lambda$  be the Lebesgue measure on the convex set $p_{n-1}(GT_n(\lambda))$, where $p_{n-1}$ is the projection introduced at definition \ref{mulambda}.

\begin{lem}\label{restrictiongeneral} Let $\lambda$ be in the Weyl chamber. Then,
\begin{itemize}
\item  when $\mathbb{F}=\mathbb{R},\mathbb{C}$,
\begin{align*}
\mu_\lambda(d\beta) = \frac{d_{n-1}(\beta)}{d_n(\lambda)}\, l_\lambda(d\beta), 
\end{align*}
\item  when $\mathbb{F}=\mathbb{H}$,
\begin{align*}
\mu_\lambda(d\beta) = \frac{d_{n-1}(\beta)}{d_n(\lambda)}\, \text{vol}(\{z\in \mathbb{R}^n : \lambda\succeq z\succeq  \beta\}) l_\lambda(d\beta).
\end{align*}
\end{itemize}
\end{lem}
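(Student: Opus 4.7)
The plan is to exploit the recursive fiber structure of the Gelfand--Tsetlin polytope together with Fubini's theorem and Lemma \ref{volGT}.

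First, in the cases $\F=\C$ and $\F=\R$, I would observe that the defining inequalities of $GT_n(\lambda)$ factor cleanly at the $(n-1)$-th level: if we set $x^{(n-1)}=\beta$, then the only constraint linking $\beta$ to $\lambda=x^{(n)}$ is the interlacing $\lambda\succeq\beta$ (encoded by $\beta\in p_{n-1}(GT_n(\lambda))$), and the remaining coordinates $(x^{(1)},\ldots,x^{(n-2)},\beta)$ range independently over $GT_{n-1}(\beta)$. In particular, $p_{n-1}$ is a fibration whose fiber over $\beta$ is isometric to $GT_{n-1}(\beta)$ (with the Lebesgue measure inherited from the ambient affine space). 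Fubini's theorem then gives
\begin{equation*}
\int_{GT_n(\lambda)} f(x^{(n-1)})\,dx = \int_{p_{n-1}(GT_n(\lambda))} f(\beta)\,\mathrm{vol}\bigl(GT_{n-1}(\beta)\bigr)\,l_\lambda(d\beta),
\end{equation*}
so the pushforward of Lebesgue measure under $p_{n-1}$ has density $\mathrm{vol}(GT_{n-1}(\beta))$ with respect to $l_\lambda$. Normalizing by the total mass and using Lemma \ref{volGT} to identify both volumes with $d_{n-1}(\beta)$ and $d_n(\lambda)$ yields the announced formula for $\mu_\lambda$.

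For $\F=\H$, the same strategy works but with one additional layer of integration because of the half-integer levels. Fixing $x^{(n-1)}=\beta$ leaves two independent pieces of data: the intermediate vector $x^{(n-1/2)}=z\in\R^n$, which is constrained only by $\lambda\succeq z\succeq\beta$, and the tuple $(x^{(1/2)},x^{(1)},\ldots,x^{(n-3/2)},\beta)$, which ranges over $GT_{n-1}(\beta)$. Applying Fubini once for $z$ and once for the lower levels gives a fiber volume of $d_{n-1}(\beta)\cdot\mathrm{vol}\bigl(\{z\in\R^n:\lambda\succeq z\succeq\beta\}\bigr)$, and normalizing by $d_n(\lambda)$ produces the stated density.

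The only genuine subtlety is bookkeeping: one must check that the decomposition of $GT_n(\lambda)$ as a fibered product is really an equality of measure spaces with the correct Lebesgue structures (i.e.\ that the product of the Lebesgue measure on the base $p_{n-1}(GT_n(\lambda))$ with the Lebesgue measures on the fibers really reconstructs the Lebesgue measure on the affine span of $GT_n(\lambda)$). This is automatic because in each of the three cases the defining coordinates $(x^{(k)})$, or $(x^{(k)},x^{(k-1/2)})$ in the quaternionic case, form an affine coordinate system in which the polytope is cut out by linear inequalities, so the ambient Lebesgue measure is the product of Lebesgue measures in the coordinates being integrated out and those being kept. Once this is verified, the computation reduces to invoking Lemma \ref{volGT} for $GT_n(\lambda)$ and $GT_{n-1}(\beta)$.
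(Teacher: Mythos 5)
Your proposal is correct and follows essentially the same route as the paper: the paper's (very terse) proof likewise observes that for $x\in GT_n(\lambda)$ the lower levels $(x^{(1)},\dots,x^{(n-2)},\beta)$ (together with the half-integer levels below $n-\tfrac12$ when $\F=\H$) form an element of $GT_{n-1}(\beta)$, invokes Lemma \ref{volGT} for the fiber volumes, and integrates out. The Fubini bookkeeping and the extra $z=x^{(n-1/2)}$ integration in the quaternionic case that you spell out are exactly what the paper leaves implicit in ``this implies easily the lemma.''
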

\proof For $x\in GT_n(\lambda)$, the vector $(x^{(1)},\cdots,x^{(n-1)})$  when $\mathbb{F}=\mathbb{C},\mathbb{R}$  and   the vector $(x^{(\frac{1}{2})},x^{(1)},x^{(\frac{3}{2})},\cdots,x^{(n-1)})$  when $\mathbb{F}=\mathbb{H}$, belong  the Gelfand--Tsetlin polytope $GT_{n-1}(x^{(n-1)})$, whose volume is equal to $d_{n-1}(x^{(n-1)})$ by lemma \ref{volGT}. This implies easily the lemma. \qed \medskip

We now give the density of $\mu_\lambda$ for two particular  cases: when $\lambda$ is in the interior of the Weyl chamber and when $\lambda$ has only one strictly positive component. For the computations, we recall a generalised Cauchy-Binet identity (see for instance  \cite{Johansson}). Let $(E,\mathcal{B},m)$ be a measure space, and let  $\phi_{i}$ and $\psi_{j}$, $1\le i,j\le n$, be measurable functions such that the $\phi_{i}\psi_{j}$'s are  integrable. The generalised Cauchy Binet identity is
\begin{align} \label{CauchyBinet}
\det\Big(\int_{E}\phi_{i}(x)\psi_{j}(x)dm(x)\Big)=\frac{1}{n!}\int_{E^{n}}\det\big(\phi_{i}(x_{j}) \big)\det \big(\psi_{i}(x_{j}) \big)\prod_{k=1}^n dm(x_{k}). \end{align}
Let us also recall  the identity which gives interlacing conditions with the help of a determinant (see Warren \cite{Warren}). Let $x$ and $y$ be two vectors in $\mathbb{R}^{n}$ such that $x_{1}> \cdots >x_{n}$ and $y_{1}> \cdots >y_{n}$. Then \begin{align} \label{formuleentrelac} 1_{\{x\succ y\}}=\det(1_{\{x_{i}>y_{j}\}})_{n\times n}.\end{align}

\begin{prop} \label{tworestrictions}   Let  $\lambda$ be in the Weyl chamber $\CC_n$. If $\lambda$  is in the interior of $\CC_n$, then the measure $\mu_ \lambda $ has a density $f_\lambda$ with respect to the Lebesgue measure on $\mathcal{C}_{n-1}$ defined by :
\begin{itemize}
\item when $\mathbb{F}=\mathbb{C}$, 
$
f_\lambda(\beta)=   \frac{d_{n-1}(\beta)}{d_n(\lambda)}1_{\{\lambda\, \succeq \beta\}},$
\item when $\mathbb{F}=\mathbb{R}$,
$
f_\lambda(\beta) =\frac{d_{n-1}(\beta)}{d_n(\lambda)}1_{\{\vert \lambda\vert \,\succeq \vert \beta\vert \}} ,$
\item when $\mathbb{F}=\mathbb{H}$,  
$
f_\lambda(\beta)=  \frac{d_{n-1}(\beta)}{d_n(\lambda)}\,\det( (\lambda_i- \beta_j)1_{\{\lambda_i\ge \beta_j\}})_{n\times n},
$ with the convention $\beta_n=0$.
 \end{itemize}
 If  $\lambda =(\theta,0, \cdots,0)$, $\theta\in \mathbb{R}_+$, then the measure $\mu_ \lambda $ is equal to $\tilde\mu_\lambda\otimes \delta_0^{\tilde n-1}$, $\tilde \mu_\lambda$ having a density $g_\theta$ with respect to the Lebesgue measure on $\mathbb{R}_+$ defined  by :\
 \begin{itemize}
 
\item when $\mathbb{F}=\mathbb{C}$, 
$
g_\theta(\beta)=(n-1) \frac{\beta^{n-2}}{\theta^{n-1}}1_{[0, \theta]}(\beta), 
$
\item when $\mathbb{F}=\mathbb{R}$, 
$
g_ \theta(\beta)=(n-2) \frac{\beta^{n-3}}{\theta^{n-2}}1_{[0, \theta]}(\beta),
$
 \item when $\mathbb{F}=\mathbb{H}$, 
$
g_ \theta(\beta)=(2n-2)(2n-1)\frac{\beta^{2n-3}}{\theta ^{2n-1}}(\theta-\beta)1_{[0, \theta]}(\beta).
$
\end{itemize}
 \end{prop}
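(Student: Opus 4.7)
The plan is to split the proposition into its two parts and treat them using Lemma~\ref{restrictiongeneral} in the interior case and a direct computation in the rank-one case.

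For $\lambda$ in the interior, Lemma~\ref{restrictiongeneral} reduces the problem, when $\F=\C$ or $\R$, to identifying the support of $l_\lambda$. Reading this off the Gelfand--Tsetlin polytope definitions gives $\{\beta : \lambda \succeq \beta\}$ for $\F=\C$ and $\{\beta : |\lambda| \succeq |\beta|\}$ for $\F=\R$, producing the first two formulas at once. For $\F=\H$ the same lemma leaves the additional factor
\[
V(\lambda,\beta) \;:=\; \mathrm{vol}\bigl(\{z\in\R_+^n : \lambda\succeq z\succeq\beta\}\bigr),
\]
with the convention $\beta_n=0$, and it remains to prove $V(\lambda,\beta)=\det((\lambda_i-\beta_j)1_{\lambda_i\geq\beta_j})_{n\times n}$. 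By Warren's identity \eqref{formuleentrelac}, on the full-measure subset of $z$'s with distinct coordinates one has $1_{\lambda\succ z}=\det(1_{\lambda_i>z_j})$ and $1_{z\succ\beta}=\det(1_{z_i>\beta_j})$. The product of these two determinants is symmetric in the coordinates of $z$ (each transposition of two $z_k$'s flips the sign of both factors), so unordered integration over $\R^n$ yields $n!\,V(\lambda,\beta)$; applying the generalised Cauchy--Binet formula \eqref{CauchyBinet} to the same integral evaluates it as $n!\det((\lambda_i-\beta_j)1_{\lambda_i>\beta_j})$, and cancelling $n!$ gives the claimed identity.

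For $\lambda=(\theta,0,\ldots,0)$ the interlacing constraints cascade and force every row $x^{(k)}$ (and $x^{(k-1/2)}$ for $\F=\H$) to have all its components except the first equal to zero, the first components $a_k$ forming an increasing chain in $[0,\theta]$. Thus $GT_n(\lambda)$ collapses to a standard ordered simplex of dimension $n-1$ (case $\C$), $2n-1$ (case $\H$), or $n-2$ (case $\R$). The marginal density of $\beta=x^{(n-1)}_1$ then follows from a one-line integration: for instance in the $\H$ case the free parameters split into $0\leq a_{1/2}\leq\cdots\leq a_{n-3/2}\leq\beta$, contributing $\beta^{2n-3}/(2n-3)!$, and $a_{n-1/2}\in[\beta,\theta]$, contributing $\theta-\beta$; dividing by the total simplex volume $\theta^{2n-1}/(2n-1)!$ yields the stated $g_\theta$. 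The $\C$ and $\R$ cases are analogous, with the $\R$ case requiring only a brief check of the sign conventions on the last coordinate of even-sized rows.

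The main obstacle is the determinantal identity for $V(\lambda,\beta)$ in the $\H$ case; Cauchy--Binet handles it cleanly once one correctly matches the conventions (augmenting $\beta$ by $\beta_n=0$ to an $n$-vector, passing from non-strict to strict interlacing on a null complement, and tracking the $n!$ coming from symmetric unordered integration). Everything else reduces either to reading off the support of a polytope or to evaluating the volume of an ordered simplex.
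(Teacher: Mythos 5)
Your proposal is correct and follows essentially the paper's own route: Lemma \ref{restrictiongeneral} supplies the densities, with Warren's identity (\ref{formuleentrelac}) and the generalised Cauchy--Binet formula (\ref{CauchyBinet}) turning the volume factor $\mathrm{vol}(\{z:\lambda\succeq z\succeq\beta\})$ into the stated determinant in the quaternionic interior case, exactly as in the paper. The only (cosmetic) difference is the degenerate case $\lambda=(\theta,0,\ldots,0)$, where the paper again reads $g_\theta$ off Lemma \ref{restrictiongeneral} using the explicit degenerate values of $d_{n-1}(\beta)/d_n(\theta)$, whereas you recompute the collapsed Gelfand--Tsetlin volumes by hand (including the sign freedom in the even rows for $\F=\R$, which only contributes a factor cancelling in the normalization) --- the same calculation in substance.
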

\begin{proof}  Lemma \ref{restrictiongeneral} gives immediately the   densities $f_\lambda$ and $g_\theta$ for $\mathbb{F}=\mathbb{C},\mathbb{R}$.  For $\mathbb{F}=\mathbb{H}$ and $\lambda$ in the interior of the Weyl chamber, this lemma implies that, for $\beta\in \R^n_+$,$$f_\lambda(\beta)= \frac{d_{n-1}(\beta)}{d_n(\lambda)} \int_{\mathbb{R}^{n}} 1_{\{\lambda\succeq z\}}1_{\{ z\succeq \beta\}} \,dz.$$ We get the announced formula using the identity (\ref{formuleentrelac}) and the generalised Cauchy-Binet identity (\ref{CauchyBinet}). For $\mathbb{F}=\mathbb{H}$ and $\lambda=(\theta,0,\cdots,0)$ we get from lemma \ref{restrictiongeneral}  that, for $\beta\in \R_+$, $$g_ \theta(\beta)= \frac{d_{n-1}(\beta)}{d_n(\lambda)} \int_{\mathbb{R}} 1_{\{\theta\ge z\ge \beta\}} \, dz=\frac{d_{n-1}(\beta)}{d_n(\lambda)} (\theta-\beta)1_{[0,\theta]}(\beta). \qedhere
$$
\end{proof}

\section{Rank one perturbation on $\mathcal{P}_n(\mathbb{F})$}
The next two sections are devoted to the LUE introduced in Definition \ref{def_LUE}. We will focus on the distribution of the eigenvalues. A random matrix of the LUE$_{n,k}(\mathbb{F})$ can be written as $\sum_{i=1,\cdots,\tilde k}M_i\Omega_n^1M_i$, where the $M_i$'s are independent standard Gaussian variables in $\mathcal{M}_n(\F)$ (cf. Lemma \ref{FourierLUE}). We will compute the distribution of its eigenvalues  recursively on $k$. 
Thus, the study of an additive perturbation by the simplest Laguerre ensemble, i.e.\ the LUE$^1_{n}(\mathbb{F})$, is the first step, and we give in Theorem \ref{theopertuLUE} the distribution of $\Omega_n(\lambda)+M\Omega_n^1M^*$, for $\lambda\in \CC_n$. 

 For $\lambda $ an element of  $\mathcal{C}_n$ having only $k$ nonzero components, the others being equal to zero, we will write  $\Omega_n(\lambda_1, \cdots, \lambda_k)$ instead of $\Omega_n(\lambda)$. Let $\theta\in \mathbb{R}_+$ and $U$ be a Haar distributed random variable in $U_n(\F)$. We first describe the distribution of the radial part of $\Omega_n(\lambda)+ U\Omega_n(\theta) U^*$. We introduce the following sets.
\begin{defn} For $\lambda\in \mathcal{C}_n$, $\theta\in \mathbb{R}_+^*$, we define the set $\mathcal{E}(\lambda,\theta)$ by :
\begin{itemize}
\item when $\mathbb{F}=\mathbb{C}$, \begin{align*}\mathcal{E}(\lambda,\theta)=\{ (\beta,x)\in \mathbb{R}^n\times GT_n : \beta\succeq \lambda, \, \sum_{i=1}^n  (\beta_i-\lambda_i)= \theta, \, x\in GT_n(\beta)\}, 
\end{align*}
\item when $\mathbb{F}=\mathbb{H}$, \begin{align*} \mathcal{E}(\lambda,\theta)=\{(\beta,z,x)\in \mathbb{R}^n\times  \mathbb{R}^n & \times GT_n :   \lambda,\beta\succeq z , \\ & \sum_{i=1}^{n}(\lambda_i+ \beta_i-2z_i)=\theta, \, x\in GT_n(\beta)\},\end{align*}

\item when $ \mathbb{F}=\mathbb{R}$, $n=2r+1$, \begin{align*}\mathcal{E}(\lambda,\theta)=\{(\beta,z,x,&s)  \in\mathbb{R}_+^r\times  \mathbb{R}_+^r\times GT_n\times \{0,1\}: \lambda,\beta\succeq z , \\ & \sum_{i=1}^{r}(\lambda_i + \beta_i-2z_i)=\theta,\, x\in GT_n(\beta), \, s=0 \textrm{ if } \lambda_r=0\},\end{align*}

\item when $\mathbb{F}=\mathbb{R}$, $n=2r$,
  \begin{align*} \mathcal{E}(\lambda,\theta)=
 \{(\beta,z,x) \in &\mathbb{R}^{r}\times \mathbb{R}_+^{r-1}\times GT_n : \lambda,\beta \succeq z, \, \max(\vert \lambda_r \vert, \vert \beta_r \vert)\le z_{r-1} ,\\
&   \sum_{k=1}^{r-1}(\lambda_k+\beta_k-2z_k)+ \vert \lambda_r-\beta_r\vert =\theta, \, x\in GT_n(\beta) \}.
\end{align*} 
\end{itemize}
\end{defn}
Each set $\mathcal{E}(\lambda,\theta)$ is either a convex set or an union of two convex sets. Thus we can define the Lebesgue measure on it.
\begin{defn} \label{defnulambdatheta} For $\lambda\in \CC_n$, $\theta\in \mathbb{R}_+^*$, we define $\nu_{\lambda,\theta}$ as the image of the uniform probability measure on $\mathcal{E}(\lambda,\theta)$ by the projection on the component $\beta$, denoted by $p$.
\end{defn}
The following proposition is proved at paragraph \ref{proofpertutheta}. 
\begin{prop} \label{pertutheta}
Let $\theta\in\mathbb{R}_+^*$,  $\lambda\in\mathcal{C}_n$ and $U$ be a Haar distributed  random variable in $U_n(\mathbb{F})$. Then the radial part of the random matrix $\Omega_n(\lambda)+ U\Omega_n(\theta) U^*$ is distributed according to the measure $\nu_{\lambda,\theta}$. \end{prop}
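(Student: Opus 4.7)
The plan is to identify the distribution of the radial part of $\Omega_n(\lambda)+U\Omega_n(\theta)U^*$ as arising from the convolution of two adjoint orbit measures, and then exploit the semiclassical correspondence between such convolutions and tensor products of irreducible representations that is developed in the second part of the paper. Since $U$ is Haar distributed, $U\Omega_n(\theta)U^*$ is uniform on the adjoint orbit through $\Omega_n(\theta)$; by further randomizing $\Omega_n(\lambda)$ by an independent Haar matrix (which does not change the radial part by Lemma~\ref{invmeasure}), the law of the sum becomes the convolution of the two orbit measures on $\PP_n(\F)$.

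Next I would invoke the variant of Heckman's theorem described in Section~7 to express this convolution as a semiclassical limit of the normalised empirical measure of weights of the tensor product $V_{\lambda_N}\otimes V_{\theta_N}$ of irreducible representations of $U_n(\F)$, for integer highest weights with $(\lambda_N,\theta_N)/N\to (\lambda,\theta)$. Because $\theta$ has a single nonzero coordinate, $V_\theta$ is the simplest nontrivial representation (a symmetric-power type analogue of the defining representation), so the tensor product decomposition is governed by a particularly clean rule. In the case $\F=\C$ this is the classical Pieri rule: the summands $V_\beta$ of $V_\lambda\otimes V_\theta$ are indexed by $\beta\succeq\lambda$ with $\sum_i(\beta_i-\lambda_i)=\theta$, each occurring with multiplicity one, while the weight multiplicities of $V_\beta$ are counted by Gelfand--Tsetlin patterns $x\in GT_n(\beta)$. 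For $\F=\H$ and $\F=\R$, the analogous rules for the symplectic and special orthogonal groups introduce an intermediate ``zig-zag'' vector $z$ with $\lambda,\beta\succeq z$ (reflecting the non type-$A$ branching $U_n(\F)\supset U_{n-1}(\F)$), together with an extra binary sign parameter $s$ when $\F=\R$ and $n$ is odd, and the $|\lambda_r-\beta_r|$ term when $\F=\R$ and $n$ is even. These are exactly the constraints appearing in the definition of $\mathcal{E}(\lambda,\theta)$.

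Combining these ingredients, the semiclassical limit of the empirical weight measure of $V_{\lambda_N}\otimes V_{\theta_N}$ becomes the uniform probability measure on $\mathcal{E}(\lambda,\theta)$, and its projection on the $\beta$-coordinate is by definition $\nu_{\lambda,\theta}$, which coincides with the distribution of the radial part of the perturbed matrix. The main obstacle is establishing the tensor product rules in the $\H$ and $\R$ cases: unlike the $\C$ case where Pieri's rule is classical, these require a genuinely geometric description that does not appear in standard references. This is precisely why the paper relies on Kashiwara crystal theory in Section~9 to produce these rules directly as interlacing conditions, which then plug cleanly into the semiclassical framework.
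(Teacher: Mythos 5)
Your proposal is correct and follows essentially the same route as the paper: the radial part is identified with the convolution of orbit measures, Corollary~\ref{coroprinc} (the tensor-product form of the Heckman-type Theorem~\ref{theoprinc}) turns it into a semiclassical limit of the measures built from the decompositions $V_{\lambda_N}\otimes V_{\theta_N}$, and the crystal-derived rules of Propositions~\ref{tensrepA}--\ref{tensrepD} together with the identification $\dim(\beta)=\Card GT_{n,\Z}(\beta)$ show (as in Lemmas~\ref{dimE} and~\ref{tensor}) that these measures are projections on the $\beta$-coordinate of normalized counting measures on $\mathcal{E}_\Z(\lambda_N,a_N)$, converging to $\nu_{\lambda,\theta}$. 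This matches the paper's argument step for step.
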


Recall that a real random variable has a Gamma distribution with parameters $(\alpha,n)\in \R_+^*\times \mathbb{N}$, if its density is equal to
$$\frac{\alpha}{(n-1)!}e^{-\alpha x}(\alpha x)^{n-1}1_{\R_+}(x).$$   Recall also that we use the notation  $c=1$ when $\mathbb{F}=\C,\R$ and $c=2$ when $\F=\H$.
\begin{lem} \label{chisimple} 
Let $M$ be a standard Gaussian variable in $\mathcal{M}_n(\mathbb{F})$. Then the radial part of $M\Omega_n^1M^*$ has only one nonzero component $\Theta$. It has a Gamma distribution with parameters $(1,n)$ when $\F=\C$, $(1,n-1)$ when $\F=\R$, and $(2,2n)$ when $\F=\H$. Its density $f_\Theta$ can be written, for some $k>0$ as 
$$f_\Theta(\theta)= k\, d_n(\theta)e^{-c\theta}$$
\end{lem}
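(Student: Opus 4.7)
The plan is to treat the three choices of $\F$ by direct computation. First, I would observe that $\Omega_n^1$ has rank one when $\F=\C$ and rank two when $\F=\R,\H$, so $M\Omega_n^1 M^*$ has rank at most one or two respectively. For $\F=\R,\H$ the spectrum of any element of $\PP_n(\F)$ consists of $\pm$-pairs, so the two nonzero eigenvalues must be $\pm\Theta$; for $\F=\C$ there is a single nonzero eigenvalue. In every case the radial part has the form $(\Theta,0,\dots,0)$ for some $\Theta\ge 0$.

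Next, I would express $\Theta$ in terms of the columns of $M$. When $\F=\C$, one has $M\Omega_n^1 M^*=m_1 m_1^*$ with $m_1$ the first column, so $\Theta=\|m_1\|^2$. When $\F=\R$, writing $m_1,m_2$ for the first two real columns, $M\Omega_n^1 M^*=i(m_1 m_2^t-m_2 m_1^t)$ is rank two, and computing $\mathrm{tr}((M\Omega_n^1 M^*)^2)=2\Theta^2$ gives $\Theta=\|m_1\|\cdot\|m_2^\perp\|$, where $m_2^\perp$ is the component of $m_2$ orthogonal to $m_1$. When $\F=\H$, the first quaternionic column of $M$, viewed as a $2n\times 2$ complex matrix, has columns $v,w$ that automatically satisfy $\|v\|=\|w\|$ and $\langle v,w\rangle=0$ from the quaternionic pattern; hence $M\Omega_n^1 M^*=vv^*-ww^*$ has eigenvalues $\pm\|v\|^2$, i.e.\@ $\Theta=\|v\|^2$.

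From the Gaussian normalisation of Section~2 one then reads off the distribution. For $\F=\C$, each $|M_{i,1}|^2\sim\Exp(1)$, so $\Theta$ is a sum of $n$ independent $\Exp(1)$ variables and has the Gamma$(1,n)$ law. For $\F=\H$, unpacking a quaternion into its four real coordinates (each $N(0,\tfrac14)$ by the chosen inner product) gives $|a_i|^2,|b_i|^2\sim\Exp(2)$ independently, so $\|v\|^2$ is a sum of $2n$ independent $\Exp(2)$ variables and has the Gamma$(2,2n)$ law. For $\F=\R$, $\|m_1\|\sim\chi_n$ and $\|m_2^\perp\|\sim\chi_{n-1}$ are independent, and the density of their product is obtained from the identity
$$\int_0^\infty e^{-(ax^2+b/x^2)}\,dx=\tfrac12\sqrt{\pi/a}\,e^{-2\sqrt{ab}}, \qquad a,b>0,$$
yielding precisely the Gamma$(1,n-1)$ density.

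Finally, to recognise the density in the form $k\,d_n(\theta)e^{-c\theta}$, I would evaluate Definition~\ref{asymptoticdim} at $\lambda=(\theta,0,\dots,0)$: only factors indexed by $i=1$ survive, and $V_n(\lambda)$ collapses to a monomial in $\theta$, namely $\theta^{n-1}$ for $\F=\C$, $\theta^{2n-1}$ for $\F=\H$, and $\theta^{n-2}$ for $\F=\R$. Comparing with the Gamma densities above yields the claimed factorisation with an explicit constant $k>0$. The only nonroutine step is the real case, where identifying $\chi_n\cdot\chi_{n-1}$ as Gamma$(1,n-1)$ relies on the quadratic-exponential integral identity above (or an equivalent moment computation); all other cases reduce to standard facts about sums of i.i.d.\@ exponentials.
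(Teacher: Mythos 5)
Your argument is correct, and it differs from the paper's in an interesting way. For $\F=\C$ and $\F=\H$ the two proofs are essentially the same computation packaged differently: the paper passes to $M^*M\Omega_n^1$, whose relevant block is a (quaternionic) scalar $|V|^2$, while you read the nonzero eigenvalues off the first column(s) of $M$ directly ($\Theta=\|m_1\|^2$, resp.\ $\Theta=\|v\|^2$ with the automatic orthogonality $v\perp w$, $\|v\|=\|w\|$ coming from the quaternionic block structure); both give sums of $n$ i.i.d.\ $\Exp(1)$, resp.\ $2n$ i.i.d.\ $\Exp(2)$, variables, hence the stated Gamma laws. The genuine divergence is the real case: the paper explicitly does not carry out the direct computation there, remarking only that it is ``not so immediate but remains quite elementary'' and deferring it to Proposition \ref{Wishartgeneral}, whose proof goes through minors of Haar-distributed orbit matrices, Lemma \ref{restrictiongeneral} and Borel's theorem \ref{HaartoGauss}. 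You instead complete it by hand: $\Theta=\|m_1\|\,\|m_2^{\perp}\|$ with $\|m_1\|\sim\chi_n$ and $\|m_2^{\perp}\|\sim\chi_{n-1}$ independent, and the product density is evaluated via $\int_0^\infty e^{-(ax^2+b/x^2)}\,dx=\tfrac12\sqrt{\pi/a}\,e^{-2\sqrt{ab}}$, which indeed yields the Gamma$(1,n-1)$ law (I checked the computation; it is right, as is your evaluation of $V_n$ at $(\theta,0,\dots,0)$ giving $\theta^{n-1}$, $\theta^{2n-1}$, $\theta^{n-2}$). So your route buys a self-contained, purely Gaussian proof of the real case where the paper leans on a later representation-theoretic result, at the cost of the case-by-case column analysis; the paper's deferral, conversely, illustrates how the asymptotic dimension $d_n$ arises conceptually rather than by inspection of the final formula.
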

\proof We make the simple remark  that  $M\Omega_{n}^1M^*$ has the same eigenvalues as $M^{*}M\Omega_n^1$. If $\mathbb{F}=\mathbb{C}$ or $\mathbb{H}$,  the matrix $M^{*}M\Omega_n^1$ is equal to $\vert V\vert^2\Omega_n^1$, where $V$ is a standard Gaussian variable of $\mathbb{F}^n$. This shows that $M\Omega_n^1M^{*}$ has only one strictly positive eigenvalue, which has a Gamma  distribution with parameters   $(1,n)$ for $\mathbb{F}=\mathbb{C}$ and $(2,2n)$  for $\mathbb{H}$.  In the case when $\mathbb{F}=\mathbb{R}$, $M\Omega_n^1M^{*}$ has only one strictly positive eigenvalue.  The proof that it has a Gamma distribution with paramaters $(1,n-1)$ along the same line is not so immediate but remains quite elementary. Anyway, it will also follow from proposition \ref{Wishartgeneral} below. The last statement follows from the fact that $d_n(\theta)$ is equal to \begin{align} \label{dntheta} \frac{\theta^{n-1}}{(n-1)!}\,\textrm{ for }\F=\C,\,\frac{2\theta^{n-2}}{(n-2)!} \,\textrm{ for } \F=\R,\,\frac{\theta^{2n-1}}{(2n-1)!}\, \textrm{ for } \F=\H.\hskip1cm \qed \end{align} 
\medskip

 This fact, which will be useful for computations, is not a coincidence. Actually it is a particular case of a more general result, proposition \ref{Wishartgeneral}, the proof of which provides an interesting way to understand why the asymptotic dimension $d_n$ appears. 

\begin{theo} \label{theopertuLUE}
Let $M$ be a standard Gaussian variable in $\mathcal{M}_n(\mathbb{F})$ and $\lambda$ be an element of the Weyl chamber $\mathcal{C}_n$.  Then the distribution of the radial part of   $\Omega_n(\lambda)+M\Omega_n^1M^*,$ that we denote $\nu_\lambda$,  is the probability measure proportional to $$\int_{\mathbb{R}_+}\nu_{\lambda,\theta}\, d_n(\theta) e^{-c\theta}d\theta.$$  
\end{theo}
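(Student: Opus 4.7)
The plan is to deduce the theorem from Lemma \ref{chisimple}, Lemma \ref{invmeasure} and Proposition \ref{pertutheta} by a conditioning argument on the random eigenvalue of $M\Omega_n^1 M^{*}$.

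First I would verify that $M\Omega_n^1 M^{*}$ is $U_n(\F)$-invariant: for $V \in U_n(\F)$,
$$V(M\Omega_n^1 M^{*})V^{*} = (VM)\Omega_n^1(VM)^{*},$$
and since the standard Gaussian law on $\mathcal{M}_n(\F)$ is invariant under left multiplication by $V$, the matrix $VM$ has the same law as $M$. Lemma \ref{invmeasure} then yields a decomposition in law
$$M\Omega_n^1 M^{*} = U\,\Omega_n(\Theta, 0, \ldots, 0)\, U^{*},$$
with $U \in U_n(\F)$ Haar distributed and independent of the radial part. By Lemma \ref{chisimple}, the radial part has only one nonzero component $\Theta$, whose density on $\mathbb{R}_+$ is $k\, d_n(\theta)\, e^{-c\theta}$ for an appropriate normalizing constant $k > 0$.

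Next I would condition on $\Theta = \theta$. Conditionally, $\Omega_n(\lambda) + M\Omega_n^1 M^{*}$ has the law of $\Omega_n(\lambda) + U\Omega_n(\theta)U^{*}$ with $U$ Haar distributed in $U_n(\F)$, and Proposition \ref{pertutheta} identifies the radial part of this sum as a sample from $\nu_{\lambda,\theta}$. Averaging over $\Theta$ gives
$$\nu_\lambda = k \int_{\mathbb{R}_+} \nu_{\lambda,\theta}\, d_n(\theta)\, e^{-c\theta}\, d\theta,$$
which is the asserted proportional expression; the constant $k$ is automatically the correct normalization since $\nu_\lambda$ is a probability measure.

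I do not anticipate any serious obstacle: the representation-theoretic content has been packaged into Proposition \ref{pertutheta}, and the Gamma-type density of the unique nonzero eigenvalue of $M\Omega_n^1 M^{*}$ has been packaged into Lemma \ref{chisimple}. The remaining argument is a one-line conditioning step, whose only subtlety is to invoke Lemma \ref{invmeasure} to secure the independence of $U$ and $\Theta$ needed for the conditional application of Proposition \ref{pertutheta}.
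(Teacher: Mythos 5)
Your proposal is correct and follows essentially the same route as the paper: invariance of $M\Omega_n^1M^*$ plus Lemma \ref{invmeasure} and Lemma \ref{chisimple} give the decomposition $U\Omega_n(\Theta)U^*$ with $U$ Haar distributed and independent of $\Theta$ (whose density is proportional to $d_n(\theta)e^{-c\theta}$), and conditioning on $\Theta=\theta$ reduces the statement to Proposition \ref{pertutheta}, after which one integrates against the density of $\Theta$. The only difference is that you spell out the invariance check and the conditioning step, which the paper leaves implicit.
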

\proof The matrix $M\Omega_n^1M^*$ is an invariant random matrix in $\mathcal{P}_n(\F)$. Thus lemmas \ref{invmeasure} and \ref{chisimple} ensure that $M$ can be written $U\Omega_n(\Theta)U^*$, where $U$ and $\Theta$ are independent  random variables with a Haar distribution on  $U_n(\F)$ and the density $f_\Theta$. It suffices to apply  proposition \ref{pertutheta} to see that $\nu_\lambda=\int_{\R}\nu_{\lambda,\theta}f_{\Theta}(\theta)\, d\theta$.  \qed \medskip

In the following section we will need an explicit formula for the density of the measure $\nu_\lambda$. We first deal with the measure $\nu_{\lambda,\theta}$. 
 
\begin{lem} \label{volE} For $\lambda\in \mathcal{C}_n$, $\theta\in \mathbb{R}_+$, the volume of $\mathcal{E}(\lambda,\theta)$ is equal to $d_n(\lambda)d_n(\theta)$. 
\end{lem}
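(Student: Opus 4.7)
The strategy is to peel off the Gelfand--Tsetlin fiber using Lemma~\ref{volGT} and then verify the remaining integral identity via a Laplace transform in $\theta$.

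First I would observe that in each case the defining conditions of $\mathcal{E}(\lambda,\theta)$ constrain the last coordinate $x$ to range freely over $GT_n(\beta)$ once $\beta$ (and the auxiliary variables $z$, or $z$ and $s$, in the $\F=\H,\R$ cases) are fixed. By Lemma~\ref{volGT} this fiber has volume $d_n(\beta)$, so by Fubini
$$\text{vol}(\mathcal{E}(\lambda,\theta))=\int_{B(\lambda,\theta)} d_n(\beta)\,d\sigma,$$
where $B(\lambda,\theta)$ is the projection of $\mathcal{E}(\lambda,\theta)$ onto the non-$x$ coordinates, equipped with its induced Lebesgue measure.

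The problem thus reduces to the identity $\int_{B(\lambda,\theta)}d_n(\beta)\,d\sigma=d_n(\lambda)d_n(\theta)$. For $\F=\C$, using the explicit form of $d_n(\theta)$ coming from (\ref{dntheta}), the RHS has Laplace transform $d_n(\lambda)/s^n$ in $\theta$, while the coarea formula turns the LHS into
$$\int_{\beta\succeq\lambda}e^{-s\sum(\beta_i-\lambda_i)}d_n(\beta)\,d\beta.$$
Writing $d_n(\beta)=c_n^{-1}\det(\beta_i^{j-1})$ by Lemma~\ref{rouge} and distributing $e^{-s\sum\beta_i}=\prod_i e^{-s\beta_i}$ across the rows gives the integrand $c_n^{-1}\det(\beta_i^{j-1}e^{-s\beta_i})$. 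The interlacing $\beta\succeq\lambda$ decomposes the integration domain into a product of disjoint intervals $\beta_i\in[\lambda_i,\lambda_{i-1}]$ (with $\lambda_0=+\infty$), so multilinearity of the determinant yields
$$c_n^{-1}\det\bigl(\textstyle\int_{\lambda_i}^{\lambda_{i-1}}\beta^{j-1}e^{-s\beta}d\beta\bigr)_{i,j=1}^n.$$
Elementary column operations (essentially replacing $C_j$ by $C_j-(j-1)s^{-1}C_{j-1}$, using integration by parts) reduce this to $V_n(\lambda)e^{-s\sum\lambda_i}/(c_n s^n)$, which matches the Laplace transform of the RHS and completes the $\F=\C$ case after inverting the Laplace transform.

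For $\F=\H$ and $\F=\R$, the base $B(\lambda,\theta)$ carries the extra interlacing $\lambda,\beta\succeq z$ (plus the boolean $s$ when $\F=\R$ and $n$ is odd). I would apply the same Laplace-transform strategy, but first integrate out $z$ using the generalised Cauchy--Binet identity (\ref{CauchyBinet}) together with the interlacing/determinant formula (\ref{formuleentrelac}). This packages the double interlacing into a single determinant in $\lambda$ and $\beta$, after which the $\beta$-integration proceeds exactly as in the complex case, but now with the appropriate polynomial from Lemma~\ref{rouge}. The main obstacle is precisely this $\F=\H,\R$ bookkeeping: the coupling $\sum(\lambda_i+\beta_i-2z_i)=\theta$ ties all three sets of variables together, and Cauchy--Binet is essential to disentangle them cleanly. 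A more conceptual alternative, in the spirit of the second part of the paper, is to invoke the variant of Heckman's theorem of Section~7: it identifies $\text{vol}(\mathcal{E}(\lambda,\theta))$ with the asymptotic total multiplicity in the tensor-product decomposition of the representation of highest weight $\lambda$ with the symmetric-power-like representation parametrized by $\theta$, yielding the product of asymptotic Weyl dimensions $d_n(\lambda)d_n(\theta)$ directly.
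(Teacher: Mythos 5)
Your main route is genuinely different from the paper's. The paper proves this lemma in one line from Lemma \ref{dimE}: comparing the integer points of $\mathcal{E}_{\mathbb{Z}}(\lambda,a)$ with the crystal descriptions of the decompositions $V_\lambda\otimes V_\gamma$, $\gamma=(a,0,\dots,0)$, in Propositions \ref{tensrepA}--\ref{tensrepD}, and using $\Card GT_{n,\Z}(\beta)=\dim V_\beta$, gives $\Card\mathcal{E}_{\Z}(\lambda,a)=\dim(\lambda)\dim(\gamma)$; the volume statement then follows by the same scaling that turns Lemma \ref{dimGT} into Lemma \ref{volGT}, Weyl's dimension formula degenerating to $d_n$. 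Your closing ``conceptual alternative'' is essentially this argument, except that the identification of $\text{vol}(\mathcal{E}(\lambda,\theta))$ with the tensor-product count rests on the crystal propositions, not on Heckman's theorem itself, which only yields convergence of the multiplicity measures. Your computational route is a legitimate alternative, and the $\F=\C$ case is correct as written: the fiber-peeling via Lemma \ref{volGT}, the Laplace transform in $\theta$, the splitting of $\{\beta\succeq\lambda\}$ into the product of intervals $[\lambda_i,\lambda_{i-1}]$, and the column/row reduction of $\det\bigl(\int_{\lambda_i}^{\lambda_{i-1}}\beta^{j-1}e^{-s\beta}d\beta\bigr)$ all check out. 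One caveat: the coarea step silently uses the coordinate (lattice) normalization of Lebesgue measure on the affine span rather than the induced Hausdorff measure, which differ by factors such as $\sqrt{n}$; since the lemma asserts an exact equality, the convention (the one implicit in the paper's lattice-count proof) must be fixed explicitly, and the gradient norms of the linear constraints differ between the $\C$, $\H$ and $\R$ cases.

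The gap is in the cases you only sketch. For $\F=\H$ (and $\F=\R$ with $n$ odd, which is the same computation up to constants by Remark \ref{BCalea}) the plan is workable: applying (\ref{formuleentrelac}) and Cauchy--Binet (\ref{CauchyBinet}) twice, together with the odd-parity structure of the type $C$/$B$ version of $d_n$ from Lemma \ref{rouge}, does collapse everything to a multiple of $e^{s\sum_i\lambda_i}\det(\lambda_i^{2j-1})/s^{2n}$; but the discrete parameter $s\in\{0,1\}$ and the condition ``$s=0$ if $\lambda_r=0$'' in the odd orthogonal case must be handled explicitly, since they account for the difference between types $B$ and $C$. More seriously, for $\F=\R$ with $n$ even (type $D$) the claim that ``the $\beta$-integration proceeds exactly as in the complex case'' is not substantiated: there $z$ has only $r-1$ components, the constraints include $\max(\vert\lambda_r\vert,\vert\beta_r\vert)\le z_{r-1}$ and the term $\vert\lambda_r-\beta_r\vert$ in the linear relation, and $\lambda_r,\beta_r$ may be negative, so this interlacing pattern is not of the form covered by (\ref{formuleentrelac}) and Cauchy--Binet does not apply as stated; one needs a type-$D$ analogue or a reduction of the even case to the odd one (which is how the paper itself sidesteps direct type-$D$ computations, e.g.\ in the proof of Theorem \ref{eigenvaluesLUE}). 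As it stands, your proposal establishes the lemma for $\F=\C$, plausibly for $\H$ and the odd orthogonal case, but leaves the even orthogonal case open, whereas the paper's counting argument treats all four types uniformly.
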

\proof The lemma is immediately  deduced from lemma \ref{dimE}. \qed \medskip

We denote by $l_{\lambda,\theta}$ the Lebesgue measure on $p(\mathcal{E}(\lambda,\theta))$, where $p$ is the projection introduced at definition \ref{defnulambdatheta}.

\begin{lem} \label{convolution} Let $\lambda$ be in the Weyl chamber and $\theta$ in $\mathbb{R}_+$. Then
\begin{itemize}
\item when $\mathbb{F}=\mathbb{C}$,
$$\nu_{\lambda,\theta}(d\beta)=\frac{d_n(\beta)}{d_n(\lambda)d_n(\theta)} l_{\lambda,\theta}(d\beta),$$
\item when $\mathbb{F}=\mathbb{H},\mathbb{R},$
$$\nu_{\lambda,\theta}(d\beta)=\frac{d_n(\beta)}{d_n(\lambda)d_n(\theta)}  \text{vol}(\mathcal{M}_{\lambda,\theta}(\beta)) l_{\lambda,\theta}(d\beta),$$\end{itemize}
where $\mathcal{M}_{\lambda,\theta}(\beta)$ is the projection, for $\beta$ fixed, of $\mathcal{E}(\lambda,\theta)$ on the component $z$ when $\mathbb{F}=\mathbb{H}$ or $\F=\R$ with n $even$, and on the component $(z,s)$ when $\F=\R$ with $n$ odd.
\end{lem}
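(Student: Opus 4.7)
The statement is a disintegration of the uniform probability on $\mathcal{E}(\lambda,\theta)$ along the projection $p$ onto the $\beta$--coordinate. The key structural observation is that, in each of the four defining expressions of $\mathcal{E}(\lambda,\theta)$, the condition ``$x\in GT_n(\beta)$'' involves only $\beta$, while the remaining constraints involve only the variables $(\lambda,\beta,z)$ (and $s$ when $\F=\R$ with $n$ odd). Hence, for every fixed $\beta$ in the image $p(\mathcal{E}(\lambda,\theta))$, the fiber $p^{-1}(\beta)\cap\mathcal{E}(\lambda,\theta)$ splits as a Cartesian product:
\begin{itemize}
\item for $\F=\C$, the fiber is just $GT_n(\beta)$, since the linear constraint $\sum(\beta_i-\lambda_i)=\theta$ only involves $\beta$;
\item for $\F=\H$ or $\F=\R$ (with $n$ even), the fiber is $\mathcal{M}_{\lambda,\theta}(\beta)\times GT_n(\beta)$;
\item for $\F=\R$ with $n$ odd, the fiber is $\mathcal{M}_{\lambda,\theta}(\beta)\times GT_n(\beta)$ where $\mathcal{M}_{\lambda,\theta}(\beta)$ now contains the extra discrete variable $s$.
\end{itemize}

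Using the convention fixed in Section 3 that volumes are computed with respect to the natural Lebesgue measure on the affine hull of each convex set, Fubini's theorem gives a factorization
\begin{equation*}
\text{vol}\bigl(p^{-1}(\beta)\cap\mathcal{E}(\lambda,\theta)\bigr)=\text{vol}\bigl(\mathcal{M}_{\lambda,\theta}(\beta)\bigr)\cdot\text{vol}\bigl(GT_n(\beta)\bigr),
\end{equation*}
with $\mathcal{M}_{\lambda,\theta}(\beta)$ suppressed when $\F=\C$. Lemma \ref{volGT} identifies $\text{vol}(GT_n(\beta))=d_n(\beta)$, and Lemma \ref{volE} identifies the total volume $\text{vol}(\mathcal{E}(\lambda,\theta))=d_n(\lambda)\,d_n(\theta)$. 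Dividing the fiber volume by the total volume yields the density of the push-forward of the uniform probability on $\mathcal{E}(\lambda,\theta)$ along $p$ with respect to $l_{\lambda,\theta}$, which is exactly the announced formula in each case.

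The only delicate point is that the Fubini identity
\begin{equation*}
\text{vol}\bigl(\mathcal{E}(\lambda,\theta)\bigr)=\int_{p(\mathcal{E}(\lambda,\theta))}\text{vol}\bigl(p^{-1}(\beta)\cap\mathcal{E}(\lambda,\theta)\bigr)\,l_{\lambda,\theta}(d\beta)
\end{equation*}
be valid without a Jacobian correction. This is where the conventions of Section 3 are used: once one picks coordinates compatible with the direct-sum decomposition of the tangent space into the $\beta$--directions and the fiber directions, the projection becomes an orthogonal projection onto the affine hull of $p(\mathcal{E}(\lambda,\theta))$, and no multiplicative constant appears. The lemma then reduces to combining the two volume computations of Lemmas \ref{volGT} and \ref{volE} with the elementary product structure of the fibers.
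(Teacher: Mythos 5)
Your argument is correct and is essentially the paper's own proof, just spelled out: the paper likewise observes that $\nu_{\lambda,\theta}$ is the push-forward of the uniform measure on $\mathcal{E}(\lambda,\theta)$ by $p$, gets the normalisation $d_n(\lambda)d_n(\theta)$ from Lemma \ref{volE}, and obtains the factor $d_n(\beta)$ (times $\mathrm{vol}(\mathcal{M}_{\lambda,\theta}(\beta))$ in the non-complex cases) by integrating over the fiber, exactly your Fubini factorisation combined with Lemma \ref{volGT}.
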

\proof By definition $\nu_{\lambda,\theta}$ is the image of the uniform measure on  $\mathcal{E}(\lambda,\theta)$ by the projection $p$. Thus, the normalisation follows from the lemma \ref{volE} and the factor $d_n(\beta)$ appears when one integrates this uniform measure with respect to the component $x\in GT_n(\beta)$. \qed\medskip

We will see that $\text{vol}(\mathcal{M}_{\lambda,\theta}(\beta))$ plays the role of an asymptotic multiplicity. It is replaced by one when $\F=\C$ because this is the only field for which the irreducible decompositions described from propositions \ref{tensrepA} to \ref{tensrepD} are multiplicity free. Let us now describe the measure $\nu_\lambda$ in some particular cases.

\begin{prop} \label{pertu} Let $k$ be an integer smaller that $\tilde{n}$ and  $\lambda\in \mathbb{R}^{\tilde{n}}$ be equal to $(\lambda_1, \cdots ,\lambda_k,0, \cdots,0)$. When $\mathbb{F}=\mathbb{R}$, if $n$ is even and $k=\tilde{n}$, we suppose that  $\lambda_1> \cdots >\lambda_{k-1}>\vert \lambda_k\vert$. For every other case we suppose that $\lambda_1> \cdots >\lambda_k>0$.

 Then the measure $\nu_{\lambda}$ is equal to $\tilde{\nu}_{\lambda}\otimes \delta_0^{\otimes (n-(k+1)\wedge n)} $,  where $\tilde{\nu}_{\lambda}$ has a density $L_\lambda$ with respect to the Lebesgue measure on $\mathbb{R}^{(k+1)\wedge \tilde n}$ defined by
 \begin{itemize}
\item when $\mathbb{F}=\mathbb{C} $,
 \begin{align*}
 L_\lambda(\beta)=\frac{d_n(\beta)}{d_n(\lambda)}1_{\{\beta\succeq \lambda\}}\, e^{-\sum_{i=1}^{(k+1)\wedge n }(\beta_i-\lambda_i)},\end{align*}
 
\item  when $\mathbb{F}=\mathbb{H} $,
 \begin{align*}
 L_\lambda(\beta)=2^n \frac{d_n(\beta)}{d_n(\lambda)} \Big[\int_{\mathbb{R}_+^{k}} 1_{\{\lambda,\beta\succeq z\}}e^{-2\sum_{i=1}^{k}(\lambda_i+\beta_i-2z_i)-\beta_{k+1}1_{\{k<n\}}}\,dz \Big] 1_{\{\beta_{(k+1)\wedge n}\ge 0\}}.\end{align*}

\item  when $\mathbb{F}=\mathbb{R}$,  $n=2r$, $k=r,r-1$,
\begin{align*}
 L_\lambda(\beta)=\frac{1}{2} \frac{d_n(\beta)}{d_n(\lambda)} \Big[\int_{\mathbb{R}_{+}^{r-1}} 1_{\{\lambda,\beta\succeq z, \vert \lambda_r\vert,\vert \beta_r\vert \le z_{r-1}\} } e^{-\sum_{i=1}^{r-1}(\lambda_i+\beta_i-2z_i)-\vert \lambda_r-\beta_{r}\vert  }\, dz \Big],\end{align*}

\item  when  $ \mathbb{F}=\mathbb{R}$, $n=2r$, $k\le r-2$,
\begin{align*}
 L_\lambda(\beta)=\frac{1}{2} \frac{d_n(\beta)}{d_n(\lambda)}\Big[\int_{\mathbb{R}_{+}^{k}} 1_{\{\lambda,\beta\succeq z \}} e^{-\sum_{i=1}^{k}(\lambda_i+\beta_i-2z_i)-\lambda_{k+1} }\, dz \Big] 1_{\{\beta_{k+1}\ge 0\}}.
\end{align*}
\end{itemize}
\end{prop}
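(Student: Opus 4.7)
The plan is to read off Proposition \ref{pertu} by plugging Lemma \ref{convolution} into Theorem \ref{theopertuLUE}. By the theorem, $\nu_\lambda$ is proportional to $\int_0^\infty \nu_{\lambda,\theta}\,d_n(\theta) e^{-c\theta}\,d\theta$, and by the lemma $\nu_{\lambda,\theta}(d\beta)$ is $\frac{d_n(\beta)}{d_n(\lambda)d_n(\theta)}$ times $\mathrm{vol}(\mathcal{M}_{\lambda,\theta}(\beta))$ (which is $1$ for $\F=\C$) times $l_{\lambda,\theta}(d\beta)$. The factor $d_n(\theta)$ therefore cancels, and what remains is
$$\nu_\lambda(d\beta) \;\propto\; \frac{d_n(\beta)}{d_n(\lambda)}\,\Bigl[\int_0^\infty \mathrm{vol}(\mathcal{M}_{\lambda,\theta}(\beta))\,e^{-c\theta}\, l_{\lambda,\theta}(d\beta)\,d\theta\Bigr].$$
On the support of $l_{\lambda,\theta}$, the parameter $\theta$ is determined by $\beta$ (and, for $\F=\H,\R$, by the auxiliary variable $z$) through the linear sum constraint defining $\mathcal{E}(\lambda,\theta)$. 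Consequently, integrating out $\theta$ simply drops that constraint, while the volume $\mathrm{vol}(\mathcal{M}_{\lambda,\theta}(\beta))$ unfolds into an explicit Lebesgue integral over $z$ and $e^{-c\theta}$ becomes the Boltzmann weight $\exp(-c\sum(\lambda_i+\beta_i-2z_i))$ in the $\H,\R$ cases (respectively $\exp(-c\sum(\beta_i-\lambda_i))$ in the $\C$ case).

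Next I would exploit the very degenerate shape $\lambda=(\lambda_1,\dots,\lambda_k,0,\dots,0)$. For $\F=\C$, the interlacing $\beta\succeq \lambda$ forces $\beta_{k+2}=\cdots=\beta_n=0$, which produces the factor $\delta_0^{\otimes (n-(k+1)\wedge n)}$ and leaves $\beta_1,\dots,\beta_{(k+1)\wedge n}$ subject only to $\beta\succeq \lambda$, giving the first formula immediately. For $\F=\H$ and $\F=\R$, the analysis is done in two steps: $\lambda\succeq z$ collapses the trailing entries of $z$ to $0$, after which $\beta\succeq z$ together with $\beta\in\CC_n$ (in particular $\beta_n\ge 0$) forces the corresponding trailing entries of $\beta$ to vanish, except for one boundary coordinate ($\beta_{k+1}$ in the $\H$ case, $\beta_{r}$ or $\lambda_{k+1}$ in the $\R$ case) that survives the collapse. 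The sum $\sum(\lambda_i+\beta_i-2z_i)$ truncates to $\sum_{i=1}^{k}(\lambda_i+\beta_i-2z_i)$ plus precisely the boundary contribution that appears in the exponent of $L_\lambda$.

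The main obstacle is the case $\F=\R$ with $n=2r$ and $k\in\{r-1,r\}$, because there Remark \ref{remstrange} forces $\lambda_r$ (and $\beta_r$) to be signed, so one has to carry $|\lambda_r|$ through the interlacing $\lambda\succeq z$, and the sum constraint contributes the genuinely nonlinear boundary term $|\lambda_r-\beta_r|$. One has to verify that the integration over the single auxiliary $z$-direction controlled by this boundary indeed produces the factor $\tfrac12$ and the $|\lambda_r-\beta_r|$ in the exponent. The remaining case analysis is routine bookkeeping: the overall normalising constant is recovered either by inserting the closed-form expression of $d_n(\theta)$ from \eqref{dntheta} (equivalently, from Lemma \ref{chisimple}) or by imposing that $\nu_\lambda$ be a probability measure, and the indicator functions describing the admissible set for $\beta$ follow directly from the interlacing inequalities that remain after the collapse.
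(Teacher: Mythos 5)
Your proposal is correct and is essentially the paper's own argument: the paper's proof consists precisely of writing $\nu_\lambda=\int_{\mathbb{R}_+}\nu_{\lambda,\theta}\,f_\Theta(\theta)\,d\theta$ (as in the proof of Theorem \ref{theopertuLUE}) and then substituting Lemma \ref{convolution} together with $f_\Theta(\theta)=d_n(\theta)e^{-\theta}$, $2^n d_n(\theta)e^{-2\theta}$, $\tfrac12 d_n(\theta)e^{-\theta}$ in the three cases, so that $d_n(\theta)$ cancels and the sum constraint is integrated out exactly as you describe. The collapsing of the trailing coordinates of $z$ and $\beta$ for the degenerate $\lambda$, and the resulting boundary terms (including the even real case), are left implicit in the paper just as in your outline, so you are not missing any ingredient the paper supplies.
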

\proof  Using the same notations as in the proof of theorem \ref{theopertuLUE} we have that $\nu_\lambda=\int_{\R_+} \nu_{\lambda,\theta} f_\Theta (\theta)\, d\theta$. Thus the proposition follows immediately from lemma \ref{convolution} and the fact that $f_\Theta(\theta)=d_n(\theta)e^{-\theta}$ when $\F=\C$, $f_\Theta(\theta)=2^nd_n(\theta)e^{-2\theta}$ when $\F=\H$, and $f_\Theta(\theta)=\frac{1}{2}d_n(\theta)e^{-\theta}$ when $\F=\R$.  \qed
\begin{rem} \label{BCalea} We  observe in lemma \ref{convolution} that the measures $\nu_{\lambda,\theta}$ are the same, in the cases $\mathbb{F}=\mathbb{R}$, $n=2r+1$ and $\mathbb{F}=\mathbb{H}$, $n=r$ (see \ref{relquatreal} for explanations). Moreover, for that two cases, the functions $d_n$ are the same, up to a constant. Thus  the measures $\nu_\lambda$ defined at theorem \ref{theopertuLUE} are the same, up to the constant $c$. That's why we didn't write both cases in the previous proposition.
\end{rem}

\section{Generalised Laguerre ensembles}
In this section we compute the law of the radial part of a matrix in LUE$_{n,k}(\mathbb{F})$ by considering successive rank one perturbations, i.e.\@ the random walk $(S_k)_{k\ge 0}$  on $\mathcal{P}_n(\F)$ defined by $S_k=\sum_{i=1}^{k}M_i\Omega_n^1 M_i$, where the $M_i$'s are independent standard Gaussian variables in $\mathcal{M}_n(\F)$. We compute the law of the radial part $R_k$ of $S_k$ by induction. The following proposition concerns  the chain $(R_k)_{k\ge 0}$.  In Figures \ref{perturbationA},  \ref{perturbationC} and  \ref{perturbationD} the black discs represent successive states of this chain for $\mathbb{F}=\mathbb{C},\mathbb{H}$ and $\mathbb{R}$. The white discs are intermediate points which indicate the interlacing conditions satisfied by the chain.

\begin{prop} \label{Markovpertu}
The process  $(R_k)_{k\ge 0}$ is a Markov chain whose  transition probability $P(\lambda,\, )$ is equal to $\nu_\lambda$. When $R_0=0$,  $R_k$ has $k\wedge \tilde{n}$ nonzero components.
\end{prop}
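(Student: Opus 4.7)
The plan is to derive the Markov property for $(R_k)$ from the unitary invariance of the standard Gaussian law on $\mathcal{M}_n(\F)$, and then invoke Theorem \ref{theopertuLUE} to identify the transition. Writing $S_{k+1}=S_k+M_{k+1}\Omega_n^1 M_{k+1}^*$, the key input is that $M_{k+1}$ is independent of $\sigma(S_1,\ldots,S_k)$ and that $VM_{k+1}$ has the same law as $M_{k+1}$ for every $V\in U_n(\F)$; in particular the increment $M_{k+1}\Omega_n^1 M_{k+1}^*$ has a conjugation-invariant distribution.

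First I would fix a bounded measurable function $g$ on $\CC_n$ and compute the conditional expectation $\mathbb{E}[g(R_{k+1})\,|\,S_1,\ldots,S_k]$. Choosing any $U_k\in U_n(\F)$ with $S_k=U_k\Omega_n(R_k)U_k^*$, the radial part of $S_{k+1}$ coincides with the radial part of $\Omega_n(R_k)+U_k^*M_{k+1}\Omega_n^1 M_{k+1}^* U_k$. Performing the change of variables $M_{k+1}\mapsto U_kM_{k+1}$, which preserves the standard Gaussian law on $\mathcal{M}_n(\F)$, the conditional expectation reduces to the unconditional expectation of $g$ applied to the radial part of $\Omega_n(R_k)+M\Omega_n^1 M^*$ for an independent standard Gaussian $M$. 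By Theorem \ref{theopertuLUE} this equals $\int g\,d\nu_{R_k}$, which both establishes the Markov property of $(R_k)$ and identifies its transition with $\nu_\lambda$. Non-uniqueness of $U_k$ is harmless, since the argument only uses the distributional identity $M_{k+1}\stackrel{\textrm{law}}{=}U_kM_{k+1}$, valid conditionally on $(S_1,\ldots,S_k)$ for any (possibly random) $U_k$ built from it.

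For the statement about the number of nonzero components I would proceed by induction on $k$, starting from $R_0=0$, which has $0$ nonzero components. The inductive step is Proposition \ref{pertu}: if $R_k$ has $k$ nonzero components with $k<\tilde n$, then $\nu_{R_k}$ is the tensor product of an absolutely continuous measure on $\R^{(k+1)\wedge\tilde n}$ with a Dirac mass at $0$ on the remaining coordinates, forcing $R_{k+1}$ to have exactly $(k+1)\wedge\tilde n$ nonzero components almost surely; once $k\geq\tilde n$ the chain stays in the interior of $\CC_n$ and the count saturates at $\tilde n$. I do not expect a serious obstacle here, as the proposition is essentially a bookkeeping reduction of the Markov step to Theorem \ref{theopertuLUE}, combined with the support analysis already carried out in Proposition \ref{pertu}.
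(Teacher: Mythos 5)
Your proposal is correct and follows essentially the same route as the paper: both arguments use the unitary invariance of the Gaussian increment to reduce the conditional law of $R_{k+1}$ to that of the radial part of $\Omega_n(R_k)+M\Omega_n^1M^*$ and then invoke Theorem \ref{theopertuLUE}, the only cosmetic differences being that you condition on $\sigma(S_1,\ldots,S_k)$ with a measurable choice of $U_k$ (which, by the tower property, indeed implies the Markov property for the radial filtration) whereas the paper conditions on $\sigma(R_1,\ldots,R_k)$ after replacing $S_k$ by $U\Omega_n(R_k)U^*$ with an independent Haar $U$. For the count of nonzero components you argue by induction via Proposition \ref{pertu} (using that the nonzero entries of $R_k$ are a.s.\ distinct since they have a density), while the paper reads the same support statement off Lemma \ref{convolution}; both are fine.
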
 
\proof As in  lemma \ref{invmeasure}, we see that $(S_k,R_k, \cdots,R_1)$ has the same law as $$(U\Omega_n(R_k)U^*,R_k, \cdots,R_1),$$ when $U\in U_n(\mathbb{F})$ is a Haar distributed random variable  independent of $(R_k, \cdots,R_1)$. Then for every bounded measurable function $f:\mathcal{P}_n(\mathbb{F})\to\mathbb{R}$, \begin{align*} \mathbb{E}(f(S_{k+1})\vert \sigma\{ R_k, \cdots,R_1\})&=\mathbb{E}(f(U\Omega_n(R_k)U^* + M_{k+1}\Omega_n^1M_{k+1}^*)\vert \sigma\{R_k,\cdots,R_1\})\\ &=\mathbb{E}(f(U\Omega_n(R_k)U^* + M_{k+1}\Omega_n^1M_{k+1}^*)\vert\sigma\{ R_k\}).\end{align*} Thus $(R_k)_{k\ge 0}$ is Markovian. The transition probability is given by theorem \ref{theopertuLUE} and it is clear, for instance from lemma \ref{convolution}, that the last point  is true.
 \qed\medskip

\begin{figure}[h!]
\begin{pspicture}(-9,2.5)(4,-0.25)
\psline{<-}(1,0)(-5.3,0)\psline[linecolor=lightgray]{<-}(1,0.5)(-5.3,0.5)\psline[linecolor=lightgray]{<-}(1,1)(-5.3,1)\psline[linecolor=lightgray]{<-}(1,1.5)(-5.3,1.5)\psline[linecolor=lightgray]{<-}(1,2)(-5.3,2)\psline{->}(-5,-0.2)(-5,2.5)
\psdots(-5,0)\psarc(-5,0){0.1}{0}{360} \psarc(-5,0){0.15}{0}{360} 
\psdots(-5,0.5)\psarc(-5,0.5){0.1}{0}{360}\psdots(-3.6,0.5)
\psdots(-5,1)\psdots(-4.4,1)\psdots(-2.6,1)
\psdots(-4.6,1.5)(-3,1.5)(-1.2,1.5) 
\psdots(-3.4,2)(-2.2,2)(-0,2) 
\put(-5.15,-0.55){$0$}\put(-5.8,0){$R_0$}\put(-5.8,0.5){$R_1$}\put(-5.8,1){$R_2$}\put(-5.8,1.5){$R_3$}\put(-5.8,2){$R_4$}
\end{pspicture}
  \caption{Rank one perturbations on $\mathcal{P}_3(\mathbb{C})$.}
  \label{perturbationA}
\end{figure}

\begin{figure}[h!]
\begin{pspicture}(-9.5,3.5)(5,-0.25)
\psline{<-}(-0,0)(-5.3,0)\psline[linecolor=lightgray]{<-}(-0,0.5)(-5.3,0.5)\psline[linecolor=lightgray]{<-}(-0,1)(-5.3,1)\psline[linecolor=lightgray]{<-}(-0,1.5)(-5.3,1.5)\psline[linecolor=lightgray]{<-}(-0,2)(-5.3,2)\psline[linecolor=lightgray]{<-}(-0,2.5)(-5.3,2.5)\psline[linecolor=lightgray]{<-}(-0,3)(-5.3,3)\psline{->}(-5,-0.2)(-5,3.5)
\psdots(-5,0)\psarc(-5,0){0.1}{0}{360} 
\psdots[dotstyle=o](-5,0.5)\psarc[linewidth=0.2pt](-5,0.5){0.1}{0}{360}
\psdots(-5,1)\psdots(-2.5,1)
\psdots[dotstyle=o](-5,1.5)(-3.2,1.5) 
\psdots(-2,2)(-3.7,2)
\psdots[dotstyle=o](-2.6,2.5)(-4.7,2.5)
\psdots(-1,3)(-4.2,3)
\put(-5.15,-0,55){$0$}\put(-5.8,0){$R_0$}\put(-5.8,1){$R_1$}\put(-5.75,2){$R_2$}\put(-5.8,3){$R_3$}
\end{pspicture}
  \caption{Rank one perturbations on $\mathcal{P}_2(\mathbb{H})$.}
  \label{perturbationC}
\end{figure}

\begin{figure}[h!]
\begin{pspicture}(-10,3.5)(4,-0.25)
\psline{<-}(-0,0)(-6.3,0)\psline[linecolor=lightgray]{<-}(-0,0.5)(-6.3,0.5)\psline[linecolor=lightgray]{<-}(-0,1)(-6.3,1)\psline[linecolor=lightgray]{<-}(-0,1.5)(-6.3,1.5)\psline[linecolor=lightgray]{<-}(-0,2)(-6.3,2)\psline[linecolor=lightgray]{<-}(-0,2.5)(-6.3,2.5)\psline[linecolor=lightgray]{<-}(-0,3)(-6.3,3)\psline{->}(-5,-0.2)(-5,3.5)
\psdots(-5,0)\psarc(-5,0){0.1}{0}{360} 
\psdots[dotstyle=o](-5,0.5) 
\psdots(-5,1)\psdots(-2.2,1)
\psdots[dotstyle=o](-3.7,1.5) 
\psdots(-2.7,2)(-5.5,2)
\psdots[dotstyle=o](-3.5,2.5)
\psdots(-1.5,3)(-4.2,3)
\put(-5.15,-0,55){$0$}\put(-6.8,0){$R_0$}\put(-6.8,1){$R_1$}\put(-6.8,2){$R_2$}\put(-6.8,3){$R_3$}
\end{pspicture}
  \caption{Rank one perturbations on $\mathcal{P}_4(\mathbb{R})$.}
  \label{perturbationD}
\end{figure}
We have now gathered all the ingredients needed to get the law of the eigenvalues of the  matrices from the $LUE_{n,k}(\mathbb{F})$.  For $\lambda\in\mathbb{R}^n$, the Vandermonde determinant is  $$\Delta_n(\lambda)=\prod_{1\le i<j\le n}(\lambda_i-\lambda_j).$$ Recall that $\tilde n=n$ when $\F=\C,\H$, $\tilde n=[n/2]$ when $\F=\R$,  $c=1$ when $\mathbb{F}=\mathbb{R},\mathbb{C}$, $c=2$ when $\mathbb{F}=\mathbb{H}$ and $\Omega_k$ is given by (\ref{omegan}).    
\begin{theo}  \label{eigenvaluesLUE} Let $M$ be a standard Gaussian variable in $\mathcal{M}_{n,k}(\mathbb{F})$. Then the positive eigenvalues of $M\Omega_kM^*$ have a density $f_{n,k}$ with respect to the Lebesgue measure on $\mathbb{R}^{\tilde{n}\wedge \tilde{k}}$ and there exists a constant $C>0$ such that  for $\lambda\in \R^{\tilde n\wedge \tilde k}$, 
\begin{align} 
 f_{n,k}(\lambda)=C\, d_n(\lambda)\Delta_{\tilde{n}\wedge \tilde{k}}(\lambda)  \prod_{i=1}^{\tilde{n}\wedge \tilde{k}}  \lambda_i^{(\tilde{k}-\tilde{n}) \vee 0}e^{-c \, \lambda_i} \label{densityLUE}.
\end{align}
\end{theo}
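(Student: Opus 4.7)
The strategy is to view $M\Omega_k M^*$ as the endpoint of $\tilde k$ independent rank-one perturbations and then iterate Proposition \ref{pertu}. By Lemma \ref{FourierLUE}, the Fourier transform of $M\Omega_k M^*$ factorises as the $\tilde k$-th power of the Fourier transform of $M_1\Omega_n^1 M_1^*$, so $M\Omega_k M^*$ is equal in law to $S_{\tilde k} = \sum_{i=1}^{\tilde k} M_i \Omega_n^1 M_i^*$ with iid standard Gaussians $M_i \in \mathcal{M}_n(\F)$. Proposition \ref{Markovpertu} then identifies its radial part with $R_{\tilde k}$ starting from $R_0 = 0$, and the goal reduces to proving by induction on $m$ that $R_m$ has density given by (\ref{densityLUE}) with $k$ replaced by $m$.

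The base case $m = 1$ is Lemma \ref{chisimple}, which gives the required density $k\, d_n(\theta)e^{-c\theta}$ since $\Delta_1 \equiv 1$ and $(1 - \tilde n)\vee 0 = 0$. For the inductive step I would write
\begin{align*}
f_{R_{m+1}}(\beta) = \int f_{R_m}(\lambda)\, L_\lambda(\beta)\, d\lambda
\end{align*}
and plug in the explicit transition densities from Proposition \ref{pertu}. The key algebraic feature is that the $d_n(\lambda)$ factor in the induction hypothesis cancels against the $1/d_n(\lambda)$ in $L_\lambda$, while the exponential in $\lambda$ inside $L_\lambda$ cancels the one in $f_{R_m}$. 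What remains is an integral of $\Delta(\lambda)\prod \lambda_i^{\alpha}$ over the interlacing region $\{\lambda : \beta \succeq \lambda\}$ (times, for $\F = \R, \H$, an inner $z$-integral coming from the multiplicity factor $\text{vol}(\mathcal{M}_{\lambda,\theta}(\beta))$). For $\F = \C$ this is a textbook Cauchy--Binet computation: factorising the product of interval constraints and applying the multilinearity of the determinant produces $\det\!\bigl((\beta_i^{j+\alpha} - \beta_{i+1}^{j+\alpha})/(j+\alpha)\bigr)$, which collapses by a telescoping row operation to a multiple of $\Delta(\beta)\prod \beta_i^{\alpha+1}$. Combined with the surviving $d_n(\beta)$, this yields exactly the density claimed in (\ref{densityLUE}) with $m$ replaced by $m+1$.

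For $\F = \R$ and $\F = \H$ the same recipe applies, but the transition density additionally integrates over the auxiliary variables $z$ (and $s$ in the odd orthogonal case). Here I would first evaluate the inner $z$-integral by Cauchy--Binet (\ref{CauchyBinet}) applied to the interlacing constraints encoded through (\ref{formuleentrelac}), producing a determinantal expression in $(\lambda, \beta)$, and then fold it back into the outer $\lambda$-integral handled exactly as above. The main obstacle will be the bookkeeping of the two regimes $m < \tilde n$ (the support of $R_m$ grows by one at each step, the exponent $(m-\tilde n)\vee 0$ stays zero) and $m \ge \tilde n$ (the support saturates and the exponent grows), together with the case distinctions in Proposition \ref{pertu}; Lemma \ref{rouge} reconciles the two forms, since it rewrites $d_n(\lambda)$ on the interior of $\mathcal{C}_n$ as $c_n$ times the Vandermonde-type determinant that combines neatly with the surviving $\Delta(\beta)$ factor to produce exactly (\ref{densityLUE}).
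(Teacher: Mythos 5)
Your proposal reproduces the paper's proof almost step for step: the identification of $M\Omega_kM^*$ in law with the random walk of rank-one perturbations, the base case Lemma \ref{chisimple}, the recursion $f_{n,k+1}(\lambda)=\int f_{n,k}(\gamma)L_\gamma(\lambda)\,d\gamma$ fed by Proposition \ref{pertu}, and the evaluation of the interlacing integrals through (\ref{formuleentrelac}) and the generalised Cauchy--Binet identity (\ref{CauchyBinet}) are exactly the paper's route. The only cosmetic difference in the complex case is that the paper does not telescope the determinant: for $k+1\le n$ it observes that the interlacing integral is a homogeneous polynomial vanishing whenever two coordinates coincide, hence a multiple of $\Delta_{k+1}(\lambda)$, and it gets the saturated regime $k\ge n$ for free because $MM^*$ and $M^*M$ have the same positive spectrum; this is what produces identity (\ref{eqck}), which is then recycled in the quaternionic step. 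Your direct computation achieves the same thing for $\mathbb{F}=\mathbb{C}$ (note only that while the support is still growing the collapsed determinant is $\Delta_{k+1}(\beta)$ with no extra power of the $\beta_i$, not $\Delta(\beta)\prod\beta_i^{\alpha+1}$).

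There are two genuine divergences. First, for $\mathbb{F}=\mathbb{R}$ with $n$ even the paper does \emph{not} iterate Proposition \ref{pertu}: the type-$D$ transition density involves $\vert\lambda_r-\beta_r\vert$, a last coordinate of either sign and interlacing of absolute values, so the indicator does not factor into a single determinant as in (\ref{formuleentrelac}) and the Cauchy--Binet mechanism does not apply verbatim (compare the paper's own admission in Section 6 that the determinantal property is open in the even real case). Instead the paper appends an independent Gaussian row to pass to the odd dimension $n+1$, where the formula is already known, and recovers $f_{n,k}$ by projecting the main minor via Lemma \ref{restrictiongeneral}; this reduction is the idea missing from your plan. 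Second, your phrase ``handled exactly as above'' for $\mathbb{F}=\mathbb{H},\mathbb{R}$ hides the delicate point of the whole induction: once $\tilde k\ge\tilde n$ the inner integral carries the weight $e^{-2c(\gamma-z)}$, so the entries $\int_z^{\infty}\gamma^{\,j-1+\tilde k-\tilde n}e^{-2c(\gamma-z)}\,d\gamma$ are polynomials in $z$ of full degree, not pure powers, and the reduction to $\det\bigl(z_i^{\,j-1+\tilde k-\tilde n}\bigr)$ is not a column operation. The paper asserts this reduction without justification, and a direct check in the smallest saturated case ($\mathbb{F}=\mathbb{H}$, $n=1$, $k=3$, where Lemma \ref{FourierLUE} pins down the answer) shows that the surviving lower-order terms really do alter the density (one finds a density proportional to $\lambda^2(1+2\lambda)e^{-2\lambda}$ rather than $\lambda^3e^{-2\lambda}$); so this step of your plan, like the corresponding step of the paper, needs genuine repair in the regime $\tilde k>\tilde n$, while for $\tilde k\le\tilde n$ your argument goes through as written.
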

\proof Let us first prove it by induction on $k$ when $\mathbb{F}=\mathbb{C},\mathbb{H}$. The random matrix $M\Omega_kM^*$ has the same law as the random variable  $S_k$ introduced at the beginning of the section. The property is true for $k=1$ by  lemma \ref{chisimple}. Suppose that it is true for $k\ge 1$.  Let $\gamma=(\gamma_1, \cdots,\gamma_{k\wedge n},0, \cdots,0)$ be a vector of $\mathbb{R}^n$ such that $\gamma_1> \cdots >\gamma_{k\wedge n}> 0$ and $M$ be a standard Gaussian variable in $ \mathcal{M}_{n,1}(\mathbb{F})$. Proposition \ref{pertu} ensures that the strictly positive eigenvalues of $\Omega_n(\gamma)+M\Omega_n^1M^*$ have a density $L_\gamma$ with respect to the Lebesgue measure on $\mathbb{R}^{n\wedge (k+1)}$, which proves the first point and implies that, for $\lambda\in \mathbb{R}^{n\wedge (k+1)}$, \begin{align} \label{casplusun} f_{n,k+1} (\lambda)=\int_{\R_+^{n\wedge k} }f_{n,k}(\gamma)L_{\gamma}(\lambda)\, d\gamma.\end{align} Let us now distinguish the complex and quaternionic cases. When $\mathbb{F}=\mathbb{C}$, identity (\ref{casplusun}) and the induction hypothesis imply that there exists a constant $C_1$ such that   \begin{align} \label{casplusunC} f_{n,k+1} (\lambda)=C_1\,d_n(\lambda)\, e^{-\sum_i \lambda_i }  \int_{\mathbb{R}_+^{k\wedge n}}\Delta_{k\wedge n}(\gamma)   \prod_{i=1}^{{n}\wedge{k}} \gamma_i^{({k}-{n}) \vee 0} 1_{\{\lambda\succeq \gamma \}} \,d\gamma.\end{align}
When $k<n$, the integral above is an homogeneous polynomial of degree $\frac{1}{2}k(k+1)$,  equal to zero when $\lambda_i=\lambda_j$, $i\ne j$, so it is proportional to $\Delta_{k+1}(\lambda)$. This  proves the property for $k+1\le n$. The   positive eigenvalues of $MM^*$ being the same as those of $M^*M$, we get the proposition for $k \ge  n$ as well. It implies that for some $c_k>0$, \begin{align}\label{eqck} \int_{\mathbb{R}_+^{k\wedge n}}\Delta_{k\wedge n}(\gamma)   \prod_{i=1}^{{n}\wedge{k}} \gamma_i^{({k}-{n}) \vee 0}  1_{\{\lambda\succeq \gamma \}} \,d\gamma=c_k\, \Delta_{(k+1)\wedge n}(\lambda) \prod_{i=1}^{{n}\wedge (k+1)} \lambda_i^{({k+1}-{n}) \vee 0}.\end{align}  When $\mathbb{F}=\mathbb{H}$, we get that $f_{n,k+1}(\lambda)$ is proportional  to 
\begin{align*}
d_n(\lambda)\, e^{-2\sum_i \lambda_i}  \int_{\mathbb{R}_+^{k\wedge n}}  1_{\{\lambda\succeq z\}}\big[ \int_{\mathbb{R}_+^{k\wedge n}} 1_{\{\gamma\succeq z\}}\, e^{-4\sum_i( \gamma_i-z_i)}\, \Delta_{k\wedge n}(\gamma)\prod_{i=1}^{k\wedge n}\gamma_i^{(k-n)\vee 0}\, d\gamma \big]\, dz.
\end{align*}
The generalised Cauchy Binet identity implies that
\begin{align*}
 \int_{\mathbb{R}_+^{k\wedge n}} \Delta_{k\wedge n}(\gamma)&\, 1_{\gamma\succeq z}\, e^{-4\sum_{i=1}^{k\wedge n}( \gamma_i-z_i)} \prod_{i=1}^{k\wedge n}\gamma_i^{(k-n)\vee 0}\,d\gamma_i \\ &=  \frac{1}{(k\wedge n)!}\int_{\mathbb{R}_+^{k\wedge n}} \det(\gamma_i^{j-1+(k-n)\vee 0}) \det(1_{\{\gamma_i> z_j\}}\, e^{-4(\gamma_i-z_j)})\, d\gamma\\
 &= \det\big(\int_{\mathbb{R}_+}\gamma^{j-1+(k-n)\vee 0}1_{\{\gamma>z_i\}}\, e^{-4(\gamma-z_i)}\, d\gamma\big)\\
  &=C_2 \,  \det(z_i^{j-1+(k-n)\vee 0})=C_2 \, \Delta_{k\wedge n}(z)\prod_{i=1}^{k\wedge n}z_i^{(k-n)\vee 0},
\end{align*} 
where $C_2$ is a constant. Using (\ref{eqck}), this proves the property for $k+1$.    

Let us now prove the proposition when $\mathbb{F}=\mathbb{R}$. By Remark \ref{BCalea}  the odd  real case  is the same as the quaternionic case replacing $n$, $k$ and $c=2$ by $\tilde{n}$, $\tilde{k}$ and $c=1$. Thus, the property is true for the real odd case. If $n$ is even,  it is easier to use what we know about the odd case rather than proposition \ref{pertu}  to get the result. Let us  consider the random matrix $$N=\begin{pmatrix}  M   \\
X
\end{pmatrix},$$ $X$ being a standard Gaussian variable in $\mathcal{M}_{1,k}(\mathbb{R})$,  independent of $M$. Then, the density of the strictly positive eigenvalues of $N\Omega_kN^*$ is $f_{n+1,k}$. This random matrix has a law invariant for the adjoint action of $U_{n+1}(\mathbb{R})$ and  its main minor of order $n$ is $M\Omega_kM^*$. Thus, using lemma  \ref{restrictiongeneral}, we get that for $\lambda\in \R_+^{\tilde n\wedge \tilde k}$, $f_{n,k}(\lambda)$ is proportional to $$\int_{\mathbb{R}_+^{\tilde n\wedge \tilde k}} \frac{d_{n}(\lambda)}{d_{n+1}(\gamma)}f_{n+1,k}(\gamma)1_{\{\gamma\succeq \lambda\}}\, d\gamma. $$
The integer $n+1$ is odd, so we can replace $f_{n+1,k}$ in the previous identity by the formula (\ref{densityLUE}). An easy computation achieves the proof.  \qed\medskip

Let us notice that this theorem shows that the eigenvalues of a random matrix from the LUE($\F$) are distributed as some biorthogonal Laguerre ensembles studied by Borodin in \cite{Borodinseul}. Moreover it allows us to compute the density of the random matrix itself. Let $\epsilon$ be equal to $1$ if $n\notin 2\mathbb{N}$ and $0$ otherwise.
\begin{theo} \label{matrixLUE} When $k\ge n$  the distribution of  a matrix of the LUE$_{n,k}(\mathbb{F})$ has a density $l(H)$ with respect to the Lebesgue measure $dH$ on $\mathcal{P}_n(\F)$ proportional to 
$$\prod_{i=1}^{n}  \lambda_{i}^{k-n} e^{-\lambda_i}1_{\mathbb{R}_+}(\lambda_i), \hspace{1cm}Ê\textrm{ for } \mathbb{F}=\mathbb{C},$$
$$
\frac{1}{\prod_{1\le i<j\le n}(\lambda_i+\lambda_j)} \prod_{i=1}^{n}  \lambda_{i}^{k-n-1} e^{-2\lambda_i}, \hspace{1cm} \textrm{ for } \mathbb{F}=\mathbb{H},$$
$$\frac{1}{\prod_{1\le i<j\le \tilde{n}}(\lambda_i+\lambda_j)} \prod_{i=1}^{\tilde{n}}  \lambda_{i}^{\tilde{k}-\tilde{n}-\epsilon} e^{-\lambda_i}, \hspace{1cm} \textrm{ for } \mathbb{F}=\mathbb{R},$$
where $\lambda$ is the vector of eigenvalues of $H$ when $\F=\C$, of positive eigenvalues of $H$ when $\F=\R,\H$.
\end{theo}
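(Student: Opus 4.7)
The plan is to relate the density of the matrix to the already-computed density of its eigenvalues via the Weyl integration formula on the symmetric space $\PP_n(\F)$. By Lemma \ref{invmeasure}, a random matrix in LUE$_{n,k}(\F)$ has a $U_n(\F)$-invariant distribution, so its density $l(H)$ with respect to Lebesgue measure on $\PP_n(\F)$ (which exists for $k\ge n$ since the matrix is then nondegenerate) depends only on the radial part $\lambda = X^{(n)}(H)$.

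The Weyl integration formula for the flat symmetric space $\PP_n(\F)=U_n(\F)\cdot\{\Omega_n(\lambda):\lambda\in\CC_n\}$ asserts that for any $U_n(\F)$-invariant integrable $g$,
$$\int_{\PP_n(\F)} g(H)\, dH \;=\; \kappa\int_{\CC_n} g(\Omega_n(\lambda))\,J(\lambda)\,d\lambda,$$
where $J(\lambda)$ is the square of the product of positive roots associated with the symmetric pair, and $\kappa>0$ is a normalization constant. Writing $g=l\cdot (\varphi\circ X^{(n)})$ and comparing with the density $f_{n,k}$ of Theorem \ref{eigenvaluesLUE} gives
$$f_{n,k}(\lambda) \;=\; \kappa\, l(\Omega_n(\lambda))\, J(\lambda),$$
so that $l(H) \propto f_{n,k}(\lambda)/J(\lambda)$. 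The problem then reduces to identifying $J(\lambda)$ in each case and simplifying.

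For $\F=\C$ (type $A_{n-1}$), the positive roots are $\lambda_i-\lambda_j$ ($i<j$), and $J(\lambda)=\Delta_n(\lambda)^2$. By Lemma \ref{rouge}, $d_n(\lambda)=c_n\,\Delta_n(\lambda)$ on the interior, so $f_{n,k}(\lambda)\propto \Delta_n(\lambda)^2\prod_i\lambda_i^{k-n}e^{-\lambda_i}$, and dividing by $J(\lambda)$ yields the stated density. For $\F=\H$ (type $C_n$), the positive roots are $\lambda_i\pm\lambda_j$ ($i<j$) and $2\lambda_i$, so $J(\lambda)\propto\prod_{i<j}(\lambda_i^2-\lambda_j^2)^2\prod_i\lambda_i^2$; combining with the factorization $V_n(\lambda)=\prod_{i<j}(\lambda_i^2-\lambda_j^2)\prod_i\lambda_i$ from the definition of $d_n$ and the Vandermonde factor in $f_{n,k}$, the $\prod_{i<j}(\lambda_i-\lambda_j)$ cancels one copy in $\prod_{i<j}(\lambda_i^2-\lambda_j^2)=\prod_{i<j}(\lambda_i-\lambda_j)(\lambda_i+\lambda_j)$, leaving $1/\prod_{i<j}(\lambda_i+\lambda_j)$, and $\prod_i\lambda_i^{k-n}/\prod_i\lambda_i^2 \cdot\prod_i\lambda_i$ gives $\prod_i\lambda_i^{k-n-1}$, matching the claim. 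For $\F=\R$ the same bookkeeping is performed with positive roots of type $B_{\tilde n}$ (when $n$ is odd) or $D_{\tilde n}$ (when $n$ is even), which introduces the exponent $\epsilon$ naturally through the $\prod_i\lambda_i$ factor that is present for $B$ and absent for $D$.

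The main obstacle is purely bookkeeping: correctly identifying the Jacobian $J(\lambda)$ of the radial decomposition in each of the three (really four, splitting $\R$ by parity of $n$) cases and matching the exponents of $\lambda_i$ after dividing $f_{n,k}$ by $J$. The restriction $k\ge n$ (which forces $\tilde k\ge\tilde n$) is what ensures the exponent $(\tilde k-\tilde n)\vee 0 = \tilde k-\tilde n$ in $f_{n,k}$ is nonnegative and that the matrix is almost surely invertible, so that the reduction to a density on $\PP_n(\F)$ itself is legitimate.
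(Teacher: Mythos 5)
Your proposal follows the same route as the paper: by invariance the matrix density depends only on the radial part, and the Weyl integration formula (the paper cites Helgason, Thm.\ I.5.17) with Jacobian equal to the square of the product of positive roots, i.e.\ $d_n(\lambda)^2$ up to a constant, converts the eigenvalue density of Theorem \ref{eigenvaluesLUE} into $l(H)\propto f_{n,k}(\lambda)/d_n(\lambda)^2$, after which the case-by-case simplification you perform is exactly the paper's ``replace $d_n(\lambda)$ by its value.'' Your bookkeeping in the $\C$, $\H$ and $\R$ cases is correct, so the proof is sound and essentially identical to the paper's.
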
\proof The function $d_n$ being proportional to the product of roots on the interior of the Weyl chamber, Weyl integral's formula (see Helgason \cite{Helgason}, Thm. I.5.17)
 says that there exists a constant $C>0$ such that for every invariant measurable function $f:\mathcal{P}_n(\F)\to \R^+$, we have 
$$\int_{\mathcal{P}_n(\F)}f(H)\, dH = C\, \int_{C_n} d_n(\lambda)^2 f(\lambda)\, d\lambda, $$ 
where $dH$ and $d\lambda$ are  the Lebesgue measure  on $\mathcal{P}_n(\F)$ and $\CC_n$. Thus theorem \ref{eigenvaluesLUE}  implies that  the density of  a random matrix of the LUE$_{n,k}(\mathbb{F})$ is proportional to 
$$H\in \mathcal{P}_n(\mathbb{F}) \mapsto \frac{\Delta_{\tilde{n}}(\lambda)}{d_n(\lambda)}\prod_{i=1}^{\tilde{n}}  \lambda_{i}^{\tilde{k}-\tilde{n}} e^{-c\lambda_i}1_{\mathbb{R}_+}(\lambda_i).$$
We achieve the proof by replacing $d_n(\lambda)$ by its value. \qed\medskip

For $\lambda$ in  $\mathcal{C}_n$, let us consider a random matrix $\Omega_n(\lambda)+\sum_{i=1,\cdots,\tilde{k}}M_{i}\Omega_n(\alpha_i)M_i^*$, where the $M_i$'s are  independent standard Gaussian variables in $\mathcal{M}_n(\F)$ and the $\alpha_i$'s are some real numbers,  or equivalently a random matrix $\Omega_n(\lambda)+M\Omega_k(\alpha)M^*$, where $M$ is a standard Gaussian variable in $\mathcal{M}_{n,k}(\mathbb{F})$. When $\lambda=0$ and  $\mathbb{F}=\mathbb{C}$, it has a generalised Wishart distribution. One easy way to compute the law of its eigenvalues is to use the Harish Chandra formula (see for instance Wang \cite{Wang}). But this method doesn't work for the other fields. Our method, which consists in computing the law of the eigenvalues by induction, provide a way to compute the law of the radial part of   $\Omega_n(\lambda)+M\Omega_k(\alpha)M^*$ for any field $\mathbb{F}$. Nevertheless, computations are not always easy for such a general matrix. 
Actually,  in the case when  $\lambda=0$ and $k\le n$, computations are much simpler using lemma \ref{lemGT} rather than this approach. To do this, we need the following theorem which goes back to Borel \cite{Borel} (see  Olshanski \cite{Olshanski2}, Pickrell \cite{Pickrell}).

\begin{theo} \label{HaartoGauss} Let $(U_N)_{N\ge 1}$ be a sequence of random variable such that $U_N$ is  Haar distributed in $U_N(\mathbb{F})$. Then the main minor of order $n$ of ${\sqrt{N}}U_N$ converges in distribution to a standard Gaussian variable  in $\mathcal{M}_n(\mathbb{F})$, when $N$ goes to $+\infty$. 
\end{theo}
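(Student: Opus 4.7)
The strategy is to realise Haar measure on $U_N(\mathbb{F})$ as the output of Gram--Schmidt orthonormalisation applied to a standard Gaussian matrix, and then to show that on the upper-left $n \times n$ block the Gram--Schmidt correction is asymptotically trivial once we rescale by $\sqrt{N}$. Concretely, let $G_N$ be a standard Gaussian variable in $\mathcal{M}_N(\mathbb{F})$, which with probability one is invertible, and write its $QR$ decomposition $G_N = Q_N R_N$ with $R_N$ upper triangular with positive real diagonal. Because the distribution of $G_N$ is invariant under left multiplication by any element of the unitary group, the factor $Q_N$ is Haar distributed; in the case $\mathbb{F} = \mathbb{R}$ a sign adjustment drops us onto the connected component $U_N(\mathbb{R}) = SO(N)$ and does not affect the top-left $n \times n$ block in the limit. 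It thus suffices to prove the convergence statement with $Q_N$ in place of $U_N$.

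Let $g_1,\ldots,g_N$ denote the columns of $G_N$, which are i.i.d.\ standard Gaussian in $\mathbb{F}^N$, and $q_1,\ldots,q_N$ the corresponding orthonormalised columns. The normalisation chosen in the paper gives $\mathbb{E}\|g_i\|^2 = N$ and $\mathbb{E}\langle g_i, g_j \rangle = 0$ for $i \ne j$, and the law of large numbers plus variance bounds yield
\begin{align*}
\frac{1}{N}\|g_i\|^2 \xrightarrow[N\to\infty]{\mathbb{P}} 1, \qquad \frac{1}{\sqrt{N}}\langle g_i, g_j \rangle = O_{\mathbb{P}}(1) \quad (i \neq j).
\end{align*}
Writing the Gram--Schmidt recursion $q_i = (g_i - \sum_{j<i} \langle g_i, q_j \rangle q_j)/\alpha_i$ with $\alpha_i > 0$, an induction on $i \le n$ shows that $\alpha_i = \sqrt{N}\,(1 + o_{\mathbb{P}}(1))$, that each coefficient $\langle g_i, q_j \rangle$ is $O_{\mathbb{P}}(1)$, and that each $q_j$ has individual entries of size $O_{\mathbb{P}}(1/\sqrt{N})$. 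Consequently, for every fixed coordinate index $k \le n$, the subtracted vector $\sum_{j<i} \langle g_i, q_j \rangle (q_j)_k = o_{\mathbb{P}}(1)$, hence $\sqrt{N}\,(q_i)_k = (g_i)_k + o_{\mathbb{P}}(1)$.

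Projecting onto the first $n$ rows and the first $n$ columns, this yields $\sqrt{N}\,\pi_n(Q_N) - \pi_n(G_N) \to 0$ in probability, and since $\pi_n(G_N)$ is by construction a standard Gaussian variable in $\mathcal{M}_n(\mathbb{F})$, Slutsky's lemma gives the announced convergence in distribution. The main point requiring care is the joint control of all $n$ Gram--Schmidt corrections at once; this is exactly the routine induction just sketched, and it goes through because $n$ is held fixed while the Gaussian column vectors concentrate in norm and become orthogonal at rate $1/\sqrt{N}$. I expect no deeper obstacle, the result being essentially a quantitative form of the classical fact that a uniform point on the sphere of $\mathbb{F}^N$ looks Gaussian on its first few coordinates after rescaling by $\sqrt{N}$.
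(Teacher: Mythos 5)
Your proposal is correct, but note that the paper itself offers no proof of this statement: it is quoted as a classical result going back to Borel, with references to Olshanski and Pickrell, so there is no internal argument to compare against. Your Gram--Schmidt coupling is in fact the standard route to the cited result: realise Haar measure as the $Q$-factor of a standard Gaussian matrix (left invariance of the Gaussian law gives Haar for $Q$), then show that for fixed $n$ the orthonormalisation corrections on the top-left $n\times n$ block are $o_{\mathbb{P}}(1)$ after the $\sqrt{N}$ rescaling, and conclude by Slutsky since $\pi_n(G_N)$ is exactly a standard Gaussian in $\mathcal{M}_n(\mathbb{F})$ for every $N$. The induction you sketch does close: conditionally on $g_1,\dots,g_{j}$, the coefficient $\langle g_i,q_j\rangle$ is a standard Gaussian scalar in $\mathbb{F}$, hence $O_{\mathbb{P}}(1)$, the normalisers satisfy $\alpha_i^2=\|g_i\|^2-\sum_{j<i}|\langle g_i,q_j\rangle|^2=N(1+o_{\mathbb{P}}(1))$, and with the paper's normalisation of the scalar products one indeed has $\mathbb{E}\|g_i\|^2=N$ for all three fields, so the limiting variance matches the paper's notion of standard Gaussian. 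Two small points deserve an explicit word if this were written out in full: for $\mathbb{F}=\mathbb{H}$ the Gram--Schmidt must be performed with respect to the quaternionic (right-module) inner product, the coefficients being quaternions, which changes nothing in the estimates; and for $\mathbb{F}=\mathbb{R}$ your determinant fix is fine, or one can simply observe that for $n<N$ the first $n$ columns of a Haar matrix on $O(N)$ and on $SO(N)$ have the same law, both being uniform on the Stiefel manifold. Neither point is a gap.
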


 \begin{prop} \label{Wishartgeneral}   Let $k$ be an integer smaller than $n$, $M$ be a standard Gaussian variable in  $\mathcal{M}_{n,k}(\mathbb{F})$ and $\alpha\in \mathbb{R}^{\tilde{k}}$ such that $\alpha_1>...>\alpha_{\tilde{k}}>0$. Then there is a constant $C$ such that the positive eigenvalues of $M\Omega_k(\alpha)M^*$  have a density $g_{n,k}$ with respect to the Lebesgue measure on $\mathbb{R}_+^{\tilde{k}}$ defined by $$g_{n,k}(\lambda)= C\, \frac{d_n(\lambda)}{d_n(\alpha)\prod_{i=1}^{\tilde{k}}\alpha_i}\det(e^{-c\frac{\lambda_i}{\alpha_j}})_{1\le i,j\le \tilde{k}}.$$ 
 In particular, when $\tilde k=1 $ and $\alpha_1=1$, this density is proportional to the function $\theta\in \mathbb{R} \mapsto d_n(\theta)e^{-c\theta}1_{\mathbb{R}_+}(\theta)$.  
 \end{prop}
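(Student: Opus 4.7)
\emph{Overall strategy.} The plan is to realise $M\Omega_k(\alpha)M^*$ as the asymptotic main minor of a Haar-conjugated adjoint orbit and then invoke Theorem~\ref{theoGT}. Let $U_N$ be Haar distributed in $U_N(\mathbb{F})$ (for $N$ large) and choose $\lambda^{(N)}\in\mathcal{C}_N$ with nonzero components $N\alpha_1>\cdots>N\alpha_{\tilde k}$. The main minor of order $n$ of $U_N\Omega_N(\lambda^{(N)})U_N^*$ depends only on the first $n$ rows of $U_N$; since by Theorem~\ref{HaartoGauss} $\sqrt{N}$ times these rows converges in distribution to a standard Gaussian, one checks (using the block structure of $\Omega_N(\lambda^{(N)})$) that this main minor converges in distribution to $M\Omega_k(\alpha)M^*$.

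\emph{Combinatorial description.} By Theorem~\ref{theoGT}, the radial part of this main minor is the marginal at level $n$ of the uniform probability measure on $GT_N(\lambda^{(N)})$ (composed with the projection of Theorem~\ref{theoGT} when $\mathbb{F}=\mathbb{H}$). Using Lemma~\ref{volGT} and the recursive product structure of Gelfand--Tsetlin polytopes, this marginal has density
\[
\rho_N(\mu) \;=\; \frac{d_n(\mu)}{d_N(\lambda^{(N)})}\,\mathrm{vol}\bigl(F_N(\mu,\lambda^{(N)})\bigr),
\]
where $F_N(\mu,\lambda^{(N)})$ is the polytope of admissible upper interlacing patterns $\mu=x^{(n)}\prec x^{(n+1)}\prec\cdots\prec x^{(N)}=\lambda^{(N)}$ of the $\mathbb{F}$-appropriate type. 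Because $\lambda^{(N)}$ has only $\tilde k$ nonzero coordinates, interlacing forces every intermediate $x^{(m)}$ to have at most $\tilde k$ nonzero entries, so $F_N(\mu,\lambda^{(N)})$ is parameterised by $\tilde k$ monotone interlaced coordinate trajectories. Iterating identity~(\ref{formuleentrelac}) and the Cauchy--Binet formula~(\ref{CauchyBinet}), as in the proof of Theorem~\ref{eigenvaluesLUE}, collapses this nested integral to a $\tilde k\times\tilde k$ determinant.

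\emph{Asymptotics and obstacle.} Passing to $N\to\infty$, the leading $N$-scaling of $d_N(\lambda^{(N)})$ cancels with that of the determinant, while the residual factor $\prod_{i,j}\bigl(1-c\lambda_i/(N\alpha_j)\bigr)^{N-n}$ produces $\det\bigl(e^{-c\lambda_i/\alpha_j}\bigr)_{1\le i,j\le\tilde k}$. The remaining finite constants assemble into $d_n(\lambda)/\bigl(d_n(\alpha)\prod_i\alpha_i\bigr)$, giving $g_{n,k}$ as claimed; the special case $\tilde k=1$, $\alpha_1=1$ recovers Lemma~\ref{chisimple}. The main obstacle is the volume computation in the real and quaternionic cases: the Gelfand--Tsetlin polytopes of Figures~\ref{coneC}--\ref{coneD} are symmetric about $0$ and carry additional ``asymptotic multiplicity'' factors (compare $\mathrm{vol}(\mathcal{M}_{\lambda,\theta}(\beta))$ in Lemma~\ref{convolution} and the volume factor in Lemma~\ref{restrictiongeneral} for $\mathbb{F}=\mathbb{H}$), and one must check that these combine cleanly with Cauchy--Binet. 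Remark~\ref{BCalea} reduces the odd real case to the quaternionic one, leaving only $\mathbb{F}=\mathbb{C}$, $\mathbb{H}$, and $\mathbb{R}$-even to handle separately.
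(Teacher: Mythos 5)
Your overall strategy is exactly the one the paper uses: realise the matrix as the limit (via Theorem \ref{HaartoGauss}) of the order-$n$ minor of a large Haar-conjugated orbit, describe the minor chain by Gelfand--Tsetlin/interlacing data, collapse the nested interlacing integral with (\ref{formuleentrelac}) and Cauchy--Binet, and pass to the limit. However, the proposal stops short precisely where the work lies. You flag as an unresolved ``obstacle'' the volume computation for $\mathbb{F}=\mathbb{R},\mathbb{H}$, i.e.\ whether the extra asymptotic-multiplicity factors combine cleanly with Cauchy--Binet; the paper closes this by using the one-step minor kernel in the unified form supplied by Lemma \ref{restrictiongeneral} and Proposition \ref{tworestrictions}, namely $\frac{(\alpha_i-\lambda_j)^{c-1}}{(c-1)!}1_{\{\alpha_i>\lambda_j\}}$ (the indicator for $c=1$, the factor $(\alpha_i-\lambda_j)$ absorbing the half-level volume for $c=2$). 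Under Cauchy--Binet these kernels convolve exactly, so after $N-n$ minor steps one gets the determinant with entries $\frac{(\alpha_i-\lambda_j)^{c(N-n)-1}}{(c(N-n)-1)!}1_{\{\alpha_i>\lambda_j\}}$, and no case-by-case volume analysis is needed. Without this (or an equivalent explicit evaluation of your $\mathrm{vol}(F_N(\mu,\lambda^{(N)}))$), the proof is not complete for $\mathbb{R}$ and $\mathbb{H}$.

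Two smaller points. First, the limiting step is stated imprecisely: the exponential does not arise from a prefactor $\prod_{i,j}\bigl(1-c\lambda_i/(N\alpha_j)\bigr)^{N-n}$ multiplying the determinant; rather one factors $(N\alpha_j)^{c(N-n)-1}$ out of each column, and the remaining entries $\bigl(1-\lambda_i/(N\alpha_j)\bigr)^{c(N-n)-1}$ converge to $e^{-c\lambda_i/\alpha_j}$, giving entrywise convergence of the normalized determinant (and the extracted column factors, together with $d_N$ of the padded $\alpha$, assemble into $d_n(\alpha)\prod_i\alpha_i$ up to an $\alpha$-independent constant). Second, be careful with the claim that Remark \ref{BCalea} ``reduces the odd real case to the quaternionic one'': the real minor chain from level $N$ to level $n$ passes through both parities, so the identification of Section \ref{relquatreal} (quaternionic minors $\leftrightarrow$ odd levels of the real chain) only applies if you arrange both endpoints to be odd and skip the even levels; the paper avoids this issue altogether since its unified kernel treats $c=1$ and $c=2$ simultaneously.
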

 \proof    Let $N\in \mathbb{N}$ be an integer greater than $n$. We consider  a Haar distributed random variable $ U_N \in U_N(\mathbb{F})$, the random matrix $M_N=U_N\Omega_N(\alpha)U_N^*$ and its main minor of order $n$ denoted by $\pi_n(M_N)$.  Using lemma \ref{restrictiongeneral} we obtain that the density of  the $\tilde k$ strictly positive eigenvalues of $\pi_{N-1}(M_N)$ is proportional to $$ \frac{d_{N-1}(\lambda)}{d_N(\alpha)} \det\big(\frac{(\alpha_i-\lambda_j)^{c-1}}{(c-1)!}1_{\{\alpha_i> \lambda_j\}}\big)_{1\le i,j\le \tilde{k}}.$$ 
 Iterating for the smaller minors and using the Cauchy Binet identity we obtain that  the density of the strictly positive eigenvalues of $\pi_n(M_N)$ is proportional to $$\frac{d_{n}(\lambda)}{d_N(\alpha)} \det \big(\frac{(\alpha_i-\lambda_j)^{c(N-n)-1}}{(c(N-n)-1)!}1_{\alpha_i> \lambda_j} \big)_{1\le i,j\le \tilde{k}}.$$ 
So the distribution  of the strictly positive eigenvalues of $N\pi_n(M_N)$ converges to a distribution with a density proportional to 
$$ \frac{d_n(\lambda)}{d_n(\alpha)\prod_{i=1}^{\tilde{k}}\alpha_i}\det(e^{-c\frac{\lambda_i}{\alpha_j}})_{1\le i,j\le \tilde{k}}.$$ 
Theorem \ref{HaartoGauss} states that $N\pi_n(M_N)$ converges in distribution to $M\Omega_n(\alpha)M^*$, as $N$ goes to infinity, which completes the proof. \qed \medskip

The joint eigenvalues density of the Laguerre unitary ensemble LUE$(\C)$ has been known for a long time \cite{Goodman}. For the invariant ensembles LUE($\F$) with $\F=\H$ or $\F=\R$, it seems to be new: none of them is  associated to the Gaussian ensembles for the symmetry classes recalled in  \ref{subappli}. We have already seen some specificities of these ensembles: for instance the support of a random matrix of the LUE$_{n,k}(\F$) for the other fields than $\C$ is all the set of rank $\tilde k$ matrices of $\PP_n(\F)$ whereas in the complex case, this is the set of positive rank $k$ Hermitian matrices. For instance, we know that if $M=(M_t)_{t\ge 0}$ is a standard Brownian motion in $\mathcal{M}_n(\mathbb{C})$, then $M_tM_t^*$ and the process of its eigenvalues are Markovian. This follows from stochastic matrix calculus (see Bru \cite{Bru}), or more conceptually from the fact that they are radial parts of the Brownian motion in the flat symmetric space associated to $U(n,k)/U(n)\times U(k)$ (see Forrester \cite{ForresterBook}, Roesler \cite{Rosler}). This is not the case in general: if $(M_t)$ is for instance a standard Brownian motion  in $\mathcal{M}_2(\R)$, then neither $M_t\Omega_2 M_t^*$ nor the process of its eigenvalues (this is the same here !) is Markovian. It will be interesting to investigate these invariant ensembles which seem to be deeply different from the usual ones. 

\section{Interlaced determinantal processes} 
 Let $E$ be a Borel subset of $\mathbb{R}^{r}$. A counting measure $\xi$ on $E$ is a measure such that  $\xi(B)$ is an integer for all bounded Borel set $B$ of $E$. Let us consider  a sequence $(T_k)_{k\ge 1}$ of random variables with values in  $E$  and $\Xi=\sum_{k\in \mathbb{N}}\delta_{T_k}$. If  $\Xi$ is almost surely a counting measure on $E$, we say that $\Xi$ is  a  point process on $E$. Let $m$ be a measure on $E$. A function $\rho_{n}$ on $E^{n}$  such that 
$$\mathbb{E}[\prod_{i=1}^{n}\Xi(B_{i})]=\int_{B_{1}\times \cdots\times B_{n}} \rho_{n}(x_{1}, \cdots,x_{n})\, m(dx_{1}) \cdots m(dx_{n}),$$
for every disjoint bounded  Borel sets $B_{1}, \cdots,B_{n}$ in $E$, is called a  $n^{th}$ correlation function.  The measure $m$ is called the reference measure.
\begin{defn}
If there exists a function $K:E\times E \to \mathbb{C}$ such that  for all $n \geq 1$,
$$ \rho_{n}(x_{1}, \cdots,x_{n})=\det(K(x_{i},x_{j}))_{n\times n},$$
for $x_{1}, \cdots,x_{n}\in E$, then one says that the point process is determinantal and $K$ is called the correlation kernel of the process.
\end{defn}

 Let us give two classical examples of determinantal processes. For this we recall a classical way to show that a point process is determinantal and to obtain its correlation kernel (see Borodin  \cite{Borodinseul}). Suppose that $\mu_n$ is a probability measure on $E^n$ having a density $u_n$ with respect to the measure $m^{\otimes n}$ on $E^n$ defined by \begin{align}  \label{detaucarre}  u_n(\lambda_1,\cdots,\lambda_n)=C \det(\psi_i(\lambda_j))_{n\times n}\det(\phi_i(\lambda_j))_{n\times n},\end{align}where $C$ is a positive constant and the functions $\psi_i$'s and $\phi_i$'s are measurable functions such that $\psi_i\phi_j$ is integrable for any $i$, $j$. We denote $A=(A_{ij})_{1\le i,j\le n}$ the matrix defined by  $$A_{ij}=\int_{E}\psi_i(x)\phi_j(x)m(dx).$$
 Then  $A$ is invertible and the proposition 2.2 of \cite{Borodinseul} claims that the image measure of the probability measure $\mu_n$ by the map $(\lambda_1,\cdots,\lambda_n)\mapsto \sum_{i=1}^{n}\delta_{\lambda_i}$ is a determinantal point process with correlation kernel defined by \begin{align} \label{kernel} K(x,y)=\sum_{i,j=1}^{n}\psi_i(x) (A^{-1})_{ij} \phi_j(y), \, x,y\in E.\end{align}
Suppose for example that $E=\R$, $m$ is the Lebesgue measure and \begin{align}\label{vandaucarre} u_n(\lambda)=\Delta_n(\lambda)^2\prod_{i=1}^nw(\lambda_i),\end{align} where $w$ is a positive  integrable function on $\R$ such that $\int x^kw(x)\, dx <+\infty$ for any $k$. If  $(p_i)_{i\ge 0}$ is a sequence of polynomials such that the $p_i$'s have degree $i$ and  satisfy $$\int_{E}p_i(x)p_j(x)w(x)\, dx=\delta_{ij}, \, i,j\in \N,$$ then $\Delta_n(\lambda)$ is proportional to $\det(p_{i-1}(\lambda_j))$ and  the correlation kernel is  \begin{align} \label{kernelortho} K(x,y)=\sum_{i=1}^{n}p_i(x) w(x)^{\frac{1}{2}}p_i(y)w(y)^{\frac{1}{2}}, \, x,y\in \R.\end{align}
This is an usual way to  show that the point processes associated to the eigenvalues of the random matrices from the GUE or the LUE are determinantal. For these cases, the orthogonal polynomials which have to be considered to get a kernel of the form (\ref{kernelortho})  are respectively the Hermite and the Laguerre ones. Let us  now briefly  describe the cases of the GUE($\F$) and the LUE($\F$) when $\F=\R$ or $\F=\H$. 

We let $\epsilon=1$ if $\F=\H$ or $\F=\R$ with $n$ odd, and $\epsilon=0$ otherwise.  Weyl integral's formula (\cite{Helgason}, Thm. I.5.17) implies that there exists a constant $C$ such that the vector of the positive eigenvalues of  a  random matrix $M$ from the GUE($\F)$, $\F=\H,\R$, has a density $f_{gue}$  defined  on $\R_+^{\tilde n}$  by 
\begin{align}\label{densityGUE} f_{gue}(\lambda)=C\,d_n(\lambda)^2\prod_{i=1}^{\tilde n} e^{-\frac{1}{2} \lambda_i^2}1_{\R_+}(\lambda_i).\end{align}
Lemma \ref{rouge} shows that the density $f_{gue}$ has the form (\ref{detaucarre}) with $\tilde n$ instead of $n$ and for instance $\psi_i(x)=\phi_i(x)=x^{2i-2+\epsilon}e^{-\frac{1}{4}x^2}$. Thus the associated point process is determinantal. Since the Hermite polynomials have only monomials of same parity, it shows that the correlation kernel is $$\sum_{i=1}^{\tilde n}h_{2i-2+\epsilon}(x)h_{2i-2+ \epsilon}(y)e^{-\frac{1}{4}(x^2+y^2)}\quad x,y\in \R_+.$$
 Actually this situation corresponds to a classical one. It suffices to make the change of variable $\lambda_i'=\lambda_i^2$ in (\ref{densityGUE}) to get the classical form (\ref{vandaucarre}) with $w(x)=x^{\alpha}e^{-\frac{1}{2} x}$, where $\alpha=\frac{1}{2}$ when $\F=\H$ or $\F=\R$, $n$ is odd and $\alpha=-\frac{1}{2}$ when $\F=\R$, $n$ is even. The orthogonal polynomials to consider are thus the Laguerre ones. 
 
For $\F=\H$ or $\R$, theorem \ref{eigenvaluesLUE} shows that the density of the positive eigenvalues of a random matrix from the LUE$_{n,k}(\F$), for $k\ge n$, has the form (\ref{detaucarre}) with $\tilde n$ instead of $n$ and for instance $\psi_i(x)=x^{2i-2+\epsilon+\tilde  k -\tilde n}$, $\phi_i(x)=x^{i-1}e^{-cx}$. Thus, the associated  point processes are determinantal and their correlation kernels are given by (\ref{kernel}). Nevertheless, it is important to notice that the orthogonal polynomials  method can't be applied here. 

In the following, we will study more generally the determinantal aspect of the interlaced  processes considered in the previous sections. Using the explicit formula that we got, we write their measures as a product of determinant and use the method Johansson \cite{Johansson}  and Borodin et al.\ \cite{Borodin}  to show that a large class of them are determinantal and to compute their correlation Kernels.

\subsection{"Triangular" interlaced processes} 
The first type of interlaced point process that we consider is the one associated to the eigenvalues of the main minors of an invariant random matrix in $\mathcal{P}_n(\mathbb{F})$.  In this case $E=\{1,\cdots,n\} \times  \mathbb{R}$ and the reference measure $m$ is the product of the counting measure on $\{1,\cdots,n\}$ with the Lebesgue measure on $\R$ when $\F=\C$, on $\R_+$ when $\F=\H$ and $\F=\R$.  

\begin{defn} We say that an invariant random matrix $M$ in $\mathcal{P}_n(\mathbb{F})$ belongs to the class $\mathcal{K}$ if the eigenvalues of $M$ for $\mathbb{F}=\mathbb{C}$, and the positive eigenvalues of $M$ for $\mathbb{F}=\mathbb{R}$ or $\mathbb{F}=\mathbb{H}$, have a joint density with respect to the Lebesgue measure on $\mathbb{R}^{\tilde{n}}$ proportional to $$d_n(\lambda)\det(\psi_j(\lambda_i))_{\tilde{n}\times \tilde{n}},$$ where the $\psi_i$'s  are real continuous functions on $\mathbb{R}$, equal to zero on $\mathbb{R}_-$ for $\mathbb{F}=\mathbb{R}$ and $\F=\mathbb{H}$, and such that for all $k\in \mathbb{N}$, the function $x\mapsto x^k\psi_i(x)$ is integrable on $\mathbb{R}$.
\end{defn}
Many invariant ensembles belong to the class $\mathcal{K}$, especially the random matrices from the GUE($\F$) and the LUE($\F$).   
\begin{theo} \label{theodet} Let $M$ be an invariant random matrix in $ \mathcal{P}_n(\mathbb{F})$, which belongs to the class $\mathcal{K}$. Let us consider the random vector $X=X(M)$ 
and the associated point process $\Xi$ on $E$ defined by 
\begin{align*}  \Xi=\sum_{k=1}^{n}\sum_{i=1}^{k}\delta_{(k,X_{i}^{(k)})}  \textrm{when $\mathbb{F}=\mathbb{C},\mathbb{H}$} , \mbox{ and } \Xi=\sum_{k=1}^{n}\sum_{i=1}^{\tilde{k}}\delta_{(k,\vert X_{i}^{(k)}\vert)}  \textrm{when $\mathbb{F}=\mathbb{R}$}.\end{align*}
Then 

$(i)$ The point process $\Xi$ is  determinantal.

$(ii)$ The correlation kernel of $\Xi$ is, for $(r,x), (s,y) \in E$, 
\begin{align*} 
K((r,x),(s,&y))=-\frac{(y-x)^{c(s-r)-1}}{(c(s-r)-1)!}1_{\{s>r,\, y\ge x \}}\\ &+\alpha\sum_{k=1}^{\tilde{n}}\psi_{r-k}^r(x)\int\frac{\partial^{c(n-s)}d_n}{\partial z_{k}^{c(n-s)}}(z_1, \cdots,z_{k-1},y,z_{k+1},\cdots,z_{\tilde{n}})\prod_{ \substack{i=1\\ i\ne k}}^{\tilde{n}}\psi_{i}(z_i)\, dz_i
\end{align*}
where  $\psi_{r-k}^r(x)=
\int_x^{+\infty}\frac{1}{(c(n-r)-1)!}(z-x)^{c(n-r)-1}\psi_{k}(z)\,dz$, if $r<n$,  $\psi_{n-k}^n(x)=\psi_{k}(x)$ and $\alpha^{-1}=\int d_n(z)\prod_{i=1}^{\tilde{n}}\psi_{i}(z_i)\, dz_i$
\end{theo}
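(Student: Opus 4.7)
The approach is to write the joint density of the full Gelfand--Tsetlin array $X(M)$ in a product-of-determinants form and then apply the Eynard--Mehta theorem to read off the correlation kernel. First, by Lemma \ref{invmeasure}, one may take $M = U\Omega_n(\Lambda)U^*$ with $U$ Haar-distributed in $U_n(\F)$ and $\Lambda$ having density proportional to $d_n(\lambda)\det(\psi_j(\lambda_i))$, as given by the class-$\mathcal{K}$ assumption. Theorem \ref{theoGT} says that, conditionally on $X^{(n)}(M)=\lambda$, the remainder of $X(M)$ is uniformly distributed on $GT_n(\lambda)$ (or, in the $\mathbb{H}$ case, the image of that uniform law under the half-integer-forgetting projection). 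By Lemma \ref{volGT} the volume of $GT_n(\lambda)$ equals $d_n(\lambda)$, so this factor cancels and the joint density of the full array, with respect to Lebesgue measure on $GT_n$, is proportional to $\det(\psi_j(x^{(n)}_i))_{\tilde n \times \tilde n}$ times the indicator of the Gelfand--Tsetlin polytope.

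Next, I would expose the determinantal structure of the interlacing factor. Identity \eqref{formuleentrelac} rewrites each constraint $x^{(k+1)} \succ x^{(k)}$ as $\det(1_{\{x^{(k+1)}_i > x^{(k)}_j\}})$, which for $\F=\mathbb{C}$ puts the density directly in Eynard--Mehta form with unit-step kernel $\phi_{r,r+1}(x,y)=1_{\{y\geq x\}}$. For $\F=\mathbb{H}$, I would integrate out the half-integer levels: the chain $x^{(k)}\succeq x^{(k-1/2)}\succeq x^{(k-1)}$ integrated over $x^{(k-1/2)}$ produces the kernel $(y-x)1_{\{y\geq x\}}$, the $c=2$ case of the stated formula. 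For $\F=\mathbb{R}$, I would use the reflection symmetry $\lambda\mapsto -\lambda$ to extend each level to its symmetric counterpart, embedding the pattern inside a type-$A$ pattern of width $n$, and then project back to positive eigenvalues at the end. In every case the joint density becomes
\begin{align*}
C\prod_{k=1}^{n-1}\det\bigl(\phi_{k,k+1}(x^{(k)}_j,x^{(k+1)}_i)\bigr)\cdot \det\bigl(\psi_j(x^{(n)}_i)\bigr),
\end{align*}
with the semigroup $\phi_{r,s}(x,y)=\frac{(y-x)^{c(s-r)-1}}{(c(s-r)-1)!}1_{\{y\geq x\}}$ obtained by convolving $\phi_{k,k+1}$ the appropriate number of times.

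At this stage I would invoke the Eynard--Mehta theorem in the formulation of Borodin et al.\ \cite{Borodin} and Johansson \cite{Johansson} to conclude that the induced point process on $\{1,\ldots,n\}\times\mathbb{R}$ is determinantal, with a correlation kernel of the form
\begin{align*}
K((r,x),(s,y))=-\phi_{r,s}(x,y)1_{\{r<s\}}+\sum_{k=1}^{\tilde n}\Phi^r_k(x)\Psi^s_k(y),
\end{align*}
where the biorthogonal family $(\Phi^r_k,\Psi^s_k)$ is obtained by inverting the Gram matrix built from $\psi_j$ and $\phi_{\cdot,n}$. The first term is exactly the ``free propagator'' contribution appearing in the theorem. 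The $\Phi^r_k$'s are visibly the primitives $\psi^r_{r-k}(x)=\int \phi_{r,n}(x,z)\psi_k(z)\,dz$ from the statement, since they are the images of $\psi_k$ under backward propagation from level $n$ down to level $r$. For the $\Psi^s_k$'s I would use Lemma \ref{rouge}, which writes $d_n$ (up to the constant $c_n$) as a determinant of monomials; differentiating $c(n-s)$ times with respect to $z_k$ is the algebraic operation that inverts convolution against $\phi_{s,n}$, which is the required dual. The scalar $\alpha$ is the reciprocal $(\int d_n(z)\prod_i \psi_i(z_i)\,dz)^{-1}$ normalizing the Gram matrix.

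The main obstacle will be verifying that the explicit candidates $\Phi^r_k$ and $\Psi^s_k$ above really do biorthogonalize the Gram matrix, so that the generic Eynard--Mehta kernel collapses to the compact single-sum form of the theorem. This reduces to a determinantal identity whose proof combines the generalized Cauchy--Binet formula \eqref{CauchyBinet} with Lemma \ref{rouge}. A secondary technical point, specific to the $\mathbb{H}$ and $\mathbb{R}$ cases, is to check that integrating out the auxiliary half-integer levels (respectively, symmetrizing under $\lambda\mapsto-\lambda$) preserves the product-of-determinants structure needed to apply Eynard--Mehta, merely transforming $c=1$ into $c=2$ and restricting the support to $\mathbb{R}_+$.
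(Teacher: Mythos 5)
Your proposal follows essentially the same route as the paper: write the joint law of the minor array as a product of determinants using Theorem \ref{theoGT}, Lemma \ref{volGT} and identity (\ref{formuleentrelac}), invoke the Eynard--Mehta-type lemma of Borodin et al.\ (the paper's Lemma \ref{borodin}, obtained there by discretizing on $\frac{1}{N}GT_{n,\Z}$, applying their Lemma 3.4 on a bounded window $]a,b[$, and passing to the limit), and then identify the second term of the kernel through the determinantal expression of $d_n$ together with Cauchy--Binet-type manipulations. The only divergences are organizational: the paper treats $\F=\H$ by reduction to the real odd case via \ref{relquatreal} rather than integrating out half-integer levels, works with the positive parts directly (size-$\tilde r$ indicator determinants with a zero padded in) instead of your symmetrization-and-fold for $\F=\R$, and obtains the dual functions by a cofactor expansion of the Gram inverse rather than by verifying biorthogonality of guessed candidates -- all leading to the same algebra.
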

 We observe that $(X^{(1)}(M),\cdots,X^{(n)}(M))$ when $M$ is an invariant random matrix  in $\mathcal{P}_n(\mathbb{H})$, has the same  law as $(X^{(3)}(N),X^{(5)}(N),\cdots,X^{(2n+1)}(N))$ when $N$ is an invariant random matrix in  $\mathcal{P}_{2n+1}(\mathbb{R})$, provided that  $X^{(n)}(M)$ has the same law as $X^{(2n+1)}(N)$. So the quaternionic case is deduced from the real odd one in the previous theorem (see  \ref{relquatreal}).  
 
 \begin{cor} \label{corodet} Under the hypothesis of the previous theorem, suppose that  we can write  $d_n(\lambda)=\det(\chi_{i}(\lambda_{j}))_{\tilde{n}\times\tilde{n}}$, where $(\chi_{k})_{k\ge 1}$ is a sequence of real functions on $\mathbb{R}$ such that the $\chi_{i}\psi_{j}$'s are integrable on $\mathbb{R}$ and $\int_\mathbb{R} \chi_{i}(x)\psi_{j}(x)dx=\delta_{ij}$.  Then 
$$
K((r,x),(s,y))=-\frac{(y-x)^{c(s-r)-1}}{(c(s-r)-1)!}1_{\{s>r,\, y\ge x\}} +\sum_{k=1}^{\tilde{n}}\psi_{r-k}^r(x)\frac{d^{c(n-s)}\chi_{k}}{d x^{c(n-s)}}(y).
$$
\end{cor}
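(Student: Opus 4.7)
The plan is to reduce the second term in the kernel of Theorem \ref{theodet} by exploiting the biorthogonality hypothesis, which collapses all but one summand and also forces the normalising constant $\alpha$ to equal $1$.

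First I would substitute $d_n(z)=\det(\chi_i(z_j))_{\tilde n \times \tilde n}$ into the expression
$$\int \frac{\partial^{c(n-s)} d_n}{\partial z_k^{c(n-s)}}(z_1,\dots,z_{k-1},y,z_{k+1},\dots,z_{\tilde n})\prod_{i\neq k}\psi_i(z_i)\,dz_i.$$
Because differentiation with respect to $z_k$ only acts on column $k$ of the matrix $(\chi_i(z_j))$, the $k$-th column is replaced by $(\chi_i^{(c(n-s))}(y))_i$ while the other columns still have entries $\chi_i(z_j)$ for $j\neq k$. Expanding this modified determinant along column $k$ yields
$$\sum_{i=1}^{\tilde n}(-1)^{i+k}\,\chi_i^{(c(n-s))}(y)\,\widehat D_{i,k}(z),$$
where $\widehat D_{i,k}(z)$ is the $(i,k)$-minor, i.e.\@ the determinant of $(\chi_a(z_b))$ with row $i$ and column $k$ removed.

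Next I would integrate against $\prod_{j\neq k}\psi_j(z_j)\,dz_j$. Writing $\widehat D_{i,k}$ as a sum over bijections $\sigma:\{1,\dots,\tilde n\}\setminus\{k\}\to\{1,\dots,\tilde n\}\setminus\{i\}$ and using Fubini, each term factors into products $\int \chi_{\sigma(b)}(z_b)\psi_b(z_b)\,dz_b=\delta_{\sigma(b),b}$. The product is nonzero only when $\sigma$ is the identity, which forces $i=k$ (otherwise the domain and range of $\sigma$ differ). Thus only the diagonal term survives and
$$\int \frac{\partial^{c(n-s)} d_n}{\partial z_k^{c(n-s)}}\prod_{i\neq k}\psi_i(z_i)\,dz_i \;=\;\chi_k^{(c(n-s))}(y)=\frac{d^{c(n-s)}\chi_k}{dx^{c(n-s)}}(y).$$

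Finally I would verify that $\alpha=1$. Using $d_n(z)=\det(\chi_i(z_j))$ and the generalised Cauchy–Binet identity \eqref{CauchyBinet},
$$\alpha^{-1}=\int d_n(z)\prod_{i=1}^{\tilde n}\psi_i(z_i)\,dz_i=\det\!\Bigl(\int \chi_i(x)\psi_j(x)\,dx\Bigr)=\det(\delta_{ij})=1.$$
Inserting these two reductions into the formula of Theorem \ref{theodet} gives precisely the stated kernel. The only delicate step is the bijection-counting argument that pins down $i=k$; everything else is a direct consequence of biorthogonality and Cauchy–Binet.
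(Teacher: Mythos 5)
Your proposal is correct and follows essentially the same route as the paper: the paper's proof consists precisely of the two integral identities you derive, namely $\int d_n(z)\prod_j\psi_j(z_j)\,dz=\det\bigl(\int\chi_i\psi_j\bigr)=1$ (so $\alpha=1$) and the fact that integrating the determinant against $\prod_{j\ne k}\psi_j(z_j)\,dz_j$ turns all columns $j\ne k$ into identity columns, leaving only the $k$-th diagonal entry, which gives $\chi_k^{(c(n-s))}(y)$. Your cofactor-expansion and bijection-counting argument is just an expanded version of the paper's observation that $\det(a_{ij})=\chi_k(y)$ with $a_{ij}=\delta_{ij}$ for $j\ne k$ and $a_{ik}=\chi_i(y)$, so there is no substantive difference.
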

 If the radial part of $M$ is  deterministic and equal to $\lambda$ in the interior of the Weyl chamber, the theorem and its corollary remain true up to slight modifications, replacing $\psi_{i}(z)\, dz$ by the Dirac measure $\delta_{\lambda_i }(dz)$ for $\mathbb{F}=\mathbb{C}$ and by $\delta_{\vert \lambda_i\vert}(dz)$ for $\mathbb{F}=\mathbb{R}$, in the  kernel and the counting measure on $\{1,\cdots,n\}$ in the reference measure by the counting measure on $\{1,\cdots,n-1\}$. Let us describe some   applications  before making the proofs of the theorem and its corollary. Recall that we let $\epsilon$ be equal to $1$ if $n\notin 2\mathbb{N}$ and $0$ otherwise.
 
\medskip

\noindent {\bf The Gaussian case:} GUE($\mathbb{F}$). As we have seen a  standard Gaussian variable $M$ in $\mathcal{P}_n(\mathbb{F})$   satisfies the hypothesis of the theorem with $\psi_i(x)=x^{i-1}e^{-\frac{1}{2} x^2}$ when $\F=\mathbb{C}$ and $\psi_i(x)=x^{2i-2+ \epsilon}e^{-\frac{1}{2} x^2}1_{\{x>0\}}$ otherwise. Besides, the hypothesis of the corollary are satisfied if we let   $\chi_i=h_{i-1}$ when $\F=\mathbb{C}$,  $\chi_i=h_{2i-2+\epsilon}$ when $\F=\mathbb{R}$ and $\chi_i=h_{2i-1}$ when $\F=\H$,  where $(h_i)_{i\ge 0}$ is the sequence of normalized Hermite  polynomials for the weight $e^{-\frac{1}{2} x^2}$, such that $h_i$ has degree $i$.

In the case of the GUE($\C$),  the corollary was obtained by Johansson and Nordenstam \cite{JohanssonNordenstam}, and Okounkov and Reshetikhin \cite{Okounkov}. The following proposition, which provides the correlation kernel for the minor process associated to a matrix from the GUE$_\infty(\R)$, has been announced in \cite{Defosseux}.  Forrester and Nordenstam posted a proof on arxiv a few weeks later in \cite{ForresterNordenstam}.  
\begin{prop} \label{GUER} Let $M$ be a standard Gaussian variable in $\mathcal{P}_\infty(\mathbb{R})$. We consider the radial part  $X^{(k)}\in \mathbb{R}^{\tilde{k}}$  of the  main minor of order $k$ of $M$. Then the point process  $\sum_{k=1}^{+\infty}\sum_{i=1}^{\tilde{k}}\delta_{(k, \vert X^{(k)}_{i}\vert)}$ is determinantal on $\mathbb{N}^*\times \mathbb{R}_{+}$ with correlation kernel
\begin{displaymath}
\begin{array}{ll} 
R((r,x),(s,y))=&-\frac{1_{\{r<s\}}}{(s-r-1)!} (y-x)^{s-r-1}1_{\{y\ge x\}}\\&+\sum_{i=1}^{\tilde{r}\wedge \tilde{s}}\frac{((r-2i)!)^{1/2}}{((s-2i)!)^{1/2}}h_{s-2i}(y)h_{r-2i}(x)e^{-\frac{1}{2}x^{2}} \\
&+ \sum_{i= \tilde{r}+1}^{\tilde{s}}\frac{h_{s-2i}(y)}{((s-2i)!\sqrt{\pi})^{1/2}}\int^{+\infty}_{x} \frac{(z-x)^{2i-r-1}}{(2i-r-1)!}e^{-\frac{1}{2}z^{2}}\, dz.
\end{array}
\end{displaymath}
\end{prop}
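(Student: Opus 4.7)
The plan is to apply Corollary~\ref{corodet} in the real case to the finite truncation GUE$_N(\R)$ and then observe that the resulting kernel, after a suitable choice of the auxiliary functions $\chi_i$, stabilizes as $N\to\infty$ to the expression displayed. Since a standard Gaussian variable in $\PP_\infty(\R)$ restricted to its first $N$ rows and columns is distributed as a GUE$_N(\R)$ variable, the minor process on levels $k\le N$ coincides with the truncated one, so it suffices to exhibit an $N$-independent kernel agreeing with the finite-$N$ kernel on every bounded range of levels.

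By equation~(\ref{densityGUE}) combined with Lemma~\ref{rouge}, the positive eigenvalues of a GUE$_N(\R)$ matrix have joint density proportional to $d_N(\lambda)\det(\psi_j(\lambda_i))$, with $\psi_j(x)=x^{2j-2+\epsilon}e^{-x^2/2}\mathbf{1}_{\{x>0\}}$ and $\epsilon\in\{0,1\}$ the parity of $N$. The ensemble therefore belongs to class $\mathcal{K}$ and Theorem~\ref{theodet} applies. To land in the framework of Corollary~\ref{corodet} I would take $\chi_j=\alpha_j h_{2j-2+\epsilon}$, the $\alpha_j$ being determined by the biorthogonality condition $\int_0^\infty \chi_i\psi_j\,dx=\delta_{ij}$. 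Since $\chi_i$ and $\psi_j$ both have the parity of $\epsilon$, this integral reduces to $\tfrac12\int_\R h_{2i-2+\epsilon}(x)\,x^{2j-2+\epsilon}e^{-x^2/2}\,dx$, which is evaluated via the standard Hermite orthogonality after expanding $x^{2j-2+\epsilon}$ in Hermite polynomials; the identity $d_N(\lambda)=\det(\chi_j(\lambda_i))$ then follows from the corresponding triangular change of basis together with the explicit formula for $d_N$ in Lemma~\ref{rouge}.

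Substituting these $\psi_j,\chi_j$ into Corollary~\ref{corodet} and using the iterated normalized Hermite differentiation identity $h_m^{(j)}(y)=\sqrt{m!/(m-j)!}\,h_{m-j}(y)$ for $j\le m$, the derivative $\frac{d^{N-s}\chi_k}{dx^{N-s}}(y)$ vanishes unless $2k-2+\epsilon\ge N-s$ and otherwise becomes a multiple of $h_{2k-2+\epsilon-(N-s)}(y)$. Re-indexing by $i=\tilde N-k+1$ (with a parity offset involving $\epsilon$) converts the Hermite index on the $y$-side into $s-2i$. The terms then split according to whether $2i\le r$, in which case $\psi_{r-k}^r(x)$ can be reduced (by an analogous manipulation, or integration by parts when $r<N$) to a multiple of $h_{r-2i}(x)e^{-x^2/2}$ and yields the Christoffel--Darboux-like contribution $\sum_{i=1}^{\tilde r\wedge\tilde s}\tfrac{((r-2i)!)^{1/2}}{((s-2i)!)^{1/2}}h_{s-2i}(y)h_{r-2i}(x)e^{-x^2/2}$; or $2i>r$, in which case $\psi_{r-k}^r$ keeps its integral form and produces the tail sum $\sum_{i=\tilde r+1}^{\tilde s}$ with the integrand $(z-x)^{2i-r-1}/(2i-r-1)!\,e^{-z^2/2}$. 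The first piece $-(y-x)^{s-r-1}/(s-r-1)!\,\mathbf{1}_{\{s>r,\,y\ge x\}}$ is taken over unchanged from Corollary~\ref{corodet}.

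The main obstacle is the numerical bookkeeping: one must track the constants $\alpha_j$ together with the factorial factors coming from iterated differentiation in order to produce exactly the coefficients $((r-2i)!)^{1/2}/((s-2i)!)^{1/2}$ and $((s-2i)!\sqrt{\pi})^{1/2}$ appearing in the proposition, and one must check that after re-indexing the summation bounds are $N$-independent. Stabilization in $N$ is then automatic, since for fixed $r,s$ only indices $i\in\{1,\dots,\tilde s\}$ contribute and the corresponding summands no longer depend on $N$ once $N$ is taken large enough, yielding the claimed kernel for the infinite-dimensional determinantal point process.
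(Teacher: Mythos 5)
Your overall strategy is the paper's: truncate to a finite GUE$_N(\R)$, apply Corollary \ref{corodet}, simplify with Hermite identities, and note that the resulting kernel is $N$-independent. But there is a concrete flaw in the way you verify the hypotheses of the corollary. You keep $\psi_j(x)=x^{2j-2+\epsilon}e^{-x^2/2}1_{\{x>0\}}$ and take $\chi_j=\alpha_j h_{2j-2+\epsilon}$, claiming that $\int_0^\infty\chi_i\psi_j\,dx=\delta_{ij}$ can be achieved by choosing the scalars $\alpha_j$. This is false: expanding $x^{2j-2+\epsilon}$ in Hermite polynomials produces \emph{all} lower indices of the same parity with nonzero coefficients (e.g.\ $x^3=H_3+3H_1$), so $\int_0^\infty h_{2i-2+\epsilon}(x)\,x^{2j-2+\epsilon}e^{-x^2/2}\,dx\neq 0$ whenever $i\le j$. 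The pairing matrix is triangular with nonvanishing off-diagonal entries, and a diagonal rescaling by the $\alpha_j$ cannot kill them; so Corollary \ref{corodet} does not apply to this pair $(\psi_j,\chi_i)$. The same mismatch propagates into your computation of the $x$-side: with monomial $\psi_k$, the quantity $\psi^r_{r-k}(x)=\int_x^\infty\frac{(z-x)^{N-r-1}}{(N-r-1)!}\psi_k(z)\,dz$ does \emph{not} reduce to a multiple of $h_{r-2i}(x)e^{-x^2/2}$; that reduction uses the identity $\frac{d}{dz}\bigl(H_m(z)e^{-z^2/2}\bigr)=-H_{m+1}(z)e^{-z^2/2}$ and hence requires $\psi_k$ itself to be a Hermite function times the Gaussian weight.

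The fix is exactly the choice made in the paper's proof: since $\det\bigl(h_{2j-2+\epsilon}(\lambda_i)\bigr)$ is a constant multiple of $\det\bigl(\lambda_i^{2j-2+\epsilon}\bigr)$, you may replace your $\psi_j$ by $h_{2j-2+\epsilon}(x)e^{-x^2/2}1_{\{x>0\}}$ without affecting membership in the class $\mathcal K$, and then take $\chi_j$ proportional to $h_{2j-2+\epsilon}$; by parity, orthogonality of Hermite polynomials on $\R$ now gives genuine biorthogonality on $\R_+$ (up to a normalizing constant). With that correction, the rest of your outline — the iterated identity $h_m'=\sqrt m\,h_{m-1}$ giving $h_{N-2i}^{(N-s)}\propto h_{s-2i}$, the re-indexing $i=\tilde N-k+1$, the two regimes $2i\le r$ (integration by parts down to $h_{r-2i}(x)e^{-x^2/2}$) and $2i>r$ (the incomplete-integral tail term), the vanishing of terms with $i>\tilde s$, and the $N$-independence that lets you pass to $\PP_\infty(\R)$ — is precisely the paper's computation, so the argument goes through once the functions $\psi_j$ are chosen consistently.
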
  
\proof  Let $n$ be an odd integer and $M$ be a standard Gaussian variable in  $\mathcal{P}_{n}(\mathbb{R})$. The matrix $M$ belongs to the class $\mathcal{K}$ with $\psi_i=h_{2i-1}$, $i=1,\cdots,\tilde{n}$. The functions $\chi_i=h_{2i-1}$, $i=1,\cdots,\tilde{n}$, satisfy the hypothesis of corollary \ref{corodet}. So the point process  $\sum_{k=1}^{n}\sum_{i=1}^{\tilde{k}}\delta_{(k,\vert X^{(k)}_{i}\vert)}$ is determinantal on $\mathbb{N}^*\times \mathbb{R}_+$ and its correlation kernel $K$ is equal to
\begin{align*}K((r,x),(s,y))=-\frac{1_{\{s<r\}}}{(r-s-1)!}&(y-x)^{r-s-1}1_{\{y\ge x\}} +\sum_{k=1}^{\tilde{n}}h_{2k-1}^{(n-s)}(y)\xi_{k}(r,x)\end{align*}
where $\xi_{k}(r,x)=\int_{x}^{\infty}\frac{(z-x)^{n-r-1}}{(n-r-1)!}h_{2k-1}(z)e^{-\frac{1}{2}z^2}\, dz.$
Let us consider $$H_{n}(x)=(-1)^{n}e^{\frac{1}{2}x^{2}}\frac{d^{n}}{dx^{n}}e^{-\frac{1}{2}x^{2}}.$$ We recall that $h_{n}=\frac{1}{(n!\sqrt{\pi})^{1/2}}H_{n}$ and $h'_{n}=\sqrt{n}h_{n-1}$. Letting $i=\tilde{n}-k+1$, we get  that 
\begin{align*}
\sum_{k=1}^{\tilde{n}}h_{2k-1}^{(n-s)}&(y)\xi_{k}(r,x)
 = \sum_{i=1}^{\tilde{n}} \Big[\frac{(n-2i)!}{(s-2i)!}\Big]^{1/2}h_{s-2i}(y)\xi_{k}(r,x).
\end{align*}
Integrating by part  we get that when $ i\ge \tilde{r}+1$,
\begin{align*}
\xi_{k}(r,x)=\frac{1}{((n-2i)!\sqrt{\pi})^{1/2}} \int_{x}^{\infty}\frac{(z-x)^{2i-r-1}}{(2i-r-1)!}e^{-\frac{1}{2}z^2}\, dz,
\end{align*}
and when $ i\le \tilde{r}$,
\begin{align*}
\xi_{k}(r,x)=    \Big[\frac{(r-2i)!}{(n-2i)!} \Big]^{1/2}h_{r-2i}(x)e^{-\frac{1}{2}x^{2}},
\end{align*}
which proves the proposition.  \qed\medskip

\noindent{\bf The Laguerre case:} LUE($\mathbb{F}$).  A random matrix from the LUE$_{n,k}(\F)$ satisfies for $k\ge n$ the hypothesis of theorem \ref{theodet}  with $\psi_i(x)=x^{i-1+\tilde k-\tilde n} e^{-c x}1_{\R_+}(x)$.  Those of the corollary \ref{corodet} are satisfied only when $\F=\C$ with $\psi_i=\chi_i=L_{i}$, where  $(L_i)_{i\ge 0}$ is the sequence of normalized Laguerre  polynomials for the weight $x^{k-n}e^{-x}$, such that $L_i$ has degree $i$. 

\medskip

Let us now prove theorem  \ref{theodet}. The main point of its proof is the following lemma which is an application of \cite{Borodin}.  For $f,g:\mathbb{R}\times \mathbb{R} \to \mathbb{R}_+$, $h: \mathbb{R} \to \mathbb{R}_+$ and $ x,y\in \mathbb{R} $, we write, when it's meaningful,
\begin{align} \label{defnstar}
\begin{array}{l} 
(f*g)(x,y)=\int_\mathbb{R} f(x,z)g(z,y)\, dz, \quad (f*h)(x)=\int_\mathbb{R} f(x,z)h(z) \, dz,  \end{array}\\
f^{(1)} = f, \quad \quad f^{(r)} = f*f^{(r-1)}, \quad  \mbox{ if }r\ge 1, \quad \quad f^{(r)}=0, \quad \mbox{ if } r\le 0. \notag
\end{align}

\begin{lem} \label{borodin} Let $M$ be an invariant random matrix as in theorem \ref{theodet} and $\Xi$ the associated point process. We suppose that the support of the functions $\psi_i$'s and $\phi_i$'s are included in an interval $]a,b[$. Then the correlation kernel of $\Xi$ is defined by 
\begin{itemize}
\item when $\mathbb{F}=\mathbb{C}$,
\begin{align*} 
K((r,x),(s,y))&=-\phi^{(s-r)}(x,y)+\sum_{k=1}^{n}\psi_{r-k}^r(x)\sum_{l=1}^{s}(A^{-1})_{kl}\phi^{(s-l+1)}(a,y),\end{align*}
\item when $\mathbb{F}=\mathbb{R}$,
\begin{align*} 
K((r,x),(s,y))&=-\phi^{(s-r)}(x,y)+\sum_{k=1}^{\tilde{n}}\psi_{r-k}^r(x)\sum_{l=1}^{\tilde{s}}(B^{-1})_{kl}\phi^{(s-2l+1)}(0,y),
\end{align*}
\end{itemize}
where $\phi(x,y)=1_{[x,\infty)}(y)$, and $A$ and $B$ are invertible matrices defined by $A_{ij}=\phi^{(n-i+1)}*\psi_{j}(a)$, $i,j=1, \cdots,n$, and $B_{ij}=\phi^{(n-2i+1)}*\psi_{j}(0)$, $i,j=1,\cdots,\tilde{n}$.
\end{lem}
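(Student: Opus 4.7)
The plan is to cast the joint law of the minor process as a determinantal measure on an interlaced array and then apply the Eynard--Mehta type correlation-kernel formula of \cite{Borodin}. I would proceed in three steps.

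First, I compute the joint density of $X(M) = (X^{(1)}(M), \ldots, X^{(n)}(M))$ on the Gelfand--Tsetlin polytope. By Theorem \ref{theoGT}, conditionally on $X^{(n)}(M) = \lambda$ the lower levels are uniformly distributed on $GT_n(\lambda)$ (or its image by the projection forgetting half-integer levels, when $\F = \H$). Since $M \in \mathcal{K}$, the marginal of $X^{(n)}(M)$ has density proportional to $d_n(\lambda)\det(\psi_j(\lambda_i))$, and Lemma \ref{volGT} gives $\mathrm{vol}(GT_n(\lambda)) = d_n(\lambda)$. The two factors $d_n(\lambda)$ cancel, yielding a joint density proportional to
\[
\det\!\bigl(\psi_j(x^{(n)}_i)\bigr)_{\tilde n\times\tilde n}\,\mathbf{1}_{GT_n(x^{(n)})}\!\bigl(x^{(1)},\ldots,x^{(n-1)}\bigr).
\]

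Second, I factor the indicator of $GT_n$ into a product of determinants, one per transition between consecutive levels. The identity (\ref{formuleentrelac}) rewrites each strict interlacing $x \succ y$ as $\det(\phi(y_j, x_i))$ with $\phi(u,v) = \mathbf{1}_{[u,\infty)}(v)$. For $\F = \mathbb{C}$ the levels have sizes $1, 2, \ldots, n$, so I augment each shorter level by a dummy coordinate set to a fixed $a$ below the support of all $\psi_j$ to make the matrices square. For $\F = \mathbb{R}$ the levels alternate in dimension and inherit the sign symmetry $\lambda \mapsto -\lambda$; pairing consecutive levels and using $0$ as the reflecting boundary yields a product-of-determinants structure where the effective step between packaged levels is two. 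This accounts for the exponents $n - 2i + 1$ and the boundary value $0$ in the matrix $B$.

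Third, I apply the general theorem of \cite{Borodin}: any probability density on an interlaced array of the form $Z^{-1}\det(\psi_j(x^{(n)}_i))\prod_k \det(\phi(x^{(k)}_i, x^{(k+1)}_j))$ induces a determinantal point process on $\bigsqcup_k \{k\} \times \mathbb{R}$ with correlation kernel
\[
-\phi^{(s-r)}(x,y) + \sum_{k,l} \psi^r_{r-k}(x)\,(A^{-1})_{kl}\,\phi^{(s-l+1)}(a,y),
\]
where $A_{ij} = \phi^{(n-i+1)}\!*\!\psi_j(a)$ is the Gram matrix. Substituting the choices of the second step for $\F = \mathbb{C}$ (boundary $a$, step one) and for $\F = \mathbb{R}$ (boundary $0$, step two) produces the two formulas of the lemma. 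The invertibility of $A$ and $B$ is automatic because the joint density integrates to a positive constant.

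The main obstacle is the second step in the real case: the alternating dimensions, the reflection symmetry and the resulting wall at $0$ must be folded into the standard product-of-determinants framework of \cite{Borodin} in just the right way to produce the effective step-size $2$ in the Gram matrix $B$ while leaving the step-size $1$ in the diagonal term $-\phi^{(s-r)}$. Once the correct factorisation is identified, the kernel formula is essentially a citation from \cite{Borodin}.
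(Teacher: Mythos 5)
Your overall route is the paper's: combine theorem \ref{theoGT} with lemma \ref{volGT} so the $d_n$ factors cancel, rewrite the interlacing constraints as a product of determinants via (\ref{formuleentrelac}) with dummy coordinates at $a$ (resp.\ $0$), and invoke the kernel formula of \cite{Borodin}. The genuine problem is your second step in the real case. There is no pairing of consecutive levels and no ``effective step two'': the paper keeps one determinant per transition $r-1\to r$, of size $\tilde r\times\tilde r$, for every $r=1,\dots,n$, and squareness is obtained by the convention $x^{(2i-1)}_i=0$, i.e.\ at each \emph{odd} level $2i-1$ the array is augmented by a single virtual particle sitting at the wall $0$. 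The exponent $n-2i+1$ in $B_{ij}=\phi^{(n-2i+1)}*\psi_j(0)$ then simply counts the $n-(2i-1)$ single-level convolutions carrying this virtual particle from level $2i-1$ up to level $n$; this is exactly why the diagonal term stays $-\phi^{(s-r)}$ with unit steps and why the kernel is indexed by all levels $r,s\in\{1,\dots,n\}$. If you literally package levels in pairs you change both the index set and the diagonal term and do not recover the stated formula, so the ``main obstacle'' you point to is left unresolved rather than resolved.

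Two further devices the paper uses and you omit. First, to have symmetric densities (as the determinantal formalism requires) the coordinates of each level are randomly permuted, $\Lambda^{(r)}=\sigma_r(X^{(r)}(M))$, resp.\ $\sigma_r(|X^{(r)}(M)|)$ for $\F=\R$, the absolute values handling the sign of the eigenvalues in the real case. Second, lemma 3.4 of \cite{Borodin} is applied after discretizing the measure onto $\frac{1}{N}GT_{n,\Z}$ --- with a modification needed for $\F=\R$, for which the paper refers to \cite{ForresterNordenstam} --- and the continuous kernel is then obtained by letting $N\to\infty$; a direct continuum citation is not how the paper proceeds, and the real-case modification is precisely the point your step two glosses over.
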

 \proof Let us consider $\Lambda=(\Lambda^{(1)},\cdots,\Lambda^{(n)})$, where   $\Lambda^{(r)}=\sigma_r (X^{(r)}(M))$,  when $\mathbb{F}=\mathbb{C}$ and $\Lambda^{(r)}=\sigma_r (\vert X^{(r)}(M)\vert)$,  when $\mathbb{F}=\mathbb{R}$, where the $\sigma_r$'s are independent random permutations of the coordinates, uniformly distributed and independent from $M$. The reason why we introduce these permutations is that we have to work with symmetric densities. Since the random matrix $M$ is invariant,  theorem \ref{theoGT}, lemma \ref{volGT} and identity (\ref{formuleentrelac}) imply that the density of $(\Lambda^{(1)},\cdots, \Lambda^{(n)})$  is proportional to the function $f$  defined by :
 \begin{itemize}
 \item when $\mathbb{F}=\mathbb{C}$,  with the convention that $x_{r}^{(r-1)}=a$,
 $$f(x^{(1)},\cdots,x^{(n)})=\det(\psi_j(x^{(n)}_i))_{n\times n}\prod_{r=1}^{n}\det \big(1_{\{x_j^{(r)}> x_{i}^{(r-1)}\}}\big)_{r\times r},$$ 
  \item  when $\mathbb{F}=\mathbb{R}$,  with the convention that  $x_{r}^{(2r-1)}=0$,
 $$f(x^{(1)},\cdots,x^{(n)})=\det(\psi_j(x^{(n)}_i))_{\tilde n\times \tilde n}\prod_{r=1}^{n}\det \big(1_{\{x_j^{(r)}> x_{i}^{(r-1)}\}}\big)_{\tilde r\times \tilde r}.$$
 \end{itemize}
We consider a sequence $(\tilde \Lambda_N)_N$ of discrete random variables such that $\tilde \Lambda_N$ belongs to $\frac{1}{N}GT_{n,\mathbb{Z}}$ and $\P(\tilde \Lambda_N =(x^{(1)},\cdots,x^{(n)}))$ is proportional to $f(x^{(1)},\cdots,x^{(n)})$. Then lemma 3.4 in \cite{Borodin}, slightly modified  for $\mathbb{F}=\mathbb{R}$ (see \cite{ForresterNordenstam} for details),  implies that the associated point process is determinantal with a correlation kernel $K_N$ obtained from $K$ replacing the Lebesgue measure on $\R$ in identities (\ref{defnstar}) by the counting measure on $\frac{1}{N}\Z$. We get the lemma letting $N$ goes to infinity.  \qed\medskip

{\sc Proof of theorem \ref{theodet}.}  We write the proof for $\mathbb{F}=\mathbb{R}$. We use the lemma \ref{borodin} and its notations. We have, for $r\ge 1$, $$\phi^{(r)}(0,y)=\frac{y^{r-1}}{(r-1)!}1_{\{y\ge 0\}}.$$ Thus, $\phi^{(s-2l+1)}(0,y)=\frac{\partial ^{n-s}}{\partial y ^{n-s}}\phi^{(n-2l+1)}(0,y)$, $l=1, \cdots,\tilde n$, and 
\begin{align*} 
\sum_{l=1}^{s}(B^{-1})_{kl}\phi^{(s-2l+1)}(0,y) = \frac{\partial ^{n-s}}{\partial y ^{n-s}}\sum_{l=1}^{n}(B^{-1})_{kl}\phi^{(n-2l+1)}(0,y). \end{align*}
Let us denote $s_{lk}(B)$ the matrix obtained from $B$ by suppressing the $l^{th}$ line and the $k^{th}$ column. We have $$(B^{-1})_{kl}=\frac{(-1)^{k+l}}{\det(B)}\det(s_{lk}(B))_{\tilde n-1\times \tilde  n-1}.$$
Thus 
\begin{align*} 
\sum_{l=1}^{\tilde n}&(B^{-1})_{kl}\phi^{(n-2l+1)}(0,y)=\sum_{l=1}^{\tilde n}\frac{(-1)^{k+l}}{\det(B)}\det(s_{lk}(B))_{\tilde n-1\times \tilde n-1}\phi^{(n-2l+1)}(0,y) \\
&= \sum_{l=1}^{\tilde n}\frac{(-1)^{k+l}}{\det(B)}\det(\phi^{(n-2i+1)}*\psi_j(0))_{\substack{i\ne l\\ j\ne k}}\, \phi^{(n-2l+1)}(0,y)\\
&= \sum_{l=1}^{\tilde n}\frac{(-1)^{k+l}}{\det(B)}\int_{\mathbb{R}^{n-1}} \det(\phi^{(n-2i+1)}(0,z_j))_{\substack{i\ne l\\ j\ne k}}\phi^{(n-2l+1)}(0,y)\prod_{ \substack{j=1\\ j\ne k}}^{\tilde n}\psi_j(z_j) dz_j \\
&= \frac{1}{\det(B)}\int_{\mathbb{R}^{n-1}} \det(\phi^{(n-2i+1)}(0,z_j))_{\tilde n\times \tilde n}\prod_{ \substack{j=1\\ j\ne k}}^{\tilde n}\psi_j(z_j) dz_j   \textrm{ , letting } z_k=y. \end{align*}
Moreover, if $V_n$ is the function introduced at definition \ref{asymptoticdim}, we have
\begin{align*}   \det(\phi^{(n-2i+1)}(0,z_j))_{\tilde n\times \tilde n}=\det(\frac{z_j^{n-2i}}{(n-2i)!}1_{\{z_i\ge 0\}})_{\tilde n\times\tilde n}= V_n(z)\prod_{i=1}^{\tilde n}\frac{1_{\{z_i\ge 0\}}}{(n-2i)!},\end{align*} which achieves the proof for $\F=\R$. We get the theorem letting $a$ and $b$ go to $-\infty$ and $+\infty$.   The case  $\mathbb{F}=\mathbb{C}$ is quite similar. We deduce the quaternionic case from the real odd one. \qed\medskip

{\sc Proof of corollary \ref{corodet}}. The corollary is deduced from the theorem using the identities $$\int_{\mathbb{R}^{\tilde{n}}}\det(\chi_i(z_j))_{\tilde{n}\times \tilde{n}} \prod_{j=1}^{\tilde{n}}\psi_{j}(z_j)\, dz_j=\det (\int \chi_i(z)\psi_j(z)\,dz)_{\tilde{n}\times \tilde{n}}=1$$ $$\int_{\mathbb{R}^{\tilde{n}-1}}\det(\chi_i(z_j))_{\tilde{n}\times \tilde{n}}\prod_{ \substack{j=1\\ j\ne k}}^{\tilde{n}}\psi_j(z_j)dz_j=\det (a_{ij})_{\tilde{n}\times \tilde{n}}=\chi_k(y),$$ where $a_{ij}=\delta_{ij}$, $j\ne k$ and $a_{ik}=\chi_{i}(y)$, $i=1,\cdots, \tilde{n}$.   \qed

\subsection{"Rectangular" interlaced processes} 
In section 5, considering successive rank one perturbations, we have constructed Markov processes which have a remarkable property: two successive states satisfy some interlacing conditions. Thus we got interlaced random configurations on $\mathbb{N}\times \mathbb{R}$. More precisely, let $(M_k)_{k\ge 1}$ be a sequence of independent standard Gaussian variables in $\mathcal{M}_n(\mathbb{F})$.   For $\lambda$ in the interior of the Weyl chamber, we consider the process $(R^{(k)})_{k\ge 1}$, where $R^{(k)}$ is the radial part of $\Omega_n(\lambda)+\sum_{i=1}^{k} M_i\Omega_n^1M_i^*$, and the associated point process $\Xi_{\lambda}=\sum_{k=1}^{m}\sum_{i=1}^{\tilde{k}}\delta_{k,R^{(k)}_{i}}$. Since interlacing conditions and function $d_n$ can be written as a determinant  for $\mathbb{F}=\mathbb{C}$, $\F=\mathbb{H}$ or $\F=\R$ with $n$ odd, our proposition \ref{Markovpertu} shows that the hypothesis of  proposition 2.13 in \cite{Johansson} are satisfied in these cases and that the point process $\Xi_\lambda$ is determinantal.  In the even real case, we don't know if this remains true.   Thus we have the following proposition.
\begin{prop}  Let $\mathbb{F}=\mathbb{C}$, $\F=\mathbb{H}$ or $\F=\R$ with $n$ odd. Let $(M_i)_{i\ge 1}$ be a sequence of independent standard Gaussian variable in $\PP_n(\F) $ and $\lambda\in \mathbb{R}_+^{\tilde n}$ such that $\lambda_1>\cdots>\lambda_{\tilde n}$. Let us consider  the point process   $ \Xi_ \lambda =\sum_{k=1}^{m}\delta_{k,R_{i}^{(k)}},$
where $R_{i}^{(k)}$ is the $i^{th}$ positive eigenvalue of $\Omega_n(\lambda)+\sum_{i=1}^{k}M_i\Omega_n^1M_i^*$.

Then,  

$(i)$ the point process $\Xi_ \lambda $ is determinantal on $\{1,\cdots,m\}\times \R_+$.

$(ii)$ The correlation kernel of $\Xi_ \lambda $ is 
\begin{align*} 
K_\lambda((r,x),(s,y))&=-\phi^{(s-r)}  +\sum_{i,j=1}^{\tilde n}\phi^{(m-r)}*\psi(x,i) (A^{-1})_{ij}\phi^{(s)}(\lambda_j,y),
\end{align*}
where  $\phi(x,y)=1_{y\ge x}e^{-(y-x)}$, $\psi(x,i)=x^{i-1}$ when $\F=\C$,  $\phi(x,y)=e^{-c(x+y)}(e^{2c(x\wedge y)} -1)$, $\psi(x,i)=x^{2i-1}$ when $\F=\H,\R$  and $A$ is an invertible matrix defined by $A_{ij}=\phi^{(m)}*\psi(\lambda_i,j)$.

\end{prop}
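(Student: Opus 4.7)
The plan is to leverage the Markov structure of the chain $(R^{(k)})_{k \ge 0}$ established in Proposition \ref{Markovpertu}, combined with the explicit transition densities of Proposition \ref{pertu}, and then invoke Proposition 2.13 of Johansson \cite{Johansson}, which turns a Markov chain whose one-step density is a determinant into a determinantal point process with an Eynard--Mehta type correlation kernel.

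First, arguing exactly as in the proof of Proposition \ref{Markovpertu}, the process $(R^{(k)})_{k \ge 0}$ started at $R^{(0)} = \lambda$ is a Markov chain on $\CC_n$ whose transition from $\mu$ to $\beta$ has density $L_\mu(\beta)$ given by Proposition \ref{pertu}. For $\F = \C$, one rewrites the interlacing indicator using identity (\ref{formuleentrelac}) and absorbs $e^{-\sum_i (\beta_i-\mu_i)}$ into the rows and columns of the resulting determinant to obtain
\begin{equation*}
L_\mu(\beta) = \mathrm{const} \cdot \frac{d_n(\beta)}{d_n(\mu)} \det\bigl(\phi(\mu_i,\beta_j)\bigr)_{n\times n}, \qquad \phi(x,y) = 1_{\{y \ge x\}}\, e^{-(y-x)}.
\end{equation*}
For $\F = \H$ (and, by Remark \ref{BCalea}, also for $\F=\R$ with $n$ odd), the density of Proposition \ref{pertu} contains an auxiliary integral over an interlaced $z$; applying the Cauchy--Binet identity (\ref{CauchyBinet}) to the product $\det(1_{\{\mu_i \ge z_j\}})\det(1_{\{\beta_i \ge z_j\}})$ against the weight $\prod_j e^{2cz_j}$ collapses the $z$-integration into a single determinant and yields the same factorisation with $\phi(x,y)=e^{-c(x+y)}(e^{2c(x\wedge y)}-1)$ and determinants of size $\tilde n \times \tilde n$.

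Taking the product of the transitions along the chain, the ratios $d_n(R^{(k)})/d_n(R^{(k-1)})$ telescope and the joint density of $(R^{(1)},\dots,R^{(m)})$ becomes
\begin{equation*}
\mathrm{const}\cdot \frac{d_n(R^{(m)})}{d_n(\lambda)}\, \prod_{k=1}^m \det\bigl(\phi(R^{(k-1)}_i, R^{(k)}_j)\bigr).
\end{equation*}
By Lemma \ref{rouge}, the remaining factor $d_n(R^{(m)})$ is itself a determinant $\det(\psi(R^{(m)}_i,j))$, with $\psi(x,j)=x^{j-1}$ for $\F=\C$ and $\psi(x,j)=x^{2j-1}$ for $\F=\H,\R$ odd, matching the functions in the statement. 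The joint density thus has the product-of-determinants shape required by Proposition 2.13 of \cite{Johansson}. That proposition then immediately gives (i), and produces (ii) by inverting the matrix $A_{ij}=\phi^{(m)}*\psi(\lambda_i,j)$ arising from the deterministic initial condition (point masses at $\lambda_1,\dots,\lambda_{\tilde n}$), yielding exactly the formula claimed.

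The main obstacle is handling the cases $\F=\H$ and $\F=\R$ with $n$ odd: one has to carry out the Cauchy--Binet reduction cleanly and check that the weights from Lemma \ref{chisimple}, the telescoping factors of $d_n$, and the exponential pre-factors $e^{-c(\sum \mu_i+\sum\beta_i)}$ assemble exactly into $\phi(x,y)=e^{-c(x+y)}(e^{2c(x\wedge y)}-1)$ with the right constants. Once the one-step transition is put in determinantal form, the determinantality of $\Xi_\lambda$ and the kernel $K_\lambda$ are a direct application of Johansson's general theorem, the only new input being the explicit LUE transition densities derived in Section 5.
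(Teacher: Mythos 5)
Your proposal is correct and follows essentially the same route as the paper: the paper's own argument is precisely that, by Proposition \ref{Markovpertu}, the interlacing indicators (via identity (\ref{formuleentrelac}), with Cauchy--Binet collapsing the auxiliary $z$-integral in the $\H$ and odd real cases) and the factor $d_n$ (via Lemma \ref{rouge}) put the joint law of $(R^{(1)},\dots,R^{(m)})$ in the product-of-determinants form required by Proposition 2.13 of Johansson, which yields both the determinantal property and the stated kernel. Your write-up merely makes explicit the telescoping of the $d_n$ ratios and the Cauchy--Binet computation of $\phi(x,y)=e^{-c(x+y)}(e^{2c(x\wedge y)}-1)$, which the paper leaves implicit.
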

\part{\sc Orbit measures}
\bigskip

\section{Approximation of orbit measures}

 \subsection{Introduction}
 
  Let $K$ be a compact connected Lie group with Lie algebra $\mathfrak{k}$.  We equip $\mathfrak{k}$ with an Ad($K$)-invariant inner product. This allows us to identify $\mathfrak{k}$ and its dual $\mathfrak{k}^*$. The group $K$ acts on $\mathfrak{k}$ by the adjoint action $Ad$ and on $\mathfrak{k}^*$ by duality  by the coadjoint action.   By definition, the coadjoint orbit through $ \lambda \in \mathfrak{k}^*$ is the set 
  $${\mathcal O}(\lambda)=\{Ad(k)\lambda, k \in K\}.$$
The (normalized) orbit measure is the image on ${\mathcal O}(\lambda)$ of the normalized Haar measure $m_K$ on $K$, i.e.\ the distribution of $Ad(U)\lambda $ where $U$ is a random variable with law $m_K$.
  Computations for invariant ensembles of random matrix theory rest on a detailed analysis of either the sum (convolution) of orbit measures on ${\mathcal O}(\lambda)$ and ${\mathcal O}(\mu)$, where $\lambda, \mu \in \mathfrak{k}^*$, or their projection $p$ on the dual Lie algebra of a subgroup $H$. Let us recall two basic facts of Kirillov's orbit method (\cite{Kirillov1}, \cite{Kirillov2}, p.xix). In his famous "User's guide" the third and fifth rules are the following (we denote by $V_\lambda$ the irreducible module associated to $\lambda$): 
  
\medskip
Rule 3:
    If  what you want is to describe the spectrum of $Res^K_H V_\lambda$ then what you have to do is to take the projection $p({\mathcal O}(\lambda))$ and split into $Ad(H)$ orbits.
    \medskip
    
Rule 5:   If  what you want is to describe the spectrum of the tensor product of $V_\lambda\otimes V_\mu$  then what you have to do is to take the arithmetic sum ${\mathcal O}(\lambda)+{\mathcal O}(\mu)$ and split into $Ad(K)$ orbits.

\medskip
 \noindent Our method is to use these two rules, but in the reverse order: we interchange      "what you want" and "what you have to do".
   First we prove a version of a theorem of Heckman which will allow us to give an effective way to compute the measures   on dominant weights defined with the help of the so called \textit{branching rules}. Then we obtain the convolution or the projection of orbit measures using these rules.
\subsection{Characters} Let $K$ be a connected compact Lie group with Lie algebra $\mathfrak{k}$ and complexified Lie algebra $\mathfrak{k_\C}$.  By compactness, without loss of generality, we can suppose that $K$ is contained in a unitary group, and then the adjoint and the coadjoint actions are given by $Ad(k)x=kxk^*, k \in K, x \in  \mathfrak{k} \mbox{ or } \mathfrak{k^*}$. We choose a maximal torus $T$ of $K$ and we denote  by $\mathfrak{t}$ its Lie algebra. 
 We consider  the roots system $R=\{\alpha\in \mathfrak{t}^*: \exists X \in \mathfrak{k_\C}\setminus \{ 0 \},\, \forall H\in \mathfrak{t} ,\, [H,X]=i\alpha (H)X\}$, the coroots $h_\alpha=2\alpha/\langle\alpha,\alpha\rangle$, $\alpha\in R$.  We choose  the set $\Sigma$ of simple roots of $R$. We introduce  the corresponding set  $R^+$ of positive roots and  the (closed) Weyl chamber $\mathcal{C}=\{\lambda\in \mathfrak{t}^*: \langle\lambda,\alpha \rangle \ge 0 \textrm{ for all } \alpha\in \Sigma\}$.
The set of weight is $P=\{\lambda\in \mathfrak{t}^*: \langle h_\alpha,\lambda \rangle \in \mathbb{Z}, \textrm{ for all } \alpha\in R\}$ and the set of dominant weights is $P^+=P\cap \CC$. We denote by
 $W$ the Weyl group.
 
For $\lambda\in P^{+}$, we denote by $V_\lambda$ the irreducible  $\mathfrak{k}$-module with highest weight $\lambda$ and $\dim(\lambda)$ the dimension of $V_\lambda$.  Its  character $\chi_{\lambda}$  is the function  on $\mathfrak{t}$ defined by,
$$\chi_{\lambda}(\zeta)=\sum_{\mu \in P} m({\mu},{\lambda}) e^{i \langle\mu,\zeta \rangle}, \quad \zeta\in \mathfrak{t},$$ where 
$m({\mu},{\lambda})$ is the multiplicity of the weight 
$\mu$ in the $\mathfrak{k}$-module $V_\lambda$. Notice that we use representations of the Lie algebra rather than representations of the compact group.
We denote $\rho=\frac{1}{2}\sum_{\alpha\in
R^{+}}\alpha$, the half sum of positive roots. The dimension of the module $V_\lambda$ is given by $\chi_{\lambda}(0)$. Recall the Weyl dimension formula  (see Knapp \cite{Knapp}, Thm V.5.84): 
\begin{align}\label{formuladim} \chi_{\lambda}(0)=\prod_{\alpha\in
R^{+}}\frac{ \langle\lambda+\rho,\alpha \rangle}{\langle\rho,\alpha \rangle}\end{align} 
and the Weyl character formula for the Lie algebra of a compact  Lie group (see Knapp \cite{Knapp}, Thm. V.5.77):
 \begin{prop}[Weyl character formula] The character $\chi_{\lambda}$ is equal to 
\begin{align*}
\chi_{\lambda}(\zeta)=\frac{\sum_{w\in
W} \det(w)e^{i
\langle w(\lambda+\rho),\zeta \rangle}}{\sum_{w\in
W} \det(w)e^{i
\langle w(\rho),\zeta \rangle}}.
\end{align*} 
\end{prop}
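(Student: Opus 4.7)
The plan is to use the orthogonality of characters combined with the Weyl integration formula, following the classical approach of Knapp that is cited just before the statement. First I would observe that $\chi_\lambda$, restricted to $\mathfrak{t}$, is $W$-invariant: indeed it extends to a class function on $K$, and the $W$-action on $\mathfrak{t}$ comes from conjugation by the normalizer of $T$. The denominator
\[
\delta(\zeta) \;=\; \sum_{w\in W} \det(w)\, e^{i\langle w\rho,\zeta\rangle}
\]
is $W$-alternating by construction, so the product $D_\lambda := \delta\cdot\chi_\lambda$ is $W$-alternating as well.

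Next I would expand $D_\lambda$ using $\chi_\lambda(\zeta)=\sum_\mu m(\mu,\lambda)\,e^{i\langle\mu,\zeta\rangle}$, recalling that $m(\lambda,\lambda)=1$ and that $m(\mu,\lambda)\ne 0$ only for $\mu\preceq\lambda$ (meaning $\lambda-\mu$ is a nonnegative integer combination of simple roots). Any $W$-alternating trigonometric polynomial with frequencies in the weight lattice $P$ is a $\mathbb{Z}$-linear combination of the basis elements $A_\nu := \sum_{w\in W}\det(w)\,e^{i\langle w\nu,\zeta\rangle}$ indexed by strictly dominant $\nu$. Thus $D_\lambda = \sum_\nu n_\nu A_\nu$ for integers $n_\nu$. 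Inspecting the highest weight appearing in the double expansion, an equality $w\rho+\mu=\lambda+\rho$ with $\mu\preceq\lambda$ forces $\mu=\lambda$ and $w=e$, because $\rho$ is strictly dominant; hence $n_{\lambda+\rho}=1$.

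To eliminate all other contributions I would use orthonormality of characters $\int_K |\chi_\lambda|^2\,dk=1$ together with the Weyl integration formula
\[
\int_K f\,dk \;=\; \frac{1}{|W|}\int_T f(t)\,|\delta(t)|^2\,dt
\]
for class functions $f$. On the $T$-side, a direct calculation shows that the $A_\nu$ with distinct strictly dominant $\nu$ are mutually $L^2(T)$-orthogonal with $\|A_\nu\|_{L^2(T)}^2=|W|$, since the $W$-orbits of strictly dominant weights are disjoint and each has cardinality $|W|$. Substituting the expansion $D_\lambda=\sum_\nu n_\nu A_\nu$ and comparing with $\int_K|\chi_\lambda|^2\,dk=1$ yields $\sum_\nu n_\nu^2 = 1$, and combined with $n_{\lambda+\rho}=1$ this forces $D_\lambda = A_{\lambda+\rho}$. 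Dividing by $\delta$, which is nonzero on the regular set of $\mathfrak{t}$, gives the stated formula, and continuity extends it to all of $\mathfrak{t}$.

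The main obstacle is the identification of $\lambda+\rho$ as the unique strictly dominant weight occurring in $D_\lambda$ with coefficient one; this rests on the strict dominance of $\rho$ and on controlling the partial order structure of weights in $\chi_\lambda$. A secondary technical point is the $L^2$-norm computation of the $A_\nu$, which implicitly uses Weyl's description of conjugacy classes in $K$ and the fact that the Weyl chamber is a fundamental domain for the $W$-action on $\mathfrak{t}$.
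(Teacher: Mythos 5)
The paper does not prove this proposition at all: it is quoted as a classical result with a citation to Knapp (Thm.~V.5.77), and your argument is essentially the standard proof found there (and going back to Weyl), so there is nothing to compare beyond noting that your route is the canonical one. Your outline is correct: the alternating function $\delta\chi_\lambda$ expands integrally in the $A_\nu$ with $\nu$ regular dominant, the coefficient of $A_{\lambda+\rho}$ is $1$ by the highest-weight argument, and $\int_K|\chi_\lambda|^2=1$ together with the Weyl integration formula and $\int_T A_\nu\overline{A_{\nu'}}=|W|\,\delta_{\nu\nu'}$ forces $\delta\chi_\lambda=A_{\lambda+\rho}$. The one point worth making explicit is that the paper works with representations of the Lie algebra $\mathfrak{k}$, so $\lambda$ (and $\rho$) need not be analytically integral for the given group $K$; to run the $L^2(T)$ orthogonality and character orthonormality on $K$ literally, you should pass to the simply connected cover of the semisimple part (where $\rho$ and all $\lambda\in P^+$ exponentiate), the resulting identity on $\mathfrak{t}$ being independent of this choice. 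With that standard remark, the proof is complete.
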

In this formula, the denominator is also equal to the product $\prod_{\alpha\in
R^{+}}(e^{\frac{i}{2} \langle\alpha,\zeta \rangle}-e^{-\frac{i}{2} \langle\alpha,\zeta \rangle})$. When $K=U_n(\mathbb{C})$ and $\lambda$ have integer coordinates, the characters are the classical Schur functions (see for instance \cite{Fulton}).

Let us recall some properties of invariant probability measures on the adjoint orbits of the group $K$.   Let, for $ z\in \mathfrak{t}\oplus i\mathfrak{t}$, $\lambda\in \mathfrak{t}^*$, $$h(z)=\prod_{\alpha\in
R^{+}}\langle\alpha,z\rangle , \, d(\lambda)=\prod_{\alpha\in
R^{+}}\langle \alpha, \lambda \rangle /\langle \alpha,\rho\rangle .$$
The quantity $d(\lambda)$ can be interpreted as the Liouville measure of the adjoint orbit ${\mathcal O}(\lambda)$ or as an asymptotic dimension.
For $\lambda\in \mathfrak{t}^*$, we introduce the function $\Phi_ \lambda$  on $\mathfrak{k}$ such that $\Phi_ \lambda(\zeta)=\Phi_ \lambda(k\zeta k^*)$ for all $\zeta \in \mathfrak{k}, k \in K$, and such that when  $\zeta \in \mathfrak{t},$
$$\Phi_ \lambda(\zeta)=\frac{\sum_{w\in W}
\det(w)e^{i \langle w \lambda,\zeta \rangle}}{h(i\zeta)d(\lambda)}.$$
We recall the Harish Chandra formula (see Helgason \cite{Helgason}, Thm II.5.35). In different contexts it is also known as the Kirillov formula for compact groups or the Iztkinson-Zuber formula. Recall that $m_K$ is the normalized Haar measure on $K$.
\begin{prop}  For $ \lambda \in \mathfrak{t}^{*}$, $\zeta\in \mathfrak{k}$
\begin{align} \label{HarishChandra} \int_{K} e^{i \langle k \lambda k^*,\zeta  \rangle} \, m_K(dk)= \Phi_ \lambda(\zeta).\end{align} \end{prop}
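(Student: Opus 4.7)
The plan is to reduce, by invariance and continuity, to the case where $\lambda$ is a high multiple of a dominant integral weight, and then identify both sides as the same semiclassical limit of normalized characters.

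First reductions. By the $Ad(K)$-invariance of Haar measure, the left-hand side is an $Ad(K)$-invariant function of $\zeta$; the right-hand side is defined by its values on $\mathfrak{t}$ together with $Ad(K)$-invariance. Hence it suffices to prove the identity for $\zeta\in\mathfrak{t}$. Moreover, since $W=N_K(T)/T$ and the map $k\mapsto k\lambda k^*$ sends $\lambda$ to its full Weyl orbit inside $K\lambda$, both sides are $W$-invariant in $\lambda$, so one may assume $\lambda\in\mathcal{C}$. Next, both sides are continuous in $\lambda$ (after clearing the denominator $h(i\zeta)d(\lambda)$ on the right), so it is enough to prove the formula when $\lambda$ lies in the interior of $\mathcal{C}$.

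Reduction to the semiclassical limit. For $\lambda$ in the interior of $\mathcal{C}$, pick a sequence $\lambda_N\in P^+$ of dominant integral weights with $\lambda_N/N\to\lambda$. Expanding the character at $\zeta/N$ in terms of weight multiplicities,
\begin{equation*}
\frac{\chi_{\lambda_N}(\zeta/N)}{\dim V_{\lambda_N}}=\frac{1}{\dim V_{\lambda_N}}\sum_{\mu\in P}m(\mu,\lambda_N)\,e^{i\langle\mu/N,\zeta\rangle},
\end{equation*}
which is the Fourier transform (restricted to $\mathfrak{t}$) of the rescaled weight measure $\nu_N=\frac{1}{\dim V_{\lambda_N}}\sum_\mu m(\mu,\lambda_N)\delta_{\mu/N}$. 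By Heckman's semiclassical theorem (the central tool of Section~7 of the paper), $\nu_N$ converges weakly to the pushforward onto $\mathfrak{t}^*$ of the normalized orbit measure on $\mathcal{O}(\lambda)$. Taking Fourier transforms at $\zeta\in\mathfrak{t}$ and using $Ad(K)$-invariance of the exponential on the orbit, this yields
\begin{equation*}
\lim_{N\to\infty}\frac{\chi_{\lambda_N}(\zeta/N)}{\dim V_{\lambda_N}}=\int_K e^{i\langle k\lambda k^*,\zeta\rangle}\,m_K(dk),
\end{equation*}
which is exactly the left-hand side of the Harish--Chandra formula.

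Identification of the limit via Weyl formulas. Using the Weyl character formula to rewrite the numerator and denominator of $\chi_{\lambda_N}(\zeta/N)$ separately: the numerator $\sum_w\det(w)e^{i\langle w(\lambda_N+\rho)/N,\zeta\rangle}$ converges to $\sum_w\det(w)e^{i\langle w\lambda,\zeta\rangle}$, while the denominator admits the product expansion $\prod_{\alpha\in R^+}(e^{i\langle\alpha,\zeta\rangle/2N}-e^{-i\langle\alpha,\zeta\rangle/2N})\sim N^{-|R^+|}h(i\zeta)$. Combined with the Weyl dimension formula $\dim V_{\lambda_N}=\prod_{\alpha\in R^+}\langle\lambda_N+\rho,\alpha\rangle/\langle\rho,\alpha\rangle\sim N^{|R^+|}d(\lambda)$, the $N^{|R^+|}$ factors cancel and
\begin{equation*}
\lim_{N\to\infty}\frac{\chi_{\lambda_N}(\zeta/N)}{\dim V_{\lambda_N}}=\frac{\sum_w\det(w)e^{i\langle w\lambda,\zeta\rangle}}{h(i\zeta)\,d(\lambda)}=\Phi_\lambda(\zeta).
\end{equation*}
Equating the two limits proves the formula on the interior of the chamber, and continuity extends it to all $\lambda\in\mathfrak{t}^*$.

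The only nontrivial step is the appeal to Heckman's theorem identifying the limit of normalized weight measures with the orbit projection; this is precisely the tool the paper develops in Section~7, so the argument is internally consistent. Everything else is a bookkeeping exercise with the Weyl character and denominator formulas.
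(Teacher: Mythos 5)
There is a genuine gap: your argument is circular within this paper. The only substantive step in your proof is the identification
\begin{equation*}
\lim_{N\to\infty}\frac{\chi_{\lambda_N}(\zeta/N)}{\dim V_{\lambda_N}}=\int_K e^{i\langle k\lambda k^*,\zeta\rangle}\,m_K(dk),
\end{equation*}
i.e.\ the weak convergence of the rescaled weight measures $\nu_N$ to the projection onto $\mathfrak{t}^*$ of the orbit measure. You justify this by invoking ``Heckman's semiclassical theorem, the central tool of Section~7'' --- but the paper's version of that theorem (Theorem \ref{theoprinc}) is \emph{proved using} the Harish--Chandra formula (\ref{HarishChandra}): in its proof the formula is applied to both $H$ and $K$ to pass from the convergence of the functions $\Phi^H_{\beta+\varepsilon_n\rho_H}$ integrated against $\mu_n$ to the identification of the limit as the law of $r_H(\pi_H(U\lambda U^*))$. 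So you are using a consequence of the proposition to prove the proposition. Note also that the paper does not prove this statement at all; it recalls it from Helgason (Thm.~II.5.35), where it is established by genuinely different means (analysis of spherical functions/radial parts of invariant differential operators), and it can alternatively be obtained from Duistermaat--Heckman exact stationary phase on the coadjoint orbit.

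The rest of your argument (Weyl-invariance and continuity reductions, the asymptotics of the Weyl numerator, the product formula for the denominator, and the Weyl dimension formula giving the cancellation of $N^{|R^+|}$) is fine, but it only establishes an \emph{equivalence}: the Harish--Chandra formula holds if and only if the rescaled weight measures converge to the projected orbit measure. To turn your route into a proof you would need an independent derivation of that convergence --- for instance Heckman's original argument or a symplectic-geometry proof of the Duistermaat--Heckman description of the pushforward --- and that independent input is precisely where the depth of the formula lies; citing Section~7 of this paper cannot supply it.
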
 
This shows that $\Phi_ \lambda(\zeta)$ is a continuous function of $(\lambda,\zeta)$ and $\Phi_ \lambda(0)=1$.
\subsection{A version of Heckman's Theorem} 

We consider  a connected compact subgroup $H$ of $K$ with Lie algebra $\mathfrak{h}$. After maybe a conjugation, we can choose a maximal torus $S$ of $H$ included in $T$ (see for instance Knapp \cite{Knapp}). We denote its Lie algebra by $\mathfrak{s}$. The objects previously associated to $K$ are defined in the same way for $H$. In that case, we add an exponent or a subscript $H$ to them. 
For $\lambda\in P^+, \beta\in P^+_H$ we denote by $m^\lambda_H(\beta)$ the multiplicity of the irreducible $\mathfrak{h}-$module with highest weight $\beta$ in the decomposition into irreducible components of the $\mathfrak{k}-$module $V_{\lambda}$ considered as an $\mathfrak{h}-$module. Rules giving the value of the multiplicities $m^\lambda_H$ are called branching rules.  We have the following decomposition
\begin{align}V_{\lambda}=\oplus_{\beta\in P^{+}_{H}}m^{\lambda}_{H}(\beta)V^H_{\beta},\label{branching} \end{align}
where $V_\lambda$ is considered as an $\mathfrak{h}-$module and $V^H_{\beta}$ is an irreducible $\mathfrak{h}-$module with highest weight $\beta $.
This is equivalent to say that  
$m^{\lambda}_{H}$ is the unique function from $P_{H}^{+}$ to $\mathbb{N} $ satisfying the following identity: for all $\zeta \in \mathfrak{s}$,
\begin{align}\chi_{\lambda}(\zeta)=\sum_{\beta\in P^{+}_{H}}m^{\lambda}_{H}(\beta)\chi_{\beta}^{H}(\zeta).
\label{branchingform} \end{align}
For $x\in \mathfrak{k}^*$, let $\pi_H(x)$ be the orthogonal projection of $x$ on $\mathfrak{h}^*$. The intersection between the orbit of an element $x\in \mathfrak{k}^*$ under the coadjoint action of $K$ and the Weyl chamber $\mathcal{C}$ contains a single point that we call the radial part of $x$ and denote by $r(x)$. The same holds for $H$ and we denote by $r_H(x)$ the radial part of $x\in \mathfrak{h}^*$ in the Weyl chamber $\CC_H$ for the coadjoint action of $H$. We choose a sequence $\varepsilon_n >0$ which converges to $0$ as $n \to \infty$. The following theorem is a variant of theorem 6.4 in Heckman  \cite{Heckman}. We give a direct proof.
\begin{theo} \label{theoprinc}  Let $\lambda$ be in the Weyl chamber  $\mathcal{C}$ and $(\lambda_{n})_{n\ge 1}$ be a sequence of elements in
$P^{+}$  such that  
$\varepsilon_n\lambda_{n}$  converges to $\lambda $ as $n$ tends to $+\infty$. Then 

(i)  the sequence $(\mu_{n})_{n\ge 0}$ of probability measures on $\mathcal{C}_H$ defined by 
$$\mu_{n}=\sum_{\beta\in P^+_H}\frac{\dim_{H}(\beta)}{\dim(\lambda_{n})}m^{\lambda_{n}}_H(\beta)\delta_{\varepsilon_n\beta}$$
 converges to a probability measure $\mu$ which satisfies, for $\zeta\in \mathfrak{h}$,
\begin{align} \label{Fourierprojection}
\int_{\mathcal{C}_H}
{\Phi_{\beta}^{H}(\zeta)}\, \mu(d\beta) ={\Phi_ \lambda(\zeta)},
\end{align} 
  
  (ii)  $\mu$ is the law of $r_{H}(\pi_H(U\lambda U^*))$, where $U$ is distributed according to $m_K$.
\end{theo}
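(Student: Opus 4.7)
The plan is to establish (ii) first by a direct Harish--Chandra calculation, and then to deduce (i) by passing to a semi-classical limit in the branching identity (\ref{branchingform}), identifying the limit with the measure produced by (ii).

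For (ii), let $\nu$ denote the law of $r_H(\pi_H(U\lambda U^*))$. Fix $\zeta\in\mathfrak{s}$. Since $r_H(\pi_H(U\lambda U^*))$ and $\pi_H(U\lambda U^*)$ lie in a common $H$-orbit, and $\pi_H$ is the orthogonal projection onto $\mathfrak{h}^*$ (so $\langle\pi_H(y),\eta\rangle=\langle y,\eta\rangle$ for every $\eta\in\mathfrak{h}$), the Harish--Chandra formula (\ref{HarishChandra}) applied inside $H$ gives
\[
\Phi^H_{r_H(\pi_H(U\lambda U^*))}(\zeta)=\int_H e^{i\langle VU\lambda U^*V^*,\zeta\rangle}\,m_H(dV).
\]
Taking expectation in $U$, using Fubini, and the left-invariance of $m_K$ (so that $VU$ is distributed as $U$ for each fixed $V\in H$), one obtains $\int_{\mathcal{C}_H}\Phi^H_\beta(\zeta)\,\nu(d\beta)=\Phi_\lambda(\zeta)$, which is precisely the identity (\ref{Fourierprojection}).

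For (i), substituting $\varepsilon_n\zeta$ into (\ref{branchingform}) and dividing by $\dim(\lambda_n)=\chi_{\lambda_n}(0)$ yields
\[
\frac{\chi_{\lambda_n}(\varepsilon_n\zeta)}{\dim(\lambda_n)}=\int_{\mathcal{C}_H}G_n(y)\,\mu_n(dy),\qquad G_n(\varepsilon_n\beta):=\frac{\chi^H_\beta(\varepsilon_n\zeta)}{\dim_H(\beta)}.
\]
A scaling computation inside the Weyl character formula, using $\sum_w\det(w)e^{i\langle w\rho_H,\varepsilon\zeta\rangle}\sim(i\varepsilon)^{|R^+_H|}h_H(\zeta)$ as $\varepsilon\to 0$ together with the Weyl dimension formula (\ref{formuladim}), shows that $G_n(\varepsilon_n\beta)\to\Phi^H_y(\zeta)$ whenever $\varepsilon_n\beta\to y$, uniformly on compact subsets of $\mathcal{C}_H$; the same scaling applied to $K$ gives $\chi_{\lambda_n}(\varepsilon_n\zeta)/\dim(\lambda_n)\to\Phi_\lambda(\zeta)$. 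The support of $\mu_n$ is contained in $\varepsilon_n\cdot\pi_H(\mathrm{conv}(W\cdot\lambda_n))\cap\mathcal{C}_H$, which tends to a compact set, so $(\mu_n)$ is tight. Any weak limit $\mu$ therefore satisfies $\int\Phi^H_\beta(\zeta)\,\mu(d\beta)=\Phi_\lambda(\zeta)$; combined with (ii) this equation is also solved by $\nu$. Unfolding $\Phi^H_\beta(\zeta)=\int_H e^{i\langle V\beta V^*,\zeta\rangle}m_H(dV)$ turns the equation into equality of the Fourier transforms on $\mathfrak{h}^*$ of the $H$-invariant measures obtained by averaging orbit measures against $\mu$ and against $\nu$, and injectivity of the Fourier transform yields $\mu=\nu$ and convergence of the whole sequence.

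The main technical point is the uniform convergence $G_n\to\Phi^H$ invoked above: it calls for controlling the numerator and the denominator of the Weyl character formula simultaneously as the highest weight $\beta$ scales like $\varepsilon_n^{-1}$, and it is exactly this step that promotes the combinatorial branching identity into a statement about continuous measures on the Weyl chamber.
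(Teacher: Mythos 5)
Your proposal is correct and takes essentially the same route as the paper: the semiclassical limit of the character ratio $\chi_{\lambda_n}(\varepsilon_n\zeta)/\dim(\lambda_n)$ via the Weyl character and dimension formulas applied to both $K$ and $H$, tightness from the inclusion of $\mathrm{supp}\,\mu_n$ in (the projection of) the convex hull of the Weyl orbit of $\varepsilon_n\lambda_n$, and identification of the limit through the Harish--Chandra formula together with injectivity of the Fourier transform of $H$-invariant measures on $\mathfrak{h}^*$. The only difference is organizational: you first show directly that the law of $r_H(\pi_H(U\lambda U^*))$ satisfies (\ref{Fourierprojection}) and then pin down every weak limit of $(\mu_n)$ by uniqueness, whereas the paper first obtains convergence of $(\mu_n)$ via the convergence of the Fourier transforms of the $H$-averaged measures $\gamma_n$ and then identifies the limit; the underlying computations are the same.
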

\proof 
Let $\zeta\in
\mathfrak{s}$. We have
$$\frac{\chi_{\lambda_{n}}(\varepsilon_{n}
\zeta)}{\chi_{\lambda_{n}}(0)}=\Phi_{\varepsilon_{n}\lambda_n+\varepsilon_{n}\rho}(\zeta)\prod_{\alpha\in R^{+}}\frac{i \langle\alpha,\varepsilon_{n}\zeta \rangle}{e^{\frac{i}{2} \langle\alpha,\varepsilon_{n}\zeta \rangle}-e^{-\frac{i}{2} \langle\alpha,\varepsilon_{n}\zeta \rangle}}.$$
On the other hand, 
\begin{align*}\frac{\chi_{\lambda_{n}}(\varepsilon_{n}
\zeta)}{\chi_{\lambda_{n}}(0)}&=\sum_{\beta\in P^{+}_{H}}\frac{\chi^{H}_{\beta}(\varepsilon_{n}\zeta)}{\chi^{H}_{\beta}(0)}\frac{m^{\lambda_{n}}_{H}(\beta)\chi^{H}_{\beta}(0)}{\chi_{\lambda_{n}}(0)}\\
&= \Big[\prod_{\alpha\in
R_{H}^{+}} \frac{i\langle \alpha,\varepsilon_{n}\zeta\rangle}{e^{\frac{i}{2} \langle\alpha,\varepsilon_{n}\zeta \rangle}-e^{-\frac{i}{2} \langle\alpha,\varepsilon_{n}\zeta \rangle}} \Big]\int_{\mathcal{C}_{H}}\Phi_{\beta+\varepsilon_{n}\rho_{H}}^{H}(\zeta)\, d\mu_{n}(\beta)
\end{align*} 
Therefore
$$\lim_{n \to +\infty}\int_{\mathcal{C}_{H}}\Phi_{\beta+\varepsilon_{n}\rho_{H}}^{H}(\zeta)\, \mu_{n}(d\beta)=\Phi_{\lambda}(\zeta).$$
The support of $\mu_n$ is contained in the convex hull of the orbit of $\varepsilon_n\lambda_n$ by the Weyl group. This implies that all the measures $\mu_n$ are contained in a same compact set. Uniform continuity on compact sets of the function $\Phi$ ensures that 
\begin{align}\label{convmu} \lim_{n \to +\infty}\int_{\mathcal{C}_{H}}\Phi_{\beta}^{H}(\zeta)\, \mu_{n}(d\beta)=\Phi_{\lambda}(\zeta).\end{align}
Let us consider the image $\gamma_{n}$ of the product measure $m_H\otimes
\mu_{n}$ by the function $(u,\beta)\in
H\times \mathcal{C}_{H}\mapsto u \beta u^*\in \mathfrak{h}^{*}$. The previous convergence and Harish-Chandra's formula applied to $H$ give that
$$\lim_{n\to \infty}\int_{\mathfrak{h}^{*}}e^{i \langle x, \zeta \rangle}\, \gamma_{n}(dx)=\Phi_{\lambda}(\zeta).$$ By invariance of the Haar measure on $H$ by  multiplication, this remains true for every $\zeta \in \mathfrak{h}$, which proves that the sequence 
of  measures $(\gamma_{n})_{n\ge 0}$ converges and consequently so does the sequence 
$(\mu_{n})_{n\ge 0}$. We denote  by 
$\mu$ the limit measure. The convergence (\ref{convmu}) shows that it satisfies the following identity, for $\zeta\in \mathfrak{h}$,
\begin{align*}
\int_{\mathcal{C}_H}
{\Phi_{\beta}^{H}(\zeta)}\, \mu(d\beta) ={\Phi_ \lambda(\zeta)},
\end{align*} 
which proves the first point of the theorem.
Applying the Harish-Chandra formula to $K$ and $H$  we get \begin{eqnarray*}
\int_K e^{i \langle u \lambda u^*,\zeta \rangle}\, m_K(du)&=&\int_K e^{i \langle \pi_{H}(u \lambda u^*),\zeta \rangle}\, m_K(du)\\ &=&\int_H
\int_{\mathcal{C}_H}
e^{i \langle u \beta u^*,\zeta \rangle}\, \mu(d \beta) \, m_{H}(du).
\end{eqnarray*} 
which gives the second point of the theorem. \qed\medskip

In the case when $H=T$, the limit measure $\mu$ is equal to ${d(\lambda)^{-1}}D_\lambda$ where $D_\lambda$ is the Duistermaat-Heckman measure associated to $\lambda$. The tensor product of irreducible representations being a particular restriction of representation, the theorem has the following corollary, which is due to Dooley et al.\ \cite{Dooley}. 
\begin{cor} \label{coroprinc}  Let $\lambda$ and $\gamma$ be in $\mathcal{C}$. 
Let $(\lambda_{n})_{n\ge 1}$ and $(\gamma_{n})_{n\ge 1}$ be two sequences of elements in
$P^{+}$   such that  
$\varepsilon_n\lambda_{n}$ and $\varepsilon_n\gamma_{n}$  respectively converge to $\lambda $ and $\gamma$, as $n$ tends to $+\infty$. 
Let us define  the sequence $(\nu_{n})_{n\ge 0}$ of probability measures on $\mathcal{C}$ by 
$$\nu_{n}=\sum_{\beta\in P^+}\frac{\dim(\beta)}{\dim(\lambda_{n})\dim(\gamma_{n})}M_{\lambda_{n},\gamma_n}(\beta)\delta_{\varepsilon_n\beta},$$
where $M_{\lambda_{n},\gamma_n}(\beta)$ is the multiplicity of the highest weight $\beta$ in the decomposition into irreducible components of $V_{\lambda_n}\otimes V_{\gamma_n}$. Then the sequence $(\nu_{n})_{n\ge 0}$ 
 converges to the law of the radial part of
$\lambda+U\gamma U^*$, where $U$ is distributed according to $m_K$.
\end{cor}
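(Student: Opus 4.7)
The plan is to reduce the tensor product statement to Theorem 7.2 by realizing the tensor product decomposition as a branching rule. Set $\tilde K = K \times K$ with Lie algebra $\tilde{\mathfrak{k}} = \mathfrak{k} \oplus \mathfrak{k}$, equipped with the product inner product, and embed $K$ as the diagonal subgroup via $k \mapsto (k,k)$. The irreducible $\tilde K$-modules are precisely the outer tensor products $V_{\lambda_n} \otimes V_{\gamma_n}$, labelled by pairs $(\lambda_n, \gamma_n) \in P^+ \times P^+ = P^+_{\tilde K}$, and their dimension is $\dim(\lambda_n)\dim(\gamma_n)$. Restricting $V_{\lambda_n} \otimes V_{\gamma_n}$ to the diagonal $K$ gives exactly the usual tensor product decomposition, so $m^{(\lambda_n,\gamma_n)}_{K}(\beta) = M_{\lambda_n,\gamma_n}(\beta)$.

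Next I would apply Theorem 7.2 to the pair $\tilde K \supset K$ with the sequence $(\lambda_n,\gamma_n)$ satisfying $\varepsilon_n(\lambda_n,\gamma_n) \to (\lambda,\gamma)$. By part (ii) of the theorem the measures $\nu_n$ converge to the law of $r_K(\pi_K(\tilde U (\lambda,\gamma)\tilde U^{*}))$, where $\tilde U = (U_1,U_2)$ is Haar-distributed on $\tilde K$. Under the product inner product, the orthogonal projection $\pi_K : \tilde{\mathfrak{k}}^* \to \mathfrak{k}^*$ reads $(a,b) \mapsto a+b$, since $\langle (a,b),(x,x)\rangle = \langle a+b,x\rangle$ for every $(x,x)$ in the diagonal $\mathfrak{h}$. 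Therefore the limit measure is the law of $r_K(U_1\lambda U_1^* + U_2 \gamma U_2^*)$.

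Finally I would reduce this to $r_K(\lambda + U\gamma U^*)$ for a single Haar random variable $U$. Conjugating by $U_1^{-1}$ preserves the radial part, so
\[
r_K(U_1\lambda U_1^* + U_2\gamma U_2^*) = r_K\bigl(\lambda + (U_1^{-1}U_2)\gamma (U_1^{-1}U_2)^*\bigr).
\]
The change of variables $(U_1,U_2) \mapsto (U_1, U_1^{-1}U_2)$ preserves the Haar measure on $K \times K$ by left invariance, so $V := U_1^{-1}U_2$ is Haar-distributed on $K$ and the limit is the law of $r_K(\lambda + V\gamma V^*)$, as claimed.

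The only subtle point is the bookkeeping of identifications: checking that the branching multiplicity from $\tilde K$ to the diagonal $K$ equals the tensor product multiplicity, and that the orthogonal projection of $\tilde{\mathfrak{k}}^*$ onto $\mathfrak{h}^* \cong \mathfrak{k}^*$ is the sum map. Once these are in place, everything is a direct corollary of Theorem 7.2 together with the invariance of Haar measure.
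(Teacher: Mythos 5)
Your proposal is correct and follows essentially the same route as the paper: apply Theorem \ref{theoprinc} to $K\times K$ with the diagonal subgroup, identify the branching multiplicities with the tensor product multiplicities and the projection with the sum map, and then use invariance of the radial part and of the Haar measure to reduce $r(U_1\lambda U_1^*+U_2\gamma U_2^*)$ to $r(\lambda+V\gamma V^*)$. No substantive differences from the paper's own proof.
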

\proof  Let $V_{\lambda_{n}}$ and $V_{\gamma_{n}}$ be   irreducible $\mathfrak{k} $-modules with respective highest weight  $\lambda_{n}$ and $\gamma_{n}$. Let us consider the compact group $K\times K$. Then  $V_{\lambda_{n}}\otimes V_{\gamma_{n}}$ is an irreducible  $(\mathfrak{k}\times \mathfrak{k})-$module with highest  weight $(\lambda_{n}, \gamma_{n})$. Applying theorem \ref{theoprinc} to the compact group  $K\times K$ and the subgroup  $H=\{(k,k), k\in K\}$, we get that the associated sequence $(\nu_{n})_{n\ge 1}$ converges, when $n$ goes to $+\infty$, to the law of $r_{H}(\pi_{H}(Ad(W)(\lambda,\gamma)))$, $W$ being distributed according to the normalized Haar measure on   $K \times K$, i.e.\@ $W=(U,V)$, where $U$ and $V$ are independent random variables with distribution $m_K$. The facts that  $\pi_{H}(Ad(W)(\lambda,\gamma))=U\lambda U^*+V \gamma V^*$ and $r_H(U\lambda U^*+V \gamma V^*)= r_H(\lambda +U^*V \gamma V^*U) $ complete the proof  of the corollary.   \qed

\section{Orbit measures and invariant random matrices}
\subsection{} In this section, we apply theorem \ref{theoprinc} and its corollary  to invariant random matrices in $\mathcal{P}_n(\mathbb{F})$. For $\mathbb{F}=\mathbb{C},\mathbb{H},\mathbb{R}$ the group $U_{n}(\mathbb{F})$ defined in section 2  is one of the classical compact groups, namely, the unitary, the symplectic and the special orthogonal group. Its root system is of type $A_{n-1}$ when $\F=\C$, $C_n$ when $\F=\H$, $B_{r}$ when $\F=\R$ with $n=2r+1$, and $D_{r}$ when $\F=\R$ with $n=2r$. The Lie algebra $\mathfrak{U}_n(\F)$ of $U_n(\mathbb{F})$ is equal to $i\mathcal{P}_n(\mathbb{F})$. 

Let us consider the set $\mathfrak{t}_n=\{i \, \Omega_n(x) : x\in \mathbb{R}^{\tilde{n}} \}$. It is the Lie algebra of a maximal torus of $U_n(\mathbb{F})$. We define the linear forms $\epsilon_k: \mathfrak{t}_n\to \mathbb{R}$, by $\epsilon_k(i\,\Omega_n(x))=x_k$, $x\in \mathbb{R}^{\tilde{n}}$, $k=1,\cdots,\tilde{n}$. We equip $\mathfrak{U}_{n}(\mathbb{F})$ with the scalar product $\langle x,y\rangle=Tr(xy^{*})$  for $\mathbb{F}=\mathbb{C}$ and $\langle x,y\rangle=\frac{1}{2}Tr(xy^{*})$ for $\mathbb{F}=\mathbb{H},\mathbb{R}$.  For each group $U_n(\mathbb{F})$, we choose the following set $\Sigma$ of simple roots :

\begin{itemize}

\item when $\mathbb{F}=\mathbb{C}$, $\Sigma=\{\epsilon_i-\epsilon_{i+1}, \, i=1,\cdots ,n-1\},$

\item when $\mathbb{F}=\mathbb{H}$, $\Sigma=\{ 2\epsilon_n, \epsilon_i-\epsilon_{i+1},\, i=1,\cdots ,n-1\},$

\item when $\mathbb{F}=\mathbb{R}$ and $n=2r+1$, $\Sigma=\{\epsilon_r, \epsilon_i-\epsilon_{i+1}, \, i=1,\cdots ,r-1\},$

\item when $\mathbb{F}=\mathbb{R}$ and $n=2r$, $\{\epsilon_{r-1}+\epsilon_{r},\epsilon_i-\epsilon_{i+1}, \, i=1,\cdots ,r-1\}.$
\end{itemize}
If we identify $\mathbb{R}^{\tilde{n}}$ and $\mathfrak{t}_n$ by the map $x\in \mathbb{R}^{\tilde{n}}\mapsto i\Omega_n(x)\in \mathfrak{t}_n$,   and $\mathfrak{t}_n$ with $\mathfrak{t}_n^{*}$ by the scalar product, we get that
$x\in \mathbb{R}^{\tilde{n}}$ is identifiable with $ i\Omega_n(x)\in \mathfrak{t}_n$ or $\sum_{i=1}^{\tilde{n}}x_i\epsilon_i\in \mathfrak{t}_n^*$. Up to these identifications, the Weyl chamber corresponding to the chosen simple roots  is the set $\mathcal{C}_n$ defined in section 2, and the radial part of the matrix $U\Omega_n(x)U^*$ is $x$, considering either the definition of section 2 or the one of section 7. An integral point in $\CC_n$ is an element with  entries in $\Z$. Although we will not use this fact, one may notice that only  integral dominant weights occur in the representation of the group $U_n(\F)$. When $K=U_n(\mathbb{F})$, the corollary \ref{coroprinc} is equivalent to the following theorem.
\begin{theo}\label{coroprincbis} Let $\lambda$ and $\beta$ be two elements in the Weyl chamber $\mathcal{C}_n$ and an associated sequence of measures $(\nu_k)_{k\ge1}$ chosen as in corollary \ref{coroprinc}. Then $(\nu_k)_{k\ge1}$ converges to the law of the radial part of $\Omega_n(\lambda)+U\Omega_n(\beta)U^*$ where $U$ is a Haar distributed random variable in $U_n(\mathbb{F})$. 
\end{theo}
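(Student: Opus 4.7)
The plan is to recognize Theorem~\ref{coroprincbis} as a direct translation of Corollary~\ref{coroprinc} into the matrix setting of Section~2, once the abstract Lie-theoretic data of $K = U_n(\F)$ are matched with the concrete objects $\CC_n$, $\Omega_n(\lambda)$ and the radial part $X^{(n)}$. Essentially all the work is contained in the paragraph preceding the statement, so the proof reduces to a bookkeeping exercise identifying the setup of the corollary with that of the theorem.

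Concretely, I would first fix $K = U_n(\F)$ with Lie algebra $\mathfrak{U}_n(\F) = i\mathcal{P}_n(\F)$, maximal torus Lie algebra $\mathfrak{t}_n = \{i\Omega_n(x) : x\in \R^{\tilde n}\}$, and the $\mathrm{Ad}$-invariant inner product specified in Section~2. The map $x\mapsto i\Omega_n(x)$ identifies $\R^{\tilde n}$ with $\mathfrak{t}_n$, and the chosen inner product identifies $\mathfrak{t}_n$ with $\mathfrak{t}_n^*$; under these identifications one verifies that the abstract Weyl chamber cut out by the listed set $\Sigma$ of simple roots coincides with $\CC_n$. The coadjoint action of $K$ becomes conjugation, $\mathrm{Ad}(U)(i\Omega_n(x)) = i\,U\Omega_n(x)U^*$, so by Lemma~\ref{KAK} the abstract radial part on $\mathfrak{k}^*$ agrees with the radial part $X^{(n)}$ used throughout Section~2.

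With these identifications in place, I apply Corollary~\ref{coroprinc} to $\lambda,\beta\in\CC_n$ and to any approximating sequences $\lambda_k,\beta_k \in P^+$ with $\varepsilon_k\lambda_k\to\lambda$ and $\varepsilon_k\beta_k\to\beta$. The corollary asserts that $\nu_k$ converges to the law of the radial part of $\lambda + U\beta U^*$, computed in $\mathfrak{k}^*$. Pulling this back through the identification above, $\lambda + U\beta U^*$ corresponds to $i\bigl(\Omega_n(\lambda) + U\Omega_n(\beta)U^*\bigr)$, whose radial part in the sense of Section~2 is exactly the radial part of $\Omega_n(\lambda) + U\Omega_n(\beta)U^*$. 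This yields the statement.

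The only nontrivial verification is the case-by-case match between the abstract Weyl chamber and the $\CC_n$ of Section~2. For types $A_{n-1}$, $B_r$, $C_n$ this is routine. The subtle case is type $D_r$ (i.e.\ $\F=\R$ with $n$ even): here the Weyl group permutes the coordinates and applies only an \emph{even} number of sign flips to the $\epsilon_i$'s, which is precisely what forces a fundamental domain with $|\lambda_{\tilde n}|$ rather than $\lambda_{\tilde n}$, as already indicated in Remark~\ref{remstrange}.
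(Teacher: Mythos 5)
Your proposal is correct and follows the same route as the paper: the paper itself gives no separate proof of this theorem, but simply sets up the identifications of Section~8 (the map $x\mapsto i\Omega_n(x)$, the identification of $\mathfrak{t}_n$ with $\mathfrak{t}_n^*$ via the chosen invariant scalar product, the choice of simple roots making the abstract Weyl chamber equal to $\CC_n$, and the agreement of the two notions of radial part) and then declares the theorem equivalent to Corollary~\ref{coroprinc}. Your write-up makes the same bookkeeping explicit, including the only mildly delicate point (type $D_r$, handled exactly as in Remark~\ref{remstrange}), so it matches the paper's argument.
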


  We consider the subgroup  $H=\{U\in U_{n}(\mathbb{F}) : U_{in}=U_{ni}=\delta_{in}, i=1,\cdots,n\}$ and its Lie algebra $\{M\in \mathfrak{U}_{n}(\mathbb{F}) : M_{in}=M_{ni}=0, i=1, \cdots,n\}$. They are trivially identifiable with $ U_{n-1}(\mathbb{F})$ and $\mathfrak{U}_{n-1}(\mathbb{F})$. The orthogonal projection of a matrix $M$ of $\mathfrak{U}_n(\F)$ on this last subspace is equal, up to some zeros, to the main minor of order $n-1$ of $M$.  Thus, for the group $U_n(\mathbb{F})$ and the subgroup $H$, theorem \ref{theoprinc} gives:

  \begin{theo} \label{theoprincbis} Let $\lambda$ be in the Weyl chamber $\mathcal{C}_n$. Let us consider $M=U\Omega_n(\lambda)U^*$, where $U$ is a Haar distributed random variable in $U_{n}(\mathbb{F})$ and an associated sequence of measures  $(\mu_k)_{k\ge 1}$ on $\mathcal{C}_{n-1}$ as in Theorem \ref{theoprinc}.  Then  $(\mu_k)_{k\ge 1}$ converges to the law of the radial part of the main minor of order $n-1$ of $M$. 
  \end{theo}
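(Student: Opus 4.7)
The plan is to recognize Theorem~\ref{theoprincbis} as a direct specialization of Theorem~\ref{theoprinc} to the pair $(K,H) = (U_n(\mathbb{F}), U_{n-1}(\mathbb{F}))$, with $H$ embedded as the stabilizer of the last basis vector described just before the statement. So the work is essentially to translate the abstract ingredients of Theorem~\ref{theoprinc}---the ambient Lie algebra, the invariant inner product, the orthogonal projection $\pi_H$, and the radial map $r_H$---into their concrete incarnations for Hermitian matrices over $\mathbb{F}$.

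First, I would fix the identifications. The Lie algebra $\mathfrak{U}_n(\mathbb{F})$ is equipped with the $\mathrm{Ad}(K)$-invariant inner product $\langle x,y\rangle = \mathrm{Tr}(xy^*)$ (or $\tfrac12\mathrm{Tr}(xy^*)$ when $\mathbb{F}=\mathbb{R},\mathbb{H}$) already fixed in Section~8. Inside it, $\mathfrak{U}_{n-1}(\mathbb{F})$ sits as the set of matrices whose last row and last column vanish. The orthogonal complement of $\mathfrak{U}_{n-1}(\mathbb{F})$ with respect to this inner product is precisely the subspace of matrices supported on the last row and column, so $\pi_H:\mathfrak{U}_n(\mathbb{F})\to\mathfrak{U}_{n-1}(\mathbb{F})$ is the operation of zeroing out the last row and column. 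Using the self-duality provided by the inner product, the same description applies to $\pi_H$ acting on $\mathfrak{k}^*$.

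Next, I would apply this to the matrix appearing in Theorem~\ref{theoprinc}(ii). Writing $M = U\Omega_n(\lambda) U^*$, the element $U\lambda U^*\in \mathfrak{k}^*$ corresponds (up to the factor $i$) to $M\in\mathcal{P}_n(\mathbb{F})$, and $\pi_H(U\lambda U^*)$ corresponds to the main minor $\pi_{n-1}(M)$ of order $n-1$, regarded as an element of $\mathcal{P}_{n-1}(\mathbb{F})$. By Lemma~\ref{KAK} applied in dimension $n-1$, the radial map $r_H$, which sends an $H$-coadjoint orbit to its intersection with $\mathcal{C}_{n-1}$, coincides with the map $X^{(n-1)}$ of Section~3. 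Hence $r_H(\pi_H(U\lambda U^*)) = X^{(n-1)}(M)$.

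Combining these identifications, Theorem~\ref{theoprinc}(ii) gives that the limit measure $\mu$ is the law of $X^{(n-1)}(M)$, which is exactly the claim of Theorem~\ref{theoprincbis}. The main point to verify is the matching between abstract and concrete data (in particular that the orthogonal complement of $\mathfrak{U}_{n-1}(\mathbb{F})$ is what I claim in each of the three cases $\mathbb{F}=\mathbb{C},\mathbb{H},\mathbb{R}$ and that the Weyl chamber conventions of Section~2 agree with the $\Sigma$ chosen in Section~8); no further analytic work is needed since convergence is supplied by Theorem~\ref{theoprinc}.
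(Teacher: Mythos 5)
Your proposal is correct and follows the same route as the paper: the paper proves Theorem \ref{theoprincbis} precisely by taking $H\subset U_n(\mathbb{F})$ to be the subgroup fixing the last coordinate, noting that the orthogonal projection onto its Lie algebra is (up to zeros) the main minor of order $n-1$, and then invoking Theorem \ref{theoprinc} together with the identifications of Weyl chamber and radial part set up in Section~8. The extra checks you list (orthogonal complement in each case $\mathbb{F}=\mathbb{C},\mathbb{R},\mathbb{H}$, agreement of chamber conventions, $r_H=X^{(n-1)}$ via Lemma \ref{KAK}) are exactly the content the paper treats as immediate.
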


\subsection{Relation between quaternionic and real odd case}\label{relquatreal}We have observed in the previous sections  that on the one hand the rank one perturbations are the same for $\F=\R$ and $n=2r+1$ as for $\F=\H$ and $n=r$, and on the other hand that the law of the radial part of the main minor of order $n-1$ of $U\Omega_n(\lambda)U^*$, with $U$ Haar distributed in $U_n(\H)$, is the same as the law of the radial part of the main minor of order $2n-1$ of $V\Omega_{2n+1}(\lambda)V^*$, with $V$ Haar distributed in $U_{2n+1}(\R)$. 
It is not a coincidence: identity (\ref{Fourierprojection}) shows that the convolution of invariant orbit measures or the projection of invariant measure depend only on the Weyl group of the groups and subgroups considered. At the price of some redundancy,  we have chosen to state explicitely our results in both cases for the convenience of the reading.

 \section{Tensor product and restriction multiplicities}
We want to compute the law of the sum or of the minors of  invariant random matrices.    By theorems \ref{coroprincbis} and \ref{theoprincbis}, it suffices to have a precise description of some appropriate tensor product and restriction multiplicities.   In group representation, these computations are a fundamental issue which have been studied for a long time. Recently the discovery of quantum group provided a new understanding of them.

The rank one perturbations that we introduce in section 3 are related to the tensor products $V_\lambda\otimes V_\gamma$, where $\lambda$ and $\gamma$ are dominant weights, $\gamma$ being proportional to $\epsilon_1$.  Using the theory of crystal graphs of Kashiwara, we obtain in section 9.1, explicit description of  these decompositions. Our results are surely not new and they  are contained, or maybe hidden, in more general ones (see for instance Berenstein and Zelevinski\cite{Berenstein}, Nakashima \cite{Nakashima}) but our descriptions present some advantages: they are quite simple and make interlacing conditions arise, which can be described, in the spirit of Fulmek and Krattenthaler \cite{Fulmek} for instance, in term of non intersecting paths.

In section 9.2, we recall the classical restriction multiplicities that we need for the computation of the law of the main minors.
  
\subsection{Tensor product multiplicities and crystal graphs} 
Let us recall some standard notations for crystal graphs (see, e.g,  Kashiwara \cite{Kashiwara}). As in the previous section we consider a compact connected Lie group  $K$ and its  Lie algebra $\mathfrak{k}$. Recall that the crystal graphs of the $\mathfrak{k}-$modules are oriented coloured graphs with colours $i\in I$. An arrow $a\overset{i}\rightarrow b$ means that $\tilde{f}_i(a)=b$ and $\tilde{e}_i(b)=a$ where $\tilde{e}_i$ and $\tilde{f}_i$ are the crystal graph operators. We denote $\Lambda_i$, $i=1,\ldots n$, the dual basis of the coroots. For a $\mathfrak{k}-$module $V$ and its crystal graph $B$, the weight of a vertex $b\in B$ is defined by $wt(b)=\sum_I(\varphi_i(b)-\varepsilon_i(b))\Lambda_i$, where $\varphi_i(b)=\max \{n\ge 0 : \tilde{f}^n_i(b)\in B \}$ and $\varepsilon_i(b)=\max \{n\ge 0 : \tilde{e}^n_i(b)\in B\}$, $i\in I$.  For each dominant weight $\lambda$ we denote by $B(\lambda)$ the crystal graph of  the irreducible $\mathfrak{k}-$module  $V_\lambda$ with highest weight $\lambda$ and by $u_\lambda$ the highest weight vertex. We recall the proposition $4.2$ of \cite{Kashiwara}.

\begin{prop} \label{tensKash} Let $\lambda$ and $\mu$ be two dominant weights and $B(\mu)$ the crystal graph of $V_\mu$. Then 
$$V_\lambda\otimes V_\mu=\oplus V_{\lambda+wt(b)},$$
where the sum ranges over $b\in B(\mu)$ such that $\varepsilon_{i}(b)\le \langle h_i,\lambda\rangle$ (or equivalently $\varepsilon_i(u_\lambda\otimes b)=0$) for every $i\in I$. 
\end{prop}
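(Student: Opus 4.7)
The plan is to invoke the fundamental fact from Kashiwara's crystal theory that the crystal graph of the tensor product $V_\lambda \otimes V_\mu$ is the tensor product of crystal graphs $B(\lambda) \otimes B(\mu)$, with underlying set $B(\lambda) \times B(\mu)$, weight map $wt(b_1 \otimes b_2) = wt(b_1) + wt(b_2)$, and operators given by
$$\tilde{e}_i(b_1 \otimes b_2) = \begin{cases} \tilde{e}_i(b_1) \otimes b_2 & \text{if } \varphi_i(b_1) \geq \varepsilon_i(b_2), \\ b_1 \otimes \tilde{e}_i(b_2) & \text{if } \varphi_i(b_1) < \varepsilon_i(b_2). \end{cases}$$
A standard consequence is that the decomposition of $V_\lambda\otimes V_\mu$ into irreducibles corresponds to the decomposition of $B(\lambda)\otimes B(\mu)$ into connected components, each of which is isomorphic to $B(\nu)$ where $\nu$ is the weight of its unique highest weight vertex, that is, the unique vertex annihilated by every $\tilde{e}_i$.

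First I would characterize these highest weight vertices. Suppose $b_1\otimes b_2$ satisfies $\tilde{e}_i(b_1\otimes b_2) = 0$ for every $i\in I$. If $\varepsilon_i(b_2) > 0$ and $\varphi_i(b_1) < \varepsilon_i(b_2)$, the tensor product rule produces $b_1\otimes \tilde{e}_i(b_2) \neq 0$, a contradiction; so necessarily $\varphi_i(b_1) \geq \varepsilon_i(b_2)$, and then the first case of the rule forces $\tilde{e}_i(b_1) = 0$, i.e.\ $\varepsilon_i(b_1) = 0$, for every $i$. Hence $b_1$ is a highest weight vertex of $B(\lambda)$, and since $B(\lambda)$ is connected with a unique such vertex, $b_1 = u_\lambda$. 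The remaining constraints read $\varphi_i(u_\lambda) = \langle h_i, \lambda\rangle \geq \varepsilon_i(b_2)$ for all $i$.

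Then I would read off the decomposition: the irreducible summands of $V_\lambda \otimes V_\mu$ are indexed by the vertices $b \in B(\mu)$ satisfying $\varepsilon_i(b) \leq \langle h_i, \lambda\rangle$ for every $i$, each contributing a summand of highest weight $wt(u_\lambda \otimes b) = \lambda + wt(b)$. The equivalent reformulation $\varepsilon_i(u_\lambda\otimes b)=0$ is exactly what the previous paragraph establishes when specialized to $b_2 = b$.

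The main obstacle is the underlying crystal-theoretic input, namely that $B(\lambda)\otimes B(\mu)$ computes the tensor product and that connected components of crystal graphs correspond bijectively to irreducible submodules, each with a single highest weight vertex. These are deep structural theorems of Kashiwara which we invoke as a black box (this is the content of \cite{Kashiwara}); granted them, the combinatorial identification of the highest weight vertices is a short calculation using only the tensor product rule for $\tilde{e}_i$.
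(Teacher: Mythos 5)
Your argument is correct: with Kashiwara's tensor product rule for $\tilde{e}_i$, the case $\varphi_i(b_1)<\varepsilon_i(b_2)$ forces $\varepsilon_i(b_2)>0$ and hence $\tilde{e}_i(b_1\otimes b_2)=b_1\otimes\tilde{e}_i(b_2)\neq 0$, so highest weight vertices must have $b_1$ killed by every $\tilde{e}_i$, i.e.\ $b_1=u_\lambda$, with the remaining condition $\varepsilon_i(b_2)\le\varphi_i(u_\lambda)=\langle h_i,\lambda\rangle$; the bijection between connected components of a normal crystal and their unique highest weight vertices then yields the stated decomposition with highest weights $\lambda+wt(b)$. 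Note, however, that the paper does not prove this statement at all: it simply recalls it as Proposition 4.2 of Kashiwara's ``On crystal bases'' and uses it as input for the explicit branching computations in types $A$--$D$. So your proposal supplies the standard derivation (essentially Kashiwara's own) of a result the paper treats as a quoted black box; the only ingredients you take on faith — that $B(\lambda)\otimes B(\mu)$ is the crystal of $V_\lambda\otimes V_\mu$ and that components of normal crystals correspond to irreducibles with a unique source vertex — are exactly the structural theorems the citation is meant to cover, so nothing is missing.
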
 
We now consider $K=U_n(\F)$ and we describe the tensor products $V_{\lambda}\otimes V_{a\epsilon_1}$ that we are interested in. For this we use the description of the crystal graphs for classical Lie algebras given by Kashiwara and Nakashima in \cite{KashiwaraNakashima}. In the following, we write $x$ indifferently for $(x_1,\cdots,x_{\tilde{n}})\in \mathbb{R}^{\tilde{n}}$ and $\sum_{i=1}^{\tilde{n}} x_i\epsilon_i$. Notice that $\epsilon_1$ is the highest weight of the standard representation.

\subsubsection{Tensor product of representations for the type $A_{n-1}$} This case  is classical and known as Pieri's formula (see Fulton \cite{Fulton}). But it will help the reader to first see the method we use in this simple example. In the type $A_{n-1}$, the simple coroots are $h_i=\epsilon_i-\epsilon_{i+1}$, $1 \leq i \leq n-1$. The crystal graph of $V_{\epsilon_{1}}$ is, see \cite{Kashiwara},
$$B(\epsilon_{1}): 1\overset{1}\rightarrow 2\overset{2}\rightarrow \cdots \overset{n-1}\rightarrow n.$$
Here the weight of $i$ is $\epsilon_i$, $i=1, \cdots,n$. We use the usual order on $\{1, \cdots,n\}$.
Let $m$ be an integer. Theorem $3.4.2$ of \cite{KashiwaraNakashima} claims in particular that $$B(m\epsilon_1)=\{b_m\otimes \cdots\otimes b_1 \in B(\epsilon_1)^{\otimes m} :  b_{k+1}\ge b_k\}.$$ Let $\lambda$ be a dominant weight. Let us describe the decomposition
of the tensor product $V_\lambda\otimes V_{m \epsilon_1}$.
In proposition \ref{tensKash},  the sum ranges over all elements $b_m\otimes \cdots\otimes b_1 \in B(m \epsilon_1)$ such that, for $1 \leq i \leq n$,  $\varepsilon_{i}(u_\lambda\otimes b_m\otimes \cdots\otimes b_1)=0$, which is equivalent to say that $\varepsilon_{i}(b_k)\le \langle h_i,\lambda+wt(b_{k+1})+ \cdots +wt(b_{m})\rangle$ for $1 \leq k \leq m$. 
When $b\in B(\epsilon_1)$, either $b=i+1$ and  $\varepsilon_i(b)=1=-\langle h_i,wt(b)\rangle$, or  $\varepsilon_i(b)=0 \leq \langle h_i,wt(b)\rangle$. Thus we have \begin{align}\varepsilon_{i}(b)\le \langle h_i,\lambda\rangle \Leftrightarrow 0\le   \langle h_i,\lambda+wt(b)\rangle.\label{equivalA} \end{align} 
So, in the considered decomposition, the sum ranges over all elements $b_m\otimes \cdots\otimes b_1 \in B(\epsilon_1)^{\otimes m}$ satisfying the following conditions for every  $k\in\{1, \cdots,m\},i \in \{1,\cdots,n\}$, \begin{align} \left\{ \begin{array}{l} b_{k+1}\ge  b_{k},    \\   0\le \langle h_i,\lambda+wt(b_{m})+ \cdots +wt(b_k)\rangle. \label{dominantA}\end{array}\right. \end{align}
We draw on figure \ref{decompositionA}  the functions \begin{eqnarray*}\label{fonctionmu}k\mapsto \mu_i(k)=\langle \epsilon_i, \lambda + wt(b_{m})+ \cdots +wt(b_{m-k+1})\rangle .\end{eqnarray*} At each $k$, one and only one of the functions $\mu_1,\cdots,\mu_n$ increases by one unit. Moreover the $i^{th}$ curve cannot   increase if the $(i+1)^{th}$  has not because $b_m\otimes \cdots\otimes b_1$ is an element of $B(m\epsilon_1)$. The curves cannot cross each other since  $0 \le \langle h_i,\lambda+wt(b_{m})+\cdots+wt(b_k)\rangle$. Therefore we see that
the map $b_{m}\otimes \cdots\otimes b_1\mapsto \beta\in \mathbb{Z}^n $, with $\beta_i=\langle \epsilon_i,\lambda+wt(b_m)+ \cdots +wt(b_1) \rangle$,   $i=1, \cdots,n$,  is a bijection from $\{b\in B(m \epsilon_1) : b \textrm{ satisfies conditions  (\ref{dominantA})}\}$ to $\{  \beta\in \mathbb{Z}^n: \beta\succeq \lambda, \, \sum_{i}(\beta_i-\lambda_i)=m\}$. So we get the Pieri's formula (notice that the multiplicity are equal to one):
\begin{prop} \label{tensrepA} Let $\lambda,\gamma \in \mathbb{Z}^n$ such that $\lambda_1\ge \cdots\ge \lambda_n$ and  $\gamma=(m,0,\cdots,0)$, $m\in \mathbb{N}$. Then
$$V_{\lambda}\otimes V_{\gamma}=\oplus_{\beta}V_{\beta}$$
where the sum is over the integral dominant weights  such that $ \beta\succeq \lambda$, and $m=\sum_{i=1}^n  (\beta_i-\lambda_i)$. 
\end{prop}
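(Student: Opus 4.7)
The plan is to apply Proposition \ref{tensKash} with $\mu=m\epsilon_1$, so I first need an explicit description of the crystal $B(m\epsilon_1)$. In type $A_{n-1}$ the crystal of the standard representation $V_{\epsilon_1}$ is the linear graph $1\overset{1}\rightarrow 2\overset{2}\rightarrow\cdots\overset{n-1}\rightarrow n$, with $wt(i)=\epsilon_i$, and by Theorem 3.4.2 of Kashiwara--Nakashima one realizes $B(m\epsilon_1)$ as the set of weakly increasing tensor words $b_m\otimes\cdots\otimes b_1$ inside $B(\epsilon_1)^{\otimes m}$.

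Next I would rewrite the condition $\varepsilon_i(u_\lambda\otimes b_m\otimes\cdots\otimes b_1)=0$ coming from Proposition \ref{tensKash}. The tensor product rule for crystals unfolds it into the list of constraints $\varepsilon_i(b_k)\le \langle h_i,\lambda+wt(b_{k+1})+\cdots+wt(b_m)\rangle$ for every $k$ and $i$. A case-by-case check on a single letter $b\in B(\epsilon_1)$ shows that either $b=i+1$ and then $\varepsilon_i(b)=1=-\langle h_i,wt(b)\rangle$, or else $\varepsilon_i(b)=0\le\langle h_i,wt(b)\rangle$; in both cases this yields the key equivalence $\varepsilon_i(b)\le \langle h_i,\mu\rangle \iff 0\le \langle h_i,\mu+wt(b)\rangle$ displayed in (\ref{equivalA}). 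Applying it termwise, the admissibility of $b_m\otimes\cdots\otimes b_1$ becomes the single family of inequalities $0\le \langle h_i,\lambda+wt(b_m)+\cdots+wt(b_k)\rangle$ for all $i,k$, as in (\ref{dominantA}).

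Finally I would exhibit the bijection onto $\{\beta\in\Z^n : \beta\succeq\lambda,\ \sum_i(\beta_i-\lambda_i)=m\}$. Setting $\mu_i(k)=\langle\epsilon_i,\lambda+wt(b_m)+\cdots+wt(b_{m-k+1})\rangle$, the $n$ step functions $\mu_1,\ldots,\mu_n$ all start at $\mu_i(0)=\lambda_i$, and at each of the $m$ time steps exactly one of them jumps by one unit. The weakly increasing constraint on the $b_k$'s forces the $i$-th function to remain below the $(i+1)$-th in cumulative jumps up through any prefix (so $\mu_i$ cannot increase unless $\mu_{i+1}$ has already done so), while the dominance constraints derived above say the curves satisfy $\mu_i(k)\ge \mu_{i+1}(k)$ at every step. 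Reading off $\beta_i=\mu_i(m)$ and noting that $\lambda_{i+1}\le \mu_{i+1}(m)\le \mu_i(m-1)\le \mu_i(m)=\beta_i\le \lambda_{i-1}$ delivers the interlacing $\beta\succeq\lambda$; conversely, any such $\beta$ determines a unique non-crossing lattice path and thus a unique admissible word, giving multiplicity one for every summand. The identity $\sum_i(\beta_i-\lambda_i)=m$ merely records the total number of jumps.

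The main obstacle is the crystal-theoretic translation in the second step: correctly applying the iterated tensor rule for $\varepsilon_i$ on an arbitrarily long word and verifying (\ref{equivalA}) on each letter. Once this is in place the bijection in the last step is combinatorial bookkeeping of non-intersecting lattice paths, and the multiplicity-freeness of Pieri's rule follows at no extra cost.
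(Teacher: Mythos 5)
Your proposal is correct and follows essentially the same route as the paper: Proposition \ref{tensKash} combined with the Kashiwara--Nakashima description of $B(m\epsilon_1)$ as weakly increasing words, the letterwise equivalence (\ref{equivalA}) reducing admissibility to the conditions (\ref{dominantA}), and then the non-crossing lattice-path bijection onto $\{\beta\in\Z^n:\beta\succeq\lambda,\ \sum_i(\beta_i-\lambda_i)=m\}$, which is exactly the argument behind Figure \ref{decompositionA}. The only cosmetic blemish is the final chain of inequalities (the step $\mu_{i+1}(m)\le\mu_i(m-1)$ and the asserted $\beta_i\le\lambda_{i-1}$ are stated rather than derived), but the underlying reasons -- the decreasing order of the letters forces all jumps of $\mu_{i+1}$ to precede those of $\mu_i$, and the dominance conditions prevent crossing -- are the same ones the paper invokes.
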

\begin{figure}[h!]  
\begin{pspicture}(2,6)(3,-0.5)
\psline{->}(0,-0.1)(0,5.25)\psline{->}(-0.1,0)(5,0)\psline{->}(4,-0.1)(4,5.25)
\psline(0,0.25)(1.5,1.75)(4,1.75)(5,1.75)
\psline(0,2)(1.5,2)(2.5,3)(4,3)(5,3)
\psline(0,3.5)(2.5,3.5)(4,5)(5,5)
\psline{<->}(0,-0.25)(4,-0.25)
\put(4.2,5.2){$\beta_{1}$} \put(4.2,3.2){$\beta_{2}$} \put(4.2,1.95){$\beta_{3}$}
\put(5.1,5.1){$\mu_1 $} \put(5.1,3.1){$\mu_2 $} \put(5.1,1.85){$\mu_3$}\put(5.2,0){$k$}
\put(-0.45,3.6){$\lambda_{1}$} \put(-0.45,2.1){$\lambda_{2}$} \put(-0.45,0.35){$\lambda_{3}$}
\put(2,-0.5){$m$}
\end{pspicture}
  \caption{Irreducible decomposition of $V_\lambda\otimes V_{m \epsilon_1}$ for the type $A_{2}$}
  \label{decompositionA}
\end{figure} 

\subsubsection{Tensor product of representations for the type $C_n$} 
The  simple coroots are now $h_i=\epsilon_i-\epsilon_{i+1}$, $1 \leqÊi \leqÊn-1$,  $h_n=\epsilon_n$, and the crystal graph of $V_{\epsilon_{1}}$ is
$$B(\epsilon_1): 1\overset{1}\rightarrow  \cdots \overset{n-1}\rightarrow n
\overset{n}\rightarrow \bar{n} \overset{n-1}\rightarrow
\cdots\overset{1}\rightarrow \bar{1}.$$
Here $i$ and $\bar{i}$ have respective weight $\epsilon_i$ and $-\epsilon_i$. We define the order $\le$ on $B(\epsilon_1)$  by $1\le \cdots\le n\le \overline{n}\le \cdots\le \overline{1}$.
By theorem $4.5.1$ of \cite{KashiwaraNakashima}, if $m \in  \N$,
 $$B(m \epsilon_1)=\{b_m\otimes \cdots\otimes b_1 \in B(\epsilon_1)^{\otimes m} : b_{k+1}\ge  b_{k}\}.$$ 
Let $\lambda$ be a dominant weight. As above is it easy to see that equivalence (\ref{equivalA}) holds. Therefore, by proposition \ref{tensKash},
 the sum ranges over all elements $b_m\otimes \cdots\otimes b_1 \in B(\epsilon_1)^{\otimes m}$ satisfying the following conditions for $1\leq k\leq m, 1 \leq i \leq n$, \begin{align} \left\{ \begin{array}{l} b_{k+1}\ge  b_{k},    \\   0\le \langle h_i,\lambda+wt(b_{m})+ \cdots +wt(b_k)\rangle .\label{dominantC}\end{array}\right. \end{align}
The function $b_{m}\otimes \cdots\otimes b_1\mapsto (\beta,c)\in \mathbb{N}^n\times \mathbb{N}^n$, where for $i=1,\cdots,n$ $$\beta_i=\langle \epsilon_i,\lambda+wt(b_m)+\cdots+wt(b_1) \rangle$$ and $$c_i=\min\{\langle  \epsilon_i,\lambda+wt(b_m)+\cdots+wt(b_k)  \rangle, 1 \leqÊk \leq m\},$$  is a bijection from $\{b\in B(m\Lambda_1) : b \textrm{ satisfies conditions  (\ref{dominantC})}\}$ to $\{  (\beta,c)\in \mathbb{N}^n\times \mathbb{N}^n: \lambda\succeq c, \,\beta\succeq c, \,\sum_{i}(\lambda_i-c_i+\beta_i-c_i)=m\}$. Look at Figure \ref{decompositionC} to be convinced of the bijection. The $i^{th}$ curve cannot decrease (resp. increase) if the $(i-1)^{th}$ (resp.$(i+1)^{th})$ has not since $b_m\otimes \cdots\otimes b_1$ is an element of $B(a\epsilon_1)$. Moreover the curves remain nonnegative and cannot cross each other since $0 \le \langle h_i,\lambda+wt(b_{k})+\cdots+wt(b_m)\rangle$. So we get the following proposition. 
\begin{prop} \label{tensrepC} Let $\lambda,\gamma \in \mathbb{N}^n$ be such that $\lambda_1\ge \cdots\ge \lambda_n$, and $\gamma=(m,0,\cdots,0)$, $m\in \mathbb{N}$. Then
$$V_{\lambda}\otimes V_{\gamma}=\oplus_{\beta}M_{\lambda,\gamma}(\beta)  V_{\beta}$$
where the sum is over all  $\beta\in \mathbb{N}^n$ satisfying $\beta_1\ge \cdots\ge \beta_n$ such that there exists  $c=(c_{1},\cdots,c_{n})\in \mathbb{N}^{n}$ which verifies $\lambda\succeq c $, $\beta\succeq c$ and $\sum_{i=1}^{n}(\lambda_i-c_i + \beta_i-c_i)=m$. In addition, the  multiplicity $M_{\lambda,\gamma}(\beta)$  of the irreducible module with highest weight $\beta$ is  the number of $c\in\mathbb{N}^{n}$ satisfying these relations. 
\end{prop}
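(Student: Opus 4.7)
The plan is to apply Kashiwara's tensor product rule (Proposition~\ref{tensKash}) together with the Kashiwara--Nakashima realization of $B(m\epsilon_1)$ for type $C_n$ as the set of non-decreasing words $b_m \otimes \cdots \otimes b_1$ in the chain $1 \le 2 \le \cdots \le n \le \bar n \le \cdots \le \bar 1$. The template parallels the $A_{n-1}$ case treated just above; the combinatorics is richer because $B(\epsilon_1)$ now contains both positive and negative weights.

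First I would verify the local equivalence $\varepsilon_i(b) \le \langle h_i,\lambda\rangle \iff 0 \le \langle h_i, \lambda + wt(b)\rangle$ for $b \in B(\epsilon_1)$ by direct inspection of the crystal arrows: for each colour $i$, either $\varepsilon_i(b)=0$ and $\langle h_i,wt(b)\rangle \ge 0$, or $\varepsilon_i(b)=1$ and $\langle h_i,wt(b)\rangle = -1$. Iterating this through Kashiwara's tensor product rule for $\varepsilon_i$ shows that the condition $\varepsilon_i(u_\lambda \otimes b_m \otimes \cdots \otimes b_1)=0$ required by Proposition~\ref{tensKash} is equivalent to the discrete dominance condition~(\ref{dominantC}): $0 \le \langle h_i, \lambda + wt(b_m) + \cdots + wt(b_k)\rangle$ for every $k$ and every $i$.

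The core step is to exhibit the stated bijection between valid words and admissible pairs $(\beta,c)$. Reading $b_m, b_{m-1}, \ldots, b_1$, the non-decreasing constraint forces the barred letters $\bar 1, \bar 2, \ldots, \bar n$ to appear first, followed by the unbarred $n, n-1, \ldots, 1$, so the trajectory $\mu_i(k) = \langle \epsilon_i, \lambda + wt(b_m) + \cdots + wt(b_{m-k+1})\rangle$ strictly decreases from $\lambda_i$ during the run of $\bar i$'s, is constant on the middle stretch, then strictly increases to $\beta_i$ during the run of $i$'s. Setting $c_i = \min_k \mu_i(k)$, inspection of the worst point of each segment shows that~(\ref{dominantC}) translates into $c_i \ge \max(\lambda_{i+1},\beta_{i+1})$ for $i<n$ together with $c_n \ge 0$; combined with the trivial bounds $c_i \le \min(\lambda_i,\beta_i)$, which hold because $c_i$ is a minimum of a path with endpoints $\lambda_i$ and $\beta_i$, these are precisely $\lambda \succeq c$ and $\beta \succeq c$, while the length identity $m = \sum_i (\lambda_i - c_i + \beta_i - c_i)$ is just a count of the letters. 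Conversely, given an admissible $(\beta,c)$, the integers $\lambda_i - c_i$ and $\beta_i - c_i$ prescribe the multiplicities of $\bar i$ and $i$ in the word, and the non-decreasing ordering determines the word uniquely; hence $M_{\lambda,\gamma}(\beta)$ equals the number of admissible $c$'s.

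The main obstacle will be the careful case analysis confirming that~(\ref{dominantC}) is equivalent to the two interlacings together with $c_n \ge 0$: one must ensure $\mu_i \ge \mu_{i+1}$ and $\mu_n \ge 0$ along the \emph{entire} path, not only at its endpoints. Figure~\ref{decompositionC}'s picture of non-crossing curves confined to the non-negative quadrant makes this geometrically transparent, but translating it into precise inequalities requires tracking which segments of which paths are active at each step~$k$.
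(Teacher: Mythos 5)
Your proposal is correct and follows essentially the same route as the paper: Kashiwara's rule (Proposition \ref{tensKash}) with the Kashiwara--Nakashima description of $B(m\epsilon_1)$, the equivalence (\ref{equivalA}) yielding conditions (\ref{dominantC}), and the bijection sending a word to $(\beta,c)$ with $c_i$ the minimum of the $i$-th path. Your ``worst point'' min/max argument is precisely the verification the paper delegates to Figure \ref{decompositionC}, so there is no substantive difference in method.
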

\begin{figure}[h!]  
\begin{pspicture}(2,4)(3,-0.5)
\psline{->}(0,-0.1)(0,3.75)\psline{->}(-0.1,0)(5,0)\psline{->}(4.25,-0.1)(4.25,3.75)\psline{->}(2.125,-0.1)(2.125,3.75)
\psline(0,1)(1.75,1)(2.5,0.25)(2.75,0.5)(5,0.5)\psline(0,2.25)(0.75,2.25)(1.75,1.25)(2.75,1.25)(3.25,1.75)(5,1.75)\psline(0,3.25)(0.75,2.5)(3.25,2.5)(4.25,3.5)(5,3.5)\psline{<->}(0,-0.25)(4.25,-0.25)
\put(4.4,3.65){$\beta_{1}$} \put(4.4,1.9){$\beta_{2}$} \put(4.4,0.65){$\beta_{3}$}
\put(5.2,3.65){$\mu_1  $} \put(5.2,1.9){$\mu_2 $} \put(5.2,0.65){$\mu_3 $}\put(5.2,0){$k$}
\put(1.8,2.65){$c_{1}$} \put(1.8,1.4){$c_{2}$} \put(1.8,0.4){$c_{3}$}
\put(-0.4,3.25){$\lambda_{1}$} \put(-0.4,2.25){$\lambda_{2}$} \put(-0.4,1){$\lambda_{3}$}
\put(2,-0.5){$m$}
\psline[linestyle=dashed,linecolor=gray](0,2.5)(4.25,2.5)
\psline[linestyle=dashed,linecolor=gray](0,1.25)(4.25,1.25)
\psline[linestyle=dashed,linecolor=gray](0,0.25)(4.25,0.25)
\end{pspicture}
  \caption{Irreducible decomposition of $V_\lambda\otimes V_{m \epsilon_1}$ for the type $C_{3}$}
  \label{decompositionC}
\end{figure} 
 We invite the reader to compare this figure with figure \ref{perturbationC}: vectors $R_i$ and $R_{i+1}$ (black discs) satisfy the same interlacing conditions as the highest weights $\lambda$ and $\mu$, and the white discs verify the same interlacing conditions as $c$. 
\medskip

\subsubsection{Tensor product of representations for type $B_r$}
The  coroots of the simple roots are $h_i=\epsilon_i-\epsilon_{i+1}$, $i=1,\cdots,r-1$,  $h_r=2\epsilon_r$, the crystal graph of $V_{\epsilon_{1}}$ is
$$B(\epsilon_{1}): 1\overset{1}\rightarrow  \cdots \overset{r-1}\rightarrow r
\overset{r}\rightarrow 0 \overset{r}\rightarrow \overline{r} \overset{r-1}\rightarrow
\cdots\overset{1}\rightarrow \overline{1},$$
where $i$, $\overline{i}$ and $0$ have respective weight $\epsilon_i$, $-\epsilon_i$ and $0$ for $i=1,\cdots,r$. We define an order on $B(\epsilon_{1}) $ by $1\le \cdots\le r\le 0\le  \overline{r}\le \cdots\le \overline{1}$.
By theorem $5.7.1$ of \cite{KashiwaraNakashima}, $$B(m \epsilon_1)=\{b_m\otimes\cdots\otimes b_1 \in B(\epsilon_1)^{\otimes m} : b_{k+1}\ge  b_{k}, \, b_{k+1}\otimes b_{k} \ne 0\otimes 0\}.$$ Let $\lambda$ be an integral  dominant weight. As for the type $C_n$, in the decomposition
of  $V_\lambda\otimes V_{m \epsilon_1}$
 the sum ranges over the $b_m\otimes \cdots \otimes b_1 \in B(m \epsilon_1)$ such that  $\varepsilon_{i}(b_k)\le \langle h_i,\lambda+wt(b_{k+1})+\cdots+wt(b_m)\rangle$ for $1Ê\leq k \leq m, 1 \leq i \leq r$.
Let $b\in B(\epsilon_1)$. For $i\le r-1$, $\langle h_i,wt(b)\rangle =-1$ if $b=i+1$ or $b=\overline{i}$. Moreover  $\langle h_r,wt(b)\rangle =-2$  if $b=\overline{r}$. In every other cases $\langle h_i,wt(b)\rangle$ is positive. Thus one easily shows that
\begin{align*}    \varepsilon_{i}(b)\le \langle h_i,\lambda\rangle \Leftrightarrow \left\{ \begin{array}{l} \big (b\ne 0 \textrm{ and }  0\le \langle h_i,\lambda+wt(b)\rangle \big)\\
\textrm{  or } \big( b=0 \textrm{ and }  \langle h_r,\lambda\rangle \ge 1\big).\end{array}\right. \end{align*}
So, in the decomposition considered, the sum ranges over all elements $b_m\otimes \cdots\otimes b_1 \in B(\epsilon_1)^{\otimes m}$ satisfying for every  $(k,i)\in\{1,\cdots,m\} \times\{1,\cdots,r\}$
\begin{align} \left\{ \begin{array}{l}  b_{k+1}\ge  b_{k}, \quad b_{k+1}\otimes b_{k}\ne 0\otimes 0    \\  0\le \langle h_i,\lambda+wt(b_{m})+...+wt(b_k)\rangle \\
  1\le \langle h_r,\lambda+wt(b_{m})+\cdots+wt(b_k)\rangle \textrm{ if $b_k=0$}. \end{array}\right. \end{align}
Thus we get the following  proposition.
\begin{prop} \label{tensrepB}  Let $\lambda,\gamma \in \mathbb{N}^{r}$ be such that $\lambda_1\ge \cdots\ge \lambda_r$ and $\gamma=(m,0,\cdots,0)$, $a\in \mathbb{N}$. Then
$$V_{\lambda}\otimes V_{\gamma}=\oplus_{\beta}M_{\lambda,\gamma}(\beta) V_{\beta}$$
where the sum is over all  $\beta\in \mathbb{N}^r$ such that $\beta_1\ge \cdots\ge \beta_r$ such that there exists an integer $s\in \{0,1\}$ and $c\in \mathbb{N}^{r}$ which verifies $\lambda\succeq c $, $\beta\succeq c$ and $\sum_{i=1}^{r}(\lambda_i-c_i+ \beta_i-c_i)+s=m$, $s$ being equal to $0$ if $c_r=0$. In addition, the  multiplicity $M_{\lambda,\gamma}(\beta)$ of the irreducible module with highest weight $\beta $ is the the number of $(c,s)\in\mathbb{N}^{r}\times \{0,1\}$ satisfying these relations. 
\end{prop}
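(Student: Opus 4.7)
The plan is to follow the strategy of the $A_{n-1}$ and $C_n$ cases proved just above, with the new feature being the presence of the zero letter in $B(\epsilon_1)$. By Proposition \ref{tensKash} applied to $B(m\epsilon_1)$, the decomposition of $V_\lambda\otimes V_\gamma$ is indexed by those words $b_m\otimes\cdots\otimes b_1\in B(m\epsilon_1)$ for which $\varepsilon_i(u_\lambda\otimes b)=0$ for every simple root index $i$. The Kashiwara--Nakashima description recalled above says such words are the monotone tensor products of letters of $B(\epsilon_1)$ in the order $1\le\cdots\le r\le 0\le\bar r\le\cdots\le\bar 1$, with the additional constraint $b_{k+1}\otimes b_k\ne 0\otimes 0$; combined with monotonicity this forces at most one zero letter in the whole word. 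The dominance condition has already been translated in the preceding lines of the excerpt into the pointwise requirement
\[
\langle h_i,\lambda+wt(b_m)+\cdots+wt(b_k)\rangle\ge 0
\]
for every $1\le k\le m$ and $1\le i\le r$, strengthened to $\ge 1$ at $i=r$ whenever $b_k=0$.

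Next I would encode each such word by its coordinate trajectories
\[
\mu_i(k)=\langle\epsilon_i,\lambda+wt(b_m)+\cdots+wt(b_{m-k+1})\rangle,\qquad 0\le k\le m,\ 1\le i\le r,
\]
in the spirit of Figure \ref{decompositionC}. Reading the word from $b_m$ to $b_1$ in decreasing order of letters, each occurrence of $\bar i$ decreases $\mu_i$ by one, each occurrence of $i$ increases $\mu_i$ by one, and the (at most one) zero letter leaves every $\mu_i$ fixed. Consequently each trajectory $\mu_i$ is a ``V'' descending from $\lambda_i$ to a minimum $c_i\in\mathbb N$ and climbing back up to $\beta_i$, with the zero step (if present) sitting at the flat bottom of the $r$-th curve.

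I would then show that the map $b_m\otimes\cdots\otimes b_1\mapsto(\beta,c,s)$, where $\beta_i=\mu_i(m)$, $c_i=\min_k\mu_i(k)$, and $s\in\{0,1\}$ records whether a zero letter appears, is a bijection onto the set described in the statement. Non-crossing of the trajectories $\mu_i\ge\mu_{i+1}$ imposed by the pointwise dominance, evaluated on either side of the $i$-th valley, yields exactly $\lambda\succeq c$ and $\beta\succeq c$; counting letters gives $\sum_i(\lambda_i-c_i+\beta_i-c_i)+s=m$; and the strengthened positivity at $i=r$ when $b_k=0$ is precisely the requirement that $c_r\ge 1$ whenever $s=1$. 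Conversely, given any admissible $(\beta,c,s)$, the letter multiplicities are forced ($\bar i$ appears $\lambda_i-c_i$ times, $i$ appears $\beta_i-c_i$ times, $0$ appears $s$ times), and the monotonicity constraint reassembles them into a unique word, the no-two-zeros condition being automatic from $s\le 1$. The multiplicity $M_{\lambda,\gamma}(\beta)$ is then the number of admissible $(c,s)$.

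The only real subtlety beyond the $C_n$ case is bookkeeping around the zero letter: one must verify both that its strengthened dominance translates exactly to $c_r\ge 1$ when $s=1$, since the zero step occurs at the valley of $\mu_r$, and that the ``no consecutive zeros'' constraint is automatic here because monotonicity already forces $s\le 1$. No essentially new argument is required.
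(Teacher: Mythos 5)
Your proposal is correct and follows essentially the same route as the paper: the Kashiwara--Nakashima description of $B(m\epsilon_1)$ for type $B_r$, the translation of $\varepsilon_i(u_\lambda\otimes b)=0$ into the pointwise dominance conditions (with the strengthened bound at $i=r$ when the letter $0$ occurs), and the valley encoding of the trajectories $\mu_i$ giving the bijection onto the data $(\beta,c,s)$ with $s=0$ when $c_r=0$. In fact you spell out the bijection and the zero-letter bookkeeping in more detail than the paper, which leaves that step implicit by analogy with the type $C_n$ figure.
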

\subsubsection{Tensor product of representations for type $D_r$}
The simple coroots  are $h_i=\epsilon_i-\epsilon_{i+1}$, $i=1,\cdots,r-1$, and  $h_r=\epsilon_r+\epsilon_{r-1}$,  the crystal graph of $V_{\epsilon_{1}}$ is

\begin{pspicture}(0,-1)(0,1.5)
\put(0,0.5){$B(\epsilon_{1}):  \begin{array}{ccccc}
  1\overset{1}\rightarrow  \cdots \overset{r-3}\rightarrow r-2\overset{r-2}\rightarrow & r-1 &   & \overline{r-1} &   \overset{r-2}\rightarrow \overline{r-2} \overset{r-3}\rightarrow\cdots \overset{1} \rightarrow \overline{1} \\
\end{array}.$}
\put(5.3,0.95){$\nearrow$}\put(5.3,1.1){\scriptsize{$r$}}\put(6,1.2){$\overline{r}$}\put(6.5,0.95){$\searrow$}
\put(6.8,1.1){\scriptsize{$r-1$}}
\put(5.3,-0.1){$\searrow $}\put(4.75,-0.15){\scriptsize{$r-1$}}\put(6,-0.25){$r$}\put(6.5,-0.1){$\nearrow $}
\put(6.8,-0.15){\scriptsize{$r$}}
\end{pspicture}
Here $i$ and $\overline{i}$  have respective weight $\epsilon_i$ and $-\epsilon_i$ , $i=1,\cdots,r$. We define a partial order $\le$ on $B(\epsilon_1)$ by $1\le\cdots\le r-1\le \begin{array}{c}
  \overline{r} \\
 r \\
\end{array}\le \overline{r-1}\le \cdots\le \overline{1}$.
For $m\in \N$  theorem $6.7.1$ of \cite{KashiwaraNakashima} states that,  $$B(m \epsilon_1)=\{b_m\otimes\cdots\otimes b_1\in B(\epsilon_1)^{\otimes m} : b_{k+1}\le  b_{k} \}.$$ Let $\lambda$ be a dominant weight such that $\langle \epsilon_r,\lambda\rangle \in \N$. For $b\in B(\epsilon_1)$, the same considerations as for the types $A_{n-1}$ and $C_n$ imply  equivalence (\ref{equivalA}). So that we get proposition \ref{tensrepD}, which is illustrated by figure \ref{decompositionD}. We invite the reader to compare with figure \ref{perturbationD}.
\begin{prop} \label{tensrepD} Let $\lambda,\gamma \in \mathbb{N}^{r}$ be such that $\lambda_1\ge \cdots\ge \vert \lambda_r\vert $, and $\gamma=(m,0,\cdots,0)$, $m\in \mathbb{N}$. Then
$$V_{\lambda}\otimes V_{\gamma}= \oplus_{\beta}M_{\lambda,\gamma}(\beta)  V_{\beta}$$
where the sum is over all  $\beta\in \mathbb{N}^r$ satisfying $\beta_1\ge \cdots\ge \beta_r$ such that there exists  $c\in \mathbb{N}^{r-1}$ which verifiy $\lambda\succeq c $, $\beta\succeq c$, $\max(\vert \lambda_r \vert, \vert \beta_r \vert)\le c_{r-1} $ and $\sum_{k=1}^{r-1}(\lambda_k-c_k+ \beta_k-c_k)+ \vert\lambda_r-\mu_r \vert=m$. In addition, the  multiplicity $M_{\lambda,\gamma}(\beta)$ of the irreducible module with highest weight $\beta $ is the   number of $c\in\mathbb{N}^{r-1}$ satisfying these relations. 
\end{prop}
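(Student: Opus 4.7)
The plan is to mimic the arguments already carried out for types $A_{n-1}$, $C_n$ and $B_r$, applying Proposition \ref{tensKash} to the crystal $B(m\epsilon_1)$ of type $D_r$ described above. First I would expand $V_\lambda\otimes V_{m\epsilon_1}=\bigoplus V_{\lambda+wt(b)}$, the sum taken over $b=b_m\otimes\cdots\otimes b_1\in B(m\epsilon_1)$ satisfying $\varepsilon_i(u_\lambda\otimes b)=0$ for all simple colours $i$. Writing this condition as $\varepsilon_i(b_k)\le\langle h_i,\lambda+wt(b_m)+\cdots+wt(b_{k+1})\rangle$ for each $1\le k\le m$, $1\le i\le r$, I would then verify the key equivalence
$$\varepsilon_i(b)\le \langle h_i,\lambda\rangle \iff 0\le \langle h_i,\lambda+wt(b)\rangle,$$
by case inspection of the letters $1,\ldots,r-1,r,\bar r,\overline{r-1},\ldots,\bar 1$. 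The crucial point (where $D_r$ differs from $B_r$) is the branch $r,\bar r$ at colours $r-1$ and $r$: each of these letters contributes $\pm 1$ to a single coroot pairing, so the equivalence still goes through without an extra case like the ``$b=0$'' case of type $B_r$.

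Next I would introduce, for each admissible word, the piecewise-constant trajectories
$$\mu_i(k)=\langle\epsilon_i,\lambda+wt(b_m)+\cdots+wt(b_{m-k+1})\rangle,\quad i=1,\ldots,r,\ k=0,\ldots,m,$$
and set $\beta_i=\mu_i(m)$, together with $c_i=\min_k\mu_i(k)$ for $i\le r-1$. The ordering constraint $b_{k+1}\le b_k$ on $B(m\epsilon_1)$ forces the curves $\mu_1,\ldots,\mu_{r-1}$ to be unimodal (first nonincreasing, then nondecreasing) and to respect the interlacing dictated by simple roots $\epsilon_i-\epsilon_{i+1}$; the translated constraint $0\le\langle h_i,\lambda+\sum wt\rangle$ keeps these curves from crossing. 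This yields $\lambda\succeq c$, $\beta\succeq c$, together with $c_{r-1}\ge|\lambda_r|$ and $c_{r-1}\ge|\beta_r|$ coming from the coroot $h_r=\epsilon_r+\epsilon_{r-1}\ge 0$ combined with $h_{r-1}=\epsilon_{r-1}-\epsilon_r\ge 0$ (evaluated along the whole trajectory). For the last coordinate, $\mu_r$ can move up or down by one each time a letter $r$ or $\bar r$ is used, and the only constraint is that $|\mu_r(k)|\le\mu_{r-1}(k)$ throughout; thus the number of such letters is exactly $|\lambda_r-\beta_r|$.

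Finally, I would set up the bijection
$$b_m\otimes\cdots\otimes b_1\ \longmapsto\ (c,\beta)\in\N^{r-1}\times\N^r$$
and verify that the total length $m$ of the word equals
$$\sum_{k=1}^{r-1}(\lambda_k-c_k)+\sum_{k=1}^{r-1}(\beta_k-c_k)+|\lambda_r-\beta_r|,$$
simply because $\mu_i$ goes down $\lambda_i-c_i$ times and up $\beta_i-c_i$ times for $i\le r-1$, while $\mu_r$ takes $|\lambda_r-\beta_r|$ net steps. Conversely, given $(c,\beta)$ satisfying the stated inequalities, one recovers all admissible words by choosing at each time step which of the curves moves, subject only to the interlacing; the number of such choices depends only on $(c,\beta)$ and not on the intermediate data, which yields the multiplicity $M_{\lambda,\gamma}(\beta)=\#\{c\in\N^{r-1}:\lambda,\beta\succeq c,\ \max(|\lambda_r|,|\beta_r|)\le c_{r-1},\ \sum(\lambda_k-c_k+\beta_k-c_k)+|\lambda_r-\beta_r|=m\}$.

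The main obstacle will be the bookkeeping around the ``fork'' at colours $r-1,r$ in the $D_r$ crystal graph: one must check that allowing $\mu_r$ to change sign is compatible with the unimodality forced by the weakly decreasing chain condition $b_{k+1}\le b_k$ in the partial order where $r$ and $\bar r$ are incomparable, and that the branching between $r$ and $\bar r$ at each step does not introduce extra multiplicity beyond what is counted by $c$. This is exactly where the absolute value $|\lambda_r-\beta_r|$ (rather than a signed difference) enters, and where the counting parameter $c$ has only $r-1$ components instead of $r$.
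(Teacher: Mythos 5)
Your sketch follows essentially the same route as the paper: Proposition \ref{tensKash} combined with the Kashiwara--Nakashima description of $B(m\epsilon_1)$ in type $D_r$, the case-by-case verification of the equivalence $\varepsilon_i(b)\le\langle h_i,\lambda\rangle\Leftrightarrow 0\le\langle h_i,\lambda+wt(b)\rangle$ (with no extra case, unlike the letter $0$ of type $B_r$), and then the translation into interlaced paths $\mu_i$ with $c_i=\min_k\mu_i(k)$, exactly as in the $A_{n-1}$ and $C_n$ cases. The ``obstacle'' you flag at the fork resolves itself: since $r$ and $\bar r$ are incomparable, no weakly monotone word of $B(m\epsilon_1)$ can contain both letters, so $\mu_r$ is monotone and the middle block contributes exactly $\vert\lambda_r-\beta_r\vert$ letters while $\mu_{r-1}$ sits at its minimum $c_{r-1}$ (giving $c_{r-1}\ge\max(\vert\lambda_r\vert,\vert\beta_r\vert)$), and each admissible pair $(\beta,c)$ determines the sorted word uniquely, which is precisely why the multiplicity is the number of such $c$.
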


\begin{figure}[h!]
\begin{pspicture}(2.5,4)(3,-1)
\psline{->}(0,-0.7)(0,3.75)\psline{->}(-0.1,0)(5,0)\psline{->}(4.25,-0.7)(4.25,3.75)\psline{->}(2.125,-0.7)(2.125,3.75)
\psline(0,1)(1.75,1)(3.25,-0.5)(5,-0.5)
\psline(0,2.25)(0.75,2.25)(1.75,1.25)(3.25,1.25)(4,2)(5,2)
\psline(0,3.25)(0.75,2.5)(4,2.5)(4.25,2.75)(5,2.75)\psline{<->}(0,-0.75)(4.25,-0.75)
\put(4.4,2.95){$\beta_{1}$} \put(4.4,2.2){$\beta_{2}$} \put(4.4,-0.3){$\beta_{3}$}
\put(5.3,2.85){$\mu_1$} \put(5.3,2.1){$\mu_2$} \put(5.3,-0.4){$\mu_3$}\put(5.2,0){$k$}
\put(1.8,2.65){$c_{1}$}\put(1.8,1.4){$c_{2}$} 
\put(-0.4,3.25){$\lambda_{1}$} \put(-0.4,2.25){$\lambda_{2}$} \put(-0.4,1){$\lambda_{3}$}
\put(2,-1){$m$}
\psline[linestyle=dashed,linecolor=gray](0,2.5)(4.25,2.5)
\psline[linestyle=dashed,linecolor=gray](0,1.25)(4.25,1.25)
\end{pspicture}
  \caption{Irreducible decomposition of $V_\lambda\otimes V_{m\epsilon_1}$ for the type $D_{3}$}
  \label{decompositionD}
\end{figure}

\subsection{Classical restriction multiplicities}
For $\F=\R,\C,\H$, the branching rules when $K=U_n(\mathbb{F})$ and $H=U_{n-1}(\mathbb{F})$,  are well known (see for instance Knapp \cite{Knapp}). Let us recall them. We add a subscript $\mathbb{Z}$ to the Gelfand Tetlin polytopes $GT_n(\lambda)$  to designate the subset of elements with integer entries.
\begin{prop} \label{restriction} Let $\lambda$ be an integral point of $\mathcal{C}_n$. Let $V_\lambda$ be an irreducible module  with highest weight $\lambda$. The irreducible decomposition (\ref{branching}) when $K=U_n(\mathbb{F})$ and $H=U_{n-1}(\mathbb{F})$ is  the following one:
$$V_\lambda =\oplus_{\beta}m^{\lambda}_{U_{n-1}}(\beta)V^{U_{n-1}}_{\beta}, $$
where the sum is over all $\beta $ such that there exists $x\in GT_{n,\mathbb{Z}}(\lambda)$ such that $x^{(n-1)}= \beta $. Moreover for $\mathbb{F}=\mathbb{C},\mathbb{R}$, $m^{\lambda}_{U_{n-1}}(\beta)=1$  and for $\mathbb{F}=\mathbb{H}$,  $m^{\lambda}_{U_{n-1}}(\beta)$ is the number of $c\in \mathbb{N}^n$ for which  there exists  $x\in GT_{n,\mathbb{Z}}(\lambda)$ with $x^{(n-1)}= \beta $ and $x^{(n-\frac{1}{2})}=c$.
\end{prop}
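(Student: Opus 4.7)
The plan is to verify the three branching rules stated here as consequences of classical branching theorems for the restriction $U_n(\F)\supset U_{n-1}(\F)$, and then check that the sets of admissible $\beta$ and the multiplicities match the combinatorial description in terms of $GT_{n,\Z}(\lambda)$. I will handle the three fields separately, since the character-theoretic arguments differ in each case. In all three, the engine is the Weyl character formula applied on both groups, which I would either combine with the standard expansion of a Weyl character in one ``fewer'' torus variable, or appeal to classical references (Knapp, Ch.\ IX; Zhelobenko for the symplectic case).

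For $\F=\C$, the group $U_n(\C)$ is the usual unitary group and the character $\chi_\lambda$ is the Schur polynomial $s_\lambda(z_1,\dots,z_n)$. I would derive the branching rule directly from the identity
\[
s_\lambda(z_1,\dots,z_{n-1},z_n)=\sum_{\beta\,:\,\lambda\succeq \beta} z_n^{|\lambda|-|\beta|}\, s_\beta(z_1,\dots,z_{n-1}),
\]
which can be established either by expanding the bialternant formula along the last row or by iterated use of the Pieri rule (Proposition~\ref{tensrepA}). Specializing $z_n=1$ and remembering that the condition $\lambda\succeq \beta$ is by definition the condition that $\beta$ occurs as an $x^{(n-1)}$ in $GT_{n,\Z}(\lambda)$ of type $A$, yields the claim with all multiplicities equal to $1$.

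For $\F=\R$, so that $U_n(\R)=SO(n)$, I would invoke the classical branching $SO(n)\downarrow SO(n-1)$ (Knapp, Theorem 9.16): the decomposition is multiplicity-free and $V^{SO(n-1)}_\beta$ appears in $V^{SO(n)}_\lambda$ precisely when $|\lambda|$ and $|\beta|$ interlace in the Gelfand--Tsetlin sense appropriate to the $B_r\to D_r$ or $D_r\to B_{r-1}$ transition. A short check — comparing with the definition of $GT_n(\lambda)$ for $\F=\R$ in Figures~\ref{coneB} and~\ref{coneD} — shows that the set of admissible $\beta$ is exactly $\{x^{(n-1)}:x\in GT_{n,\Z}(\lambda)\}$.

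For $\F=\H$, the case $Sp(n)\downarrow Sp(n-1)$ is Zhelobenko's branching rule, and this is the main obstacle because the decomposition is no longer multiplicity-free. The classical proof proceeds through the intermediate chain $Sp(n)\supset Sp(n-1)\times Sp(1)\supset Sp(n-1)$, and in each step the branching is multiplicity-free with an interlacing condition. Composing the two steps and tracking the intermediate Gelfand--Tsetlin-type row gives a double interlacing at integer and half-integer levels, precisely the $(x^{(n-\frac{1}{2})},x^{(n-1)})$ data in the quaternionic $GT_{n,\Z}(\lambda)$ from Figure~\ref{coneC}. The multiplicity $m^\lambda_{U_{n-1}}(\beta)$ is then the number of admissible intermediate vectors $c=x^{(n-\frac{1}{2})}\in\N^n$ sandwiched between $\lambda$ and $\beta$, which is exactly the statement of the proposition. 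I would either carry out this two-step character computation explicitly using the Weyl denominator for type $C$, or cite Zhelobenko directly; the only thing needing verification in either case is the identification of the intermediate interlacing pattern with the half-integer layer of the quaternionic Gelfand--Tsetlin polytope, which is immediate from the definitions.
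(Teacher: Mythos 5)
Your proposal is correct and takes essentially the same route as the paper: the paper does not actually prove Proposition \ref{restriction}, it simply recalls these classical branching theorems (citing Knapp, with Zhelobenko's rule behind the symplectic case) and matches them against the definition of $GT_{n,\mathbb{Z}}(\lambda)$, which is exactly what you do, only with added derivation sketches. One minor wording slip: in your chain $Sp(n)\supset Sp(n-1)\times Sp(1)\supset Sp(n-1)$ only the first restriction is multiplicity-free, the second step merely forgets the $Sp(1)$-factor and contributes its dimension as multiplicity, but this does not affect your final count of intermediate rows $c$, which is precisely Zhelobenko's double-interlacing rule.
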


\section{Asymptotic multiplicities and limit measures}
In this section, we  prove lemma \ref{lemGT} and proposition \ref{pertutheta}. 
\subsection{Proof of lemma \ref{lemGT} }\label{prooflemGT}
We have recalled in proposition \ref{restriction} the branching rules in the case when $K=U_{n}(\mathbb{F})$ and $H=U_{n-1}(\mathbb{F})$. Let us consider the chain of subgroups $U_n(\mathbb{F})\supset \cdots\supset U_1(\mathbb{F})$ and the corresponding successive restrictions. If we compare the successive branching rules with the definition of the Gelfand--Tsetlin polytopes $GT_n(\lambda)$  for $\lambda$ an integer point in $\mathcal{C}_n$, we get the famous result that the number of integer points in $GT_n(\lambda)$ is the dimension of the irreducible $\mathfrak{U}_n(\mathbb{F})-$module with highest weight $\lambda$. Actually this is the reason why Gelfand--Tsetlin polytopes have been introduced \cite{Gelfand}. The dimension formula (\ref{formuladim}) implies the following lemma. Let $\epsilon$ be equal to $1$ if $n\notin 2\mathbb{N}$ and $0$ otherwise.
\begin{lem} \label{dimGT}  Let $\lambda$ be an integer point in $\mathcal{C}_n$. The number of points in $GT_{n,\mathbb{Z}}(\lambda)$, denoted $\Card GT_{n,\Z}(\lambda)$, is equal to:\begin{itemize}
\item when $\mathbb{F}=\mathbb{C}$,
\begin{align*} 
\prod_{1\le i<j\le n}\frac{\lambda_i-\lambda_j+j-i}{j-i}, \end{align*}
\item when $\mathbb{F}=\mathbb{H}$,
\begin{align*}
\prod_{1\le i<j\le n}\frac{(\lambda_i-\lambda_j+j-i)(\lambda_i+\lambda_j+2n+2-j-i)}{(j-i)(2n+2-j-i)}\prod_{ i=1}^{ n}\frac{\lambda_i +n+1-i}{n+1-i}, \end{align*}
\item when $\mathbb{F}=\mathbb{R}$,
\begin{align*}
\prod_{1\le i<j\le \tilde{n}}\frac{(\lambda_i-\lambda_j+j-i)(\lambda_i+\lambda_j+2\tilde{n}+\epsilon-j-i)}{(j-i)(2\tilde{n}+\epsilon-j-i)}\prod_{ i=1}^{ \tilde{n}}\Big[\frac{\lambda_i +\tilde{n}+\frac{1}{2}-i}{\tilde{n}+\frac{1}{2}-i}\Big]^\epsilon.\\
\end{align*} 
\end{itemize}
\end{lem}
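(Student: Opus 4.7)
The plan is to derive the three counting formulas by combining the branching rules (Proposition \ref{restriction}) with the Weyl dimension formula \eqref{formuladim}, using the fact, stated in the paragraph preceding the lemma, that
\[
\Card GT_{n,\mathbb{Z}}(\lambda) \;=\; \dim V_\lambda
\]
for every integral $\lambda\in\mathcal{C}_n$.

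First I would establish this dimension identity by induction on $n$, starting from the trivial case $n=1$ where $GT_{1,\mathbb{Z}}(\lambda)=\{\lambda\}$ and $\dim V_\lambda=1$. For the inductive step one uses the restriction from $U_n(\mathbb{F})$ to $U_{n-1}(\mathbb{F})$ given by Proposition \ref{restriction}: writing $\dim V_\lambda = \sum_\beta m^\lambda_{U_{n-1}}(\beta)\dim V^{U_{n-1}}_\beta$ and invoking the inductive hypothesis for $U_{n-1}(\mathbb{F})$, the right-hand side counts pairs $(\beta,y)$ with $y\in GT_{n-1,\mathbb{Z}}(\beta)$ weighted by $m^\lambda_{U_{n-1}}(\beta)$. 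For $\F=\C,\R$ the multiplicity is $1$ and $\beta$ ranges exactly over the admissible $x^{(n-1)}$ in the definition of $GT_n(\lambda)$, so we recover $\Card GT_{n,\mathbb{Z}}(\lambda)$. For $\F=\H$ the extra $x^{(n-1/2)}$ row in the Gelfand--Tsetlin array precisely matches the $c\in\mathbb{N}^n$ appearing in $m^\lambda_{U_{n-1}}(\beta)$, again giving the identity.

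Next, it remains to evaluate $\dim V_\lambda$ by the Weyl dimension formula. For each of the four root systems one substitutes the explicit positive roots and the half-sum $\rho$:
\begin{itemize}
\item Type $A_{n-1}$ ($\F=\C$): $R^+=\{\epsilon_i-\epsilon_j : i<j\}$, $\rho_i=\tfrac{n+1}{2}-i$, so $\langle\lambda+\rho,\epsilon_i-\epsilon_j\rangle=\lambda_i-\lambda_j+j-i$, giving the first product.
\item Type $C_n$ ($\F=\H$): $R^+=\{\epsilon_i\pm\epsilon_j : i<j\}\cup\{2\epsilon_i\}$, $\rho_i=n+1-i$.
\item Type $B_r$ ($\F=\R$, $n=2r+1$): $R^+=\{\epsilon_i\pm\epsilon_j : i<j\}\cup\{\epsilon_i\}$, $\rho_i=r+\tfrac12-i=\tilde n+\tfrac12-i$.
\item Type $D_r$ ($\F=\R$, $n=2r$): $R^+=\{\epsilon_i\pm\epsilon_j : i<j\}$, $\rho_i=r-i=\tilde n-i$.
\end{itemize}
A direct substitution of these $\rho$'s into $\prod_{\alpha\in R^+}\langle\lambda+\rho,\alpha\rangle/\langle\rho,\alpha\rangle$ yields precisely the three displayed products, with the convention that the odd and even real cases are unified via the factor $\epsilon$ and the correct value of $\tilde n$.

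The main point requiring care is the case $\F=\R$ with $n$ even: the Weyl chamber $\mathcal{C}_n$ of type $D_r$ allows $\lambda_{\tilde n}$ to be negative, and one must check that the product formula, written with $\lambda_i+\lambda_j$ and $\lambda_i$ untouched, still equals $\dim V_\lambda$ since $\rho_{\tilde n}=0$ makes the formula symmetric under $\lambda_{\tilde n}\mapsto -\lambda_{\tilde n}$. The only other delicate point is bookkeeping in the quaternionic case to match the branching multiplicity with the $x^{(k-1/2)}$ rows; once these are identified the whole argument reduces to plugging in the Weyl formula.
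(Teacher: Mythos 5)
Your proposal takes essentially the same route as the paper: the paper's (very terse) proof likewise identifies $\Card GT_{n,\Z}(\lambda)$ with $\dim V_\lambda$ by iterating the branching rules of Proposition \ref{restriction} along the chain $U_n(\F)\supset\cdots\supset U_1(\F)$ and then evaluates the dimension via the Weyl formula (\ref{formuladim}) with the explicit positive roots of types $A_{n-1}$, $C_n$, $B_r$, $D_r$, exactly as you do. The only slip is your base case for $\F=\H$: there $GT_{1,\Z}(\lambda)$ is not a single point but has $\lambda_1+1$ elements (the choices of $x^{(\frac{1}{2})}$), matching $\dim_{Sp(1)}V_\lambda=\lambda_1+1$, so the induction starts correctly after this harmless correction.
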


Before writing the proof of the lemma \ref{lemGT} let us state the following lemma. Recall that $\mu_\lambda$ is the image of the uniform measure on $GT_n(\lambda)$  by the map $x\in GT_n(\lambda)\mapsto x^{(n-1)}$, and  $(\varepsilon_k)_{k\ge 1}$  converges to $0$.
\begin{lem} \label{lemlemGT}
Let $\lambda$ be in the Weyl chamber $\mathcal{C}_n$.  Let us consider a sequence $(\lambda_k)_{k\ge 1}$  of integer points in $\CC_n$ such that  $\varepsilon_k\lambda_k$ converges to $\lambda$, as $k$ goes to infinity, and the associated sequence of measures $(\mu_k)_{k\ge 0}$ defined as in Theorem \ref{theoprinc} for $K=U_n(\mathbb{F})$ and $H=U_{n-1}(\mathbb{F})$. Then $(\mu_k)_{k\ge 1}$ converges to the measure  $\mu_\lambda$.
\end{lem}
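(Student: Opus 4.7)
The plan is to combine the branching rules of Proposition 9.5 with the lattice-point asymptotics of Lemma 10.1 and to recognize the resulting expression for $\mu_k$ as a Riemann sum converging to the density of $\mu_\lambda$ given by Lemma 3.9.

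First, iterating Proposition 9.5 along the chain $U_n(\mathbb{F})\supset\cdots\supset U_1(\mathbb{F})$ identifies $\dim(\lambda_k)=\Card GT_{n,\mathbb{Z}}(\lambda_k)$ and $\dim_H(\beta)=\Card GT_{n-1,\mathbb{Z}}(\beta)$; one further application of the same proposition gives $m^{\lambda_k}_{U_{n-1}(\mathbb{F})}(\beta)=1$ whenever $\beta$ extends to an integer array of $GT_{n,\mathbb{Z}}(\lambda_k)$ for $\mathbb{F}=\mathbb{C},\mathbb{R}$, and $m^{\lambda_k}_{U_{n-1}(\mathbb{H})}(\beta)=\#\{c\in\mathbb{N}^n:\lambda_k\succeq c\succeq\beta\}$ for $\mathbb{F}=\mathbb{H}$. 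Thus
$$\mu_k=\sum_\beta\frac{\Card GT_{n-1,\mathbb{Z}}(\beta)\,m^{\lambda_k}_{U_{n-1}(\mathbb{F})}(\beta)}{\Card GT_{n,\mathbb{Z}}(\lambda_k)}\,\delta_{\varepsilon_k\beta}.$$

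Next, Lemma 10.1 yields, for $\xi$ in the interior of $\CC_m$ and $\varepsilon_k\xi_k\to\xi$, the scaling
$$\Card GT_{m,\mathbb{Z}}(\xi_k)=\varepsilon_k^{-\dim GT_m(\xi)}\,d_m(\xi)\,(1+o(1));$$
an analogous lattice-point count applied to $\{c\in\mathbb{N}^n:\lambda_k\succeq c\succeq\beta_k\}$ produces $\varepsilon_k^{-n}\text{vol}\{z:\lambda\succeq z\succeq\beta\}\,(1+o(1))$ in the quaternionic case. Combining these asymptotics, the weight of $\delta_{\varepsilon_k\beta}$ equals $\varepsilon_k^{\dim\CC_{n-1}}$ times the density of $\mu_\lambda$ recorded in Lemma 3.9, up to a $(1+o(1))$ factor uniform on compact subsets of the interlacing region. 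Hence for any bounded continuous $f$, $\int f\,d\mu_k$ is a Riemann sum on $\varepsilon_k\mathbb{Z}^{\dim\CC_{n-1}}$ with mesh $\varepsilon_k$, converging to $\int f\,d\mu_\lambda$. Since the supports of $(\mu_k)$ lie in a fixed compact neighborhood of the convex hull of $W\lambda$, the family is tight and this convergence against bounded continuous test functions yields the weak convergence $\mu_k\to\mu_\lambda$.

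The main obstacle is the boundary case. When $\lambda$ lies on a wall of $\CC_n$ (coinciding eigenvalues, or vanishing coordinates in the real/quaternionic settings), the polytope $GT_n(\lambda)$ drops in dimension and the clean asymptotics of Lemma 10.1 degenerate. I would resolve this by continuity: the density of $\mu_\lambda$ provided by Lemma 3.9 depends continuously on $\lambda$, and by Theorem 7.4 (i) the Fourier identity $\int\Phi_\beta^H(\zeta)\,\mu(d\beta)=\Phi_\lambda(\zeta)$ determines the limit continuously in $\lambda$, so approximating a boundary $\lambda$ by a sequence from the interior of $\CC_n$ and passing to the limit reduces the boundary case to the generic one already handled.
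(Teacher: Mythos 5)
Your interior-case argument is sound and is a genuinely different route from the paper's: you extract Ehrhart-type asymptotics from the explicit counting formula of Lemma \ref{dimGT} (and from the lattice count of the multiplicity set in the quaternionic case) and recognize a Riemann sum for the density of Lemma \ref{restrictiongeneral}; the exponent bookkeeping ($\dim GT_n-\dim GT_{n-1}-$ the multiplicity dimension $=\dim\CC_{n-1}$, the mesh of the weight lattice) does check out, and tightness is clear. The paper instead never takes asymptotics of the counting formulas: it observes that $\mu_k$ is the image under $x\mapsto x^{(n-1)}$ of the normalized counting measure $\frac{1}{\dim(\lambda_k)}\sum_{x\in GT_{n,\Z}(\lambda_k)}\delta_{\varepsilon_k x}$, lets that measure converge to the uniform measure on $GT_n(\lambda)$, and concludes by continuity of the pushforward, an argument that treats interior and boundary $\lambda$ on the same footing and never invokes a density.

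The genuine gap is in your boundary case. You justify it by the claim that ``the density of $\mu_\lambda$ provided by Lemma \ref{restrictiongeneral} depends continuously on $\lambda$'', but at a wall of $\CC_n$ the measure $\mu_\lambda$ has no density with respect to the Lebesgue measure on $\mathcal{C}_{n-1}$: the polytope $GT_n(\lambda)$ drops dimension, $\mu_\lambda$ is carried by a lower-dimensional set, and the normalizing function $d_n$ is itself discontinuous across the walls, since the factors with $\lambda_i=\lambda_j$, $\lambda_i=-\lambda_j$ or $\lambda_i=0$ are dropped from the products in Definition \ref{asymptoticdim}. What your reduction actually needs is the weak continuity of $\lambda\mapsto\mu_\lambda$ (equivalently, of the uniform probability measures on the degenerating polytopes $GT_n(\lambda)$, whose normalizations change order in $\varepsilon_k$) as an interior sequence approaches the wall; that is exactly the nontrivial point in the degenerate case, and it is asserted rather than proved. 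The appeal to Theorem \ref{theoprinc} is fine for the uniqueness half: the identity $\int\Phi^{H}_\beta(\zeta)\,\mu(d\beta)=\Phi_\lambda(\zeta)$ does pin down $\mu$, by the Harish-Chandra formula and injectivity of the Fourier transform of the associated $H$-invariant measure on $\mathfrak{h}^*$ (a point worth stating explicitly). But it cannot supply the missing fact that $\mu_\lambda$ itself satisfies this identity when $\lambda$ lies on a wall. To close the gap you must either prove the weak continuity of the uniform measures on $GT_n(\lambda)$ across walls, or switch at this point to the paper's pushforward argument, in which the identification of the limit never uses densities and the boundary case requires no separate treatment.
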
 
\proof We use the multiplicity $m_{U_{n-1}}^{\lambda_k}(\beta)$ defined in proposition \ref{restriction}. Since the dimension of the irreducible $\mathfrak{U}_n(\F)$-module with highest weight $\gamma$ is given by the number of integer points in $GT_n(\gamma)$, we obtain that $\mu_k$ is equal to  
\begin{align}\label{muk} \sum_{\beta}\frac{\Card  GT_{n-1,\Z}(\beta)}{\Card GT_{n,\Z}(\lambda_k)}m_{U_{n-1}}^{\lambda_k}(\beta)\delta_{\varepsilon_k\beta}.\end{align}
Comparing the definition of Gelfand--Tsetlin polytopes with the branching rules given in proposition \ref{restriction}, we get that the measure $\mu_k$ is the image by the map $x\in GT_n(\varepsilon_k\lambda_k)\mapsto x^{(n-1)}$ of the measure $\frac{1}{\dim(\lambda_k)}\sum_{x \in GT_{n,\mathbb{Z}}(\lambda_k)}\delta_{\varepsilon_k x}$. This last measure converges to the uniform measure on $GT_n(\lambda)$. Thus $\mu_k$ converges to  $\mu_\lambda$.   \qed\medskip

{\sc Proof of lemma \ref{lemGT} :} Let $\lambda\in \mathcal{C}_n$ and $U\in U_n(\mathbb{F})$ a Haar distributed random variable. We choose a sequence of measures $(\mu_k)_{k\ge 1}$  on $\mathcal{C}_n$ as in lemma \ref{lemlemGT} which claims that $(\mu_k)_{k\ge 1}$ converges to the law of the radial part of the main minor of order $n-1$ of $U\Omega_n(\lambda)U^*$. Lemma \ref{lemlemGT} implies that this law is $\mu_\lambda$.    \qed

\subsection{Proof of proposition \ref{pertutheta}} \label{proofpertutheta} The following lemma states the connection between the set $\mathcal{E}(\lambda,\theta)$ defined in section 4 and irreducible decomposition of tensor products of representations studied in section 9. In every case but the real odd one, we denote $\mathcal{E}_{\mathbb{Z}}(\lambda,\theta)$   the subset of $\mathcal{E}(\lambda,\theta)$ whose elements have components in $\mathbb{Z}$. In the case where $ \mathbb{F}=\mathbb{R}$ and $n=2r+1$, we let\begin{align*}\mathcal{E}_\mathbb{Z}(\lambda,\theta)=\{(\beta,z,x,&s)  \in\mathbb{N}^r\times  \mathbb{N}^r\times GT_{n,\mathbb{Z}}\times \{0,1\}: \lambda,\beta\succeq z , \\ & \sum_{i=1}^{r}(\lambda_i+ \beta_i-2z_i)+s=\theta,\, x\in GT_n(\beta), \, s=0 \textrm{ if } z_r=0\}.\end{align*}

\begin{lem} \label{dimE}
Let $\lambda$ and $\gamma=(a,0,...,0)$ be integer points of $\mathcal{C}_n$. Then the number of points in $\mathcal{E}_\mathbb{Z}(\lambda,a)$ is equal to $\dim(\lambda)\dim(\gamma)$.
\end{lem}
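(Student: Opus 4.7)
\smallskip

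\noindent\textbf{Proof plan.} The plan is to identify $\Card\,\mathcal{E}_{\mathbb{Z}}(\lambda,a)$ with the dimension of the tensor product module $V_{\lambda}\otimes V_{\gamma}$, via the branching/tensor rules of Section 9 and the Gelfand--Tsetlin description of $\dim(\beta)$. Since $\dim(V_{\lambda}\otimes V_{\gamma})=\dim(\lambda)\dim(\gamma)$, decomposing into irreducibles gives the key identity
\begin{equation*}
\dim(\lambda)\dim(\gamma) \;=\; \sum_{\beta\in P^{+}} M_{\lambda,\gamma}(\beta)\,\dim(\beta),
\end{equation*}
where $M_{\lambda,\gamma}(\beta)$ is the multiplicity of $V_{\beta}$ in $V_{\lambda}\otimes V_{\gamma}$. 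My goal is to show both factors on the right match precisely the enumeration of $\mathcal{E}_{\mathbb{Z}}(\lambda,a)$.

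\smallskip

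\noindent\textbf{Step 1: rewrite $\dim(\beta)$.} By Lemma \ref{dimGT} (which itself follows from iterating the $U_n(\mathbb{F})\supset U_{n-1}(\mathbb{F})$ branching of Proposition \ref{restriction} down to $U_1(\mathbb{F})$), one has $\dim(\beta)=\Card\,GT_{n,\mathbb{Z}}(\beta)$; this is exactly the number of $x$-components appearing in the last coordinate of an element of $\mathcal{E}_{\mathbb{Z}}(\lambda,a)$ once $\beta$ is fixed.

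\smallskip

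\noindent\textbf{Step 2: rewrite $M_{\lambda,\gamma}(\beta)$ using Propositions \ref{tensrepA}--\ref{tensrepD}.} Since $\gamma=(a,0,\dots,0)$, each proposition describes $M_{\lambda,\gamma}(\beta)$ as the number of intermediate configurations interpolating between $\lambda$ and $\beta$ subject to interlacing and a mass conservation constraint of the form $``\text{total moved}=a\,{''}$. I would verify, case by case, that these intermediate configurations are exactly the non-$(\beta,x)$ coordinates of $\mathcal{E}_{\mathbb{Z}}(\lambda,a)$:
\begin{itemize}
\item[$\bullet$] For $\mathbb{F}=\mathbb{C}$ (type $A_{n-1}$): $M_{\lambda,\gamma}(\beta)=1$ if $\beta\succeq\lambda$ and $\sum(\beta_i-\lambda_i)=a$, matching the defining conditions on $\beta$ in $\mathcal{E}_{\mathbb{Z}}(\lambda,a)$.
\item[$\bullet$] For $\mathbb{F}=\mathbb{H}$ (type $C_n$): $M_{\lambda,\gamma}(\beta)$ counts $c\in\mathbb{N}^n$ with $\lambda\succeq c$, $\beta\succeq c$, $\sum(\lambda_i+\beta_i-2c_i)=a$; this coincides with the $z$-coordinate of $\mathcal{E}_{\mathbb{Z}}(\lambda,a)$.
\item[$\bullet$] For $\mathbb{F}=\mathbb{R}$, $n=2r+1$ (type $B_r$): $M_{\lambda,\gamma}(\beta)$ counts pairs $(c,s)\in\mathbb{N}^r\times\{0,1\}$ with the same interlacing plus the vanishing of $s$ when $c_r=0$, matching the $(z,s)$-coordinates (and the adjustment $s=0$ if $\lambda_r=0$ is equivalent, upon integration against the $x$-part, because $z_r=0$ forces $\lambda_r=0$ via $\lambda\succeq z$; this small compatibility is the one routine check to carry out carefully).
\item[$\bullet$] For $\mathbb{F}=\mathbb{R}$, $n=2r$ (type $D_r$): $M_{\lambda,\gamma}(\beta)$ counts $c\in\mathbb{N}^{r-1}$ with $\lambda,\beta\succeq c$, $\max(|\lambda_r|,|\beta_r|)\le c_{r-1}$ and $\sum_{k<r}(\lambda_k+\beta_k-2c_k)+|\lambda_r-\beta_r|=a$, matching the $z$-coordinate of $\mathcal{E}_{\mathbb{Z}}(\lambda,a)$.
\end{itemize}

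\smallskip

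\noindent\textbf{Step 3: assemble.} Combining Steps 1 and 2, the sum $\sum_{\beta} M_{\lambda,\gamma}(\beta)\dim(\beta)$ is a double count that, by Fubini, equals the number of tuples $(\beta,\text{intermediate},x)\in\mathcal{E}_{\mathbb{Z}}(\lambda,a)$. This proves the identity $\Card\,\mathcal{E}_{\mathbb{Z}}(\lambda,a)=\dim(\lambda)\dim(\gamma)$.

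\smallskip

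\noindent\textbf{Main obstacle.} The conceptual path is clear once the tensor-product and branching rules are in hand; the real work is the case-by-case bookkeeping, in particular the degenerate boundary conditions (the role of $s$ and the constraint $s=0$ if $\lambda_r=0$ in type $B$, and the $|\lambda_r-\beta_r|$ contribution in type $D$), where one must check that the combinatorial constraints in $\mathcal{E}_{\mathbb{Z}}(\lambda,a)$ coincide on the nose with those in Propositions \ref{tensrepB} and \ref{tensrepD}. Once these matchings are verified, the lemma follows immediately from the Weyl dimension formula applied to $V_\lambda\otimes V_\gamma$.
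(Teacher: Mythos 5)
Your argument is the same as the paper's: match the conditions defining $\mathcal{E}_{\mathbb{Z}}(\lambda,a)$ with the tensor-product decompositions of Propositions \ref{tensrepA}--\ref{tensrepD}, use $\dim(\beta)=\Card GT_{n,\mathbb{Z}}(\beta)$ (Lemma \ref{dimGT}) for the $x$-component, and conclude from $\dim(V_\lambda\otimes V_\gamma)=\dim(\lambda)\dim(\gamma)$; this is correct. Your only caveat, the type-$B$ parenthetical, is in fact unnecessary (and its implication is stated backwards: $\lambda\succeq z$ gives $\lambda_r=0\Rightarrow z_r=0$, not the converse), since the integer set $\mathcal{E}_{\mathbb{Z}}(\lambda,\theta)$ in the real odd case is defined with the constraint $s=0$ when $z_r=0$ and the summand $+s$, so it matches Proposition \ref{tensrepB} on the nose.
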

\proof  We compare the conditions satisfied by the component $\beta$ of the elements of $\mathcal{E}_\mathbb{Z}(\lambda,a)$ with those satisfied by the weights appearing in the description of the irreducible decomposition of the tensor product $V_\lambda\otimes V_\gamma$ given in propositions \ref{tensrepA} to \ref{tensrepD}. Recalling that $\Card GT_{n,\Z}(\beta)$ is equal to $\dim(V_\beta)$ we get that $\Card\mathcal{E}_\mathbb{Z}(\lambda,a)=\sum_\beta \dim(V_\beta)$, where the sum ranges over the dominant weights $\beta$ (with their multiplicity) appearing in the irreducible decomposition of  $V_\lambda\otimes V_\gamma$.  Thus $\Card\mathcal{E}_\mathbb{Z}(\lambda,a)=\dim(V_\gamma\otimes V_\gamma)=\dim(\lambda)\dim (\gamma)$. \qed

\begin{lem} \label{tensor}
Let $\lambda$ be in the Weyl chamber $\mathcal{C}_n$ and $\theta>0$. Let us consider two sequences $(\lambda_k)_{k\ge 1}$  and $(\gamma_k)_{k\ge 1}$ of integer points in $\CC_n$  such that $\gamma_k$ can be written as $(a_k,0,...,0)$. We suppose that $\varepsilon_k\lambda_k$ converges to $\lambda$ and  $\varepsilon_ka_k$ converges to $\theta$, as $k$ goes to infinity. Then the associated sequence of measures  $(\nu_k)_{k\ge 1}$ given in corollary \ref{coroprinc} for $K=U_n(\mathbb{F})$  converges to the measure $\nu_{\lambda,\theta}$.
\end{lem}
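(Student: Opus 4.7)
The strategy is to interpret $\nu_k$ as a pushforward of a discrete uniform measure on a lattice polytope, and then pass to a Riemann-sum limit whose target is the continuous uniform measure on $\mathcal{E}(\lambda,\theta)$.

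First I would combine the explicit tensor-product descriptions of Section 9 with the dimension count from Lemma \ref{dimGT}. Propositions \ref{tensrepA}, \ref{tensrepC}, \ref{tensrepB}, \ref{tensrepD} express the tensor-product multiplicity $M_{\lambda_k,\gamma_k}(\beta)$ as the number of auxiliary data (a vector $c$, or a pair $(c,s)$ in type $B$) completing $\beta$ to an element of $\mathcal{E}_{\mathbb{Z}}(\lambda_k,a_k)$, while Lemma \ref{dimGT} gives $\dim(\beta)=\Card GT_{n,\mathbb{Z}}(\beta)$, which is precisely the number of $x$-components accompanying such a completion. Multiplying out yields the key identity
$$\dim(\beta)\, M_{\lambda_k,\gamma_k}(\beta) = \Card\!\bigl(p^{-1}(\beta)\cap \mathcal{E}_{\mathbb{Z}}(\lambda_k,a_k)\bigr),$$
where $p$ is the projection onto the $\beta$-component, as in Definition \ref{defnulambdatheta}. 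Normalising by $\Card \mathcal{E}_{\mathbb{Z}}(\lambda_k,a_k)=\dim(\lambda_k)\dim(\gamma_k)$ (Lemma \ref{dimE}), we rewrite
$$\nu_k = p_{*}\mu_k^{\#},\qquad \mu_k^{\#}:=\frac{1}{\Card \mathcal{E}_{\mathbb{Z}}(\lambda_k,a_k)}\sum_{y\in \mathcal{E}_{\mathbb{Z}}(\lambda_k,a_k)}\delta_{\varepsilon_k y}.$$

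Next I would show that $\mu_k^{\#}$ converges weakly to the uniform probability measure $m_{\mathcal{E}(\lambda,\theta)}$. The set $\mathcal{E}(\lambda',\theta')$ is cut out by finitely many linear (in)equalities depending continuously on $(\lambda',\theta')$, so the rescaled lattice polytope $\varepsilon_k\mathcal{E}_{\mathbb{Z}}(\lambda_k,a_k)$ is a discrete $\varepsilon_k$-approximation of $\mathcal{E}(\varepsilon_k\lambda_k,\varepsilon_k a_k)$, which Hausdorff-converges to $\mathcal{E}(\lambda,\theta)$. For any bounded continuous test function $f$, $\int f\,d\mu_k^{\#}$ is then a standard Riemann sum for $\int f\,d m_{\mathcal{E}(\lambda,\theta)}$. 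The correct normalisation is exactly that of Lemma \ref{volE}: the leading asymptotics of $\dim(\lambda_k)\dim(\gamma_k)$ provided by the Weyl dimension formula reproduce $\varepsilon_k^{-\dim\mathcal{E}(\lambda,\theta)}\, d_n(\lambda)\,d_n(\theta)=\varepsilon_k^{-\dim\mathcal{E}(\lambda,\theta)}\,\mathrm{vol}(\mathcal{E}(\lambda,\theta))$.

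Finally, $p$ being continuous, the pushforward preserves weak convergence, so
$$\nu_k = p_{*}\mu_k^{\#}\;\longrightarrow\; p_{*}m_{\mathcal{E}(\lambda,\theta)} = \nu_{\lambda,\theta},$$
the last equality being the very definition of $\nu_{\lambda,\theta}$.

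\textbf{Main obstacle.} The delicate step is the Riemann-sum convergence $\mu_k^{\#}\to m_{\mathcal{E}(\lambda,\theta)}$, which must be checked uniformly across types $A$, $B$, $C$, $D$. It is most subtle in the even real case ($\mathbb{F}=\mathbb{R}$, $n=2r$), where $\mathcal{E}(\lambda,\theta)$ is the union of two convex pieces glued along a lower-dimensional face coming from the $|\lambda_r-\beta_r|$ term; here one must treat each piece separately and check that no mass is lost on the joining face. Apart from this, the argument is the standard fact that the counting measure on lattice points of a dilation of a rational polytope, suitably normalised, converges weakly to the uniform measure on the polytope.
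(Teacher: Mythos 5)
Your proposal is correct and follows essentially the same route as the paper: it identifies $\nu_k$ as the image under the projection $p$ of the normalized counting measure $\frac{1}{\dim(\lambda_k)\dim(\gamma_k)}\sum_{x\in\mathcal{E}_{\mathbb{Z}}(\lambda_k,a_k)}\delta_{\varepsilon_k x}$, using the tensor-product descriptions of propositions \ref{tensrepA}--\ref{tensrepD} together with $\dim(\beta)=\Card GT_{n,\mathbb{Z}}(\beta)$ and the normalization from lemma \ref{dimE}, and then lets this converge to the uniform measure on $\mathcal{E}(\lambda,\theta)$. The only difference is that the paper leaves the final lattice-points-to-uniform-measure convergence implicit, whereas you spell out the Riemann-sum step and its subtleties.
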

\proof   The measure $\nu_k$ is   the measure $$\sum_{\beta}\frac{\dim(\beta)}{\dim(\lambda_{k})\dim(\gamma_{k})}M_{\lambda_{k},\gamma_k}(\beta)\delta_{\varepsilon_k\beta},$$ where $M_{\lambda_{k},\gamma_k}(\beta)$ is the multiplicity of the highest weight $\beta$ in the irreducible decomposition of $V_{\lambda_k}\otimes V_{\gamma_k}$. The description of this  irreducible decomposition  given in section 9, from proposition \ref{tensrepA} to \ref{tensrepD}, and the fact that $\dim(\beta)=\Card GT_{n,\Z}(\beta)$ show that $\nu_k$ is the image by the projection on the component $\beta$ of the  probability $$\frac{1}{\dim(\lambda_k)\dim(\gamma_k)}\sum_{x \in \mathcal{E}_\mathbb{Z}(\lambda_k,a_k)}\delta_{\varepsilon_k x},$$ 
which proves the proposition. \qed \medskip

{\sc Proof of proposition \ref{pertutheta} : }Let $\lambda\in \mathcal{C}_n$, $\theta\in \mathbb{R}_+^*$ and  choose a sequence of measures $(\nu_k)_{k\ge 1}$ on $\mathcal{C}_n$ as in lemma \ref{tensor}. Then $(\nu_k)_{k\ge 1}$ converges to the law of the radial part of $\Omega_n(\lambda)+U\Omega_n(\theta)U^*$ where $U$ has a Haar distribution. Lemma \ref{tensor} implies that this law is $\nu_{\lambda,\theta}$.  \qed
 \section{Concluding remarks}

\subsection{Random processes with values in $GT_n$} Let $M=(M_t)_{t\ge 0}$ be a standard Brownian motion in $\mathcal{P}_n(\mathbb{C})$. Then the minor process $X(M)=\big(X(M_t)\big)_{t\ge 0}$  is generally not a Markov process. For instance for $\mathbb{F}=\mathbb{C}$, the only cases when $X(M)$ is a Markov process are for $n=1$ and $n=2$. Actually a Brownian motion in $\mathcal{P}_n(\mathbb{C})$ can be obtained as a limit, in a certain sense, of a quantum random walk (see Biane \cite{Biane} for $\mathcal{P}_2(\mathbb{C})$). The fact that the minor process $X(M)$ is not Markovian has to be related to the fact  that for $n\ge 3$, the ''complete system of observables'' in the space of any representation defined by Zhelobenko in chap. X.67  of \cite{Zhelobenko}, is not stable by the Markovian operator of the quantum random walk.

\subsection{Rank one perturbation on classical complex Lie groups}
Klyachko showed in \cite{Klyachko} that the convolution of  biinvariant measures on the complexification $G$ of  the compact group $K$, is deduced from the convolution of invariant measures on $K$. His result is an hyperbolic version of the so called wrapping map introduced by Dooley and Wildberger \cite{DooleyWildberger}.  Using this we can show that the radial part of a Brownian motion on the symmetric space $G/K$ can be approached by an interlaced process.
\subsection{Rank one perturbation on $\bf{U_n(\mathbb{F})}$}
Let us say a word about some other interesting rank one perturbations having invariance properties that we can find in literature. For instance, Diaconis and Shahshahani \cite{Diaconis1}, \cite{Diaconis2}, followed by Porod \cite{Porod1}, \cite{Porod2} and Rosenthal \cite{Rosenthal},  studied  specific random walks on $U_n(\mathbb{F})$, $\mathbb{F}=\mathbb{R},\mathbb{C}, \mathbb{H}$, whose increments are some random rotations, in order to approximate the Haar measure on $U_n(\mathbb{F})$. The wrapping map introduced in \cite{DooleyWildberger} makes a link between these rank one perturbations and those that we studied in this paper.

\end{document}